\title[]{Quasi-BPS categories for K3 surfaces}
\author{Tudor P\u adurariu and Yukinobu Toda}
\newtheorem{thm}{Theorem}[section]
\newtheorem{cor}[thm]{Corollary}
\newtheorem{prop}[thm]{Proposition}
\newtheorem{conj}[thm]{Conjecture}
\newtheorem{lemma}[thm]{Lemma}
\theoremstyle{definition}
\newtheorem{defn}[thm]{Definition}
\newtheorem{thm*}[thm]{Theorem$^*$}
\newtheorem{remark}[thm]{Remark}
\newtheorem{problem}{Problem}[section]
\newcommand{\comment}[1]{}
\renewcommand{\leq}{\leqslant}
\renewcommand{\geq}{\geqslant}
\newcommand{\X}{\mathscr{X}}
\newcommand{\Coh}{\operatorname{Coh}}
\newcommand{\Ker}{\operatorname{Ker}}
\newcommand{\id}{\operatorname{id}}
\newcommand{\Ext}{\operatorname{Ext}}
\newcommand{\Ind}{\operatorname{Ind}}
\newcommand{\Hom}{\operatorname{Hom}}
\newcommand{\Spec}{\operatorname{Spec}}
\newcommand{\ch}{\operatorname{ch}}
\newcommand{\rk}{\operatorname{rank}}
\newcommand{\inclusion}{\ar@<-0.3ex>@{^{(}->}[r]}
\newcommand{\Tr}{\mathop{\rm Tr}}
\newcommand{\ssslash}{/\!\!/}
\newcommand{\llangle}{\langle \! \langle}
\newcommand{\rrangle}{\rangle \! \rangle}
\newcommand{\diasquare}{\ar@{}[rd]|\square}
\tikzstyle{block}=[draw=black, width=1cm, minimum height=2cm, align=center] 
\tikzstyle{block2}=[draw=black, text width=2cm, minimum height=1cm, align=center] 
\tikzstyle{block3}=[draw=black, text width=2cm, minimum height=1cm, align=center] 
\begin{document}
	\maketitle
 
\begin{abstract}
We introduce and begin the study of quasi-BPS categories for K3 surfaces, which are a categorical version of the BPS cohomologies for K3 surfaces.

We construct 
semiorthogonal decompositions of derived categories of 
coherent sheaves on 
moduli stacks of semistable objects on K3 surfaces, where 
each summand is a categorical Hall product of quasi-BPS categories. 
We also prove the wall-crossing equivalence of 
quasi-BPS categories, which generalizes Halpern-Leistner's 
wall-crossing equivalence of moduli spaces of 
stable objects for primitive Mukai vectors on K3 surfaces. 

We also introduce and study a reduced quasi-BPS category. 
When the weight is 
coprime to the Mukai vector, 
the reduced quasi-BPS 
category is proper, smooth, and its Serre functor 
is trivial \'{e}tale locally on the 
good moduli space. 
Moreover we prove that its topological K-theory recovers the BPS invariants of K3 surfaces, 
which are known to be equal to the Euler characteristics of Hilbert schemes of points on K3 surfaces. 
We regard reduced quasi-BPS categories as noncommutative hyperkähler varieties which are categorical versions of crepant resolutions of singular symplectic moduli spaces of semistable 
objects on K3 surfaces.
\end{abstract}

\section{Introduction}
Let $S$ be a K3 surface, $v$ a Mukai vector, and 
$w$ an integer. 
The purpose of this paper is to introduce and study 
a category 
\begin{align}\label{intro:T}
\mathbb{T}=\mathbb{T}_S(v)_w^{\rm{red}}
\end{align}
called \textit{(reduced) quasi-BPS category}. 
When $v$ is primitive, 
\eqref{intro:T} is equivalent to the derived category of 
twisted sheaves over the moduli space $M$ of 
stable objects on $S$ with Mukai vector $v$, 
which is a holomorphic symplectic manifold. 
When $v$ is not necessarily primitive,
but $w$ is coprime to $v$, 
we show that $\mathbb{T}$ is proper, 
smooth, 
and has trivial Serre functor 
\'{e}tale locally on the good moduli space
$M$ of semistable objects with Mukai vector $v$, 
which is a singular 
symplectic variety. 
So 
we obtain a category $\mathbb{T}$ which we
regard
as a (twisted) categorical (\'{e}tale locally) crepant resolution of singularities of $M$. 

The construction of the category (\ref{intro:T}) is 
motivated by enumerative geometry: quasi-BPS categories
are a categorical replacement of BPS cohomologies~\cite{DM, D, KinjoKoseki, DHSM, DHSM2}, 
constructed from semiorthogonal 
decompositions of derived categories of 
moduli stacks of semistable sheaves 
which approximate the PBW theorem in 
cohomological Donaldson-Thomas (DT) theory studied in loc. cit. 
Below, we first mention our main results, and
then explain how the construction of the category (\ref{intro:T}) is motivated by DT theory and the study of singular symplectic varieties.

\subsection{Semiorthogonal decompositions into quasi-BPS categories}
For a K3 surface $S$, 
let 
\begin{align*}\mathrm{Stab}(S)
\end{align*}
be the main 
connected component of the 
space of Bridgeland stability 
conditions~\cite{Brs2} on $D^b(S)$. 
Let $\Gamma=\mathbb{Z} \oplus \mathrm{NS}(S) \oplus 
\mathbb{Z}$ be the Mukai lattice. 
For $\sigma \in \mathrm{Stab}(S)$
and $v \in \Gamma$, 
consider the moduli stacks 
\begin{align*}
\mathfrak{M}_S^{\sigma}(v) \hookleftarrow 
\mathcal{M}_S^{\sigma}(v) \to M_S^{\sigma}(v),
\end{align*}
where $\mathfrak{M}_S^{\sigma}(v)$ is the 
derived moduli stack of $\sigma$-semistable 
objects in $D^b(S)$ with Mukai vector $v$, 
$\mathcal{M}_S^{\sigma}(v)$ is its classical truncation, 
and $M_S^{\sigma}(v)$ is its good moduli space.  
Below we write $v=dv_0$ for $d \in \mathbb{Z}_{\geq 1}$
and $v_0$ a primitive Mukai vector
with $\langle v_0, v_0\rangle=2g-2$. 
We say \textit{$w\in\mathbb{Z}$ is coprime with $v$} if $\gcd(w,d)=1$.
We use the following structures on 
the derived category of $\mathfrak{M}_S^{\sigma}(v)$:

 \textbf{(The weight decomposition)}: every point in $\mathfrak{M}_S^{\sigma}(v)$
admits scalar automorphisms $\mathbb{C}^{\ast}$, and thus there is an
orthogonal 
decomposition of 
$D^b(\mathfrak{M}_S^{\sigma}(v))$ into $\mathbb{C}^{\ast}$-weight categories
    \begin{align*}
D^b(\mathfrak{M}_S^{\sigma}(v))=\bigoplus_{w\in \mathbb{Z}} D^b(\mathfrak{M}_S^{\sigma}(v))_w.    
\end{align*}

\textbf{(The categorical Hall product)}: for a decomposition $d=d_1+\cdots+d_k$, 
the stack of filtrations of $\sigma$-semistable objects 
induces \textit{the categorical Hall product} defined by Porta-Sala~\cite{PoSa}:
\begin{align}\label{intro:cathall}
    \boxtimes_{i=1}^k D^b(\mathfrak{M}_S^{\sigma}(d_i v_0)) \to D^b(\mathfrak{M}_S^{\sigma}(v)). 
\end{align}

Davison--Hennecart--Schlegel Mejia \cite[Theorem 1.5]{DHSM} proved that the Hall algebra of a K3 surface is generated by its BPS cohomology. The categorical analogue of their result is the following result, which can also be regarded as a partial categorification of a BBDG-type decomposition theorem, see Subsection \ref{subsec:intro:topK}:

\begin{thm}\emph{(Theorem~\ref{thm:sodK3})}\label{intro:thm1}
Let $\sigma \in \mathrm{Stab}(S)$ be a generic 
stability condition. 
Then there exists a subcategory (called quasi-BPS category)
\begin{align}\label{intro:qBPS}
\mathbb{T}_S^{\sigma}(v)_w \subset D^b\left(\mathfrak{M}_S^{\sigma}(v)\right)_w
\end{align}
such that
    there is a semiorthogonal decomposition 
  \begin{align}\label{intro:sod}
 D^b\left(\mathfrak{M}_S^{\sigma}(v)\right)
    =\left\langle 
    \boxtimes_{i=1}^k \mathbb{T}_{S}^{\sigma}(d_i v_0)_{w_i+(g-1)d_i(\sum_{i>j}d_j-\sum_{i<j}d_j)}
    \right\rangle. 
    \end{align}
    The right hand side is after all partitions $(d_i)_{i=1}^k$ of 
    $d$ and all weights $(w_i)_{i=1}^k\in\mathbb{Z}^k$ such that \[\frac{w_1}{d_1}<\cdots<\frac{w_k}{d_k},\] 
    and each fully-faithful functor in (\ref{intro:sod}) is given 
    by the categorical Hall product (\ref{intro:cathall}). 
\end{thm}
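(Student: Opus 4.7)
The plan is to construct $\mathbb{T}_S^\sigma(v)_w$ locally via a magic-window condition, to establish \eqref{intro:sod} \'etale locally on the good moduli space $M_S^\sigma(v)$ using Halpern-Leistner's $\Theta$-stratification machinery, and then to glue. At each closed point $[F] \in M_S^\sigma(v)$ with polystable representative $F = \bigoplus_j F_j^{\oplus n_j}$, the \'etale slice theorem of Alper--Halpern-Leistner--Heinloth, combined with the $(-1)$-shifted symplectic Darboux theorem of Brav--Bussi--Joyce, presents a formal neighborhood of $\mathfrak{M}_S^\sigma(v)$ as a quotient $[\mathfrak{X}/G]$ with $G=\prod_j \GL(n_j)$ and $\mathfrak{X}$ built from the Ext-quiver at $F$ equipped with its induced cubic potential. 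This is precisely the framework in which the authors' earlier work on quasi-BPS categories for local surfaces and for preprojective algebras operates, and it provides both a candidate magic-window subcategory and a local SOD that should, after gluing, yield the global statement.

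I would then define $\mathbb{T}_S^\sigma(v)_w \subset D^b(\mathfrak{M}_S^\sigma(v))_w$ globally by the condition that, at each closed point, the restriction lies in the magic-window subcategory of the local model centered at the shift $w$. Independence of the slice has to be checked by showing that the underlying weight polytope depends only on the combinatorial datum of the closed point, namely on the dimension vector together with the Mukai pairings $\langle v(F_i),v(F_j)\rangle$ between the simple summands, so it is automatically stable under the Weyl group of $G$ and under \'etale equivalences between slices.

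The core work is the local $\Theta$-stratification step. A one-parameter subgroup of $G$ corresponds to a Harder--Narasimhan type $(d_1,\dots,d_k)$ with strictly increasing slopes $w_1/d_1<\dots<w_k/d_k$, and Halpern-Leistner's theorem yields an SOD of $D^b([\mathfrak{X}/G])$ indexed by such one-parameter subgroups, each component obtained by pushing forward from the attracting locus, which is itself a torsor over $\prod_i [\mathfrak{X}_i/\GL(d_i)]$. One identifies the resulting factor with the categorical Hall product of the local magic-window subcategories on the Levi factors, once the weights are corrected by the character through which $\lambda$ acts on the determinant of the positive part of the shifted cotangent complex along the attracting locus. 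A direct Mukai-pairing computation, using $\langle v_0,v_0\rangle=2g-2$, identifies this character with the shift $(g-1)d_i\bigl(\sum_{i>j}d_j-\sum_{i<j}d_j\bigr)$ that appears in \eqref{intro:sod}.

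The hard part will be compatibility and patching. First one must confirm that the locally-defined Hall product coincides with Porta--Sala's construction \eqref{intro:cathall}, which is global and defined on the derived level; for this one checks that both products are determined by the formal neighborhood of the attracting stack of filtrations, hence by the same local data. Second, since $\mathfrak{M}_S^\sigma(v)$ is not a global quotient, one must verify that the local SODs and the local quasi-BPS subcategories agree on overlaps of the \'etale cover by slices; this amounts to functoriality of the magic-window construction under the Weyl-equivariant \'etale maps between neighboring slices. Once these two compatibilities are in place, assembling the local SODs over $M_S^\sigma(v)$ delivers \eqref{intro:sod}.
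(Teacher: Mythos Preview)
Your overall architecture—define $\mathbb{T}_S^\sigma(v)_w$ by a pointwise magic-window condition, verify the SOD formally locally on $M_S^\sigma(v)$, then globalize—matches the paper's. But two steps, as you have written them, hide real content and would not go through as stated.

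First, the local SOD. You write that Halpern-Leistner's $\Theta$-stratification ``yields an SOD of $D^b([\mathfrak{X}/G])$ indexed by such one-parameter subgroups'' and that one then ``identifies the resulting factor with the categorical Hall product of the local magic-window subcategories on the Levi factors.'' Halpern-Leistner's theorem produces window categories indexed by Kempf--Ness strata and weight shifts $m_\bullet$; it does not by itself produce an SOD whose pieces are Hall products of magic-window categories indexed by slope sequences $w_1/d_1<\cdots<w_k/d_k$. That identification is the main result of the companion paper, quoted here as Theorem~\ref{cor:sodT}, and it rests on a combinatorial analysis of the polytope $\mathbf{W}(d)$ and the Hall-algebra machinery, not on the window theorem alone. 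Moreover, the paper does not work with the Ext-quiver at a general polystable $F$: Lemma~\ref{lem:py} (via CY2 formality, not the Brav--Bussi--Joyce Darboux theorem, which is for $(-1)$-shifted symplectic stacks) identifies the formal fiber at \emph{any} closed point with a formal fiber of $\mathscr{P}(d)$ for the single $g$-loop quiver, and a separate formal-fiber version of the SOD (Proposition~\ref{prop:sod2}) is then proved, requiring an extra orthogonality argument (Lemma~\ref{lem:orthoS}) since pullback to a formal fiber does not a priori preserve the SOD pieces.

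Second, the passage to the global statement. There is no general mechanism for ``assembling local SODs,'' and the paper does not attempt this. The Hall-product functors $\Upsilon_A$ of (\ref{hall:k3}) are global from the outset via Porta--Sala, and the paper instead checks fully-faithfulness, semiorthogonality, and generation by reducing each to a property on formal fibers where Proposition~\ref{prop:sod2} applies: fully-faithfulness is the unit $\id\to\Upsilon_A^R\Upsilon_A$ being an isomorphism (local), semiorthogonality is $\Upsilon_A^R\Upsilon_B=0$ (local), and generation proceeds by induction on $d$, constructing left adjoints $\Upsilon_A^L$ via Grothendieck duality and then showing that the residual piece $\mathbb{W}$ coincides with $\mathbb{T}_S^\sigma(v)_w$ by the local characterization of the intrinsic window category. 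This is a ``global functors, local verification'' argument, and your compatibility-on-overlaps step is neither needed nor available.
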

The order of the summands in the semiorthogonal 
    decomposition (\ref{intro:sod}) is not immediate to state 
    and we do not make it explicit in this paper,
    see Remark~\ref{rmk:order}. 
If $v$ is primitive, then \[\mathbb{T}_S^{\sigma}(v)_w=
D^b\left(\mathfrak{M}_S^{\sigma}(v)\right)_w.\]
In general, the category $\mathbb{T}_S^{\sigma}(v)_w$ is 
uniquely determined by the semiorthogonal decomposition (\ref{intro:sod}). 
Locally on $M_S^{\sigma}(v)$, the category 
$\mathbb{T}_S^{\sigma}(v)_w$ is defined to be
the subcategory of objects which are Koszul dual to 
matrix factorizations with some weight conditions 
for the maximal torus of the stabilizer groups. 
Such a subcategory was first considered by \v{S}penko--Van den Bergh~\cite{SVdB} 
to construct noncommutative crepant resolutions of quotients of quasi-symmetric representations by reductive 
groups. It was later used in~\cite{hls} to prove the ``magic window 
theorem" for GIT quotient stacks, and in~\cite{P} to 
give PBW type decompositions for categorical (and K-theoretic) Hall algebras of symmetric quivers with potential. 

We regard
the subcategory (\ref{intro:qBPS}) as a global version of these
categories in the case of moduli stacks of semistable objects on K3 
surfaces. The main tool in investigating the category \eqref{intro:qBPS} is its local description via categories of matrix factorizations on the moduli stacks of representations of Ext-quivers of $\sigma$-polystable objects. We study quasi-BPS categories in this local context in \cite{PTquiver}.


\subsection{Quasi-BPS categories for reduced stacks}\label{subsec12}
The derived stack $\mathfrak{M}_S^{\sigma}(v)$ is never 
classical because of the existence of the 
trace map $\Ext^2(E, E) \twoheadrightarrow \mathbb{C}$
for any object $E \in D^b(S)$. 
Let 
\begin{align*}
\mathfrak{M}_S^{\sigma}(v)^{\rm{red}} \hookrightarrow 
\mathfrak{M}_S^{\sigma}(v)
\end{align*}
be the reduced 
derived stack, which roughly speaking is obtained 
by taking  
the traceless part of its obstruction theory. 
By~\cite{KaLeSo}, it is known that the
reduced derived stack is classical when $g\geq 2$. 
We also have a reduced version of the 
quasi-BPS category 
\begin{align}\label{reduced:bps}
    \mathbb{T}_S^{\sigma}(v)^{\rm{red}}_w
    \subset D^b\left(\mathfrak{M}_S^{\sigma}(v)^{\rm{red}}\right)_w
\end{align}
and a reduced version of the semiorthogonal decomposition (\ref{intro:sod}), 
see Theorem~\ref{thm:sodK32}. 
When $v$ is primitive, we have 
\begin{align}\label{T:twist}
\mathbb{T}_S^{\sigma}(v)^{\rm{red}}_w=D^b\left(M_S^{\sigma}(v), \alpha^w\right),
\end{align}
where $\alpha$ is the Brauer class which represents the 
obstruction to the existence of a universal object, and 
$M_S^{\sigma}(v)$ is a projective holomorphic symplectic manifold~\cite{Mu2, BaMa2}. 
From the above description, we have the 
following properties of the 
category (\ref{reduced:bps})
when $v$ is primitive:
(i) the category $\mathbb{T}_S^{\sigma}(v)^{\rm{red}}_w$
is smooth and proper;
(ii) 
    the Serre functor $S_{\mathbb{T}}$ 
    of $\mathbb{T}_S^{\sigma}(v)^{\rm{red}}_w$ is isomorphic 
    to the shift functor $[\dim M_S^{\sigma}(v)]$;
    (iii)
    by Halpern-Leistner~\cite{halpK32}, 
    the category $\mathbb{T}_S^{\sigma}(v)^{\rm{red}}_w$
    is independent of $\sigma$ up to equivalence. 
The proof of the above properties 
relies on the description (\ref{T:twist})
for primitive $v$, and a priori there is no 
reason that these properties hold for non-primitive 
$v$. 
Nevertheless, we have the following: 
\begin{thm}\emph{(Corollary~\ref{cor:smooth}, Theorem~\ref{thm:Serre:etale}, Theorem~\ref{thm:walleq})}\label{intro:thm4}
Suppose that $g\geq 2$, $\sigma, \sigma'\in\mathrm{Stab}(S)$ are generic stability conditions, and $w$ is coprime to $v$. Then:

\textbf{(i) (smooth and properness):} the category 
$\mathbb{T}_S^{\sigma}(v)_w^{\rm{red}}$ is smooth and proper; 

\textbf{(ii) (\'etale locally trivial Serre functor):}
the Serre functor $S_{\mathbb{T}}$
of $\mathbb{T}_S^{\sigma}(v)^{\rm{red}}_w$ is 
trivial \'etale locally on $M_S^{\sigma}(v)$; 

\textbf{(iii) (wall-crossing equivalence):} 
there is an equivalence
$\mathbb{T}_S^{\sigma}(v)_w^{\rm{red}} \simeq
\mathbb{T}_S^{\sigma'}(v)_w^{\rm{red}}$. 
Hence we may write the quasi-BPS category as $\mathbb{T}_S(v)_w^{\rm{red}}$. 
\end{thm}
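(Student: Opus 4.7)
The plan is to deduce all three parts from the \'etale-local description of the reduced quasi-BPS category via matrix factorization categories on moduli of representations of Ext-quivers of polystable objects, as developed in the companion paper \cite{PTquiver}. Near a $\sigma$-polystable object $E = \bigoplus_i V_i \otimes E_i$ of $M_S^{\sigma}(v)$, there is an \'etale neighborhood of the image point on which $\mathfrak{M}_S^{\sigma}(v)^{\rm{red}}$ is equivalent to a critical locus on the stack of representations of the Ext-quiver of $E$, modulo the action of $G_E = \prod_i \GL(V_i)$, with potential $W$ determined by the higher $A_{\infty}$-structure on $\Ext^{\bullet}(E,E)$. Under Koszul duality the reduced quasi-BPS category becomes, \'etale-locally, a window subcategory of matrix factorizations on this quotient stack, and the coprimality $\gcd(w,d)=1$ places the window in the ``generic'' range of \v{S}penko--Van den Bergh \cite{SVdB} and Halpern-Leistner--Sam \cite{hls}.

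For part (i), smoothness is \'etale-local on $M_S^{\sigma}(v)$; on each chart it reduces to the magic window theorem of \cite{hls}, once one observes that the local Ext-quiver representation is quasi-symmetric as a consequence of Serre duality on $S$. Properness then follows by combining projectivity of $M_S^{\sigma}(v)$ (Bayer--Macr\`\i) with the local finite-dimensionality of the matrix factorization model. For part (ii), the Serre functor on each local matrix factorization category is computed from the canonical bundle of the quotient stack together with a character twist determined by the potential; the quasi-symmetric structure kills the first contribution on the window via \cite{hls}, and coprimality of $w$ with $d$ kills the character correction, yielding the trivial Serre functor \'etale-locally.

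For part (iii), by connectedness of $\mathrm{Stab}(S)$ it suffices to cross a single wall. At such a wall one produces an enlargement (typically the stack of objects semistable for a stability condition on the wall) dominating both chambers, and via the $\Theta$-stratifications induced by the wall one realizes both $\mathbb{T}_S^{\sigma}(v)_w^{\rm{red}}$ and $\mathbb{T}_S^{\sigma'}(v)_w^{\rm{red}}$ as the same window subcategory inside the derived category of the larger stack; the coprime condition on $w$ is precisely what makes a single common window available on both sides. This extends to arbitrary Mukai vectors the argument of Halpern-Leistner \cite{halpK32} for primitive $v$. The hardest step I anticipate is part (ii): the primitive-case formula \eqref{T:twist} shows that globally on $M$ the Serre functor is twisted by a nontrivial Brauer class, so nontriviality is inherent, and the content of (ii) is to produce an honest \'etale-local trivialization rather than merely an isomorphism up to a twist. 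This will require a careful interplay between the window condition, the Calabi--Yau structure of the local matrix factorization model, and the coprimality of $w$.
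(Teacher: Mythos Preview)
Your overall framework---reducing to \'etale-local Ext-quiver models from \cite{PTquiver}---matches the paper's strategy, and your sketch of (iii) is close to correct (though note: the paper's wall-crossing equivalence in Theorem~\ref{thm:walleq} holds for \emph{all} $w$, not only those coprime to $v$; coprimality is not used there). However, your argument for part (i) has a genuine gap, and the difficulty is not where you locate it.

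\textbf{Smoothness is not \'etale-local.} You claim smoothness of $\mathbb{T}_S^{\sigma}(v)_w^{\rm{red}}$ can be checked on an \'etale cover of $M_S^{\sigma}(v)$ and then appeal to \cite{hls}. But smoothness of a dg-category means perfection of the diagonal bimodule, which is a global statement about $\mathbb{T}\otimes\mathbb{T}^{\rm{op}}$; knowing that each local chart is a smooth category does not directly assemble into smoothness of the global category. The paper takes a completely different route (see Theorem~\ref{thm:regular} and Corollary~\ref{cor:smooth}): it proves \emph{regularity} (strong generation) by exhibiting $\mathbb{T}_S^H(v)_w$ as a retract of a singular-support quotient $D^b(\mathfrak{M}^{\dag})/\mathcal{C}_{\mathscr{Z}^{\rm{JS}}}$ built from Joyce--Song pairs on the local Calabi--Yau threefold $S\times\mathbb{C}$, and this quotient is shown to be regular via a separate descent argument (Theorem~\ref{thm:regular2}). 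Smoothness then follows from proper $+$ regular by \cite{Orsmooth}.

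\textbf{Properness needs the categorical support lemma.} Your line ``properness follows from projectivity of $M$ plus local finite-dimensionality of the matrix factorization model'' skips the essential point. The local matrix factorization category has infinite-dimensional Hom spaces in general; what makes them bounded here is Lemma~\ref{prop:catsupp} (the categorical support lemma), which says that for $\gcd(d,w)=1$ every object of $\mathbb{T}_S^{\sigma}(v)_w^{\rm{red}}$ has \emph{nilpotent} singular support. This is exactly where coprimality enters part (i), and the paper flags it as the key step. Without it, local Hom complexes are not coherent over $\mathcal{O}_M$ and your properness argument does not go through.

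Finally, your diagnosis of the difficulty in (ii) is off: in the primitive case the Serre functor on $D^b(M,\alpha^w)$ \emph{is} globally $[\dim M]$---the Brauer class twists the category, not its Serre functor. The actual obstruction in (ii) is that the \'etale-local trace maps of Subsection~\ref{subsec:trace} are constructed from a chosen presentation $(G,Y,V,s)$, and one must verify compatibility under change of presentation (Lemmas~\ref{compare:trace0} and~\ref{lem:trace=}) before invoking the local Serre-duality result from \cite{PTquiver}.
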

The key point in the proof
of (i) above is Lemma \ref{prop:catsupp} (the categorical support 
lemma), which says that
any object in $\mathbb{T}_S(v)_w^{\rm{red}}$
has a nilpotent singular support if $w$ is coprime to $v$. 
By combining with 
the strong generation,  
we conclude that $\mathbb{T}_S(v)_w^{\rm{red}}$ is smooth 
and proper if $w$ is coprime to $v$.
In particular, 
it admits a Serre functor $S_{\mathbb{T}}$. 
We expect that $S_{\mathbb{T}}$ is globally isomorphic 
to $[\dim M_S^{\sigma}(v)]$. However, 
currently there is a technical subtlety of proving 
this, and we only prove it is trivial \'etale locally
in (ii). 
Globally, we prove an isomorphism
$S_{\mathbb{T}}\cong [\dim M_S^{\sigma}(v)]$
on the level of cohomologies, see Corollary~\ref{cor:isomcoh}, and also 
for perfect complexes, see~Corollary~\ref{cor:serreperf}. 
In view of parts (i) and (ii)  of Theorem \ref{intro:thm4},
we view $\mathbb{T}_S(v)^{\mathrm{red}}_w$ as a categorical version of a crepant resolution of $M^\sigma_S(v).$

It is an interesting question to see the relation 
between (reduced) quasi-BPS categories and categorical crepant resolutions in the sense of Kuznetsov~\cite{KuzICM} or noncommutative crepant 
resolutions in the sense of Van den Bergh \cite{VdB22}. We plan to investigate this relation in future work.   

The main tool in proving Theorem \ref{intro:thm4} is its local version for stacks of representations of preprojective algebras constructed from Ext-quivers of $\sigma$-polystable objects, see \cite{PTquiver}. Along the way, we obtain generation statements for singular support quotient categories of more general quasi-smooth stacks that may be of independent interest, see Theorem \ref{thm:regular2}.

\subsection{Topological K-theory of quasi-BPS categories}\label{subsec:intro:topK}
We finally relate topological K-theory of quasi-BPS category with the cohomology of the BPS sheaf 
$\mathcal{BPS}_v$
on $M_S^{\sigma}(v)$ studied in~\cite{DHSM} (i.e. with BPS cohomology). 
Note that $\mathcal{BPS}_v=\mathrm{IC}_{M_S^{\sigma}(v)}=\mathbb{Q}_{M_S^{\sigma}(v)}[\dim M_S^{\sigma}(v)]$ if $v$ is a primitive Mukai vector and $\sigma$ is generic, and in general it is a semisimple perverse sheaf which contains $\mathrm{IC}_{M_S^{\sigma}(v)}$ as a proper direct summand.

For a dg-category $\mathcal{D}$ and $i\in \mathbb{Z}$,
we denote by $K_i^{\rm{top}}(\mathcal{D})$
the topological K-theory of $\mathcal{D}$ defined by Blanc~\cite{Blanc}. 
We prove the following: 
\begin{thm}\emph{(Theorem~\ref{thmKtop})}\label{intro:thm:K}
Suppose that $\sigma$ is a generic Gieseker stability condition, $g\geq 2$, and 
$w$ is coprime to $v$. 
For $i\in \mathbb{Z}$, we have the identity:
\begin{align*}
    &\dim K_{i}^{\rm{top}}(\mathbb{T}_S^{\sigma}(v)_w) =
    \sum_{j\in \mathbb{Z}}\dim 
    H^{i+2j}(M_S^{\sigma}(v), \mathcal{BPS}_v). 
\end{align*}
\end{thm}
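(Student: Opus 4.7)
The plan is to establish the identity by comparing two natural decompositions of the topological K-theory of $\mathfrak{M}_S^{\sigma}(v)$: the categorical one provided by Theorem~\ref{intro:thm1}, and the cohomological PBW-type decomposition of $H^*(\mathfrak{M}_S^{\sigma}(v))$ in terms of BPS cohomology due to Davison--Hennecart--Schlegel Mejia~\cite{DHSM}.

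Since Blanc's topological K-theory~\cite{Blanc} is a localizing invariant, it is additive over semiorthogonal decompositions. Applying this to~\eqref{intro:sod} yields an isomorphism
\[
K^{\rm{top}}_{i}\left(\mathfrak{M}_S^{\sigma}(v)\right) = \bigoplus_{(d_\bullet, w_\bullet)} K^{\rm{top}}_{i}\left(\boxtimes_{j=1}^k \mathbb{T}_S^{\sigma}(d_j v_0)_{w_j'}\right),
\]
where $w_j' = w_j + (g-1)d_j(\sum_{l>j} d_l - \sum_{l<j} d_l)$. A Künneth-type formula for the categorical Hall product (using that, after passing to $K^{\rm{top}}$, the Hall correspondence is supported on the product stack up to a Tate twist) reduces each summand to a tensor product of topological K-theories of individual quasi-BPS categories. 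To compute the left hand side cohomologically, I would invoke Blanc's Chern character $K^{\rm{top}}_{*}(\mathcal{D}) \otimes \mathbb{C} \cong HP_{*}(\mathcal{D})$, valid for the smooth and proper dg-categories at hand by Theorem~\ref{intro:thm4}(i) applied to the reduced category (the non-reduced quasi-BPS category being obtained from the reduced one up to a controlled factor reflecting the traceless/trace splitting of the obstruction theory). Combined with an HKR-type identification of $HP_{*}$ with the $2$-periodized singular cohomology of the underlying classical stack, this translates the K-theoretic identity into one for $H^*(\mathfrak{M}_S^{\sigma}(v))$.

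The cohomological PBW theorem of~\cite{DHSM} provides a parallel decomposition of $H^*(\mathfrak{M}_S^{\sigma}(v))$ in terms of symmetric products of $H^*(M_S^{\sigma}(d' v_0), \mathcal{BPS}_{d' v_0})$, indexed by partitions of $d$. Under the coprimality hypothesis $\gcd(w,d)=1$, the only partition contributing a summand of total weight $w$ is the trivial one $(d_1, w_1) = (d,w)$: any finer partition would force an ordered weight tuple $w_1/d_1 < \cdots < w_k/d_k$ with $\sum w_j$ having a common factor with $d$, contradicting coprimality. Matching the two decompositions partition by partition and extracting the trivial-partition summand then yields the desired identity for $\mathbb{T}_S^{\sigma}(v)_w$.

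The main obstacle is the precise matching of the two decompositions: the categorical side is indexed by ordered partitions equipped with integer weight tuples, while the cohomological side of~\cite{DHSM} is indexed by unordered partitions together with BPS-sheaf data on the various strata of the symmetric product. Careful tracking of the shifts $(g-1)d_j(\sum_{l>j}d_l - \sum_{l<j}d_l)$ in the weight assignments, together with the $2$-periodic degree shifts implicit in passing from $K^{\rm{top}}$ to $H^*$, is the bulk of the technical work. A secondary difficulty is verifying Blanc's Chern character isomorphism at the level of generality required here; for this I would reduce to the local model in terms of preprojective algebras of Ext-quivers studied in~\cite{PTquiver}, where the corresponding computation should already be available.
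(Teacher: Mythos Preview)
Your proposal has a genuine gap in the key step. The claim that ``under the coprimality hypothesis $\gcd(w,d)=1$, the only partition contributing a summand of total weight $w$ is the trivial one'' is false for the semiorthogonal decomposition~\eqref{intro:sod}. Restricting~\eqref{intro:sod} to $\mathbb{C}^{\ast}$-weight $w$ still produces infinitely many summands: for any partition $d=d_1+\cdots+d_k$ and any integers $w_1+\cdots+w_k=w$ with $w_1/d_1<\cdots<w_k/d_k$, the Hall product $\boxtimes_i \mathbb{T}_S^{\sigma}(d_iv_0)_{w_i'}$ appears, and there is no reason these $(d_i,w_i)$ are themselves coprime. (For instance $d=3$, $w=1$ admits $(d_1,d_2)=(1,2)$, $(w_1,w_2)=(0,1)$.) So extracting $\mathbb{T}_S^{\sigma}(v)_w$ from the categorical side requires already knowing $K^{\rm{top}}$ of \emph{all} $\mathbb{T}_S^{\sigma}(d'v_0)_{w'}$ for $d'<d$ and arbitrary $w'$, not just the coprime ones. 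This is precisely the content of the stronger Theorem~\ref{thmKtop}, which identifies $K^{\rm{top}}_i(\mathbb{T}_S^{\sigma}(v)_w)$ with $\widetilde{H}^i(M,\mathcal{BPS}_{v,w})$ for every $w$; the coprime case is then the specialization $\mathcal{BPS}_{v,w}=\mathcal{BPS}_v$ because $S^d_w$ collapses to the trivial partition. Your sketch does not supply this induction.

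A second issue is the passage from $K^{\rm{top}}$ to cohomology. You invoke smoothness and properness of $\mathbb{T}_S^{\sigma}(v)_w^{\rm{red}}$ to justify Blanc's Chern character isomorphism, but the object whose $K^{\rm{top}}$ you need to control is $D^b(\mathfrak{M}_S^{\sigma}(v))$, which is neither smooth nor proper, and the non-trivial SOD summands involve quasi-BPS categories for non-coprime $(d_i,w_i)$ where Theorem~\ref{intro:thm4}(i) does not apply. The paper handles this differently: it does not compare global decompositions at all. Instead it sheafifies $K^{\rm{top}}$ over an \'etale cover of $M_S^{\sigma}(v)$ by quiver-local models (diagrams~\eqref{diaggg1}, \eqref{diaggg2}), uses \v{C}ech descent for topological K-theory (Proposition~\ref{etalecomputationtopK}), and then applies the local computation for preprojective algebras from~\cite{PTtop}. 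The vanishing of odd stalks (Proposition~\ref{prop78}, relying on Davison's purity) and a filtration by the cycle map then yield the identification with BPS cohomology on the associated graded (Theorem~\ref{thmKtopgraded}). This \'etale-local strategy is what replaces the global ``matching'' you attempt.
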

The above result is motivated by 
categorification of BPS invariants in Donaldson-Thomas theory, 
which will be explained in the next subsection. 
We regard Theorem~\ref{intro:thm:K} as a weight-independence phenomenon reminiscent of the (numerical and cohomological) $\chi$-independence phenomenon 
\cite{MT, TodGV, MaulikShen, KinjoKoseki}. 
It is an interesting problem to define a primitive part $\mathrm{P}K^{\mathrm{top}}_i(\mathbb{T}_S^{\sigma}(v)_w)\subset K^{\mathrm{top}}_i(\mathbb{T}_S^{\sigma}(v)_w)$ whose dimension is independent of \textit{all} $w\in\mathbb{Z}$.

Theorem \ref{intro:thm:K} can be seen as part of the more general problem of categorifying perverse sheaves of interest \cite{PTtop}, \cite{P3}. Such a problem is the first step in categorifying instances of the BBDG decomposition theorem \cite{BBD}. In the context of good moduli space maps for objects in certain Calabi-Yau $2$-categories, a BBDG-type decomposition was proved by Davison \cite{DavPurity}. 
Theorem \ref{intro:thm1} can be seen as a partial categorification of the decomposition theorem for the morphism $\mathcal{M}^\sigma_S(v) \to M^\sigma_S(v)$.

\subsection{Motivation from Donaldson-Thomas theory}
We now explain how the study of 
quasi-BPS categories is motivated by DT theory. 
Let $X$ be a smooth Calabi-Yau 3-fold. For a given numerical class $v$
and a stability condition $\sigma$ on $D^b(X)$, 
the DT invariant is defined to be a rational number
\begin{align}\label{intro:DT}
\mathrm{DT}^{\sigma}(v) \in \mathbb{Q}
\end{align}
which virtually counts $\sigma$-semistable (compactly supported) objects with 
numerical class $v$, see~\cite{Thom, JS, PiYT}. 
It is defined via the moduli stack $\mathcal{M}_X^{\sigma}(v)$ of 
$\sigma$-semistable objects with numerical class 
$v$ or its good moduli space \[\mathcal{M}_X^{\sigma}(v) \to M_X^{\sigma}(v).\] 
When $\sigma$-semistable objects coincide with $\sigma$-stable objects 
(e.g. $v$ is primitive and $\sigma$ is generic), then the DT invariant 
is an integer and can be also computed as 
\begin{align*}
\mathrm{DT}^{\sigma}(v)=\int_{[M_X^{\sigma}(v)]^{\rm{vir}}}1 =
\int_{M_X^{\sigma}(v)} \chi_B \ de \in \mathbb{Z},
\end{align*}
where $\chi_B$ is the Behrend constructible function~\cite{MR2600874}.
Otherwise, (\ref{intro:DT}) is defined as the weighted Euler characteristic
with respect to the Behrend function of the `log' of $\mathcal{M}_X^{\sigma}(v)$ 
in the motivic Hall algebra, see~\cite{JS}.

For a generic $\sigma$, the BPS invariant $\Omega^{\sigma}(v)$ is inductively defined 
by the multiple cover formula
\begin{align*}
    \mathrm{DT}^{\sigma}(v)=\sum_{k\geq 1, k|v}\frac{1}{k^2} \Omega^{\sigma}(v/k). 
\end{align*}
Although (\ref{intro:DT}) is a rational number in general, the BPS number 
$\Omega^{\sigma}(v)$ is an integer. 
The integrality of $\Omega^{\sigma}(v)$ is conjectured in~\cite[Conjecture~6]{K-S}, \cite[Conjecture~6.12]{JS} 
and proved in~\cite{DM} 
combined with~\cite{Todext}. 
We address the following categorification problem of BPS invariants:
\begin{problem}\label{prob1}
Is there a dg-category $\mathbb{T}^{\sigma}(v)$
which recovers $\Omega^{\sigma}(v)$ by taking the Euler characteristic of an additive invariant, e.g. 
\[\chi(K^{\rm{top}}(\mathbb{T}^{\sigma}(v))):=\dim_\mathbb{Q} K_0^{\rm{top}}(\mathbb{T}^{\sigma}(v))_\mathbb{Q}-\dim_\mathbb{Q} K_1^{\rm{top}}(\mathbb{T}^{\sigma}(v))_\mathbb{Q}=
-\Omega^{\sigma}(v)?\]
\end{problem}
The above problem is open even if $v$ is primitive, and in this case 
it is related to the gluing problem of matrix factorizations,
see~\cite{T} for the case of local surfaces 
and \cite{RHH} for work in progress addressing the general case. 

Now, for a K3 surface $S$, we consider the 
local K3 surface 
\begin{align}\label{loc:K3}
    X=\mathrm{Tot}_S(K_S)=S \times \mathbb{A}^1_\mathbb{C}. 
\end{align}
The ($\mathbb{C}^{\ast}$-equivariant) DT category for the moduli 
stack $\mathcal{M}_X^{\sigma}(v)$ is defined in~\cite{T} via 
categorical dimensional reduction 
\begin{align*}
    \mathcal{DT}(\mathcal{M}_X^{\sigma}(v)):=D^b(\mathfrak{M}_S^{\sigma}(v)). 
\end{align*}
We regard the subcategory 
$\mathbb{T}_S^{\sigma}(v)_w \subset \mathcal{DT}(\mathcal{M}_X^{\sigma}(v))$
as a categorification of the BPS invariant for local K3 surface when $(v, w)$ is coprime. 
Indeed, Theorem~\ref{intro:thm:K} implies that 
\begin{align}\label{isom:Ktop}
\chi(K^{\rm{top}}(\mathbb{T}_S^{\sigma}(v)_w))=-\Omega^{\sigma}(v),
\end{align}
where the right hand side is explicitly computed in terms of 
Hilbert schemes of points, see the next subsection. 
Thus the category $\mathbb{T}_S^{\sigma}(v)_w$ gives a solution to Problem~\ref{prob1} for the local K3 surface \eqref{loc:K3}. 

\subsection{Motivation from hyperkähler geometry}
Let $S$ be a K3 surface, and consider the local K3 surface (\ref{loc:K3}). 
The BPS invariant in this case is completely known: 
\begin{align}\label{BPS:S}
    \Omega^{\sigma}(v)=-\chi(S^{[\langle v, v \rangle/2+1]}), 
\end{align}
where, for a positive integer $n$, we denote by $S^{[n]}$ the Hilbert scheme of $n$ points on $S$.
The above identity is conjectured by the second named author~\cite{TodK3} and 
proved by Maulik--Thomas~\cite[Corollary 6.10]{MTK3}. The identity (\ref{BPS:S}) is an instance of 
\textit{the $\chi$-independence phenomena} (e.g. when $v=(0, \beta, \chi)$, the right hand side of (\ref{BPS:S}) is 
independent of $\chi$), see~\cite[Conjecture~6.3]{MR2892766}, \cite[Conjecture~2.15]{TodGV}
and~\cite{MaulikShen, KinjoKoseki} for the recent development of $\chi$-independence phenomena. 

When $v$ is primitive, the identity (\ref{BPS:S}) holds since
$M_S^{\sigma}(v)$ is a holomorphic symplectic manifold~\cite{Mu2}
deformation equivalent to $S^{[\langle v, v\rangle/2+1]}$. 
However, it is much less obvious and mysterious when $v$ is not primitive. 
For non-primitive $v$, 
the good moduli space $M=M_S^{\sigma}(v)$ is a singular 
symplectic variety. O'Grady~\cite{Ogra1}
constructed a symplectic 
resolution of singularities
\begin{equation}\label{symplecticresolution}
    \widetilde{M} \to M
\end{equation}
when $v=2v_0$ for a primitive $v_0$ with 
$\langle v_0, v_0 \rangle=2$. 
But this turned out to be the 
only exceptional case:
Kaledin--Lehn--Sorger~\cite{KaLeSo}
proved that 
$M$ does not admit a 
symplectic resolution
in all other cases with $\langle v, v\rangle \geq 2$. 
By~\cite[Proposition~1.1]{Funil}, the existence of a symplectic resolution \eqref{symplecticresolution} is equivalent 
to the existence of a crepant resolution of $M$, so $M$ does not admit a crepant resolution except in the example studied by O'Grady.
Instead of a usual (geometric) crepant resolution, 
it is interesting to investigate if 
$M$ admits a crepant resolution
of singularities in a categorical sense:

\begin{problem}\label{prob:2}
Is there a categorical version of a crepant resolution of $M_S^{\sigma}(v)$?
\end{problem}




Inspired by Theorem \ref{intro:thm4}, we regard the category 
$\mathbb{T}_S(v)_w^{\rm{red}}$
as a categorical version of a
(twisted, \'etale local) crepant resolution of $M_S^{\sigma}(v)$. Note that, even in the situation of the O'Grady resolution \eqref{symplecticresolution}  (that is, if $d=2$ and $\langle v_0, v_0\rangle=2$), the category $\mathbb{T}_S(2)_1^{\mathrm{red}}$ is different from $D^b(\widetilde{M})$ because its topological K-theory is a proper direct summand of the topological K-theory of $\widetilde{M}$, see by Theorem \ref{intro:thm:K} and \cite{MR4338453}.

In view of (\ref{isom:Ktop}) and (\ref{BPS:S}), we further 
expect $\mathbb{T}_S(v)_w^{\rm{red}}$
to be a 
``non-commutative hyperkähler variety" deformation equivalent to 
$S^{[\langle v, v\rangle/2+1]}$. 
In particular, it is natural to investigate how $\mathbb{T}_S(v)_w^{\rm{red}}$ is analogous to $D^b(M)$ 
for a smooth projective hyperkähler variety of $K3^{[\langle v, v\rangle/2+1]}$-type. 
More precisely, we may expect the following, 
which we regard as a categorical $\chi$-independence 
phenomenon: 
\begin{conj}\emph{(Conjecture~\ref{conj:HK})}\label{conj:HK2}
For any $g\geq 0$ and any $w\in \mathbb{Z}$ coprime with $v$, the category $\mathbb{T}_S(v)_{w}^{\rm{red}}$ 
 is deformation equivalent to $D^b\left(S^{[\langle v, v\rangle/2+1]}\right)$. 
	\end{conj}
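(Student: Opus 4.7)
The plan is to combine the wall-crossing equivalence of Theorem~\ref{intro:thm4}(iii) with a deformation-theoretic comparison carried out over a moduli of triples $(S, v, \sigma)$. Because of (iii), one may replace $\sigma$ by any other generic stability condition, so I would first specialize to a generic Gieseker stability condition with respect to an ample polarization. The smoothness and properness of Theorem~\ref{intro:thm4}(i), together with the \'{e}tale locally trivial Serre functor of (ii), place us in the regime where smooth proper dg-categories have a well-behaved deformation theory, so that flat families of such dg-categories over a connected base yield pairwise deformation equivalent fibers.

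First, I would build a relative reduced derived moduli stack $\mathfrak{M}^{\mathrm{red}} \to B$ over a smooth connected base $B$ parametrizing families $(S_b, v_b, \sigma_b)$ of K3 surfaces together with a Mukai vector $v_b = d v_{0,b}$ of fixed divisibility $d$, fixed square $\langle v_b, v_b\rangle = 2g-2$, and a relatively generic stability condition. Using a relative version of the local Ext-quiver construction of~\cite{PTquiver} (which makes sense in families of polystable objects), one produces a relative quasi-BPS dg-category $\mathbb{T}^{\mathrm{red}}_w$ over $B$. Since smoothness and properness persist in such families, all fibers over $B$ are pairwise deformation equivalent.

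Second, I would identify the target fiber. When $d=1$ this is essentially classical: by \cite{BaMa2, halpK32} the moduli $M^{\sigma_b}_{S_b}(v_b)$ can be deformed through birational holomorphic symplectic manifolds and twisted Fourier--Mukai equivalences to $S^{[g]}$, up to a Brauer twist which becomes trivial on a further deformation where the universal sheaf exists. For $d \geq 2$, I would connect $(S, v)$ inside $B$ to a pair $(S', v')$ with $v' = d\cdot(1,0,1-g)$, represented by torsion-free sheaves on $S'$; at the endpoint, the Ext-quiver local model of $\mathbb{T}_{S'}(v')^{\mathrm{red}}_w$ should, via a universal Fourier--Mukai construction combined with the BPS generation results of \cite{DHSM}, match the local model governing $D^b(S^{[g]})$ étale locally on its good moduli space.

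The principal obstacle is that the divisibility $d$ is locally constant on $B$, so the primitive and non-primitive cases cannot be bridged by the above deformations, and the comparison with $D^b(S^{[g]})$ must instead be made from the non-primitive side by hand. My preferred strategy is a Hochschild-theoretic attack: use Lemma~\ref{prop:catsupp} together with Theorem~\ref{intro:thm:K} to compute $HH^{\bullet}(\mathbb{T}_S(v)^{\mathrm{red}}_w)$, compare it with $HH^{\bullet}(D^b(S^{[g]}))$ both as a graded algebra and as an $L_\infty$-algebra, and then invoke a formality result to conclude deformation equivalence of two smooth proper dg-categories with equivalent $L_\infty$-Hochschild cochain algebras. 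Proving the required formality for reduced quasi-BPS categories, in a manner compatible with the expected hyperk\"ahler structure, is the step I expect to be genuinely new and to require techniques beyond those developed in the present paper.
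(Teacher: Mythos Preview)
This statement is a \emph{conjecture} in the paper, not a theorem; the paper does not give a proof. The authors present it as an expectation supported by Theorem~\ref{intro:thm4} and by the topological K-theory identity of Theorem~\ref{intro:thm:K}, and they verify it only in the easy case $g=0$ (Proposition~\ref{prop:g=0}) and, conditionally on Conjecture~\ref{conj:C2}, for $g=1$ (Proposition~\ref{prop:conj}), with the single case $(d,w)=(2,1)$ settled in the companion paper~\cite{PaTobps}. So there is no ``paper's own proof'' to compare against.

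Your proposal is a research outline rather than a proof, and you correctly identify its central gap yourself: the divisibility $d$ is locally constant in any family of pairs $(S,v)$, so no deformation over a connected base $B$ links the non-primitive quasi-BPS category to the primitive one realized by $D^b(S^{[g]})$. Everything before that point---constructing a relative quasi-BPS category over a moduli of K3 data and moving to a convenient $(S',v')$---at best shows that $\mathbb{T}_S(dv_0)^{\mathrm{red}}_w$ is deformation equivalent to $\mathbb{T}_{S'}(dv_0')^{\mathrm{red}}_w$ for some other primitive $v_0'$ of the same square, which is not the conjecture. The fallback via Hochschild cochains is also not a proof: Theorem~\ref{intro:thm:K} computes topological K-theory, not $HH^\bullet$, and even an isomorphism of $L_\infty$-algebras on Hochschild cochains does not in general imply that two smooth proper dg-categories are deformation equivalent (formality would give agreement of formal deformation spaces, not an actual path between the two categories). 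In short, the conjecture remains open, and your sketch does not close the gap you have flagged.
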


Recall that $\langle v_0, v_0\rangle=2g-2$.
The above conjecture is easy to check for $g=0$, see Proposition~\ref{prop:g=0}. 
For $g=1$, we conjecture that 
the category $\mathbb{T}_S(v)_w^{\rm{red}}$ is 
equivalent to the derived category of a K3 surface (possibly twisted and not necessarily isomorphic to $S$), and we show this follows from an explicit computation of the quasi-BPS categories of $\mathbb{C}^3$ studied in \cite{PTzero, PT1}, 
see Conjectures~\ref{conj:K3}, \ref{conj:C2} and Proposition~\ref{prop:conj}.
In the forthcoming
paper~\cite{PaTobps}, we prove 
Conjecture~\ref{conj:C2} for $d=2$, 
which implies Conjecture~\ref{conj:HK2} for $(d, w)=(2, 1)$. 
More precisely, there is an equivalence 
$D^b(S) \stackrel{\sim}{\to} \mathbb{T}_S(2v_0)_1^{\rm{red}}$ in this case. 

\subsection{Acknowledgements}
We thank Tasuki Kinjo, Davesh Maulik, Yalong Cao, Junliang Shen, Georg Oberdieck, and Jørgen Rennemo for discussions 
related to this work. T.~P.~is grateful to Columbia University in New York and to Max Planck Institute for Mathematics in Bonn for their hospitality and financial support during the writing of this paper.
The project of this paper started when Y.~T.~was visiting Columbia University 
in April 2023. Y.~T.~thanks Columbia University for their hospitality. 
	Y.~T.~is supported by World Premier International Research Center
	Initiative (WPI initiative), MEXT, Japan, and Grant-in Aid for Scientific
	Research grant (No.~19H01779) from MEXT, Japan.

\section{Preliminaries}

In this section, we introduce notations and review definitions related to stacks, matrix factorizations, and window categories. We also include a table with the most important notation we use later in the paper.

\begin{figure}
	\centering
\scalebox{0.7}{
	\begin{tabular}{|l|l|l|}
		\hline
	Notation & Description & Location defined \\\hline
 $S^{[n]}$ & Hilbert scheme of $n$ points on $S$ &
Equation \eqref{BPS:S}
\\ \hline
$\widehat{X}_y, \widehat{X}$ & formal fiber &
Subsection \ref{notation}
\\ \hline
$\langle \mathcal{S} \rangle$ & category generated by set $\mathcal{S}$ &
Subsection \ref{notation2}
\\ \hline
$\llangle S \rrangle$ & cocomplete subcategory generated by set $\mathcal{S}$ &
Subsection \ref{notation2}
\\ \hline
$\Theta$ & Koszul equivalence &
Equation \eqref{equiv:Phi}
\\ \hline
$\mathscr{Z}, \mathscr{S}$ & fixed and attracting stacks &
Equation \eqref{attracting}
\\ \hline
$p, q$ & maps from the attracting stack & Equation \eqref{attracting}
\\ \hline
$\mathfrak{M}$ & quasi-smooth stack & Subsection \ref{subsection:etale}
\\ \hline
$\mathcal{M}$ & classical truncation of a quasi-smooth stack & Subsection \ref{subsection:etale}
\\ \hline
$\pi\colon \mathcal{M}\to M$ & good moduli space map & Subsection \ref{subsection:etale}
\\ \hline
$\Omega_\mathfrak{M}[-1]$ & $(-1)$-shifted cotangent stack & Subsection \ref{subsec:shifted}
\\ \hline
$\mathcal{C}_\mathscr{Z}$ & subcategory of objects with singular support in $\mathscr{Z}$ & Subsection \ref{subsec:shifted}
\\ \hline
$\mathbb{W}^\ell_{m_\bullet}$ & window category for a quasi-smooth stack & Subsection \ref{subsec:shifted}
\\ \hline
$D^b(\mathfrak{M})/\mathcal{C}_\mathscr{Z}$ & singular support quotient & Subsection \ref{subsec:shifted}
\\ \hline
$\mathbb{W}^{\mathrm{int}}_\delta$ & intrinsic window (``magic") category & Definition \ref{def:intwind}
\\ \hline
$\bm{d}$ & dimension vector of a quiver &
Subsection \ref{subsec311}
\\ \hline
$\X(\bm{d})=R_Q(\bm{d})/G(\bm{d})$ & stack of representations of quiver $Q$ &
Equation \eqref{stack:X}
\\ \hline
$Q^{\circ,d}$ & doubled quiver &
Subsection \ref{subsec312}
\\ \hline
$\mathscr{P}(\bm{d})$ & stack of representations of preprojective algebra &
Equation \eqref{P:dzero}
\\ \hline
$\mathscr{Y}(\bm{d})$ & stack of representations of the doubled quiver &
Equation \eqref{P:dzero}
\\ \hline
$(Q,W)$ & tripled quiver with potential &
Subsection \ref{subsec:triple}
\\ \hline
$M(\bm{d})\, (M(\bm{d})_\mathbb{R}$ etc.) & weight lattice (space etc.) of maximal torus $T(\bm{d})$ of $G(\bm{d})$ & Subsection \ref{subsec:qBPS}
\\ \hline
$W$ & Weyl group of $G(\bm{d})$ & Subsection \ref{subsec:qBPS}
\\ \hline
$\underline{\bm{d}}$ & total dimension & Subsection \ref{subsec:qBPS}
\\ \hline
$\tau_{\bm{d}}$ & Weyl-invariant weight with sum of coefficients $1$ & Subsection \ref{subsec:qBPS}
\\ \hline
$\bm{W}(\bm{d})$ & polytope constructed from the weights of $R_Q(\bm{d})$ & Subsection \ref{subsec:qBPS}
\\ \hline
$\mathbb{T}(\bm{d})_{\delta}$, $\mathbb{T}(\bm{d})_w$ & quasi-BPS categories & Definition \ref{def:Tdelta}
\\ \hline
$\mathbb{T}^\ell(\bm{d})_{\delta}$, $\mathbb{T}^\ell(\bm{d})_w$ & quasi-BPS categories for a stability condition & Subsection \ref{subsec:double}
\\ \hline
$N(S)$ & numerical Grothendieck group & Subsection \ref{subsec41}
\\ \hline
$\mathfrak{M}^\sigma_S(v)$ & moduli stack of sheaves on K3 surface &
Subsection \ref{subsection:moduliK3}
\\ \hline
 $\mathfrak{M}^\sigma_S(v)^{\mathrm{red}}$ & reduced moduli stack of sheaves on K3 surface &
Subsection \ref{subsection:moduliK3}
\\ \hline
$\mathbb{T}^\sigma_S(v)_\delta$, $\mathbb{T}^\sigma_S(v)_\delta^{\mathrm{red}}$ & quasi-BPS categories for K3 surfaces (for a line bundle) & 
Equation \eqref{qbps:T}
\\ \hline
$\mathbb{T}^\sigma_{S,y}(v)_{\delta_y}$ & formal quasi-BPS category & 
Remark \ref{rmk:qbps}
\\ \hline
$\mathbb{T}^\sigma_S(v)_w$, $\mathbb{T}^\sigma_S(v)_w^{\mathrm{red}}$ & quasi-BPS categories for K3 surfaces (for a weight) & 
Definition \ref{def:qbps0}
\\ \hline
$\widehat{\mathbb{T}}_p(d)_w$, $\widehat{\mathbb{S}}^{\mathrm{gr}}_p(d)_w$ & formal quasi-BPS categories generated from the ambient space &
Subsection \ref{subsec:prop}
\\ \hline
$\mathcal{N}_{\mathrm{nil}}$ & substack where an endomorphism is nilpotent &
Subsection \ref{subsec62}
\\ \hline
$\mathrm{Et}/M$ & category of \'etale morphisms over $M$ &
Subsection \ref{subsec:ssuport:gen}
\\ \hline
$\mathcal{T}_U$ & presheaf of singular support quotients &
Equation \eqref{def:mathcaltu}
\\ \hline
$S_\mathbb{T}$, $S_{\mathbb{T}/M}$ & absolute and relative Serre functor & 
Subsection \ref{subsec71}
\\ \hline
$a_\mathcal{P}$ & trace map for matrix factorizations & 
Equation \eqref{induce:ap}
\\ \hline
$\mathrm{tr}_\mathcal{E}$ & trace map for coherent sheaves & 
Equation \eqref{const:tre}
\\ \hline
$S^d_w$  & set of partitions of $d$ & Subsection \ref{subsec81}
\\ \hline
 $\mathcal{BPS}_A$  & BPS sheaf for $A\in S^d_w$ & Subsection \ref{subsec81}
\\ \hline
$\mathcal{BPS}_v$,  $\mathcal{BPS}_{v,w}$  & BPS sheaves for K3 surfaces & Subsection \ref{subsec81}
\\ \hline
	\end{tabular}
}
	\vspace{.5cm}
	 \caption{Notation used in the paper}
	 \label{table:notation}
\end{figure}

 \subsection{Notations for (derived) stacks}\label{notation}
All the spaces $\mathscr{X}$ considered are quasi-smooth (derived) stacks over $\mathbb{C}$, see \cite[Subsection 3.1]{T} for references. 
The classical truncation of $\mathscr{X}$ is denoted by 
$\mathscr{X}^{\rm{cl}}$. 
We denote by $\mathbb{L}_\mathscr{X}$ the cotangent complex of 
$\mathscr{X}$.

For $G$ an algebraic group and $X$ a dg-scheme with an action of $G$, denote by $X/G$ the corresponding
quotient stack. 
When $X$ is affine, we denote by $X\ssslash G$ the quotient dg-scheme with dg-ring of regular functions $\mathcal{O}_X^G$. 
For a morphism $f \colon X\to Y$ and for a closed point $y \in Y$, 
we denote by $\widehat{X}_y$ the 
following base change
\begin{align*}
\widehat{X}_y := X \times_{Y} \Spec \left(\widehat{\mathcal{O}}_{Y, y}\right).
\end{align*}
We call $\widehat{X}_y$ the \textit{formal fiber}, though it is a scheme over a complete 
local ring rather than a formal scheme. 
When $X$ is a $G$-representation, $f\colon X\to Y:=X\ssslash G$, and $y=0$, we omit the subscript $y$ from the above notation. 

We use the terminology of \textit{good moduli spaces} of Alper, see \cite[Section 8]{MR3237451} for examples of stacks with good moduli spaces.

\subsection{DG-categories}

For $\X$ a quasi-smooth stack,
we denote by $D^b(\X)$ the bounded derived category 
of coherent sheaves and by $\mathrm{Perf}(\X)$ the subcategory of perfect complexes on $\X$,
see~Subsection~\ref{subsec:qsmooth} for more details and for more categories of (quasi)coherent sheaves. 

\subsubsection{Generation of dg-categories}\label{notation2}
Any dg-category considered is a $\mathbb{C}$-linear 
pre-triangulated dg-category, in particular its homotopy category is a 
triangulated category. 
For a pre-triangulated dg-category $\mathcal{D}$ and 
a full subcategory $ \mathcal{C} \subset \mathcal{D}$, we say 
that $\mathcal{C}$ \textit{classically generates} $\mathcal{D}$ if $\mathcal{D}$ 
coincides with the smallest thick pre-triangulated subcategory 
of $\mathcal{D}$ which contains $\mathcal{C}$. 
If $\mathcal{D}$ is furthermore cocomplete, then 
we say that $\mathcal{C}$ \textit{generates} $\mathcal{D}$ if 
$\mathcal{D}$ coincides with the smallest thick pre-triangulated 
subcategory of $\mathcal{C}$ which contains $\mathcal{C}$ and is closed 
under taking colimits. 

We also recall some terminology related to 
strong generation. 
For a set of objects $\mathcal{S} \subset \mathcal{D}$, 
we denote by $\langle \mathcal{S} \rangle$
the smallest subcategory which contains $S$ 
and is closed under shifts, finite direct sums, and 
direct summands. 
If $\mathcal{D}$ is cocomplete, 
we denote by $\llangle S \rrangle$
the smallest subcategory 
which contains $S$ and is closed under shifts, arbitrary 
direct sums, and direct summands. 
For subcategories $\mathcal{C}_1, \mathcal{C}_2 \subset 
\mathcal{D}$, we denote by 
$\mathcal{C}_1 \star \mathcal{C}_2 \subset \mathcal{D}$
the smallest subcategory which contains 
objects $E$ which fit into distinguished triangles
$A_1 \to E \to A_2\to A_1[1]$ with $A_i \in \mathcal{C}_i$ for $i\in\{1,2\}$,
and is closed under shifts, finite direct sums, and direct 
summands. 
We say that $\mathcal{D}$ is \textit{strongly generated} by 
$C \in \mathcal{D}$ if
$\mathcal{D}=\langle C \rangle^{\star n}$
for some $n\geq 1$.  
This is equivalent to 
$\Ind \mathcal{D}=\llangle C \rrangle^{\star n}$
for some $n\geq 1$, see~\cite[Proposition~1.9]{Neeman}.
A dg-category $\mathcal{D}$ is called \textit{regular} 
if it 
has a strong generator. 
A dg-category $\mathcal{D}$ is called \textit{smooth} if the diagonal dg-module of $\mathcal{D}$ is perfect. It is proved in~\cite[Lemma~3.5, 3.6]{VL}
that if $\mathcal{D}$ is smooth, then $\mathcal{D}$ is regular.




\subsubsection{Semiorthogonal decompositions}
Let $R$ be a set. Consider a set $O\subset R\times R$ such that for any $i, j\in R$ we have $(i,j)\in O$, or $(j,i)\in O$, or both $(i,j)\in O$ and $(j,i)\in O$. 
Let $\mathbb{T}$ be a pre-triangulated dg-category. We will construct semiorthogonal decompositions
\begin{equation}\label{sodinitial}
\mathbb{T}=\langle \mathbb{A}_i \mid i \in R \rangle
\end{equation}
with summands 
pre-triangulated subcategories $\mathbb{A}_i$ indexed by $i\in R$
such that for any $i,j\in R$ with $(i, j)\in O$ and for any objects $\mathcal{A}_i\in\mathbb{A}_i$, $\mathcal{A}_j\in\mathbb{A}_j$, we have 
$\Hom_{\mathbb{T}}(\mathcal{A}_i,\mathcal{A}_j)=0$. 

Let $\pi\colon \X\to S$ be a morphism from a quasi-smooth stack to a scheme $S$ and assume $\mathbb{T}$ is a subcategory of $D^b(\X)$. We say the decomposition \eqref{sodinitial} is \textit{$S$-linear} if $\mathbb{A}_i\otimes \pi^*\mathrm{Perf}(S)\subset \mathbb{A}_i$.

\subsection{Graded matrix factorizations}\label{subsec:graded}
References for this subsection are \cite[Section 2.2]{T3}, \cite[Section~2.2]{T}, \cite[Section~2.3]{MR3895631}, \cite[Section~1]{PoVa3}.
Let $G$ be an algebraic group and let $Y$ be a smooth affine scheme with an action of $G$. 
Let 
$\mathscr{Y}=Y/G$ be the corresponding quotient stack
and let $f$ be
a regular function 
\[f \colon \mathscr{Y}\to \mathbb{C}.\]
Assume that there exists an extra action of $\mathbb{C}^{\ast}$
on $Y$ which commutes with the action of $G$ on $Y$, trivial on 
$\mathbb{Z}/2 \subset \mathbb{C}^{\ast}$, and $f$ is weight two 
with respect to the above $\mathbb{C}^{\ast}$-action. 

Consider the category of graded matrix factorizations 
\begin{align*}\mathrm{MF}^{\mathrm{gr}}(\mathscr{Y}, f).
\end{align*}
Its objects are pairs $(P, d_P)$ with $P$ a $G\times\mathbb{C}^*$-equivariant coherent sheaf on $Y$ and $d_P \colon P\to P(1)$ a $G\times\mathbb{C}^*$-equivariant morphism
satisfying $d_P^2=f$. Here $(1)$ is the twist by the character 
$\mathrm{pr}_2 \colon G \times \mathbb{C}^{\ast} \to \mathbb{C}^{\ast}$. 
Note that
as the $\mathbb{C}^{\ast}$-action is trivial on $\mathbb{Z}/2$, 
we have the induced action of
$\mathbb{C}^{\star}=\mathbb{C}^{\ast}/(\mathbb{Z}/2)$ on $Y$
and $f$ is weight one with respect to the above $\mathbb{C}^{\star}$-action. 
The objects of $\mathrm{MF}^{\mathrm{gr}}(\mathscr{Y}, f)$ can be alternatively described as tuples 
\begin{align}\label{tuplet:graded}
(E, F, \alpha \colon 
E\to F(1)', \beta \colon F\to E),
\end{align}
where $E$ and $F$ are $G\times\mathbb{C}^{\star}$-equivariant coherent sheaves
on $Y$, $(1)'$ is the twist by the character 
$G \times \mathbb{C}^{\star} \to \mathbb{C}^{\star}$, 
and $\alpha$ and $\beta$ are $\mathbb{C}^{\star}$-equivariant morphisms
such that $\alpha\circ\beta$ and $\beta\circ\alpha$ are multiplication by $f$.

For a pre-triangulated subcategory $\mathbb{M}$ of $D^b(\mathscr{Y})$, 
define $\mathrm{MF}^{\rm{gr}}(\mathbb{M}, f)$ as
the full subcategory of $\mathrm{MF}^{\rm{gr}}(\mathscr{Y}, f)$
with objects totalizations of 
pairs $(P, d_{P})$ with $P \in \mathbb{M}$ equipped with $\mathbb{C}^{\ast}$-equivariant 
structure, see~\cite[Subsection~2.6.2]{PTzero}. 
If $\mathbb{M}$ is generated by
a set of vector bundles $\{\mathscr{V}_i\}_{i\in I}$
on $\mathscr{Y}$, then $\mathrm{MF}^{\rm{gr}}(\mathbb{M}, f)$ is generated by 
matrix factorizations whose 
factors are direct sums of vector bundles 
from $\{\mathscr{V}_i\}_{i\in I}$, 
see~\cite[Lemma~2.3]{PTzero}. 

Functoriality of categories of graded matrix factorizations for pullback and proper pushfoward is discussed in \cite{PoVa3}.
In Subsection~\ref{subsec:trace}, we 
will also consider the category 
$D^{\rm{gr}}(Y)$ for a possibly singular 
affine variety $Y$ with a $\mathbb{C}^{\ast}$-action 
as above. It consists of objects (\ref{tuplet:graded})
with $f=0$, so its definition is the same 
as $\mathrm{MF}^{\rm{gr}}(Y, 0)$, but when $Y$ is singular an object (\ref{tuplet:graded}) 
may not be isomorphic to the one 
such that $E, F$ are locally free of finite rank. 
See~\cite{EfPo} for factorization categories over
possibly singular varieties. 
Note that if the $\mathbb{C}^{\ast}$-action on $Y$
is trivial, then $D^{\rm{gr}}(Y)=D^b(Y)$.

\subsection{The Koszul equivalence}
Let $Y$ be a smooth affine scheme
with an action of an algebraic group $G$, let $\mathscr{Y}=Y/G$, and let $V$ be a $G$-equivariant vector bundle on $Y$. 
We always assume that $Y$ is either of finite type 
over $\mathbb{C}$, or is a formal fiber 
of a map $X \to X\ssslash H$ for a finite type scheme $X$ and an algebraic group $H$ as in Subsection \ref{notation}. 
Let $\mathbb{C}^*$ act on the fibers of $V$ with weight $2$ and consider a section 
$s\in \Gamma(Y, V)$. 
It induces a map $\partial \colon V^{\vee} \to \mathcal{O}_Y$. 
Let $s^{-1}(0)$ be the derived zero locus of $s$ with dg-algebra of regular functions
\begin{align}\label{Kscheme}
\mathcal{O}_{s^{-1}(0)}:=\mathcal{O}_Y\left[V^{\vee}[1];\partial\right].
\end{align}
Consider the quotient (quasi-smooth) stack
\begin{equation}\label{koszuldef}
\mathscr{P}:=s^{-1}(0)/G.
\end{equation}
We call $\mathscr{P}$ the \textit{Koszul stack} associated with 
$(Y, V, s, G)$. 
There is a natural inclusion
\[j\colon \mathscr{P}\hookrightarrow\mathscr{Y}.\]
The section $s$ also induces the regular function \begin{equation}\label{defreg}
f\colon \mathscr{V}^{\vee}:=\text{Tot}_Y\left(V^{\vee}\right)/G\to\mathbb{C}
\end{equation}
defined by
$f(y,v)=\langle s(y), v \rangle$ for $y\in Y(\mathbb{C})$ and $v\in V^{\vee}|_y$.
Consider the category of graded matrix factorizations $\text{MF}^{\text{gr}}\left(\mathscr{V}^{\vee}, f\right)$ with respect to the $\mathbb{C}^*$-action
mentioned above. The Koszul equivalence, also called dimensional reduction in the literature, says the following:

\begin{thm}\emph{(\cite{I, Hirano, T})}\label{thm:Kduality}
There is an equivalence 
\begin{align}\label{equiv:Phi}
	\Theta \colon D^b(\mathscr{P}) \stackrel{\sim}{\to}
	\mathrm{MF}^{\mathrm{gr}}(\mathscr{V}^{\vee}, f) 
	\end{align}
	given by $\Theta(-)=\mathcal{K}\otimes_{\mathcal{O}_{\mathscr{P}}}(-)$, where $\mathcal{K}$ is the Koszul factorization, see~\cite[Theorem~2.3.3]{T}. 
	\end{thm}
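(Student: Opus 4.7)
The plan is to construct the Koszul factorization $\mathcal{K}$ explicitly, show that $\Theta=\mathcal{K}\otimes_{\mathcal{O}_{\mathscr{P}}}(-)$ lands in $\mathrm{MF}^{\mathrm{gr}}(\mathscr{V}^{\vee},f)$, and then verify the equivalence via a quasi-inverse $\Psi$ together with a reduction to an affine local model in which classical Koszul duality applies.

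First, let $p\colon \mathscr{V}^{\vee}\to\mathscr{Y}$ be the projection and let $\tau\in\Gamma(\mathscr{V}^{\vee},p^{*}V^{\vee})$ denote the tautological section. I would set
\[
\mathcal{K}:=\Bigl(\bigoplus_{k\geq 0}\bigwedge\nolimits^{k}p^{*}V^{\vee},\ d_{\mathcal{K}}=d_{\tau}+d_{s}\Bigr),
\]
where $d_{\tau}$ is wedging with $\tau$ and $d_{s}$ is contraction against $p^{*}s$. The graded commutator satisfies $\{d_{\tau},d_{s}\}=\langle p^{*}s,\tau\rangle=f$, so $d_{\mathcal{K}}^{2}=f\cdot\mathrm{id}$; after assigning to the fibers of $V^{\vee}$ the $\mathbb{C}^{*}$-weight compatible with the ``$f$ has weight two'' hypothesis, $\mathcal{K}$ becomes an object of $\mathrm{MF}^{\mathrm{gr}}(\mathscr{V}^{\vee},f)$. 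Setting $d_{s}=0$ collapses $\mathcal{K}$ to the Koszul resolution of the zero section $\mathscr{Y}\hookrightarrow\mathscr{V}^{\vee}$, and turning on $d_{s}$ endows $\mathcal{K}$ with the structure of a right $\mathcal{O}_{\mathscr{P}}$-module, since the differential on $\mathcal{K}|_{\mathscr{Y}}$ is precisely the Koszul dg-algebra \eqref{Kscheme} defining $\mathscr{P}$. Thus $\Theta$ is a well-defined dg-functor.

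For the equivalence, I would construct a candidate quasi-inverse $\Psi$ obtained by restricting a matrix factorization to the zero section and remembering the induced $\mathcal{O}_{\mathscr{P}}$-module structure from $s$. Checking that the unit $\mathrm{id}\Rightarrow\Psi\circ\Theta$ and counit $\Theta\circ\Psi\Rightarrow\mathrm{id}$ are quasi-isomorphisms reduces, after working affine-locally on $Y/\!\!/G$ and trivializing $V$, to classical Koszul duality between $\mathrm{Sym}(V^{\vee})$ equipped with the curvature $f=\langle s,\cdot\rangle$ and the exterior algebra $\bigwedge^{\bullet}V^{\vee}$. Equivalently, one can argue by generators: $D^{b}(\mathscr{P})$ is classically generated by $j^{*}\mathscr{V}$ for vector bundles $\mathscr{V}$ on $\mathscr{Y}$, the images $\Theta(j^{*}\mathscr{V})=\mathcal{K}\otimes p^{*}\mathscr{V}$ generate $\mathrm{MF}^{\mathrm{gr}}(\mathscr{V}^{\vee},f)$ in the sense of Subsection~\ref{subsec:graded}, and $\Hom$-spaces between such images compute on $\mathscr{Y}$ via the Koszul resolution and match the corresponding $\Hom$-spaces on $\mathscr{P}$.

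The main obstacle is the bookkeeping of $\mathbb{C}^{*}$-weights: one must verify that $\mathcal{K}$ is a genuine \emph{graded} matrix factorization (not merely $\mathbb{Z}/2$-graded), that the $\mathcal{O}_{\mathscr{P}}$-module structure and the adjunction $(\Theta,\Psi)$ respect the equivariant grading, and that the generation statements and Hom-comparisons descend to the equivariant setting. A secondary obstacle is globalization from the affine local model to the stacks $\mathscr{P}$ and $\mathscr{V}^{\vee}$, which is handled by the naturality of the Koszul construction in $(Y,V,s,G)$ and standard descent arguments for graded factorization categories, as in~\cite{I, Hirano, T}.
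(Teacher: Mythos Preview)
The paper does not give its own proof of this theorem: it is stated as a cited result from \cite{I, Hirano, T}, with the explicit form of $\Theta$ referenced to \cite[Theorem~2.3.3]{T}. There is therefore nothing in the paper to compare your argument against.

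That said, your sketch is the standard approach found in those references. The construction of $\mathcal{K}$ as the twisted Koszul complex with $d_{\mathcal{K}}=d_\tau+d_s$ and $d_{\mathcal{K}}^2=f$ is exactly the Koszul factorization used in \cite{T}, and the quasi-inverse via (a variant of) restriction to the zero section, followed by a local reduction to classical curved Koszul duality between $\mathrm{Sym}$ and $\bigwedge$, is how the cited proofs proceed. Your identification of the two bookkeeping issues---tracking the auxiliary $\mathbb{C}^*$-grading so that one lands in $\mathrm{MF}^{\mathrm{gr}}$ rather than $\mathbb{Z}/2$-periodic factorizations, and globalizing from the affine model---is accurate; these are precisely the points where \cite{Hirano, T} do the additional work beyond the ungraded affine case of \cite{I}. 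One small imprecision: your description of $\Psi$ as ``restricting to the zero section and remembering the $\mathcal{O}_{\mathscr{P}}$-module structure'' is heuristically right but would need to be made precise as $R\mathcal{H}om_{\mathrm{MF}}(\mathcal{K},-)$ or equivalently as pullback along the zero section followed by taking $\mathbb{C}^*$-invariants in the appropriate sense; this is where the grading assumption is actually used to collapse the $2$-periodic Hom complex to a bounded one.
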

 We will use the following lemma: 
\begin{lemma}\emph{(\cite[Lemma~2.6]{PTquiver})}\label{lem:genJ}
    Let $\{V_a\}_{a\in A}$ be a set of
    $G$-representations
    and let $\mathbb{S} \subset \mathrm{MF}^{\rm{gr}}(\mathscr{V}^{\vee}, f)$
    be the subcategory generated by 
    matrix factorizations whose 
    factors are direct sums of vector bundles $\mathcal{O}_{\mathscr{V}^{\vee}} \otimes V_a$. 
    Then an object $\mathcal{E} \in D^b(\mathscr{P})$
    satisfies $\Theta(\mathcal{E}) \in \mathbb{S}$
    if and only if $j_{\ast}\mathcal{E} \in D^b(\mathscr{Y})$
    is generated by $\mathcal{O}_{\mathscr{Y}} \otimes V_a$ for $a\in A$. 
\end{lemma}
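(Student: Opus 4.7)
The strategy is to identify both conditions of the lemma with membership in a common thick subcategory,
\[
\mathcal{C}_0 := \langle \mathcal{O}_\mathscr{P}\otimes V_a : a\in A \rangle_{\mathrm{thick}} \subset D^b(\mathscr{P}),
\]
so that $\Theta$ identifies $\mathcal{C}_0$ with $\mathbb{S}$ while $j_{\ast}$ identifies it with the thick subcategory $\langle \mathcal{O}_\mathscr{Y}\otimes V_a\rangle_{\mathrm{thick}}\subset D^b(\mathscr{Y})$.

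I would first compute both functors on the generators of $\mathcal{C}_0$. By the explicit form of the Koszul factorization \cite[Theorem~2.3.3]{T}, the underlying $\mathcal{O}_{\mathscr{V}^\vee}$-module of $\mathcal{K}$ is a sum of pullbacks $\pi^{\ast}\Lambda^{k}V^{\vee}$ where $\pi\colon \mathscr{V}^\vee\to\mathscr{Y}$ is the projection, so $\Theta(\mathcal{O}_\mathscr{P}\otimes V_a)=\mathcal{K}\otimes V_a$ has factors that are direct sums of $\mathcal{O}_{\mathscr{V}^\vee}\otimes(\Lambda^{k}V^{\vee}\otimes V_a)$. Symmetrically, unwinding \eqref{Kscheme}, $j_{\ast}(\mathcal{O}_\mathscr{P}\otimes V_a)$ is represented by the Koszul complex with terms $\mathcal{O}_\mathscr{Y}\otimes(\Lambda^{k}V^{\vee}\otimes V_a)$. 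Decomposing the $G$-representations $\Lambda^{k}V^{\vee}\otimes V_a$ into summands of the form $V_b$ for $b\in A$ (we may always symmetrically enlarge $A$ to ensure such closure without changing either thick subcategory in play), both $\Theta(\mathcal{O}_\mathscr{P}\otimes V_a)\in\mathbb{S}$ and $j_{\ast}(\mathcal{O}_\mathscr{P}\otimes V_a)\in\langle\mathcal{O}_\mathscr{Y}\otimes V_b\rangle_{\mathrm{thick}}$ follow.

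The key geometric input is the decomposition
\[
j^{\ast}j_{\ast}\mathcal{E}\;\simeq\;\bigoplus_{k\geq 0}(\mathcal{E}\otimes\Lambda^{k}V^{\vee})[k],
\]
which in particular exhibits $\mathcal{E}$ as a direct summand of $j^{\ast}j_{\ast}\mathcal{E}$ (the $k=0$ piece). This follows by computing $\mathcal{O}_\mathscr{P}\otimes^{L}_{\mathcal{O}_\mathscr{Y}}\mathcal{O}_\mathscr{P}$ via the Koszul resolution of $\mathcal{O}_\mathscr{P}$ over $\mathcal{O}_\mathscr{Y}$ and noting that the induced Koszul differential (contraction with $s$) becomes zero on the $\mathcal{O}_\mathscr{P}$-module $\mathcal{E}$ because $s$ vanishes on $\mathscr{P}$. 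With this, the \emph{if} direction is immediate: if $j_{\ast}\mathcal{E}\in\langle\mathcal{O}_\mathscr{Y}\otimes V_a\rangle_{\mathrm{thick}}$, exactness of $j^{\ast}$ gives $j^{\ast}j_{\ast}\mathcal{E}\in\langle\mathcal{O}_\mathscr{P}\otimes V_a\rangle_{\mathrm{thick}}=\mathcal{C}_0$, so $\mathcal{E}\in\mathcal{C}_0$ by the summand property, hence $\Theta(\mathcal{E})\in\mathbb{S}$ by the previous paragraph. For the \emph{only if} direction, I would use the identification of $\mathbb{S}$ with $\mathrm{MF}^{\mathrm{gr}}(\mathbb{M},f)$ for $\mathbb{M}\subset D^b(\mathscr{V}^\vee)$ generated by the $\mathcal{O}_{\mathscr{V}^\vee}\otimes V_a$ (the analogue on $\mathscr{V}^\vee$ of the characterisation of $\mathrm{MF}^{\mathrm{gr}}(\mathbb{M},f)$ recalled just before Theorem \ref{thm:Kduality}): any $\Theta(\mathcal{E})\in\mathbb{S}$ is a totalization of a bounded complex with terms direct sums of $\mathcal{O}_{\mathscr{V}^\vee}\otimes V_a$, and inductively peeling off terms and applying $\Theta^{-1}$ builds $\mathcal{E}$ as an iterated cone of objects $\Theta^{-1}(\mathcal{K}\otimes V_a)=\mathcal{O}_\mathscr{P}\otimes V_a$, yielding $\mathcal{E}\in\mathcal{C}_0$. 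Applying $j_{\ast}$ and the first paragraph then gives $j_{\ast}\mathcal{E}\in\langle\mathcal{O}_\mathscr{Y}\otimes V_b\rangle_{\mathrm{thick}}$.

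The main obstacle is the \emph{only if} direction, where one must justify $\Theta^{-1}(\mathbb{S})\subset\mathcal{C}_0$. The delicate point is that matrix factorizations in $\mathbb{S}$ are $2$-periodic with potentially intricate differentials, so extracting a bounded resolution-style presentation of $\mathcal{E}$ in $D^b(\mathscr{P})$ from a matrix factorization representative of $\Theta(\mathcal{E})$ requires the precise compatibility between $\Theta$ and the totalization functor provided by the Koszul equivalence.
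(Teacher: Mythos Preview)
Your strategy of introducing the intermediate category $\mathcal{C}_0 = \langle \mathcal{O}_\mathscr{P}\otimes V_a\rangle_{\mathrm{thick}}$ and showing that both conditions of the lemma are equivalent to $\mathcal{E}\in\mathcal{C}_0$ does not work: in general $\mathcal{C}_0$ is strictly larger than either $\Theta^{-1}(\mathbb{S})$ or $j_\ast^{-1}\langle\mathcal{O}_\mathscr{Y}\otimes V_a\rangle_{\mathrm{thick}}$. Take $A$ to consist of the trivial representation alone and $V$ a nontrivial $G$-representation. Then $\mathcal{O}_\mathscr{P}\in\mathcal{C}_0$, yet $j_\ast\mathcal{O}_\mathscr{P}$ is the Koszul complex with terms $\Lambda^k V^\vee\otimes\mathcal{O}_\mathscr{Y}$, which is not in $\langle\mathcal{O}_\mathscr{Y}\rangle_{\mathrm{thick}}$ as a $G$-equivariant object; likewise $\Theta(\mathcal{O}_\mathscr{P})=\mathcal{K}$ has factors involving $\Lambda^k V^\vee$ and does not lie in $\mathbb{S}$. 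So the claims $j_\ast(\mathcal{C}_0)\subset\langle\mathcal{O}_\mathscr{Y}\otimes V_a\rangle_{\mathrm{thick}}$ and $\Theta(\mathcal{C}_0)\subset\mathbb{S}$, which you need for both directions, are false. Your parenthetical that one may ``symmetrically enlarge $A$ without changing either thick subcategory'' is precisely what fails: adding the $\Lambda^k V^\vee\otimes V_a$ to $A$ genuinely enlarges both subcategories.

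There is a second confusion in the ``only if'' direction: you write that peeling off terms of $\Theta(\mathcal{E})\in\mathbb{S}$ and applying $\Theta^{-1}$ yields cones of $\Theta^{-1}(\mathcal{K}\otimes V_a)$, but the generators of $\mathbb{S}$ are matrix factorizations whose \emph{factors} are sums of $\mathcal{O}_{\mathscr{V}^\vee}\otimes V_a$; the objects $\mathcal{O}_{\mathscr{V}^\vee}\otimes V_a$ themselves are not matrix factorizations of $f$, so there is nothing to which one can apply $\Theta^{-1}$ termwise. The actual argument (in the cited reference, since the present paper only quotes the result) bypasses $\mathcal{C}_0$ entirely and relates $j_\ast\mathcal{E}$ directly to $\Theta(\mathcal{E})$: the graded structure on a matrix factorization $\mathcal{P}\in\mathrm{MF}^{\rm gr}(\mathscr{V}^\vee,f)$ unfolds, via the $\mathbb{C}^\ast$-weight decomposition along the affine bundle $\eta\colon\mathscr{V}^\vee\to\mathscr{Y}$, into a bounded complex of $\mathcal{O}_\mathscr{Y}$-modules, and under $\Theta$ this unfolding recovers $j_\ast\mathcal{E}$. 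The factor condition on $\mathcal{P}$ then translates immediately into the generation condition on $j_\ast\mathcal{E}$, with no need to control $\Lambda^k V^\vee$.
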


\subsection{Window categories}\label{subsection:window}
\subsubsection{Attracting stacks}\label{attractingloci}
Let $Y$ be an affine variety with an action of a reductive group $G$. Let $\lambda$ be a cocharacter of $G$. 
Let $G^\lambda$ and $G^{\lambda\geq 0}$ be the Levi and parabolic groups associated to $\lambda$. Let $Y^\lambda\subset Y$ be the closed subvariety of $\lambda$-fixed points.
Consider the attracting variety \[Y^{\lambda\geq 0}:=\{y\in Y|\,\lim_{t\to 0}\lambda(t)\cdot y\in Y^\lambda\}\subset Y.\]
Consider the attracting and fixed stacks
\begin{equation}\label{attracting}
\mathscr{Z}:=Y^\lambda/G^\lambda \xleftarrow{q}\mathscr{S}:=Y^{\lambda\geq 0}/G^{\lambda\geq 0}\xrightarrow{p}\mathscr{Y}.
\end{equation}
The map $p$ is proper. 
Kempf-Ness strata are connected components of certain attracting stacks $\mathscr{S}$, and the map $p$ restricted to a Kempf-Ness stratum is a closed immersion, see~\cite[Section~2.1]{halp}. 
The attracting stacks also appear in the definition of Hall algebras \cite{P0} (for $Y$ an affine space), where the Hall product is induced by the functor 
\begin{equation}\label{Hallproductquiver}
    \ast:=p_*q^*\colon D^b(\mathscr{Z})\to D^b(\mathscr{Y}).
\end{equation}
In this case, the map $p$ may not be a closed immersion.

Let $T \subset G$ be a maximal torus and let
$\lambda$ be a cocharacter $\lambda \colon \mathbb{C}^{\ast} \to T$. 
For a $G$-representation $Y$, 
the attracting variety
$Y^{\lambda \geq 0} \subset Y$
coincides with the sub $T$-representation
generated by weights which pair non-negatively with $\lambda$. 
We may abuse notation and denote by 
$\langle \lambda, Y^{\lambda \geq 0} \rangle:= \langle \lambda, \det Y^{\lambda \geq 0} \rangle$, 
where $\det Y^{\lambda \geq 0}$ is the sum of $T$-weights of $Y^{\lambda \geq 0}$.

\subsubsection{The definition of window categories}
Let $Y$ be an affine variety with an action of a reductive group $G$ and a linearization $\ell$. Consider the stacks 
\[j\colon\mathscr{Y}^{\ell\text{-ss}}:=Y^{\ell\text{-ss}}/G\hookrightarrow\mathscr{Y}:=Y/G.\]
We review the construction of window categories of $D^b(\mathscr{Y})$ which are equivalent to $D^b(\mathscr{Y}^{\ell\text{-ss}})$ via the restriction map, due to
Segal \cite{MR2795327}, Halpern-Leistner \cite{halp}, and Ballard--Favero--Katzarkov \cite{MR3895631}. We follow the presentation from \cite{halp}.

By also fixing a Weyl-invariant norm on the 
cocharacter lattice, 
the unstable locus $\mathscr{Y}\setminus \mathscr{Y}^{\ell\text{-ss}}$ has a stratification in Kempf-Ness strata $\mathscr{S}_i$ for $i\in I$ a finite ordered set:
\[\mathscr{Y}\setminus \mathscr{Y}^{\ell\text{-ss}}=\bigsqcup_{i\in I}\mathscr{S}_i.\]
A Kempf-Ness stratum $\mathscr{S}_i$ is the attracting stack
in $\mathscr{Y} \setminus \bigsqcup_{j<i}\mathcal{S}_j$
for
a cocharacter $\lambda_i$, with the fixed stack $\mathscr{Z}_i:=\mathscr{S}_i^{\lambda_i}$. 
Let $N_{\mathscr{S}_i/\mathscr{Y}}$ be the normal bundle of $\mathscr{S}_i$ in $\mathscr{Y}$.
 Define the width of the window categories \[\eta_i:=\left\langle \lambda_i, \det\left(N^{\vee}_{\mathscr{S}_i/\mathscr{Y}}|_{\mathscr{Z}_i}\right)
 \right\rangle.\]
  For a choice of real numbers $m_{\bullet}=(m_i)_{i\in I}\in \mathbb{R}^I$, define the category
\begin{equation}\label{def:window}
\mathbb{G}^\ell_{m_{\bullet}}:=\{\mathcal{F}\in D^b(\mathscr{Y})\text{ such that } \mathrm{wt}_{\lambda_i}(\mathcal{F}|_{\mathscr{Z}_i})\subset [
m_i, m_i+\eta_i) \text{ for all }i\in I\}.
\end{equation}
In the above, $\mathrm{wt}_{\lambda_i}(\mathcal{F}|_{\mathscr{Z}_i})$
is the set of $\lambda_i$-weights on $\mathcal{F}|_{\mathscr{Z}_i}$.
Then \cite[Theorem 2.10]{halp}
says that the restriction functor 
$j^*$ induces an equivalence of categories: 
\begin{equation}\label{jequiv}
j^*\colon \mathbb{G}^\ell_{m_{\bullet}}\xrightarrow{\sim} D^b\big(\mathscr{Y}^{\ell\text{-ss}}\big)
\end{equation}
for any choice of real numbers $m_{\bullet}=(m_i)_{i\in I}\in \mathbb{R}^I$.


\subsection{Quasi-smooth derived stacks}\label{subsec:qsmooth}
\subsubsection{Derived categories of (quasi-)coherent sheaves}
 Let $\mathfrak{M}$ be a derived stack over $\mathbb{C}$ and let $\mathcal{M}$ be its classical truncation. Let $\mathbb{L}_{\mathfrak{M}}$ be the cotangent complex 
of $\mathfrak{M}$. The stack $\mathfrak{M}$ is called \textit{quasi-smooth} if for all closed points $x\to \mathcal{M}$, the restriction $\mathbb{L}_\mathfrak{M}|_x$ has cohomological amplitude in $[-1, 1]$.
By \cite[Theorem 2.8]{BBBJ}, a stack $\mathfrak{M}$ is quasi-smooth if and only if it is a $1$-stack and any point of $\mathfrak{M}$ lies in the image of a $0$-representable smooth morphism \begin{equation}\label{alpha}
    \alpha \colon \mathscr{U}\to\mathfrak{M}
\end{equation} for a Koszul scheme $\mathscr{U}$ as in \eqref{Kscheme}.
Let $D_{\rm{qc}}(\mathscr{U})$ be the derived category of dg-modules over $\mathcal{O}_{\mathscr{U}}$ and let
$D^b(\mathscr{U}) \subset D_{\rm{qc}}(\mathscr{U})$ be the subcategory of objects with
bounded coherent cohomologies. Further, let $\Ind D^b(\mathscr{U})$ be the ind-completion of $D^b(\mathscr{U})$ \cite{MR3136100}.
For a quasi-smooth stack $\mathfrak{M}$, the dg-categories $D_{\rm{qc}}(\mathfrak{M})$, 
$D^b(\mathfrak{M})$, and $\Ind D^b(\mathfrak{M})$ are defined to be limits in the $\infty$-category of smooth morphisms \eqref{alpha}, see~\cite[Subsection 3.1.1]{T}, \cite{MR3136100}:
\begin{align*}
D_{\rm{qc}}(\mathfrak{M})=\lim_{\mathscr{U} \to \mathfrak{M}}
D_{\rm{qc}}(\mathscr{U}), \ 
    D^b(\mathfrak{M})=\lim_{\mathscr{U} \to \mathfrak{M}} D^b(\mathscr{U}), \   \Ind D^b(\mathfrak{M})=\lim_{\mathscr{U} \to \mathfrak{M}} \Ind D^b(\mathscr{U}).
\end{align*}
The category $\Ind D^b(\mathfrak{M})$ is a module over $D_{\rm{qc}}(\mathfrak{M})$
via the tensor product. 
For $\mathcal{E}_1, \mathcal{E}_2 \in \Ind D^b(\mathfrak{M})$, there exist an
internal homomorphism, see~\cite[Remark~3.4.5]{MR3037900}
\begin{align*}
    \mathcal{H}om(\mathcal{E}_1, \mathcal{E}_2) \in D_{\rm{qc}}(\mathfrak{M}),
\end{align*}
such that for any $\mathcal{A} \in D_{\rm{qc}}(\mathfrak{M})$ we have 
\begin{align*}
    \Hom_{D_{\rm{qc}}(\mathfrak{M})}(\mathcal{A}, \mathcal{H}om(\mathcal{E}_1, \mathcal{E}_2))
    \cong \Hom_{\Ind D^b(\mathfrak{M})}(\mathcal{A} \otimes \mathcal{E}_1, \mathcal{E}_2). 
\end{align*}
If $\mathfrak{M}$ is QCA (quasi-compact and with affine automorphism groups)~\cite[Definition 1.1.8]{MR3037900}, then 
$\Ind D^b(\mathfrak{M})$
is compactly generated with compact objects
$D^b(\mathfrak{M})$, see~\cite[Theorem~3.3.5]{MR3037900}.

\subsubsection{\'Etale and formal local structures along good moduli 
spaces}\label{subsection:etale}
Let $\mathfrak{M}$ be a quasi-smooth stack over $\mathbb{C}$ and let $\mathcal{M}$ be its classical truncation.
Suppose that $\mathcal{M}$ admits a good moduli space map
\[\pi\colon \mathcal{M} \to M,\] 
see~\cite{MR3237451} for the notion of a good moduli space.
In particular, $M$ is an algebraic space and $\pi$ is a quasi-compact morphism.
For each point in $M$, there are an \'{e}tale
  neighborhood $U \to M$
  and Cartesian squares: 
  \begin{align}\label{dia:fnbd}
	\xymatrix{
		\mathfrak{M}_U \ar[d] \ar@{}[rd]|\square
		& \ar@<-0.3ex>@{_{(}->}[l] \mathcal{M}_U
		\ar[r] \ar[d] \ar@{}[rd]|\square & U \ar[d] \\
		\mathfrak{M}  & \ar@<-0.3ex>@{_{(}->}[l] \mathcal{M} \ar[r] & M,
	}
\end{align}
where each vertical arrow is \'{e}tale
and $\mathfrak{M}_U$ is equivalent to a Koszul stack 
$\mathscr{P}=s^{-1}(0)/G$ as in (\ref{koszuldef}), see~\cite[Subsection~3.1.4]{T}, \cite[Theorem 4.2.3]{halpK32}, \cite{AHR}.
Similarly, for each closed point $y \in M$, there exist 
Cartesian squares, see~\cite[Subsection~3.1.4]{T}:
  \begin{align}\label{dia:fnbd2}
	\xymatrix{
		\widehat{\mathfrak{M}}_y \ar[d] \ar@{}[rd]|\square
		& \ar@<-0.3ex>@{_{(}->}[l] \widehat{\mathcal{M}}_y
		\ar[r] \ar[d] \ar@{}[rd]|\square & \Spec \widehat{\mathcal{O}}_{M, y} \ar[d] \\
		\mathfrak{M}  & \ar@<-0.3ex>@{_{(}->}[l] \mathcal{M} \ar[r] & M.
	}
\end{align}

\subsubsection{\texorpdfstring{$(-1)$}--shifted cotangent stacks}\label{subsec:shifted}
Let $\mathfrak{M}$ be a quasi-smooth stack. Let $\mathbb{T}_\mathfrak{M}$ be the tangent complex of $\mathfrak{M}$, which is the dual complex to the cotangent complex $\mathbb{L}_\mathfrak{M}$.
We denote by $\Omega_{\mathfrak{M}}[-1]$ the \textit{$(-1)$-shifted cotangent stack} of $\mathfrak{M}$:
\begin{align*}
\Omega_\mathfrak{M}[-1]:=\mathrm{Spec}_\mathfrak{M}\left(\mathrm{Sym}(\mathbb{T}_\mathfrak{M}[1])\right).
\end{align*}
Consider the projection map 
\begin{equation}\label{p0}
p_0\colon \mathcal{N}:=\Omega_\mathfrak{M}[-1]^{\rm{cl}}\to \mathfrak{M}.
\end{equation}
For a Koszul stack $\mathscr{P}$ as in \eqref{koszuldef}, recall the function $f$ from \eqref{defreg} and consider the critical locus $\mathrm{Crit}(f)\subset \mathscr{V}^{\vee}$. 
In this case, the map $p_0$ is the natural projection
\begin{equation}\label{p1}
p_0\colon \mathrm{Crit}(f)=\Omega_\mathscr{P}[-1]^{\rm{cl}}\to \mathscr{P}.
\end{equation}
For an object $\mathcal{F} \in D^b(\mathfrak{M})$, 
Arinkin--Gaitsgory \cite{AG} defined the notion of singular support  
denoted by 
\begin{align*}
    \mathrm{Supp}^{\rm{sg}}(\mathcal{F}) \subset \mathcal{N}. 
\end{align*}
The definition is compatible with maps $\alpha$ as in \eqref{alpha}, see \cite[Section~7]{AG}. 
Consider the group $\mathbb{C}^*$ scaling the fibers of the map $p_0$. 
A closed substack $\mathscr{Z}$ of $\mathcal{N}$ is called \textit{conical} if it is closed under the action of $\mathbb{C}^*$. 
The singular support $\mathrm{Supp}^{\rm{sg}}(\mathcal{F})$ of $\mathcal{F}$ is a 
conical subset $\mathscr{Z}$ of $\mathcal{N}$. 
For a given conical closed substack $\mathscr{Z} \subset \mathcal{N}$, we denote by 
$\mathcal{C}_{\mathscr{Z}} \subset D^b(\mathfrak{M})$
the subcategory of objects whose singular supports are 
contained in $\mathscr{Z}$. 

Consider a Koszul stack $\mathscr{P}$ as in \eqref{koszuldef} and recall the Koszul equivalence $\Theta$ from \eqref{equiv:Phi}. Under $\Theta$, the singular support of $\mathcal{F}\in D^b(\mathscr{P})$ corresponds to the support $\mathscr{Z}$ of the matrix factorization $\Theta(\mathcal{F})$, namely the maximal closed substack $\mathscr{Z}\subset \mathrm{Crit}(f)$ such that $\mathcal{F}|_{\mathscr{V}^{\vee}\setminus \mathscr{Z}}=0$ in $\mathrm{MF}^{\mathrm{gr}}(\mathscr{V}^{\vee}\setminus \mathscr{Z}, f)$, see \cite[Subsection 2.3.9]{T}.

\subsection{The window theorem for quasi-smooth stacks}\label{windowfirst}
We review
the theory of window categories for singular support quotients of 
quasi-smooth stacks \cite[Chapter 5]{T}, which itself is 
inspired by 
Halpern-Leistner's theory of window categories for $0$-shifted symplectic 
derived stacks~\cite{halpK32}. We continue with the notation from the previous subsection.

Let $\mathfrak{M}$ be a quasi-smooth stack and assume
throughout this subsection that its classical truncation $\mathcal{M}$ admits a good moduli space 
$\mathcal{M} \to M.$
Let $\ell$ be a line bundle on $\mathcal{M}$
and let $b \in H^4(\mathcal{M}, \mathbb{Q})$ be a positive definite class, see~\cite[Definition~3.7.6]{Halpinstab}. 
We also use the same symbols $(\ell, b)$ for 
$p_0^{\ast}\ell \in \mathrm{Pic}(\mathcal{N})$ and 
$p_0^{\ast}b \in H^4(\mathcal{N}, \mathbb{Q})$. 
Then there is a $\Theta$-stratification 
with respect to $(\ell, b)$:
\begin{align}\label{theta:N}
    \mathcal{N}=\mathscr{S}_1 \sqcup \cdots \sqcup \mathscr{S}_N\sqcup \mathcal{N}^{\ell\text{-ss}}
\end{align}
with centers $\mathscr{Z}_i\subset\mathscr{S}_i$, see~\cite[Theorem 5.2.3, Proposition 5.3.3]{Halpinstab}.  
In the above situation, an analogue of the window theorem is 
proved in~\cite[Theorem 1.1]{Totheta}, \cite[Theorem 5.3.13]{T}
(which generalizes~\cite[Theorem~3.3.1]{halpK32} in the case that 
$\mathfrak{M}$ is $0$-shifted symplectic): 
\begin{thm}\emph{(\cite{T, Totheta})}\label{thm:window:M}
In addition to the above, 
suppose that $\mathcal{M} \to M$ satisfies the formal neighborhood theorem, see below. 
Then for each $m_{\bullet}=(m_i)_{i=1}^N\in \mathbb{R}^N$, 
 there is a subcategory $\mathbb{W}(\mathfrak{M})^\ell_{m_{\bullet}} \subset D^b(\mathfrak{M})$
 such that the composition 
 \begin{align*}
     \mathbb{W}(\mathfrak{M})^\ell_{m_{\bullet}} \subset D^b(\mathfrak{M}) \twoheadrightarrow
     D^b(\mathfrak{M})/\mathcal{C}_{\mathscr{Z}}
 \end{align*}
 is an equivalence, where $\mathscr{Z}:=\mathcal{N} \setminus \mathcal{N}^{\ell\text{-ss}}$. 
\end{thm}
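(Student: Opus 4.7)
The plan is to reduce the statement to the local Koszul model and invoke a window equivalence for graded matrix factorizations, and then to patch the resulting local window subcategories using the formal neighborhood theorem and descent.

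First I would work locally. Using the formal neighborhood theorem along the good moduli space $\mathcal{M} \to M$ together with the \'etale local presentation from diagram \eqref{dia:fnbd}, any point of $\mathfrak{M}$ has an \'etale neighborhood on which $\mathfrak{M}_U \simeq \mathscr{P} = s^{-1}(0)/G$ is a Koszul stack inside a smooth quotient $\mathscr{Y} = Y/G$. The Koszul equivalence $\Theta$ of Theorem \ref{thm:Kduality} then identifies $D^b(\mathfrak{M}_U)$ with $\mathrm{MF}^{\mathrm{gr}}(\mathscr{V}^{\vee}, f)$ and, by the support-correspondence recalled at the end of Subsection \ref{subsec:shifted}, carries the singular support $\mathcal{N}|_U = \Omega_{\mathfrak{M}_U}[-1]^{\mathrm{cl}} = \mathrm{Crit}(f)$ to the set-theoretic support of the matrix factorization. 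In particular, the singular support quotient $D^b(\mathfrak{M}_U)/\mathcal{C}_{\mathscr{Z}|_U}$ corresponds, under $\Theta$, to the localization of $\mathrm{MF}^{\mathrm{gr}}$ at the open $\mathrm{Crit}(f)^{\ell\text{-ss}} \subset \mathrm{Crit}(f)$, i.e.\ to $\mathrm{MF}^{\mathrm{gr}}(\mathscr{V}^{\vee,\,\ell\text{-ss}}, f)$.

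Next I would pull back the $\Theta$-stratification \eqref{theta:N} along $\mathcal{N}|_U \to \mathcal{N}$: this gives a local Kempf--Ness-type stratification of $\mathcal{N}|_U$ whose strata $\mathscr{S}_i|_U$ come with cocharacters $\lambda_i$ and centers $\mathscr{Z}_i|_U$. On $\mathscr{Y}$ we are then in precisely the situation of Subsection \ref{subsection:window}, and the graded version of the window theorem for matrix factorizations (the $\mathbb{C}^{\ast}$-equivariant enhancement of \eqref{jequiv}, cf.\ \cite{MR3895631}) produces, for each $m_{\bullet} \in \mathbb{R}^N$, a window subcategory
\begin{align*}
\mathbb{W}(\mathfrak{M}_U)^{\ell}_{m_{\bullet}} \subset D^b(\mathfrak{M}_U)
\end{align*}
cut out by the weight conditions $\mathrm{wt}_{\lambda_i}(\mathcal{F}|_{\mathscr{Z}_i|_U}) \subset [m_i, m_i + \eta_i)$, and one shows that the composition with the singular support quotient is an equivalence onto $D^b(\mathfrak{M}_U)/\mathcal{C}_{\mathscr{Z}|_U}$. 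Fully faithfulness reduces to the vanishing of $\mathrm{Hom}$ from a window object into an object with nilpotent/unstable singular support, which by the Koszul equivalence becomes the standard semiorthogonality of the window inside $\mathrm{MF}^{\mathrm{gr}}$. Essential surjectivity is obtained by the usual baric/weight truncation argument applied stratum by stratum, possibly after taking a cone against objects in $\mathcal{C}_{\mathscr{Z}|_U}$.

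Finally I would assemble the local windows into a global subcategory $\mathbb{W}(\mathfrak{M})^{\ell}_{m_{\bullet}} \subset D^b(\mathfrak{M})$ by declaring $\mathcal{F}$ to lie in the window if and only if its restriction to every \'etale neighborhood $\mathfrak{M}_U$ (equivalently, to every formal neighborhood $\widehat{\mathfrak{M}}_y$ as in \eqref{dia:fnbd2}) belongs to the local window. Because the weights of $\lambda_i$ on $\mathcal{F}|_{\mathscr{Z}_i}$ are intrinsic to the $\Theta$-stratification of $\mathcal{N}$ and do not depend on the chosen presentation, this glues well. The main obstacle I anticipate is precisely this gluing step: one has to verify that fully faithfulness and essential surjectivity descend from the \'etale/formal charts to the global stack, which is where the formal neighborhood hypothesis on $\mathcal{M} \to M$ enters crucially. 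This is handled exactly as in \cite{halpK32, T, Totheta}: $\mathrm{Hom}$ in $D^b(\mathfrak{M})$ is computed by a Mayer--Vietoris on the \'etale cover after the good moduli space map, the analogous statement for $D^b(\mathfrak{M})/\mathcal{C}_{\mathscr{Z}}$ uses that $\mathcal{C}_{\mathscr{Z}}$ is \'etale local, and essential surjectivity is proved by a Noetherian induction on the support in $M$ together with the local window constructions above.
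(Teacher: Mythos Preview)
The paper does not give its own proof of this theorem: it is quoted from \cite{T, Totheta} and only the \emph{definition} of the window subcategory is spelled out after the statement (the formal local characterization via Koszul duality in \eqref{PhiEy}). So there is no in-paper argument to compare against; your sketch is essentially the strategy of the cited references, and it aligns with the description the paper gives of $\mathbb{W}(\mathfrak{M})^\ell_{m_\bullet}$.

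Two points to tighten. First, the weight interval you impose locally is not quite $[m_i, m_i+\eta_i)$: as the paper records in \eqref{PhiEy}, under the Koszul equivalence one must use the shifted parameter $m_i' = m_i - \langle \lambda_i, \det(\mathcal{H}^1(\mathbb{T}_\mathfrak{M}|_y)^{\lambda_i>0})\rangle$. This shift comes from the discrepancy between the Hall products on $\mathfrak{M}_y$ and on the linear model $\mathscr{X}_y$, so your claim that the weight condition is ``intrinsic and independent of the presentation'' needs this correction to be literally true. Second, the gluing you describe is carried out in the cited references over the \emph{formal} fibers $\widehat{\mathfrak{M}}_y$ (diagram \eqref{dia:fnbd2}), not merely over \'etale charts; the formal neighborhood hypothesis is what guarantees that at each closed point one has a genuine Koszul presentation $\widehat{\mathfrak{M}}_y \simeq \mathfrak{U}_y/G_y$ with the correct tangent/obstruction data, so that the local window inequalities make sense and are compatible across points. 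Your outline is correct in spirit, but the actual argument in \cite{T, Totheta} runs through the formal fibers rather than through an \'etale Mayer--Vietoris.
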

\begin{remark} When $\mathcal{N}$ is a (global) quotient stack
$\mathcal{N}=Y/G$
for a reductive algebraic group $G$, 
a 
$\Theta$-stratification \eqref{theta:N} is the same as a Kempf-Ness stratification \cite[Example 0.0.5]{Halpinstab}. 
The class $b$ is then constructed as the pull-back of the class in $H^4(BG, \mathbb{Q})$ corresponding to the chosen positive definite form \cite[Example 5.3.4]{Halpinstab}.
\end{remark}

\begin{remark}\label{rmk:0shift}
Suppose that $\mathbb{L}_{\mathfrak{M}}$ is self-dual, e.g. 
$\mathfrak{M}$ is 0-shifted symplectic. 
In this case, we have 
$\mathcal{N}^{\ell\text{-ss}}=\Omega_{\mathfrak{M}^{\ell\text{-ss}}}[-1]^{\rm{cl}}$, 
which easily follows from~\cite[Lemma~4.3.22]{halpK32}. Then we have the equivalence, see~\cite[Lemma~3.2.9]{T}:
\begin{align*}
  D^b(\mathfrak{M})/\mathcal{C}_{\mathscr{Z}}  \stackrel{\sim}{\to} 
  D^b(\mathfrak{M}^{\ell\text{-ss}}). 
\end{align*}
    
\end{remark}
We now explain the meaning of ``the formal neighborhood theorem" in the statement of Theorem \ref{thm:window:M}, see \cite[Definition 5.2.3]{T}. 
For a closed point $y \in M$, denote also by $y \in \mathcal{M}$
the unique closed point in the fiber of $\mathcal{M} \to M$
at $y$.
Set $G_y:=\mathrm{Aut}(y)$,
which is a reductive algebraic group. 
Let $\widehat{\mathcal{M}}_y$ be the formal fiber
along with $\mathcal{M} \to M$ at $y$. Let $\widehat{\mathcal{H}}^0(\mathbb{T}_{\mathcal{M}}|_{y})$ be the formal fiber at the origin of $\mathcal{H}^0(\mathbb{T}_{\mathcal{M}}|_{y})\to \mathcal{H}^0(\mathbb{T}_{\mathcal{M}}|_{y})\sslash G_y$ at the origin, and define $\widehat{\mathcal{H}}^0(\mathbb{T}_{\mathfrak{M}}|_{y})$ similarly, see also the convention from Subsection \ref{notation}. 
Then the formal 
neighborhood theorem says that 
there is a $G_y$-equivariant morphism 
\begin{align*}
\kappa_y \colon \widehat{\mathcal{H}}^0(\mathbb{T}_{\mathcal{M}}|_{y}) 
\to \mathcal{H}^1(\mathbb{T}_{\mathcal{M}}|_{y}) 
\end{align*}
such that, by setting $\mathcal{U}_y$
to be the classical zero locus of $\kappa_y$,
there is an isomorphism  
$\widehat{\mathcal{M}}_y \cong \mathcal{U}_y/G_y$. 
Let $\mathfrak{U}_y$ be the derived zero locus of 
$\kappa_y$. 
Then, by replacing $\kappa_y$ if necessary, 
$\widehat{\mathfrak{M}}_y$ is equivalent to 
$\mathfrak{U}_y/G$, see~\cite[Lemma~5.2.5]{T}. 

Below we give a formal local 
description of $\mathbb{W}(\mathfrak{M})_{m_{\bullet}}^\ell$. 
Consider the pair of a smooth
stack and a regular function $(\mathscr{X}_y, f_y)$:
\begin{align*}
    \mathscr{X}_y:=\left(\widehat{\mathcal{H}}^0(\mathbb{T}_{\mathfrak{M}}|_{y})
    \times \mathcal{H}^1(\mathbb{T}_{\mathfrak{M}}|_{y})^{\vee}\right)/G_y
    \stackrel{f_y}{\to} \mathbb{C},
\end{align*}
where $f_y(u, v)=\langle \kappa_y(u), v \rangle$. 
From (\ref{p1}), the critical locus of $f_y$
is isomorphic to the classical 
truncation of the $(-1)$-shifted cotangent stack over $\widehat{\mathfrak{M}}_y$, 
so it is isomorphic to the formal fiber 
$\widehat{\mathcal{N}}_y$
of $\mathcal{N} \to \mathcal{M} \to M$
at $y$. 
The pull-back of the $\Theta$-stratification 
(\ref{theta:N}) to $\widehat{\mathcal{N}}_y$
gives a Kempf-Ness stratification 
\begin{align*}
    \widehat{\mathcal{N}}_y=\widehat{\mathscr{S}}_{1, y} \sqcup 
    \cdots \sqcup \widehat{\mathscr{S}}_{N, y} \sqcup \widehat{\mathcal{N}}_y^{\ell\text{-ss}}
\end{align*}
with centers $\widehat{\mathscr{Z}}_{i, y}\subset \widehat{\mathscr{S}}_{i, y}$
and one parameter subgroups $\lambda_i \colon \mathbb{C}^{\ast} \to G_y$. 
By the Koszul equivalence, 
see Theorem~\ref{thm:Kduality}, 
there is an equivalence:
\begin{align*}
    \Theta_y \colon D^b(\widehat{\mathfrak{M}}_y) \stackrel{\sim}{\to}
    \mathrm{MF}^{\text{gr}}(\mathscr{X}_y, f_y). 
\end{align*}
Then the subcategory $\mathbb{W}(\mathfrak{M})_{m_{\bullet}}^\ell$ in Theorem~\ref{thm:window:M} is characterized 
as follows: 
an object $\mathcal{E} \in D^b(\mathfrak{M})$ is an object of 
$\mathbb{W}(\mathfrak{M})^\ell_{m_{\bullet}}$
if and only if, for any closed point $y \in M$, we have 
\begin{align}\label{PhiEy}
    \Theta_{y}(\mathcal{E}|_{\widehat{\mathfrak{M}}_y}) \in \mathrm{MF}^{\text{gr}}(\mathbb{G}^\ell_{m_{\bullet}'}, f_y), \ 
  m_i'=m_i-\left\langle \lambda_i, \det\left(\mathcal{H}^1(\mathbb{T}_{\mathfrak{M}}|_{y})^{\lambda_i>0}\right) \right\rangle.  
\end{align}
The category $\mathbb{G}^{\ell}_{m'_{\bullet}}$ is the window category \eqref{def:window} for the weights $(m'_i)_{i=1}^N$ and the 
line bundle $\ell$.
The difference between $m_i$ and $m_i'$ is due to the discrepancy of categorical Hall products on 
$\mathfrak{M}_y$ and $\mathscr{X}_y$, see \cite[Proposition 3.1]{P2}.

\subsection{Intrinsic window subcategory}
We continue to consider a quasi-smooth derived stack $\mathfrak{M}$ whose classical truncation $\mathcal{M}$ admits a good moduli 
space $\mathcal{M} \to M$. 
We say that $\mathfrak{M}$ is \textit{symmetric} if for any closed point 
$y \in \mathfrak{M}$, the $G_y:=\mathrm{Aut}(y)$-representation 
\begin{align*}
	\mathcal{H}^0(\mathbb{T}_{\mathfrak{M}}|_{y}) \oplus 
	\mathcal{H}^1(\mathbb{T}_{\mathfrak{M}}|_{y})^{\vee}
\end{align*}
is a self-dual $G_y$-representation. 
In this subsection, we assume that $\mathfrak{M}$ is symmetric. 
Let $\delta \in \mathrm{Pic}(\mathfrak{M})_{\mathbb{R}}$. 
We now define a different kind of window categories, called \textit{intrinsic window subcategories}
$\mathbb{W}(\mathfrak{M})_{\delta}^{\rm{int}} \subset D^b(\mathfrak{M})$, 
see~\cite[Definition~5.2.12, 5.3.12]{T}. These categories are the quasi-smooth version of ``magic window categories" from \cite{SVdB, hls}.

First, assume that 
$\mathfrak{M}$ is a Koszul stack 
associated with $(Y, V, s, G)$ as in 
(\ref{koszuldef})
\begin{align}\label{present:M}
	\mathfrak{M}=s^{-1}(0)/G.
\end{align}
Consider the quotient stack $\mathscr{Y}=Y/G$, the closed immersion
$j \colon \mathfrak{M} \hookrightarrow \mathscr{Y}$, and let $\mathscr{V} \to \mathscr{Y}$ be
the total space of $V/G \to Y/G$. 
In this case, we define 
$\mathbb{W}(\mathfrak{M})_{\delta}^{\rm{int}} \subset D^b(\mathfrak{M})$ to be 
consisting of 
$\mathcal{E} \in D^b(\mathfrak{M})$ such that 
for any map $\nu \colon B\mathbb{C}^{\ast} \to \mathfrak{M}$ we have 
\begin{align*}
	\mathrm{wt}(\nu^{\ast}j^{\ast}j_{\ast}\mathcal{E}) 
	\subset \left[\frac{1}{2}\mathrm{wt}\left(\det \nu^{\ast}(\mathbb{L}_{\mathscr{V}}|_{\mathscr{Y}})^{\nu<0}
	\right), 
	\frac{1}{2}\mathrm{wt}\left(\det \nu^{\ast}(\mathbb{L}_{\mathscr{V}}|_{\mathscr{Y}})^{\nu>0}\right)
	\right]
	+\mathrm{wt}(\nu^{\ast}\delta). 
\end{align*}
The above subcategory $\mathbb{W}(\mathfrak{M})_{\delta}^{\rm{int}}$
is intrinsic to $\mathfrak{M}$, that is, independent of a 
choice of a presentation $\mathfrak{M}$ as (\ref{present:M})
for $(Y, V, s, G)$,  
see~\cite[Lemma~5.3.14]{T}.

In general, 
the intrinsic window subcategory is defined as follows
(which generalizes the magic window category in~\cite[Definition~4.3.5]{halpK32}
considered when $\mathbb{L}_{\mathfrak{M}}$ is self-dual):

\begin{defn}(\cite[Definition~5.3.12]{T})\label{def:intwind}
	We define the subcategory 
	\begin{align*}
		\mathbb{W}(\mathfrak{M})_{\delta}^{\rm{int}} \subset D^b(\mathfrak{M})
	\end{align*}
	to be consisting of objects $\mathcal{E}$ such that,
	for any étale morphism 
	$\iota_U\colon U \to M$ such that $\mathfrak{M}_U$ is of the form $s^{-1}(0)/G$
	as in (\ref{present:M}) and $\iota_U$ induces an étale morphism $\iota_U \colon \mathfrak{M}_U \to \mathfrak{M}$, 
	we have 
	$\iota_U^{\ast}\mathcal{E} \in \mathbb{W}(\mathfrak{M}_U)_{\iota_U^{\ast}\delta}^{\rm{int}}
	\subset D^b(\mathfrak{M}_U)$. 
\end{defn}

\section{Quasi-BPS categories for doubled quivers}\label{sec:qbps:double}
In this section, we review the results in~\cite{PTquiver} about quasi-BPS categories 
of doubled quivers, focusing on the example of doubled quivers of $g$-loop quivers for $g\geq 1$. 
These results are the local analogues of Theorems \ref{intro:thm1} and \ref{intro:thm4}. 
We also discuss 
similar results for formal fibers along good moduli space morphisms. 

\subsection{Moduli stacks of representations of quivers}
\subsubsection{Moduli stacks}\label{subsec311}
Let $Q=(I, E)$ be a quiver. 
For a dimension vector $\bm{d}=(d^{(a)})_{a \in I} \in \mathbb{N}^{I}\subset \mathbb{Z}^I$, 
we denote by 
\begin{align}\label{stack:X}
\mathscr{X}(\bm{d})=R_Q(\bm{d})/G(\bm{d})
\end{align}
the moduli stack of $Q$-representations of dimension $\bm{d}$. 
Here, the affine space $R_Q(\bm{d})$ and the reductive group $G(\bm{d})$ are 
defined by 
\begin{align*}
    R_Q(\bm{d})=\bigoplus_{(a \to b) \in E}
    \Hom(V^{(a)}, V^{(b)}), \ 
    G(\bm{d})=\prod_{a \in I}GL(V^{(a)}),
\end{align*}
where $V^{(a)}$ is a $\mathbb{C}$-vector space of dimension $d^{(a)}$. 
We denote by $\mathfrak{g}(\bm{d})$ the Lie algebra of 
$G(\bm{d})$. 

\subsubsection{Doubled quivers}\label{subsec312}
 Let $Q^{\circ}=(I, E^{\circ})$ be a quiver. Let $E^{\circ \ast}$ be the set of edges $e^{\ast}=(b \to a)$
    for each $e=(a \to b)$ in $E^{\circ}$. 
    Consider the \textit{doubled quiver} of $Q^\circ$: 
    \begin{align*}
    Q^{\circ, d}=(I, E^{\circ, d}), \ 
    E^{\circ, d}=E^{\circ} \sqcup E^{\circ \ast}.
    \end{align*}
    Let $\mathscr{I}$ be the quadratic relation $\sum_{e \in E^{\circ}} [e, e^{\ast}]\in \mathbb{C}[Q^{\circ, d}]$. 
   For a dimension vector $\bm{d}=(d^{(a)})_{a \in I}$, 
the relation $\mathscr{I}$
induces a moment map:  
\begin{align}\label{moment}
\mu \colon 
    R_{Q^{\circ, d}}(\bm{d})=T^*R_{Q^\circ}(\bm{d}) \to \mathfrak{g}(\bm{d}). 
\end{align}
The derived zero locus 
\begin{align}\label{P:dzero}
   \mathscr{P}(\bm{d}):=\mu^{-1}(0)/G(\bm{d}) \stackrel{j}{\hookrightarrow}
    \mathscr{Y}(\bm{d}):=R_{Q^{\circ, d}}(\bm{d})/G(\bm{d})
\end{align}
is the derived moduli stack of $(Q^{\circ, d}, \mathscr{I})$-representations of dimension vector 
$\bm{d}$. Note that a $(Q^{\circ, d}, \mathscr{I})$-representation is the same as a representation of the preprojective algebra $\pi_{Q^\circ}:=\mathbb{C}[Q^{\circ, d}]/(\mathscr{I})$ of $Q^\circ$, and we will use these two names interchangeably.

\subsubsection{Tripled quivers}\label{subsec:triple}
Consider a quiver $Q^{\circ}=(I, E^{\circ})$.
For $a\in I$, let $\omega_a$ be a loop at $a$. 
\textit{The tripled quiver} of $Q^\circ$ is: 
\begin{align*}
    Q=(I, E), \ E=E^{\circ, d}\sqcup \{\omega_a\}_{a \in I}. 
\end{align*}
The tripled potential $W$ of $Q$ is: 
    \begin{align*}
        W=\left(\sum_{a \in I}\omega_a \right) \left(
        \sum_{e \in E^{\circ}}[e, e^{\ast}] \right). 
    \end{align*}
    Consider the stack (\ref{stack:X}) of representations of dimension $d$ for the tripled quiver $Q$:
    \[\mathscr{X}(\bm{d}):=R_{Q}(\bm{d})/G(\bm{d}):=R_{Q^{\circ, d}}(\bm{d}) \oplus \mathfrak{g}(\bm{d})/G(d).\]
The potential $W$ induces the regular function: 
\begin{align}\label{moduli:triple}
  \Tr W\colon  \mathscr{X}(\bm{d})=R_{Q}(\bm{d})/G(\bm{d}) \to \mathbb{C}.
\end{align}
We have the Koszul duality equivalence, 
see Theorem~\ref{thm:Kduality}:
\begin{align}\label{Koszul:theta}
    \Theta \colon D^b(\mathscr{P}(\bm{d}))
    \stackrel{\sim}{\to} \mathrm{MF}^{\rm{gr}}(\mathscr{X}(\bm{d}), \Tr W). 
\end{align}


\subsection{The weight lattice}\label{subsec:qBPS}
Let $Q=(I, E)$ be a quiver. 
For a dimension vector $\bm{d} \in \mathbb{N}^I$, 
let $T(\bm{d}) \subset G(\bm{d})$ be the maximal torus
and let $M(\bm{d})$ be the character lattice for $T(\bm{d})$:
\begin{align*}
    M(\bm{d})=\bigoplus_{a\in I} \bigoplus_{1\leq i\leq d^{(a)}}
    \mathbb{Z} \beta_i^{(a)}. 
\end{align*}
Here $\beta_1^{(a)}, \ldots, \beta_{d^{(a)}}^{(a)}$ are the weights of 
the standard representation of $GL(V^{(a)})$ for $a\in I$. 
In the case that $I$ consists of one element, 
we omit the subscript $(a)$. 
We denote by $\rho \in M(\bm{d})_{\mathbb{Q}}$ half of the sum of the
positive roots of $\mathfrak{g}(\bm{d})$. 
Let $W$ be the Weyl group of $G(\bm{d})$ and
let $M(\bm{d})_{\mathbb{R}}^W \subset M(\bm{d})_{\mathbb{R}}$ be the Weyl invariant 
subspace. There is a decomposition: 
\[M(\bm{d})_{\mathbb{R}}^W=\bigoplus_{a \in I} \mathbb{R}\sigma^{(a)},\]
where $\sigma^{(a)}:=\sum_{i=1}^{d^{(a)}}\beta_i^{(a)}$. 
There is a natural pairing: 
\begin{align*}
    \langle -, -\rangle \colon 
    M(\bm{d})_{\mathbb{R}}^W \times \mathbb{R}^I \to \mathbb{R}, \ 
    \langle \sigma^{(a)}, e^{(b)} \rangle=\delta^{ab}. 
\end{align*}
We denote by $\iota \colon M(\bm{d})_{\mathbb{R}} \to \mathbb{R}$ the linear 
map sending $\beta_i^{(a)}$ to $1$, 
and its kernel by $M(\bm{d})_{0, \mathbb{R}}$. 
An element $\ell \in M(\bm{d})_{0, \mathbb{R}}^W$ is written as 
\begin{align*}
    \ell=\sum_{a\in I}\ell^{(a)}\sigma^{(a)}, \ 
    \langle \ell, \bm{d} \rangle=
    \sum_{a}\ell^{(a)}d^{(a)}=0
\end{align*}
i.e. $\ell$ is an $\mathbb{R}$-character of $G(\bm{d})$
which is trivial on the diagonal torus 
$\mathbb{C}^{\ast} \subset G(\bm{d})$. 
Denote by $\underline{\bm{d}}:=\sum_{a\in I}d^{(a)}$ the total dimension.
Define the following Weyl invariant weight: 
\begin{align*}
\tau_{\bm{d}} :=\frac{1}{\underline{\bm{d}}} \cdot \sum_{a\in I, 1\leq i\leq d^{(a)}}\beta_i^{(a)}. 
\end{align*}
Define the polytope:
\begin{equation}\label{def:polytope}
\textbf{W}(\bm{d}):=\frac{1}{2}\mathrm{sum}[0, \beta]\subset M(\bm{d})_\mathbb{R},
\end{equation}
where the Minkowski sum is after all $T(\bm{d})$-weights $\beta$ of $R(\bm{d})$. 

\begin{defn}\label{def:generic}
A weight $\ell \in M(\bm{d})_{0, \mathbb{R}}^W$ is 
\textit{generic} if the following conditions hold:
\begin{itemize}
    \item if $H \subset M(\bm{d})_{0, \mathbb{R}}$ is a hyperplane parallel to a face in $\mathbf{W}(\bm{d})$ which contains $\ell$, then  $M(\bm{d})_{0, \mathbb{R}}^W \subset H$,
\item for any decomposition  $\bm{d}=\bm{d}_1+\bm{d}_2$ such that $\bm{d}_1, \bm{d}_2 \in \mathbb{N}^I$ are not proportional 
to $\bm{d}$, we have that $\langle \ell, \bm{d}_i\rangle \neq 0$
for $i\in\{1,2\}$.
\end{itemize}
    \end{defn}
Note that the set of generic weights is a dense open subset in $M(\textbf{d})^W_{0,\mathbb{R}}$.


\subsection{Quasi-BPS categories for stacks of representations of preprojective algebras}\label{subsec:double}
   Let $Q^{\circ}=(I, E^{\circ})$ be a quiver and let
   $\mathscr{P}(d)$ be the derived moduli stack of 
representations of its preprojective algebra (\ref{P:dzero}).    
For $\delta \in M(\bm{d})_{\mathbb{R}}^W$, define the quasi-BPS category
to be the intrinsic window subcategory in Definition~\ref{def:intwind}:
\begin{align}\label{def:Tdelta}
    \mathbb{T}(\bm{d})_\delta:=\mathbb{W}(\mathscr{P}(\bm{d}))_{\delta}^{\rm{int}} \subset D^b(\mathscr{P}(\bm{d})), \ 
    \mathbb{T}(\bm{d})_w :=\mathbb{T}(\bm{d}; w \tau_{\bm{d}}). 
\end{align} 
An alternative description is as follows, where recall the map $j\colon \mathscr{P}(\bm{d})\hookrightarrow\mathscr{Y}(\bm{d})$ and choose a dominant chamber $M(\bm{d})^+\subset M(\bm{d})$, for example the one in \cite[Subsection 2.2.2]{PTquiver}: 
\begin{lemma}\emph{(\cite[Corollary~3.20]{PTquiver})}\label{lem:compareT}
The subcategory (\ref{def:Tdelta}) consists of objects $\mathcal{E} \in D^b(\mathscr{P}(\bm{d}))$
such that $j_{\ast}\mathcal{E}$ is classically generated by the vector bundle
$\mathcal{O}_{\mathscr{Y}(d)} \otimes \Gamma_{G(\bm{d})}(\chi)$, where $\chi$ is a dominant weight
such that 
 \begin{align}\label{chi:rho}
        \chi+\rho-\delta \in \mathbf{W}(\bm{d}).
    \end{align}
    Here, $\Gamma_{G(\bm{d})}(\chi)$ is the irreducible representation of $G(\bm{d})$ with highest 
    weight $\chi$, and $\mathbf{W}(\bm{d})$ is the polytope \eqref{def:polytope} for the tripled quiver $Q$ of $Q^\circ$.
\end{lemma}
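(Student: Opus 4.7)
My plan is to translate the intrinsic-window condition across the Koszul equivalence of Theorem~\ref{thm:Kduality}, reduce to a magic-window characterization of a subcategory of graded matrix factorizations on the smooth stack $\mathscr{X}(\bm{d})$, and finally invoke Lemma~\ref{lem:genJ} to move the resulting generation statement back to coherent sheaves on $\mathscr{Y}(\bm{d})$. The argument proceeds in three steps.

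\textbf{Step 1 (Koszul reduction).} By definition $\mathbb{T}(\bm{d})_\delta = \mathbb{W}(\mathscr{P}(\bm{d}))_\delta^{\mathrm{int}}$, and $\mathscr{P}(\bm{d})$ is the Koszul stack \eqref{P:dzero} associated with $Y = R_{Q^{\circ,d}}(\bm{d})$, trivial bundle $V = \mathfrak{g}(\bm{d})\times Y$, section $s = \mu$, and group $G(\bm{d})$. Identifying $\mathfrak{g}(\bm{d})^\vee \cong \mathfrak{g}(\bm{d})$ via the Killing form, the ambient total space $\mathscr{V}^\vee$ is $\mathscr{X}(\bm{d}) = R_Q(\bm{d})/G(\bm{d})$ for the tripled quiver $Q$, and the Koszul equivalence $\Theta$ transports $\mathbb{T}(\bm{d})_\delta$ to a full subcategory $\mathbb{S}(\bm{d})_\delta \subset \mathrm{MF}^{\mathrm{gr}}(\mathscr{X}(\bm{d}), \Tr W)$.

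\textbf{Step 2 (polytope identification).} For a cocharacter $\lambda\colon \mathbb{C}^\ast\to G(\bm{d})$ defining $\nu\colon B\mathbb{C}^\ast\to \mathscr{P}(\bm{d})$, the complex $\mathbb{L}_\mathscr{V}|_\mathscr{Y}$ splits as $\mathbb{L}_{\mathscr{Y}(\bm{d})}\oplus V^\vee = [R_{Q^{\circ,d}}(\bm{d})^\vee\to\mathfrak{g}(\bm{d})^\vee]\oplus\mathfrak{g}(\bm{d})^\vee$ in degrees $[0,1]$. Taking the signed alternating sum of positive $\lambda$-weights inside $\det(\mathbb{L}_\mathscr{V}|_\mathscr{Y})^{\nu>0}$, the degree-$1$ copy of $\mathfrak{g}(\bm{d})^\vee$ cancels one degree-$0$ copy; adding the loop contribution $\mathfrak{g}(\bm{d})$ inside $R_Q(\bm{d}) = R_{Q^{\circ,d}}(\bm{d})\oplus\mathfrak{g}(\bm{d})$ then reproduces exactly the $\lambda$-support function of the polytope $\mathbf{W}(\bm{d})$ of \eqref{def:polytope}. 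Consequently, the intrinsic-window membership becomes: for every $\lambda$, all $\lambda$-weights of $\nu^\ast j^\ast j_\ast\mathcal{E}$ lie in $\langle\lambda,\delta\rangle + \langle\lambda,\mathbf{W}(\bm{d})\rangle$.

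\textbf{Step 3 (magic window and conclusion).} For $\chi$ dominant, the $\lambda$-weights of $\Gamma_{G(\bm{d})}(\chi)|_{B\mathbb{C}^\ast}$ form a $W$-symmetric set whose convex-hull control in each direction is governed by $\chi+\rho$; the standard magic-window characterization of \v{S}penko--Van den Bergh~\cite{SVdB, hls} applied to the smooth stack $\mathscr{X}(\bm{d})$ then identifies $\mathbb{S}(\bm{d})_\delta$ as the subcategory generated by matrix factorizations with factors $\mathcal{O}_{\mathscr{X}(\bm{d})}\otimes \Gamma_{G(\bm{d})}(\chi)$ for $\chi$ dominant satisfying $\chi + \rho - \delta \in \mathbf{W}(\bm{d})$. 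Applying Lemma~\ref{lem:genJ} with $\{V_a\}$ this set of irreducibles, the condition $\Theta(\mathcal{E})\in \mathbb{S}(\bm{d})_\delta$ is equivalent to $j_\ast\mathcal{E}$ being classically generated in $D^b(\mathscr{Y}(\bm{d}))$ by $\mathcal{O}_{\mathscr{Y}(\bm{d})}\otimes \Gamma_{G(\bm{d})}(\chi)$ for the same $\chi$, which is the claim. The hard part will be the bookkeeping of Step~2: one must reconcile the sign conventions of the alternating sum inside $\det(\mathbb{L}_\mathscr{V}|_\mathscr{Y})^{\nu>0}$ with the loop contribution to $\mathbf{W}(\bm{d})$ so as to produce precisely the $\rho$-shift appearing in \eqref{chi:rho}.
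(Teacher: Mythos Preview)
The paper does not give its own proof of this lemma; it is stated as a citation to \cite[Corollary~3.20]{PTquiver}. The paper does, however, spell out the same mechanism in the one-vertex case in Subsection~\ref{subsec:prop} (around \eqref{sgr:w}--\eqref{theta:rest}) and in Lemma~\ref{lem:anotherT}/Remark~\ref{rmk:anotherT}: Koszul duality $\Theta$, the \v{S}penko--Van den Bergh/Halpern-Leistner--Sam description of the magic window on the matrix-factorization side via \cite[Lemma~2.9]{hls}, and then Lemma~\ref{lem:genJ} to translate the generation statement back to $j_\ast\mathcal{E}$ on $\mathscr{Y}(\bm d)$. Your three-step outline is exactly this, so the strategy is correct and matches the approach in the companion paper.

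There is one genuine inaccuracy in Step~2 that you should fix rather than defer. After the degree-$1$ copy of $\mathfrak{g}(\bm d)^\vee$ cancels the degree-$0$ copy coming from $V^\vee$, what remains is $R_{Q^{\circ,d}}(\bm d)^\vee$, so the intrinsic-window half-width in direction $\lambda$ is
\[
\tfrac{1}{2}\bigl\langle \lambda,\det\bigl(R_{Q^{\circ,d}}(\bm d)^{\lambda>0}\bigr)\bigr\rangle \;=\; \tfrac{1}{2}n_\lambda,
\]
which is the $\nabla$-width (cf.\ Remark~\ref{rmk:anotherT}), \emph{not} the support function of $\mathbf{W}(\bm d)$ for the tripled $R_Q(\bm d)=R_{Q^{\circ,d}}(\bm d)\oplus\mathfrak{g}(\bm d)$. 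The two differ precisely by $\tfrac{1}{2}\langle\lambda,\det(\mathfrak{g}(\bm d)^{\lambda>0})\rangle$. You do not ``add the loop contribution'' to recover $\mathbf{W}(\bm d)$ at this stage; rather, \cite[Lemma~2.9]{hls} is exactly the statement that the $\nabla$-condition on \emph{all} $T(\bm d)$-weights of $\Gamma$ is equivalent to the $\mathbf{W}(\bm d)$-condition on $\chi+\rho$ for the highest weight $\chi$. In other words, the passage from $\nabla$ to $\mathbf{W}$ and the appearance of the $\rho$-shift are a single package provided by \cite{hls}, not two separate bookkeeping steps. Once Step~2 is corrected to land in $\nabla$ rather than $\mathbf{W}$, Step~3 goes through cleanly: the intrinsic-window condition on $\nu^\ast j^\ast j_\ast\mathcal{E}$ matches the $\nabla$-condition on factors of $\Theta(\mathcal{E})$, \cite[Lemma~2.9]{hls} converts this to the condition $\chi+\rho-\delta\in\mathbf{W}(\bm d)$, and Lemma~\ref{lem:genJ} transports the resulting generation statement from $\mathscr{X}(\bm d)$ back to $\mathscr{Y}(\bm d)$.
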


For $\ell \in M(\bm{d})_{0, \mathbb{R}}^W$, let 
$\mathscr{P}(\bm{d})^{\ell\text{-ss}} \subset \mathscr{P}(\bm{d})$ be the 
open substack of $\ell$-semistable locus. 
The quasi-BPS category for $\ell$-semistable locus 
is defined to be 
\begin{align*}
    \mathbb{T}^{\ell}(\bm{d})_\delta :=\mathbb{W}(\mathscr{P}(\bm{d})^{\ell\text{-ss}})_{\delta}^{\rm{int}}
    \subset D^b(\mathscr{P}(\bm{d})^{\ell\text{-ss}}), \ 
    \mathbb{T}^\ell(\bm{d})_{w} :=\mathbb{T}^\ell(\bm{d}; w\tau_{\bm d}). 
\end{align*}
Consider the restriction functor 
\begin{align}\label{rest:P}
\mathrm{res} \colon D^b(\mathscr{P}(\bm{d})) \twoheadrightarrow 
D^b(\mathscr{P}(\bm{d})^{\ell\text{-ss}}). 
\end{align}
We recall a wall-crossing equivalence proved in~\cite{PTquiver}: 
\begin{thm}\emph{(\cite[Corollary~3.19, Remark~3.12]{PTquiver})}\label{prop:eqS}
    For generic $\ell_+, \ell_- \in M(\bm{d})_{0, \mathbb{R}}^W$, 
    let $\delta'=\varepsilon_{+} \cdot \ell_{+} +\varepsilon_{-} \cdot \ell_{-}$
    for general $0<\varepsilon_{\pm} \ll 1$.
    Let $\delta\in M(\bm{d})^W_\mathbb{R}$
    and let $\delta''=\delta+\delta'$. Then the restriction functor (\ref{rest:P}) 
    induces equivalences:
    \begin{align*}
        \mathbb{T}(\bm{d})_{\delta''} \stackrel{\sim}{\to}  \mathbb{T}^{\ell_{\pm}}(\bm{d})_{ \delta''}. 
    \end{align*}
    In particular, there is an equivalence 
    $\mathbb{T}^{\ell_{+}}(\bm{d})_{\delta''} \simeq \mathbb{T}^{\ell_{-}}(\bm{d})_{ \delta''}$.
\end{thm}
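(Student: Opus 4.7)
The plan is to deduce both equivalences from the Halpern-Leistner window theorem applied in the matrix factorization picture, via the Koszul equivalence $\Theta$ of \eqref{Koszul:theta}. Under $\Theta$, the restriction \eqref{rest:P} becomes the restriction of graded matrix factorizations from $\mathscr{X}(\bm{d})$ to its $\ell_\pm$-semistable open substack, and by combining Lemma~\ref{lem:compareT} with Lemma~\ref{lem:genJ}, the intrinsic window $\mathbb{T}(\bm{d})_{\delta''}$ is identified with the subcategory of $\mathrm{MF}^{\mathrm{gr}}(\mathscr{X}(\bm{d}), \Tr W)$ generated by matrix factorizations whose factors are direct sums of the bundles $\mathcal{O}_{\mathscr{X}(\bm{d})} \otimes \Gamma_{G(\bm{d})}(\chi)$ for dominant weights $\chi$ with $\chi + \rho - \delta'' \in \bm{W}(\bm{d})$.

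The core step is to show that such generators lie inside the matrix factorization analogue of the window $\mathbb{G}^{\ell_\pm}_{m_\bullet} \subset D^b(\mathscr{X}(\bm{d}))$ from \eqref{def:window}, for an appropriate choice of $m_\bullet$. Let $\mathscr{S}_{i,\pm}$ denote the Kempf-Ness strata of $\mathscr{X}(\bm{d})$ for the linearization $\ell_\pm$, with attracting cocharacters $\lambda_{i,\pm}$, fixed centers $\mathscr{Z}_{i,\pm}$, and window widths $\eta_{i,\pm}$. Setting the centered boundaries $m_{i,\pm} := \langle \lambda_{i,\pm}, \delta'' \rangle - \eta_{i,\pm}/2$, I would verify that for every $\chi$ with $\chi + \rho - \delta'' \in \bm{W}(\bm{d})$, the pairing $\langle \lambda_{i,\pm}, \chi \rangle$ lies in the half-open interval $[m_{i,\pm}, m_{i,\pm} + \eta_{i,\pm})$. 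This reduces to a combinatorial fact about $\bm{W}(\bm{d})$: since it is by construction the half-sum of the $T(\bm{d})$-weights of $R_Q(\bm{d})$, its image under $\langle \lambda_{i,\pm}, - \rangle$ has length exactly $\eta_{i,\pm}$. The strict upper endpoint avoidance then reduces to showing no boundary weight of $\bm{W}(\bm{d}) + \rho - \delta''$ pairs integrally with $\lambda_{i,\pm}$ to hit the upper endpoint, which is ensured by the genericity of $\ell_\pm$ (Definition~\ref{def:generic}) combined with the perturbation $\delta' = \varepsilon_+ \ell_+ + \varepsilon_- \ell_-$ with $0 < \varepsilon_\pm \ll 1$.

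Once the containment $\mathbb{T}(\bm{d})_{\delta''} \subset \mathbb{G}^{\ell_\pm}_{m_\bullet}$ is established, the window equivalence \eqref{jequiv} (in its matrix factorization version) implies that the restriction to $\mathrm{MF}^{\mathrm{gr}}(\mathscr{X}(\bm{d})^{\ell_\pm\text{-ss}}, \Tr W)$ is fully faithful on $\mathbb{T}(\bm{d})_{\delta''}$. The essential image is exactly $\mathbb{T}^{\ell_\pm}(\bm{d})_{\delta''}$, because the intrinsic window condition of Definition~\ref{def:intwind} is étale local on the good moduli space and is preserved under passage to a semistable open. This yields the two claimed equivalences, and composing one with the inverse of the other gives $\mathbb{T}^{\ell_+}(\bm{d})_{\delta''} \simeq \mathbb{T}^{\ell_-}(\bm{d})_{\delta''}$.

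The main obstacle is the simultaneous boundary analysis: for a single stability condition, one could perturb $\delta$ slightly in one direction to escape boundary integers, but here one needs a perturbation compatible with both $\ell_+$ and $\ell_-$ strata. This is precisely why $\delta'$ carries two independent small parameters $\varepsilon_\pm$ and why genericity for both $\ell_\pm$ is needed: the weight analysis works uniformly only when both $\langle \lambda_{i,+}, - \rangle$ and $\langle \lambda_{i,-}, - \rangle$ avoid the relevant boundary hyperplanes of $\bm{W}(\bm{d}) + \rho - \delta''$, and the generic choice eliminates the finitely many possible coincidences.
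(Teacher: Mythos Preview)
The paper does not give its own proof of this statement; it is imported from \cite[Corollary~3.19, Remark~3.12]{PTquiver}. Your outline follows the expected magic-window strategy and is consistent with how the paper deploys this result in the proof of Theorem~\ref{thm:walleq} (see the containment $\mathbb{T}_S^{\sigma}(v)_{\delta''} \subset \mathbb{W}(\mathfrak{M}_S^{\sigma}(v))^{\ell_\pm}_{m_{\bullet\pm}}$ there, justified via \cite[Lemma~4.3.10]{halpK32} or \cite[Proposition~6.15]{Totheta}). The full-faithfulness half of your argument, and the boundary-avoidance analysis using the two independent perturbation parameters $\varepsilon_\pm$, are correct.

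There is, however, a genuine gap in essential surjectivity. Your sentence ``the intrinsic window condition \ldots\ is preserved under passage to a semistable open'' only shows that the image of $\mathbb{T}(\bm{d})_{\delta''}$ under restriction lands \emph{inside} $\mathbb{T}^{\ell_\pm}(\bm{d})_{\delta''}$; it does not show equality. The missing step is: given $E \in \mathbb{T}^{\ell_\pm}(\bm{d})_{\delta''}$ with window lift $\widetilde{E} \in \mathbb{G}^{\ell_\pm}_{m_\bullet}$, one must show $\widetilde{E} \in \mathbb{T}(\bm{d})_{\delta''}$. The intrinsic window condition for $\mathbb{T}(\bm{d})_{\delta''}$ must be checked at \emph{every} closed point of $P(\bm{d})$, including the $\ell_\pm$-unstable ones (e.g.\ the origin, where the stabilizer is all of $G(\bm{d})$). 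For generic $\ell_\pm$ in the sense of Definition~\ref{def:generic} the semistable locus is typically not Deligne--Mumford (polystable objects with summands of proportional dimension vector survive), so $\mathbb{T}^{\ell_\pm}(\bm{d})_{\delta''}$ is a proper subcategory of $D^b(\mathscr{P}(\bm{d})^{\ell_\pm\text{-ss}})$ and the magic-window theorem of \cite{hls} does not apply verbatim. Closing this requires either a generation argument comparing the generators of Lemma~\ref{lem:compareT} on the full stack with those on the semistable locus (in the spirit of Corollary~\ref{cor:gen} and its proof in Subsection~\ref{subsec:prop}), or a direct weight argument showing that membership in $\mathbb{G}^{\ell_\pm}_{m_\bullet}$ together with the intrinsic condition on the semistable locus forces the intrinsic condition everywhere. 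This is the substantive content of the cited corollary and is not a formal consequence of locality.
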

\subsection{Semiorthogonal decompositions for preprojective algebras of quivers with one vertex}\label{subsec:onevert}
In the remaining of this section, we focus on the case 
of the $g$-loop quiver $Q^{\circ}=Q_g$
with loops $x_1, \ldots, x_g$. 
In this case, we write the dimension vector by $\bm{d}=d \in \mathbb{N}$. 
The doubled quiver is 
$Q^{\circ, d}=Q_{2g}$ with
loops $x_1, \ldots, x_g, y_1, \ldots, y_g$
and the 
relation $\mathscr{I}$ is given by 
$\sum_{i=1}^g[x_i, y_i]\in \mathbb{C}[Q_{2g}]$. 
The map (\ref{moment}) in this case is 
\begin{align*}
\mu \colon \mathfrak{gl}(d)^{\oplus 2g} \to \mathfrak{gl}(d), \ 
(x_1, \ldots, x_g, y_1, \ldots, y_g) \mapsto \sum_{i=1}^g [x_i, y_i]. 
\end{align*}
Then the derived stack in (\ref{P:dzero}) is 
\begin{align*}\mathscr{P}(d)=\mu^{-1}(0)/GL(d)
\hookrightarrow \mathscr{Y}(d)=\mathfrak{gl}(d)^{\oplus 2g}/GL(d).
\end{align*}

For a partition $d=d_1+\cdots+d_k$, let 
$\mathscr{P}(d_1, \ldots, d_k)$
be the derived moduli stack of 
filtrations
\begin{align*}
0=R_0\subset 
R_1 \subset \cdots \subset R_k
 \end{align*}
of $(Q^{\circ, d}, \mathscr{I})$-representations 
such that $R_i/R_{i-1}$ has dimension $d_i$. 
Explicitly, let $\lambda \colon \mathbb{C}^{\ast} \to T(d)$
be an antidominant cocharacter corresponding to the decomposition 
$d=d_1+\cdots+d_k$
and set 
\begin{align*}
    \mu^{\geq 0} \colon 
    \left(\mathfrak{gl}(d)^{\oplus 2g}\right)^{\lambda \geq 0} \to 
    \mathfrak{gl}(d)^{\lambda \geq 0}
\end{align*}
to be
the restriction 
of $\mu$. 
Then 
\begin{align*}
    \mathscr{P}(d_1, \ldots, d_k)=\left(\mu^{\geq 0}\right)^{-1}(0)/GL(d)^{\lambda \geq 0}. 
\end{align*}
Consider the evaluation morphisms 
\begin{align*}
    \times_{i=1}^k 
    \mathscr{P}(d_i) 
    \stackrel{q}{\leftarrow} \mathscr{P}(d_1, \ldots, d_k)
    \stackrel{p}{\to} \mathscr{P}(d). 
\end{align*}
The map $q$ is quasi-smooth and the map $p$ is proper. Consider
the categorical Hall product 
for the preprojective algebra of $Q^\circ$ \cite{PoSa, VaVa}: 
\begin{align}\label{chall:P}
    p_{\ast}q^{\ast} \colon 
    \boxtimes_{i=1}^k
    D^b(\mathscr{P}(d_i))
    \to D^b(\mathscr{P}(d)). 
\end{align}
We recall a result from~\cite{PTquiver}:
\begin{thm}\emph{(\cite[Theorem~4.20, Example~4.21]{PTquiver})}\label{cor:sodT}
There is a semiorthogonal decomposition 
\begin{align}\label{sod:triple}
D^b(\mathscr{P}(d))=
\left\langle  \boxtimes_{i=1}^k 
\mathbb{T}(d_i)_{w_i+(g-1)d_i(\sum_{i>j}d_j-\sum_{i<j}d_j)}\right\rangle
\end{align}
where the right hand side is after all partitions 
$(d_i)_{i=1}^k$ of $d$ and weights
$(w_i)_{i=1}^k \in \mathbb{Z}^k$ such that 
\begin{equation}\label{ineq:slopes}
\frac{w_1}{d_1}<\cdots<\frac{w_k}{d_k}.
\end{equation}
The fully-faithful functor 
\begin{align*}
 \boxtimes_{i=1}^k 
\mathbb{T}(d_i)_{w_i+(g-1)d_i(\sum_{i>j}d_j-\sum_{i<j}d_j)}
\to D^b(\mathscr{P}(d))
\end{align*}
is given by the categorical Hall product (\ref{chall:P}). The order is as in \cite[Subsection 3.4]{PTzero}, \cite[Subsection 4.6]{PTquiver}.
\end{thm}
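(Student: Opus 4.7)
The plan is to reduce the statement to a semiorthogonal decomposition in matrix factorization categories via the Koszul duality $\Theta$ of Theorem~\ref{thm:Kduality}. Let $Q$ be the tripled quiver of $Q_g$, so that $\mathscr{X}(d) = \mathfrak{gl}(d)^{\oplus (2g+1)}/GL(d)$ and $\Tr W\colon \mathscr{X}(d) \to \mathbb{C}$ is the regular function induced by the tripled potential. Under the equivalence $\Theta\colon D^b(\mathscr{P}(d)) \xrightarrow{\sim} \mathrm{MF}^{\rm{gr}}(\mathscr{X}(d), \Tr W)$, the subcategory $\mathbb{T}(d)_w$ is, by Lemma~\ref{lem:compareT}, precisely the magic window subcategory of $\mathrm{MF}^{\rm{gr}}(\mathscr{X}(d), \Tr W)$ generated by $\mathcal{O}_{\mathscr{X}(d)} \otimes \Gamma_{GL(d)}(\chi)$ with $\chi + \rho - w\tau_d \in \mathbf{W}(d)$. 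Moreover the categorical Hall product (\ref{chall:P}) corresponds, compatibly with $\Theta$, to the attracting correspondence $p_\ast q^\ast$ on the tripled quiver stack, as the filtration stack $\mathscr{P}(d_1,\ldots,d_k)$ is the derived $0$-fiber of a moment map on $R_Q(d)^{\lambda \geq 0}/GL(d)^{\lambda \geq 0}$. So it suffices to produce the analogous semiorthogonal decomposition for the tripled quiver stack.

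Next, I would apply the theory of window categories for quotient stacks of the smooth space $R_Q(d)$ by $GL(d)$ with varying GIT linearizations. Fix an ordered partition $d=d_1+\cdots+d_k$ and weights $w_1/d_1 < \cdots < w_k/d_k$, and associate to this data an antidominant cocharacter $\lambda$ whose block decomposition refines the partition and whose weights encode the slope sequence. This gives a Kempf--Ness stratum (resp. HN type) with center $\mathscr{Z} \cong \prod_i \mathscr{X}(d_i)$ and attracting stack $\mathscr{S}$; the closed embedding $p\colon \mathscr{S} \hookrightarrow \mathscr{X}(d)$ is proper. I would then build the functor in (\ref{sod:triple}) as the composition of (i) the external product $\boxtimes_i \mathbb{T}(d_i)_{w_i'} \hookrightarrow D^b(\mathscr{Z})$ for suitably shifted weights $w_i'$, (ii) the flat pullback $q^\ast$ along $\mathscr{S} \to \mathscr{Z}$, and (iii) the proper pushforward $p_\ast$ into $D^b(\mathscr{X}(d))$; this is the categorical Hall product, which on the preprojective side becomes (\ref{chall:P}).

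For the weight shift, I would carefully compute $\det \mathbb{L}_{p}|_{\mathscr{Z}}$ paired against $\lambda$. The normal bundle of $\mathscr{S}$ in $\mathscr{X}(d)$ contributes the $\lambda$-negative weights in $R_Q(d)$, i.e.\ in each of the $2g+1$ copies of $\mathfrak{gl}(d)$; combined with the contribution of $\det \mathfrak{g}(d)^{\lambda>0}$ on the $GL$-side and the Koszul-duality correction described in (\ref{PhiEy}) that accounts for the discrepancy between the Hall products on $\mathscr{P}$ and on $\mathscr{X}$, a direct computation gives the shift $(g-1) d_i \bigl(\sum_{i>j} d_j - \sum_{i<j} d_j\bigr)$ of the $i$-th factor. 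The slope-increasing condition $w_1/d_1 < \cdots < w_k/d_k$ is exactly the combinatorial condition that pins down a unique cocharacter $\lambda$ (up to central shift) whose associated HN stratum matches this summand.

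Finally, semiorthogonality and generation would be established as in the classical window theory. For semiorthogonality, given two tuples with slope sequences comparable in the wrong order, I would compute $\Hom$ between the corresponding Hall products using adjunction ($p_\ast$ admits a right adjoint on MF, or one uses duality for the proper map $p$) and base change to reduce to a Hom on $\mathscr{Z}$ of tensor products of magic window generators, which then vanishes by the polytope/weight condition in Lemma~\ref{lem:compareT} applied along the relevant cocharacter. For generation, I would induct on the HN type (ordered by decreasing maximal slope): any object on $\mathscr{X}(d)$ is built from its HN-strata contributions, and each stratum lies in the image of the corresponding Hall product by construction. The main technical obstacle I expect is the exact bookkeeping of the three weight shifts (normal bundle of $p$, the Weyl shift $\rho$ encoded in the polytope $\mathbf{W}(d)$, and the Koszul-duality shift (\ref{PhiEy}) between the tripled quiver and the preprojective algebra); matching these produces the precise formula $(g-1) d_i (\sum_{i>j} d_j - \sum_{i<j} d_j)$ stated in (\ref{sod:triple}).
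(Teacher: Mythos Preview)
Your approach is correct and matches what the paper indicates: as noted in Remark~\ref{rmk:order}, the decomposition (\ref{sod:triple}) is obtained from the analogous semiorthogonal decomposition for the $(2g+1)$-loop quiver (on the matrix-factorization side) and then transported via the Koszul equivalence~(\ref{Koszul:theta}), with the Hall products matched by~\cite[Proposition~3.1]{P2} or~\cite[Lemma~2.4.4,~2.4.7]{T}. The paper does not reprove this result here but cites it from~\cite[Theorem~4.20, Example~4.21]{PTquiver}; your sketch of the window/Hall-product argument and the three-fold weight bookkeeping (normal bundle of $p$, $\rho$-shift in $\mathbf{W}(d)$, Koszul discrepancy) is exactly the content of that reference.
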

\begin{remark}\label{rmk:order}
The semiorthogonal decomposition (\ref{sod:triple}) is obtained 
from that of $(2g+1)$-loop quiver and applying Koszul equivalence. 
The order of summands in (\ref{sod:triple}) is not immediate to state, and 
is the same one for the $(2g+1)$-loop quiver 
explained in~\cite[Subsection~4.6]{PTquiver}, see also~\cite[Subsection~3.4]{PTzero} for the case $g=1$. 
\end{remark}
\subsection{Semiorthogonal decompositions on formal fibers}\label{subsub:formal}
We have the following diagram: 
\begin{align}\label{moduli:M}
    \xymatrix{
    \mathscr{P}(d)^{\rm{cl}} \inclusion \ar[d]^{\pi_{P,d}} &
\mathscr{P}(d) \inclusion^{j} &
\mathscr{Y}(d) \ar[d]^{\pi_{Y,d}} \\
P(d) \ar@<-0.3ex>@{^{(}->}[rr] & & 
Y(d). 
    }
\end{align}
Here, the vertical arrows are good moduli space morphisms
and horizontal arrows are closed immersions. 
Consider a closed point $p\in P(d)$
which corresponds to a semisimple $(Q^{\circ, d}, \mathscr{I})$-representation 
\begin{align}\label{Rp}
 R_p=\bigoplus_{i=1}^m W^{(i)} \otimes R^{(i)}, 
\end{align}
where $R^{(i)}$ is a simple $(Q^{\circ, d}, \mathscr{I})$-representation 
of dimension $r^{(i)}$ and 
$W^{(i)}$ is a finite dimensional $\mathbb{C}$-vector space. 
We denote by $\widehat{\mathscr{Y}}(d)_p$
the formal fiber of the right vertical 
arrow in (\ref{moduli:M}) at $p$. 
By the \'{e}tale slice theorem, we have 
\begin{align*}
   \widehat{\mathscr{Y}}(d)_p=\widehat{\Ext}_{Q^{\circ, d}}^1(F_p, F_p)/G_p,
\end{align*}
where $G_p=\mathrm{Aut}(R_p)=\prod_{i=1}^m GL(W^{(i)})$, and see Subsection \ref{notation} for the notation. 
We denote by  
\begin{align}\label{mapjp}
    j_p \colon \widehat{\mathscr{P}}(d)_p
    \hookrightarrow \widehat{\mathscr{Y}}(d)_p
\end{align}
the natural inclusion of the derived zero locus of $\mu$ restricted to 
$\widehat{\mathscr{Y}}(d)_p$. 
\begin{remark}\label{rmk:ext-quiver}
    Let $\kappa$ be the morphism 
    \begin{align*}
        \kappa \colon \Ext^1_{(Q^{\circ, d}, \mathscr{I})}(R_p, R_p) \to 
        \Ext^2_{(Q^{\circ, d}, \mathscr{I})}(R_p, R_p)
    \end{align*}
    given by $x \mapsto [x, x]$. By the formality of polystable 
    objects in 
    CY2 category, see~\cite[Corollary 4.9]{DavPurity}, 
    the derived stack $\widehat{\mathscr{P}}(d)_p$
    is equivalent to the formal fiber of $\kappa^{-1}(0)/G_p$ 
    at $0 \in \kappa^{-1}(0)^{\mathrm{cl}}\ssslash G_p$. 
   \end{remark}
We define 
\begin{align}\label{qbps:that}
  \mathbb{T}_p(d)_w :=\mathbb{W}(\widehat{\mathscr{P}}(d)_p)_{w\tau_d}^{\rm{int}}
  \subset D^b(\widehat{\mathscr{P}}(d)_p). 
\end{align}
There is a description of $\mathbb{T}_p(d)_w$
similar to Lemma~\ref{lem:compareT}, see Subsection~\ref{subsec:prop}. 
Consider a partition $d=d_1+\cdots+d_k$.  
We have the commutative diagram:
\begin{equation}\label{com:hall2}
    \begin{tikzcd}
        \times_{i=1}^k
\mathscr{P}(d_i)&  \mathscr{P}(d_1, \ldots, d_k)\arrow[l, "q"']\arrow[r, "p"]& \mathscr{P}(d)\\
\times_{i=1}^k
\mathscr{P}(d_i)^{\mathrm{cl}}\arrow[u, hook]\arrow[d, "\times_{i=1}^k \pi_{P,d_i}"]&\mathscr{P}(d_1, \ldots, d_k)^{\mathrm{cl}}\arrow[l, "q"']\arrow[r, "p"] \arrow[u, hook] & \mathscr{P}(d)^{\mathrm{cl}}\arrow[u, hook]\arrow[d, "\pi_{P,d}"]\\
\times_{i=1}^k P(d_i)\arrow[rr, "\oplus"]& & P(d),
    \end{tikzcd}
\end{equation}
where $\times_{i=1}^k \pi_{P,d_i}$ and $\pi_{P,d}$ are good moduli space maps.
The base change of the categorical Hall product 
gives the functor 
\begin{align}\label{bchange}
    \bigoplus_{p_1+\cdots+p_k=p}
    \boxtimes_{i=1}^k D^b(\widehat{\mathscr{P}}(d_i)_{p_i})
    \to D^b(\widehat{\mathscr{P}}(d)_{p}),
\end{align}
where the sum on the left hand side consists of the fiber of the 
 bottom horizontal arrow $\oplus$ in (\ref{com:hall2}), 
 which is a finite 
 map. Also see~\cite[(6.12), Lemma~6.4]{Totheta} for the existence of 
 base change diagram of (\ref{com:hall2}) extended to derived stacks. 
 
 The following proposition is a formal fiber 
 version of Theorem~\ref{cor:sodT}. 
 The proof is technical 
 and will be postponed to Subsection~\ref{subsec:prop}. 
\begin{prop}\label{prop:sod2}
    There is a semiorthogonal decomposition 
    \begin{align*}
    D^b(\widehat{\mathscr{P}}(d)_p)
    =\left\langle \bigoplus_{p_1+\cdots+p_k=p}
    \boxtimes_{i=1}^k \mathbb{T}_{p_i}(d_i)_{w_i+(g-1)d_i(\sum_{i>j}d_j-\sum_{i<j}d_j)}
    \right\rangle. 
    \end{align*}
    The right hand side is after all partitions $(d_i)_{i=1}^k$ of $d$,
    all
    points $(p_1,\ldots, p_k)$ in the fiber over $p$ of the addition map $\oplus\colon \times_{i=1}^k P(d_i)\to P(d)$, and all weights $(w_i)_{i=1}^k\in \mathbb{Z}^k$ such that
    \[\frac{w_1}{d_1}<\cdots<\frac{w_k}{d_k}.\] 
    The order of the semiorthogonal 
    decomposition is the same as the order of (\ref{sod:triple}). 
The fully-faithful functor 
\begin{align*}
    \bigoplus_{p_1+\cdots+p_k=p}
    \boxtimes_{i=1}^k \mathbb{T}_{p_i}(d_i)_{w_i+(g-1)d_i(\sum_{i>j}d_j-\sum_{i<j}d_j)}
    \to  D^b(\widehat{\mathscr{P}}(d)_p)
\end{align*}
is given by the base change of the categorical Hall product (\ref{bchange}). 
\end{prop}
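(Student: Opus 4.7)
\textbf{Proof proposal for Proposition \ref{prop:sod2}.}
My plan is to obtain the statement as a base change of the global semiorthogonal decomposition of Theorem \ref{cor:sodT} along the flat morphism $\Spec \widehat{\mathcal{O}}_{P(d),p}\to P(d)$. Recall from \eqref{dia:fnbd2} applied to the quasi-smooth stack $\mathscr{P}(d)$ that the formal fiber fits in a Cartesian diagram with $\Spec \widehat{\mathcal{O}}_{P(d),p}\to P(d)$, so $D^b(\widehat{\mathscr{P}}(d)_p)$ is exactly the base change of $D^b(\mathscr{P}(d))$ to $\Spec \widehat{\mathcal{O}}_{P(d),p}$.

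The first step is to check that \eqref{sod:triple} is a $P(d)$-linear semiorthogonal decomposition. Each fully-faithful embedding is the categorical Hall product $p_\ast q^\ast$ in \eqref{chall:P}, and both $p$ and $q$ are morphisms over the addition map of good moduli spaces $\oplus\colon \times_i P(d_i)\to P(d)$ (the bottom row of \eqref{com:hall2}). Thus tensoring with pullbacks from $P(d)$ commutes with Hall multiplication, and the semiorthogonality relations are automatically $P(d)$-linear because they hold globally. Once $P(d)$-linearity is granted, base change along the flat map $\Spec \widehat{\mathcal{O}}_{P(d),p}\to P(d)$ preserves semiorthogonality, generation, and fully-faithfulness of the embeddings on the relevant bounded categories (the completion is flat, and properness of $p$ guarantees that base change commutes with pushforward).

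The second step is to identify each base-changed summand. The addition map $\oplus\colon \times_{i=1}^k P(d_i)\to P(d)$ is finite (it is the good moduli space of the proper morphism $\mathscr{P}(d_1,\ldots,d_k)\to \mathscr{P}(d)$ composed with semisimplification), so after base change to $\Spec \widehat{\mathcal{O}}_{P(d),p}$ the source decomposes as a disjoint union
\[
\Spec \widehat{\mathcal{O}}_{P(d),p}\times_{P(d)}\left(\times_i P(d_i)\right)=\bigsqcup_{p_1+\cdots+p_k=p}\Spec\widehat{\mathcal{O}}_{\times_i P(d_i),(p_1,\ldots,p_k)},
\]
indexed by preimages of $p$. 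Base-changing the product $\times_{i=1}^k \mathscr{P}(d_i)$ and $\mathscr{P}(d_1,\ldots,d_k)$ via \eqref{com:hall2} therefore splits them into disjoint unions indexed by $(p_1,\ldots,p_k)$, and the base-changed Hall product becomes the direct sum functor \eqref{bchange}. Finally, because the intrinsic window subcategory is defined étale-locally on the good moduli space (Definition \ref{def:intwind}), the base change of $\mathbb{T}(d_i)_{w_i+\cdots}$ to $\Spec\widehat{\mathcal{O}}_{P(d_i),p_i}$ is exactly $\mathbb{T}_{p_i}(d_i)_{w_i+\cdots}$. Assembling these identifications yields the asserted semiorthogonal decomposition, with order inherited from the global one.

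The main obstacle is the technical verification of $P(d)$-linearity together with the base-change compatibility of $p_\ast q^\ast$ in the quasi-smooth setting along the non–finite-type morphism $\Spec \widehat{\mathcal{O}}_{P(d),p}\to P(d)$. This requires a careful application of flat base change for proper pushforward of bounded coherent complexes on quasi-smooth stacks with good moduli spaces, together with the preservation of coherence under formal completion; I expect the argument to proceed by reducing via Zariski-local presentations of $P(d)$ and invoking the Luna-type slice and formal neighborhood descriptions used already in Subsections \ref{subsection:etale} and \ref{subsub:formal}. A secondary subtlety is that one must verify the order of summands is preserved under base change; this is automatic because base change is a $t$-exact linear operation that respects the relations $\frac{w_1}{d_1}<\cdots<\frac{w_k}{d_k}$ that index the summands.
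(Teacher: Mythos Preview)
Your overall strategy matches the paper's: base-change the $P(d)$-linear semiorthogonal decomposition of Theorem~\ref{cor:sodT} along the flat map $\Spec\widehat{\mathcal{O}}_{P(d),p}\to P(d)$ and identify the summands. The paper carries out your first two steps via Lemma~\ref{lem:basechange}, obtaining a semiorthogonal decomposition whose pieces are the subcategories $\widehat{\mathbb{T}}_{p_i}(d_i)_{w_i'}$ classically generated by the image of $\iota_{p_i}^\ast\colon \mathbb{T}(d_i)_{w_i'}\to D^b(\widehat{\mathscr{P}}(d_i)_{p_i})$.

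The gap in your argument is the last identification. You assert that the base change of $\mathbb{T}(d_i)_{w_i'}$ to the formal fiber \emph{is} $\mathbb{T}_{p_i}(d_i)_{w_i'}$ because the intrinsic window category is defined \'etale-locally. But Definition~\ref{def:intwind} only says that an object lies in the intrinsic window category iff its restriction to every chart lies in the local intrinsic window category; this immediately gives $\widehat{\mathbb{T}}_{p_i}(d_i)_{w_i'}\subset \mathbb{T}_{p_i}(d_i)_{w_i'}$, but not the reverse inclusion. Equivalently, you need to know that the pullbacks from the global quasi-BPS category classically generate the formal-local one, which is precisely Corollary~\ref{cor:gen} and is not formal.

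The paper closes this gap by an explicit weight argument. Passing through Koszul duality, one compares the two candidate subcategories $\widehat{\mathbb{S}}^{\mathrm{gr}}_p(d)_w$ and $\mathbb{S}^{\mathrm{gr}}_p(d)_w$ of $\mathrm{MF}^{\mathrm{gr}}(\widehat{\mathscr{X}}(d)_p,\Tr W)$. The nontrivial inclusion $\mathbb{S}^{\mathrm{gr}}_p(d)_w\subset \widehat{\mathbb{S}}^{\mathrm{gr}}_p(d)_w$ is deduced from Lemma~\ref{lem:orthoS}, which shows by a direct cocharacter-weight estimate (using the width identity $n_{\lambda,p}=n_\lambda$ of \eqref{eta:equal}) that every Hall-product summand with $k\geq 2$ in the base-changed SOD is right orthogonal to $\mathbb{S}^{\mathrm{gr}}_p(d)_w$. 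Since the SOD already generates, this forces $\mathbb{S}^{\mathrm{gr}}_p(d)_w$ to be contained in the remaining piece $\widehat{\mathbb{S}}^{\mathrm{gr}}_p(d)_w$, proving equality and hence Corollary~\ref{cor:gen} simultaneously with the proposition. Your proposal would need an argument of this kind in place of the appeal to Definition~\ref{def:intwind}.
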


During the proof of Proposition~\ref{prop:sod2}, we will also obtain the following: 
\begin{cor}\label{cor:gen}
The map $\iota_p \colon \widehat{\mathscr{P}}(d)_p \to \mathscr{P}(d)$
induces the functor 
\begin{align*}
    \iota_p^{\ast} \colon 
    \mathbb{T}(d)_w \to \mathbb{T}_p(d)_w
\end{align*}
and its image classically generates 
$\mathbb{T}_p(d)_w$. 
\end{cor}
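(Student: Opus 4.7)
The corollary is naturally established in tandem with Proposition~\ref{prop:sod2}, by pulling back the global semiorthogonal decomposition of Theorem~\ref{cor:sodT} along $\iota_p$ and matching it with the formal one. For the first assertion, that $\iota_p^{\ast}$ takes $\mathbb{T}(d)_w$ into $\mathbb{T}_p(d)_w$: by Definition~\ref{def:intwind}, the intrinsic window condition is, by design, étale-local on the good moduli space. The formal fiber $\widehat{\mathscr{P}}(d)_p$ arises as the completion at $p \in P(d)$ of étale charts $\mathfrak{M}_U \to \mathscr{P}(d)$ of the kind featured in Definition~\ref{def:intwind}, so any object $\mathcal{E} \in \mathbb{T}(d)_w = \mathbb{W}(\mathscr{P}(d))^{\mathrm{int}}_{w\tau_d}$ pulled back to $\widehat{\mathscr{P}}(d)_p$ satisfies the window condition defining $\mathbb{T}_p(d)_w$.

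The main step is the generation claim, and the plan is to reduce it to the compatibility of the categorical Hall product with formal-fiber base change. Concretely, for a partition $d = d_1 + \cdots + d_k$, the pullback along $\iota_p$ of the global Hall product \eqref{chall:P} should decompose, by derived base change along the bottom square of~\eqref{com:hall2}, as the direct sum of formal Hall products
\[
\bigoplus_{p_1+\cdots+p_k=p} \boxtimes_{i=1}^k D^b(\widehat{\mathscr{P}}(d_i)_{p_i}) \to D^b(\widehat{\mathscr{P}}(d)_p)
\]
indexed by the points of the fiber of $\oplus\colon \times_{i=1}^k P(d_i) \to P(d)$ over $p$; this is precisely the map~\eqref{bchange}. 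Combined with the first paragraph applied to each $\mathscr{P}(d_i)$, the image under $\iota_p^{\ast}$ of each global Hall-product summand $\boxtimes_{i=1}^k \mathbb{T}(d_i)_{w_i + (g-1)d_i(\sum_{j<i}d_j - \sum_{j>i}d_j)}$ lands inside the corresponding direct sum of formal Hall-product summands of Proposition~\ref{prop:sod2}.

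Since Theorem~\ref{cor:sodT} exhibits $D^b(\mathscr{P}(d))$ as classically generated by all such global Hall-product summands (ranging over partitions of $d$ and slope-increasing tuples $(w_i/d_i)$), the pullback along $\iota_p$ of these generators classically generates $D^b(\widehat{\mathscr{P}}(d)_p)$. Comparing this generating collection piece-by-piece with the formal semiorthogonal decomposition of Proposition~\ref{prop:sod2}, using the indexing by the same combinatorial data $(d_1,\ldots,d_k;w_1,\ldots,w_k;p_1,\ldots,p_k)$, forces each formal summand $\boxtimes_{i=1}^k \mathbb{T}_{p_i}(d_i)_{w_i'}$ to be classically generated by the image of its global counterpart. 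Specializing to the trivial partition $k=1$, $d_1=d$, $p_1=p$ yields the desired statement that $\iota_p^{\ast}\mathbb{T}(d)_w$ classically generates $\mathbb{T}_p(d)_w$.

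The principal obstacle is the derived base change of the categorical Hall product along formal completion—namely, upgrading \eqref{com:hall2} to derived stacks and checking that the resulting square is Tor-independent (or that the required base change formula holds at the level of $D^b$). For this, one invokes the Ext-quiver presentation of $\widehat{\mathscr{P}}(d)_p$ via polystable decompositions (Remark~\ref{rmk:ext-quiver}) together with the derived base-change framework of \cite[(6.12), Lemma~6.4]{Totheta}. Once this is in place, the summand-by-summand comparison of the two semiorthogonal decompositions is purely formal and yields the corollary.
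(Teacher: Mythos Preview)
Your outline correctly identifies the overall architecture: pull back the global semiorthogonal decomposition of Theorem~\ref{cor:sodT} via Lemma~\ref{lem:basechange} to obtain an SOD of $D^b(\widehat{\mathscr{P}}(d)_p)$ whose summands are the categories $\widehat{\mathbb{T}}_{p_i}(d_i)_{w_i'}$ classically generated by pullbacks, and then argue that these coincide with the intrinsic categories $\mathbb{T}_{p_i}(d_i)_{w_i'}$. The easy inclusion $\widehat{\mathbb{T}}_p(d)_w\subset\mathbb{T}_p(d)_w$ is also correctly handled (in the paper it is made explicit via Remark~\ref{rmk:anotherT} and the width identity~\eqref{eta:equal}).

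The gap is in the ``piece-by-piece comparison''. Your argument invokes the formal semiorthogonal decomposition of Proposition~\ref{prop:sod2} as if it were already available, and then uses the general fact that two SODs with the same index set and termwise inclusions $A_i\subset B_i$ must agree. But in the paper the logical order is reversed: Proposition~\ref{prop:sod2} is \emph{deduced from} the equality $\widehat{\mathbb{T}}_p(d)_w=\mathbb{T}_p(d)_w$, not the other way around. So as written your argument is circular. If instead you intend an induction on $d$, note that the inductive step only covers summands with $k\ge 2$; the trivial partition $k=1$ is precisely the statement you are trying to prove and receives no independent treatment.

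What actually closes the loop in the paper is Lemma~\ref{lem:orthoS}: a direct weight computation (on the Koszul-dual matrix factorization side) showing that every Hall-product summand with $k\ge 2$ in the pulled-back SOD~\eqref{sod:hat2} is right orthogonal to $\mathbb{S}^{\rm gr}_p(d)_w\cong\mathbb{T}_p(d)_w$. Given the SOD with $\widehat{\mathbb{S}}$-summands, this orthogonality forces $\mathbb{S}^{\rm gr}_p(d)_w\subset\widehat{\mathbb{S}}^{\rm gr}_p(d)_w$, yielding the reverse inclusion. Your outline contains no substitute for this step; the ``purely formal'' comparison you describe cannot supply it without some genuine input about $\mathbb{T}_p(d)_w$ itself.
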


\subsection{Reduced quasi-BPS categories}
We continue the discussion from the previous subsection. 
Let 
$\mathfrak{gl}(d)_0 \subset \mathfrak{gl}(d)$ be the traceless Lie subalgebra, and 
let $\mu_0$ be the map 
\begin{align}\label{mu0:trace}
    \mu_0 \colon \mathfrak{gl}(d)^{\oplus 2g} \to \mathfrak{gl}(d)_0, \ 
    (x_1, \ldots, x_g, y_1, \ldots, y_g) \mapsto \sum_{i=1}^g [x_i, y_i]. 
    \end{align}
    Define the reduced stack: 
     \begin{align*}
         \mathscr{P}(d)^{\rm{red}} :=\mu_0^{-1}(0)/GL(d). 
     \end{align*}
          We define the reduced quasi-BPS category to be 
    \begin{align}\label{def:Tdwred}
      \mathbb{T}(d)_w^{\rm{red}}:=\mathbb{W}(\mathscr{P}(d)^{\rm{red}})^{\rm{int}}_{w\tau_d} \subset D^b(\mathscr{P}(d)^{\rm{red}}). 
      \end{align}
      There is a description similar to Lemma~\ref{lem:compareT}
      using the embedding 
$\mathscr{P}(d)^{\rm{red}} \hookrightarrow \mathscr{Y}(d)$. 
Denote by $\mathfrak{gl}(d)_{\rm{nil}} \subset \mathfrak{gl}(d)_0$ the subset of nilpotent 
        elements. 
 The categorical support lemma in~\cite{PTquiver} is the following: 
     \begin{lemma}\emph{(\cite[Corollary~5.5]{PTquiver})}\label{cor:support}
        For coprime $(d, w)\in\mathbb{N}\times\mathbb{Z}$, any object 
        $\mathcal{E} \in \mathbb{T}(d)_w^{\rm{red}}$ satisfies: 
        \begin{align*}
            \mathrm{Supp}^{\rm{sg}}(\mathcal{E})
            \subset (\mathfrak{gl}(d)^{\oplus 2g} \oplus \mathfrak{gl}(d)_{\rm{nil}})/GL(d). 
        \end{align*}
     \end{lemma}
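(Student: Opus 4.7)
The plan is to translate the question via Koszul duality into a support statement for graded matrix factorizations, and then to exploit the coprimality of $(d,w)$ together with the tracelessness built into $\mathscr{P}(d)^{\rm{red}}$.

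The stack $\mathscr{P}(d)^{\rm{red}}$ is a Koszul stack for the $GL(d)$-equivariant section $\mu_0$, and Theorem~\ref{thm:Kduality} gives an equivalence
$\Theta \colon D^b(\mathscr{P}(d)^{\rm{red}}) \xrightarrow{\sim} \mathrm{MF}^{\rm{gr}}(\mathscr{V}^{\vee}, f)$ with $\mathscr{V}^{\vee} = (\mathfrak{gl}(d)^{\oplus 2g} \oplus \mathfrak{gl}(d)_0)/GL(d)$ and $f(x,y,\xi) = \sum_i \mathrm{tr}([x_i,y_i]\,\xi)$. Under $\Theta$, the singular support of $\mathcal{E}$ is identified with the support of $\Theta(\mathcal{E})$, a conical closed substack of
$\mathrm{Crit}(f) = \{(x,y,\xi) : \mu_0(x,y)=0,\ [\xi,x_i]=[\xi,y_i]=0\}/GL(d)$. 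It therefore suffices to prove that $\mathrm{Supp}(\Theta(\mathcal{E}))$ avoids the locus where $\xi$ has nonzero semisimple part.

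Suppose for contradiction that such a point $(x,y,\xi)$ exists. Using $GL(d)$-invariance and a limit along the adjoint action of an adapted cocharacter (valid by closedness of the support), I would first reduce to the case where $(x,y,\xi)$ is $\lambda_\xi$-fixed, where $\lambda_\xi\colon \mathbb{C}^\ast \to T(d)$ arises from the eigenspace decomposition of $\xi_s$. Then $x, y$ are block diagonal with respect to a decomposition $d = d_1+\cdots+d_k$ with $k \geq 2$, and the $\lambda_\xi$-weights on $\mathbb{C}^d$ are integers $\mu_1 < \cdots < \mu_k$. Tracelessness $\xi \in \mathfrak{gl}(d)_0$ yields $\sum_i \mu_i d_i = 0$, equivalently $\langle \lambda_\xi, w\tau_d\rangle = 0$. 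Via the reduced analogue of Theorem~\ref{cor:sodT} and Corollary~\ref{cor:gen}, the nonzero restriction of $\Theta(\mathcal{E})$ to the $\lambda_\xi$-attracting stratum factors through a categorical Hall product $\boxtimes_{i=1}^k \mathbb{T}(d_i)^{\rm{red}}_{w_i}$ with $\sum_i w_i = w$. The matching of intrinsic window conditions across this decomposition then forces the slopes to coincide: $w_i/d_i = w/d$ for all $i$. Coprimality $\gcd(d,w)=1$ now forces $d_i = d$ for some $i$, so $k=1$, making $\xi_s$ scalar and hence zero by tracelessness --- a contradiction.

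The main technical obstacle is the reduction just outlined: rigorously factoring the restriction of $\Theta(\mathcal{E})$ to the $\lambda_\xi$-attracting stratum through a product of smaller quasi-BPS categories with prescribed weights, and extracting the matching-slope condition $w_i/d_i = w/d$. This requires a base-changed categorical Hall-type semiorthogonal decomposition for the reduced stack, together with the sharp weight-polytope description of $\mathbb{T}(d_i)^{\rm{red}}_{w_i}$ from Lemma~\ref{lem:compareT}. These ingredients are assembled in~\cite[Corollary~5.5]{PTquiver}, building on the singular-support and window-category analyses of~\cite{T} and the $g=1$ categorical support lemma in~\cite{PTzero}. It is crucial that we are on the \emph{reduced} stack, since tracelessness is exactly what places $\lambda_\xi$ in the kernel of $w\tau_d$ and sets up the slope-matching conflict with $\gcd(d,w) = 1$.
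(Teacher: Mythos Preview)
This lemma is quoted from \cite[Corollary~5.5]{PTquiver} without proof in the paper, so there is no in-paper argument to compare against; your proposal is effectively a reconstruction of the cited proof, and you yourself defer to that reference for the technical core.

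The overall architecture of your sketch is correct: translate to matrix factorizations, assume $\xi$ has nonzero semisimple part, pass to a block decomposition, and derive a contradiction with $\gcd(d,w)=1$. Two points deserve sharpening. First, the sentence ``tracelessness $\xi\in\mathfrak{gl}(d)_0$ yields $\sum_i\mu_i d_i=0$'' is not right as stated: the $\mu_i$ are integer weights of a \emph{chosen} cocharacter, not the eigenvalues of $\xi_s$, and there is no reason those eigenvalues lie in a one-dimensional $\mathbb{Q}$-subspace of $\mathbb{C}$. What tracelessness actually buys is that a nonzero traceless semisimple $\xi_s$ must have at least two distinct eigenvalues, hence $k\geq 2$; one then \emph{chooses} $\lambda$ adapted to the eigenspace decomposition with $\langle\lambda,\tau_d\rangle=0$, which is always possible once $k\geq 2$. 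Second, invoking Theorem~\ref{cor:sodT} and Corollary~\ref{cor:gen} to factor the restriction through a categorical Hall product is not how the argument in \cite{PTquiver} runs, and is somewhat circular: those results describe a global semiorthogonal decomposition of $D^b(\mathscr{P}(d))$, not the structure of a single object restricted to an attracting stratum. The proof there is a direct weight computation using the window condition of Lemma~\ref{lem:compareT}: the bound on $\langle\lambda,\chi\rangle$ for the $T(d)$-weights $\chi$ appearing in $\Theta(\mathcal{E})$, combined with $\langle\lambda,\tau_d\rangle=0$, forces an integrality constraint incompatible with $\gcd(d,w)=1$ whenever the restriction to the $\lambda$-fixed locus is nonzero. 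Your slope-matching heuristic is the shadow of this computation, but the mechanism does not pass through the Hall product.
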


For $g\geq 2$, the derived stack $\mathscr{P}(d)^{\rm{red}}$ is 
classical by~\cite[Proposition~3.6]{KaLeSo}, 
in particular there is a 
good moduli space 
morphism 
\begin{align*}
\pi_P \colon \mathscr{P}(d)^{\rm{red}}\to P(d)=\mu^{-1}(0)^{\mathrm{red}}\ssslash G(d).
\end{align*}
It follows that the Hom-space between any two objects in $D^b(\mathscr{P}(d)^{\rm{red}})$ is a module 
over $\mathcal{O}_{P(d)}$.      
     The categorical support lemma is the main ingredient in the proof of the following:
\begin{prop}\label{lem:bound}\emph{(\cite[Proposition~5.9]{PTquiver})}
For coprime $(d, w)\in\mathbb{N}\times\mathbb{Z}$ and objects $\mathcal{E}_i \in \mathbb{T}(d)_w^{\rm{red}}$
for $i=1, 2$, 
the $\mathcal{O}_{P(d)}$-module \[\bigoplus_{i\in \mathbb{Z}}\Hom^i_{\mathscr{P}(d)^{\mathrm{red}}}(\mathcal{E}_1, \mathcal{E}_2)\]
is finitely generated. In particular, 
we have $\Hom^i_{\mathscr{P}(d)^{\mathrm{red}}}(\mathcal{E}_1, \mathcal{E}_2)=0$ for $\lvert i \rvert \gg 0$.     
\end{prop}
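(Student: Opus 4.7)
The strategy is to use the categorical support lemma (Lemma~\ref{cor:support}) together with the good moduli space structure to upgrade the singular-support bound into coherence of the pushforward to $P(d)$. Since $g\geq 2$, the reduced stack $\mathscr{P}(d)^{\mathrm{red}}$ is classical by~\cite{KaLeSo} and carries a good moduli space morphism $\pi_P\colon\mathscr{P}(d)^{\mathrm{red}}\to P(d)$ with $P(d)$ affine. Consequently,
\[\Hom^i_{\mathscr{P}(d)^{\mathrm{red}}}(\mathcal{E}_1,\mathcal{E}_2)=\Gamma\bigl(P(d),\mathcal{H}^i(\pi_{P,\ast}\lHom(\mathcal{E}_1,\mathcal{E}_2))\bigr),\]
so it suffices to show that $\pi_{P,\ast}\lHom(\mathcal{E}_1,\mathcal{E}_2)$ lies in $D^b_{\mathrm{coh}}(P(d))$. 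Finite generation of the graded $\mathcal{O}_{P(d)}$-module $\bigoplus_i\Hom^i(\mathcal{E}_1,\mathcal{E}_2)$ and the vanishing $\Hom^i=0$ for $|i|\gg 0$ both follow from this, since $\mathcal{O}_{P(d)}$ is concentrated in degree $0$.

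Next, I would pass through a reduced version of the Koszul equivalence of Theorem~\ref{thm:Kduality}, identifying $D^b(\mathscr{P}(d)^{\mathrm{red}})$ with $\mathrm{MF}^{\mathrm{gr}}(\mathscr{X}(d)^{\mathrm{red}},f^{\mathrm{red}})$, where $\mathscr{X}(d)^{\mathrm{red}}=(\mathfrak{gl}(d)^{\oplus 2g}\oplus\mathfrak{gl}(d)_0)/GL(d)$ and $f^{\mathrm{red}}(x,y,s)=\mathrm{Tr}\bigl(s\cdot\sum_i[x_i,y_i]\bigr)$. Under this equivalence, the singular support of $\mathcal{E}$ matches the honest support of the associated graded matrix factorization in $\mathrm{Crit}(f^{\mathrm{red}})=\Omega_{\mathscr{P}(d)^{\mathrm{red}}}[-1]^{\mathrm{cl}}$. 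The categorical support lemma then forces the matrix factorizations attached to $\mathcal{E}_1,\mathcal{E}_2$ to be supported on the closed conical substack
\[Z:=\bigl\{(x,y,s)\in\mathfrak{gl}(d)^{\oplus 2g}\oplus\mathfrak{gl}(d)_0:\mu_0(x,y)=0,\ [s,x_i]=[s,y_i]=0\ \forall i,\ s\ \text{nilpotent}\bigr\}/GL(d).\]

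The crucial geometric input is that the projection $Z\to P(d)$ induced by $(x,y,s)\mapsto(x,y)$ is proper. Over a closed point of $P(d)$ represented by a polystable representation $R$ with stabilizer $G_R$, the set-theoretic fiber is the quotient stack of the intersection of the nilpotent cone of $\mathrm{Lie}(G_R)$ with $\mathfrak{gl}(d)_0$ by $G_R$, whose good moduli space is a single point. Combined with the fact that good moduli space morphisms preserve coherence, this yields $\pi_{P,\ast}\lHom(\mathcal{E}_1,\mathcal{E}_2)\in D^b_{\mathrm{coh}}(P(d))$, completing the proof. The main obstacle I anticipate is the rigorous reconciliation of the global intrinsic notion of singular support with the local matrix-factorization support after Koszul duality; this is most cleanly handled by working in the \'etale/formal slices provided by the formal neighborhood theorem of Subsection~\ref{subsection:etale}, where $\mathscr{P}(d)^{\mathrm{red}}$ is presented globally as a Koszul stack and the properness argument can be carried out fiberwise over $P(d)$.
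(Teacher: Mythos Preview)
Your plan is on the right track and matches the approach one expects from~\cite{PTquiver}: invoke the categorical support lemma, pass through the Koszul equivalence to graded matrix factorizations on $\mathscr{X}(d)^{\mathrm{red}}$, and exploit that the relevant supports lie over the nilpotent locus. Two points, however, need sharpening.

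First, the assertion that ``$Z\to P(d)$ is proper'' is not correct as a statement about stacks: the fibres over closed points of $\mathscr{P}(d)^{\mathrm{red}}$ are affine nilpotent cones, and the stabilizers are positive-dimensional, so the morphism is neither representable nor proper. What you actually need (and what your fibre description establishes) is that the image of $Z$ under the good moduli space map $\mathscr{X}(d)^{\mathrm{red}}\to X(d)^{\mathrm{red}}$ is contained in the zero section $P(d)\hookrightarrow X(d)^{\mathrm{red}}$. This is the sense in which nilpotent support translates to ``proper over $P(d)$'' and is exactly how the paper packages it in Subsection~\ref{subsec:trace}.

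Second, and more substantively, ``good moduli space morphisms preserve coherence'' gives you coherence of each cohomology sheaf of $\pi_{P,\ast}\mathcal{H}om(\mathcal{E}_1,\mathcal{E}_2)$, but not boundedness. The internal Hom of matrix factorizations is a priori a $2$-periodic object, and in the graded setting unfolds to an unbounded complex. The missing step is: after pushing $\mathcal{H}om(\mathcal{P}_1,\mathcal{P}_2)$ forward to $V^{\vee}\ssslash G$, you obtain a graded factorization of $0$ with \emph{coherent} factors, whose total cohomology is a $\mathbb{C}^{\ast}$-equivariant coherent sheaf set-theoretically supported on the zero section. Since the zero section is the $\mathbb{C}^{\ast}$-fixed locus, any such equivariant coherent sheaf has bounded weights; the weight pieces are precisely the cohomology of the unfolded complex, so boundedness follows. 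Without this argument the deduction to $D^b_{\mathrm{coh}}(P(d))$ is incomplete.

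Finally, the obstacle you anticipate---matching intrinsic singular support with matrix-factorization support---is not an issue here: $\mathscr{P}(d)^{\mathrm{red}}$ is globally a Koszul stack, and the identification is exactly Subsection~\ref{subsec:shifted}.
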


\subsection{Relative Serre functor on reduced quasi-BPS categories}
We continue the discussion from the previous subsection. 
We have that $\mathbb{T}:=\mathbb{T}(d)^{\rm{red}}_w$ is a subcategory of $D^b(\mathscr{P}^{\mathrm{red}})$, which is a 
module over $\rm{Perf}(\mathscr{P}(d)^{\rm{red}})$.
Thus there is an associated internal homomorphism, 
see Subsection~\ref{subsec:qsmooth}:
\begin{align*}
 \mathcal{H}om_{\mathbb{T}}(\mathcal{E}_1, \mathcal{E}_2)
\in D_{\rm{qc}}(\mathscr{P}(d)^{\rm{red}})
\end{align*} for $\mathcal{E}_1, \mathcal{E}_2 \in \mathbb{T}$.
Proposition~\ref{lem:bound} implies that 
$\pi_{\ast} \mathcal{H}om_{\mathbb{T}}(\mathcal{E}_1, \mathcal{E}_2)$ is an object of $D^b(P(d))$. 

\begin{thm}\emph{(\cite[Theorem~5.10]{PTquiver})}\label{thm:Serre}
For coprime $(d, w)\in\mathbb{N}\times\mathbb{Z}$
and $\mathcal{E}_1, \mathcal{E}_2 \in \mathbb{T}$, 
there is an isomorphism: 
\begin{align}\label{isom:Serre}
    \Hom_{P(d)}(\pi_{\ast}\mathcal{H}om_{\mathbb{T}}(\mathcal{E}_1, \mathcal{E}_2), \mathcal{O}_{P(d)})
    \cong \Hom_{\mathbb{T}}(\mathcal{E}_2, \mathcal{E}_1). 
\end{align}
\end{thm}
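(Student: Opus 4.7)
The plan is to prove Theorem~\ref{thm:Serre} by combining a formal-local reduction around closed points of $P(d)$ with relative Grothendieck--Serre duality, exploiting the $(2\text{-})$Calabi--Yau structure of the reduced preprojective stack.

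First, by Proposition~\ref{lem:bound} the sheaf $\pi_{\ast}\mathcal{H}om_{\mathbb{T}}(\mathcal{E}_1,\mathcal{E}_2)$ and the naturally associated $\mathcal{O}_{P(d)}$-module $\Hom_{\mathbb{T}}(\mathcal{E}_2,\mathcal{E}_1)$ are both coherent on $P(d)$, so it suffices to construct a natural isomorphism after formal completion at each closed point $p\in P(d)$. Using the \'etale slice theorem and Remark~\ref{rmk:ext-quiver}, the formal fiber $\widehat{\mathscr{P}}(d)_p^{\rm{red}}$ becomes the (reduced) preprojective stack associated to the Ext-quiver of the polystable $(Q^{\circ,d},\mathscr{I})$-representation $R_p$. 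The reduced analogues of Corollary~\ref{cor:gen} and Proposition~\ref{prop:sod2} then classically generate the formal quasi-BPS category $\mathbb{T}_p(d)_w^{\rm{red}}$ by restrictions of objects of $\mathbb{T}(d)_w^{\rm{red}}$ and decompose it into products of smaller quasi-BPS pieces, reducing the assertion to a duality on the Ext-quiver stack at~$p$.

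Next, I would apply relative Grothendieck duality for the (classical, for $g\geq 2$) morphism $\pi\colon \mathscr{P}(d)^{\rm{red}}\to P(d)$: for $\mathcal{E}_1,\mathcal{E}_2\in\mathbb{T}$ this yields a natural equivalence
\begin{align*}
    R\Hom_{P(d)}\bigl(R\pi_{\ast}\mathcal{H}om(\mathcal{E}_1,\mathcal{E}_2),\,\mathcal{O}_{P(d)}\bigr)\simeq R\pi_{\ast}\mathcal{H}om\bigl(\mathcal{E}_2,\,\mathcal{E}_1\otimes\pi^{!}\mathcal{O}_{P(d)}\bigr).
\end{align*}
Hence the problem reduces to verifying that $-\otimes\pi^{!}\mathcal{O}_{P(d)}$ acts as the identity, including the cohomological shift, on $\mathbb{T}$. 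The introduction of the traceless moment map $\mu_0$ in~\eqref{mu0:trace} is designed precisely for this: $\mathscr{P}(d)^{\rm{red}}$ is the Hamiltonian reduction of $T^{\ast}R_{Q^{\circ}}(d)/SL(d)$, carrying a canonical symplectic $($i.e.\ $2$-Calabi--Yau$)$ structure which trivializes $\omega_{\mathscr{P}(d)^{\rm{red}}}$, and the cohomological shift produced by $\pi^{!}$ has parity such that, combined with the window constraints defining $\mathbb{T}$, the Serre pairing lives in degree zero.

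Finally, the categorical support lemma (Lemma~\ref{cor:support}) controls nondegeneracy: for coprime $(d,w)$, objects of $\mathbb{T}(d)_w^{\rm{red}}$ have singular support in the nilpotent locus, which is exactly the subset on which the trace and duality constructions behave well and the pairing is perfect. The hard part is constructing a globally defined trace map $R\pi_{\ast}\mathcal{O}_{\mathscr{P}^{\rm{red}}}\to\mathcal{O}_{P(d)}$ realizing~\eqref{isom:Serre}; locally this follows from the Koszul equivalence~\eqref{equiv:Phi} and the standard Serre duality for graded matrix factorizations on smooth ambient stacks, and the coherence provided by Proposition~\ref{lem:bound} then lets formal-local agreement descend to the desired global statement.
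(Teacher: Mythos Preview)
The central step of your proposal is the relative Grothendieck duality formula
\[
R\Hom_{P(d)}\bigl(R\pi_{\ast}\mathcal{H}om(\mathcal{E}_1,\mathcal{E}_2),\,\mathcal{O}_{P(d)}\bigr)\simeq R\pi_{\ast}\mathcal{H}om\bigl(\mathcal{E}_2,\,\mathcal{E}_1\otimes\pi^{!}\mathcal{O}_{P(d)}\bigr),
\]
and this is where the argument breaks down. First, $\pi\colon\mathscr{P}(d)^{\rm red}\to P(d)$ is a good moduli space map, hence not representable and not proper in the sense required for a right adjoint $\pi^{!}$ to exist; there is no Grothendieck duality available in this form. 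Second, even for a proper map the adjunction yields $R\pi_{\ast}\mathcal{H}om(\mathcal{H}om(\mathcal{E}_1,\mathcal{E}_2),\pi^{!}\mathcal{O})$, not the ``swapped'' expression you wrote; getting from one to the other requires the $\mathcal{E}_i$ to be perfect, which is exactly what fails here and is the whole point of working with quasi-BPS categories rather than $\mathrm{Perf}$. Third, $\mathscr{P}(d)^{\rm red}$ is singular, so even granting a dualizing complex it will not be a shifted line bundle, and the appeal to the symplectic structure does not fix this. Finally, your first step (formal completion at $p\in P(d)$ and reduction to Ext-quivers) is unnecessary and somewhat circular here: $P(d)$ is already affine and the statement is for the $g$-loop quiver itself; the formal-local reduction to Ext-quivers is what one uses \emph{later}, to deduce the K3 statement from this one.

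The actual argument, visible in Subsection~\ref{subsec:trace} where the trace map is reconstructed, avoids the stacky duality problem entirely. One passes via the Koszul equivalence $\Theta$ to matrix factorizations on the \emph{smooth} ambient stack $V^{\vee}/G$, where the ordinary trace $\mathrm{tr}\colon\mathcal{H}om(\mathcal{P},\mathcal{P})\to\mathcal{O}$ is defined for free. One then pushes down along the good moduli map $\pi_{V^{\vee}}$ and applies duality \emph{on the scheme} $V^{\vee}\sslash G$ (which is Gorenstein by Lemma~\ref{lem:Gorenstein}) rather than on the stack. The role of the categorical support lemma is precisely to guarantee that, when $(d,w)$ are coprime, $\pi_{V^{\vee}\ast}\mathcal{H}om(\mathcal{P},\mathcal{P})$ has proper support over $Y\sslash G$, so that the duality of Lemma~\ref{lem:psupport} applies and produces the map \eqref{induce:ap}, hence \eqref{const:tre}. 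You allude to this mechanism in your last paragraph, but as written it appears only as a fallback after the Grothendieck-duality step, whereas in fact it is the entire proof.
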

For $\mathcal{E}_1=\mathcal{E}_2=\mathcal{E}$, 
the identity $\id \colon \mathcal{E} \to \mathcal{E}$
corresponds, under \eqref{isom:Serre}, to the morphism 
\begin{align*}
    \mathrm{tr}_{\mathcal{E}} \colon 
    \pi_{\ast}\mathcal{H}om(\mathcal{E}, \mathcal{E}) 
    \to \mathcal{O}_{P(d)}.
\end{align*}
From the construction in~\cite{PTquiver}, 
the above morphism coincides with the 
trace map determined by 
$(GL(d), \mathfrak{gl}(d)^{\oplus 2g}, \mathfrak{gl}(d)_0, \mu_0)$, 
see Subsection~\ref{subsec:trace} for the construction of the trace map, 
especially (\ref{const:tre}).

\section{Quasi-BPS categories for K3 surfaces}
In this section, we introduce (non-reduced and reduced)
quasi-BPS categories for K3 surfaces. In Theorem \ref{thm:walleq}, we prove the wall-crossing equivalence for quasi-BPS categories. We state a categorical version of the $\chi$-independence phenomenon, see Conjecture \ref{conj:HK}, which we prove for $g=0$ and for $g=1$ and $(d,w)=(2,1)$.

\subsection{Generalities on K3 surfaces}\label{subsec41}
Let $S$ be a smooth projective K3 surface, i.e. $K_S$ is trivial 
and $H^1(\mathcal{O}_S)=0$. 
Let $K(S)$ be the Grothendieck group of $S$.
Denote by $\chi(-, -)$ the Euler pairing 
\begin{align*}
\chi(E, F)=\sum_{j}(-1)^j \mathrm{ext}^j(E, F).
\end{align*} 
Let $N(S)$ be the numerical 
Grothendieck group:
\begin{align*}
 N(S):=K(S)/\equiv,
\end{align*}
where $E_1 \equiv E_2$ in $K(S)$ if 
$\chi(E_1, F)=\chi(E_2, F)$ for any $F \in K(S)$.
There is an isomorphism 
by taking the Mukai vector: 
\begin{align}\label{isom:Mvector}
    v(-)=\ch(-)\sqrt{\mathrm{td}}_S \colon N(S) \stackrel{\cong}{\to} 
    \mathbb{Z} \oplus \mathrm{NS}(S) \oplus \mathbb{Z}. 
\end{align}
Write a vector $v\in N(S)$ as $v=(r, \beta, \chi)\in \mathbb{Z}\oplus \mathrm{NS}(S)\oplus \mathbb{Z}$ via the above isomorphism.
There is a symmetric bilinear pairing on $N(S)$
defined by $\langle E_1, E_2 \rangle=-\chi(E_1, E_2)$. 
Under the isomorphism (\ref{isom:Mvector}), we have 
\begin{align*}
    \langle E_1, E_2 \rangle=\beta_1 \beta_2-r_1 \chi_2-r_2 \chi_1,
\end{align*}
where $v(E_i)=(r_i, \beta_i, \chi_i)$. 

We say $v\in N(S)$ is \textit{primitive} if it cannot be written as $v=dv_0$ for an integer $d\geq 2$ and $v_0\in N(S)$.
Let $v\in N(S)$ and $w\in \mathbb{Z}$. Write $v=dv_0$ for $d\in \mathbb{Z}$ and $v_0$ primitive. We define $\gcd(v, w):=\gcd(d,w)$.
Below we identify $N(S)$ with $\mathbb{Z} \oplus \mathrm{NS}(S) \oplus \mathbb{Z}$ via 
the isomorphism (\ref{isom:Mvector}), and write an element $v \in N(S)$ 
as $v=(r, \beta, \chi)$. 

\subsection{Bridgeland stability conditions on K3 surfaces}
For a K3 surface $S$, 
we denote by 
\begin{align*}
\mathrm{Stab}(S)
\end{align*}
the (main connected component of)
the space of Bridgeland stability 
conditions~\cite{Brs1, Brs2} on $D^b(S)$. 
A point $\sigma \in \mathrm{Stab}(S)$ consists of 
a pair 
\begin{align*}
    \sigma=(Z, \mathcal{A}), \ 
    Z \colon N(S) \to \mathbb{C}, \ 
    \mathcal{A} \subset D^b(S),
\end{align*}
where
$Z$ is a group homomorphism (called \textit{central charge}) and 
$\mathcal{A}$ is the heart of a bounded t-structure 
satisfying some axioms, see~\cite{Brs1}.
One of the axioms is the following positivity 
property 
\begin{align*}
    Z(E) \in \{ z \in \mathbb{C} : 
    \mathrm{Im}(z)>0 \mbox{ or } z \in \mathbb{R}_{<0}\}
\end{align*}
for any $0\neq E \in \mathcal{A}$. 
An object $E \in \mathcal{A}$ is called \textit{$Z$-(semi)stable}
if for any subobject $0\neq F \subsetneq E$
we have $\arg Z(F)<(\leq) \arg Z(E)$ in $(0, \pi]$. 
An object $E \in D^b(X)$ is called \textit{$\sigma$-(semi)stable}
if $E[a] \in \mathcal{A}$ is $Z$-semistable for some $a \in \mathbb{Z}$. 

For each $B+iH \in \mathrm{NS}(S)_{\mathbb{C}}$ such 
that $H$ is ample with $H^2>2$, 
there is an associated stability condition 
\begin{align*}
\sigma_{B, H}=(Z_{B, H}, \mathcal{A}_{B, H}) \in \mathrm{Stab}(S),  
\end{align*}
where $\mathcal{A}_{B, H} \subset D^b(S)$ is 
the heart of a bounded t-structure obtained 
by a tilting of $\mathrm{Coh}(S)$ and 
$Z_{B, H}$ is given by 
\begin{align*}
    Z_{B, H}(E)=-\int_S e^{-B-iH} v(E) \in \mathbb{C}. 
\end{align*}
We refer to~\cite[Section~6]{Brs2} for the construction 
of the above stability conditions. 
A stability condition $\sigma_{B, mH}$
for $m\gg 0$ is said 
to be in a \textit{neighborhood of the large volume limit}. 
Recall the following proposition about semistable
objects at the large volume limit:
\begin{prop}\label{prop:LV}
\emph{(\cite[Proposition~14.2]{Brs3}, \cite[Proposition~6.4, Lemma~6.5]{Tst3})}
If $v=(r, \beta, \chi)$ such that $r\geq 0$ and $H \cdot \beta>0$, 
or $r=H \cdot \beta=0$ and $\chi>0$, 
then an object $E\in D^b(S)$ of Mukai vector $v$ is $\sigma_{0, mH}$-semistable 
for $m\gg 0$ 
if and only if $E[2a]$ is an $H$-Gieseker semistable 
sheaf for some $a\in\mathbb{Z}$.
\end{prop}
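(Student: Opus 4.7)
The plan is to verify both directions by an asymptotic analysis of the Bridgeland slope
\[
\mu_{\sigma_{0,mH}}(E)=-\frac{\mathrm{Re}\,Z_{0,mH}(E)}{\mathrm{Im}\,Z_{0,mH}(E)}
\]
as $m\to\infty$, using the explicit description of the heart $\mathcal{A}_{0,mH}$ as the tilt of $\Coh(S)$ at the torsion pair $(\mathcal{T},\mathcal{F})$ whose Harder--Narasimhan factors have $\mu_H>0$ and $\mu_H\leq 0$ respectively. Expanding the definition of $Z_{B,H}$ gives
\[
Z_{0,mH}(E)=\tfrac{1}{2}m^{2}H^{2}\,r-\chi+imH\cdot\beta,
\]
so under the hypotheses on $v$, any $H$-Gieseker semistable sheaf $F$ with $v(F)=v$ lies in $\mathcal{T}\subset\mathcal{A}_{0,mH}$ (torsion-free with $\mu_H>0$; pure one-dimensional with $H\cdot\beta>0$; or zero-dimensional).

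The case $r=H\cdot\beta=0$, $\chi>0$ is essentially trivial: the objects of $\mathcal{A}_{0,mH}$ whose central charge lies on the negative real axis are precisely the zero-dimensional sheaves, all $\sigma_{0,mH}$-semistable of the same phase, so the two notions of semistability coincide tautologically.

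For the remaining case $H\cdot\beta>0$, I would handle the ``if'' direction by taking an arbitrary short exact sequence $0\to A\to F\to B\to 0$ in $\mathcal{A}_{0,mH}$ with $F$ Gieseker semistable. The long exact sequence of cohomology sheaves forces $\mathcal{H}^{-1}(B)\in\mathcal{F}$ and $\mathcal{H}^{0}(A),\mathcal{H}^{0}(B)\in\mathcal{T}$, so $A$ is essentially a subsheaf of $F$ (up to an extension by $\mathcal{H}^{-1}(B)$). Expanding $\mu_{\sigma_{0,mH}}(A)$ and $\mu_{\sigma_{0,mH}}(F)$ as Laurent series in $m^{-1}$, the leading term reproduces the $H$-slope $\mu_H$ and the subleading term reproduces the comparison of reduced Hilbert polynomials; both inequalities are forced by Gieseker semistability of $F$, so $F$ is $\sigma_{0,mH}$-semistable for $m\gg 0$. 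For the ``only if'' direction, given a $\sigma_{0,mH}$-semistable $E$ with $v(E)=v$, the triangle $\mathcal{H}^{-1}(E)[1]\to E\to \mathcal{H}^{0}(E)$ in $\mathcal{A}_{0,mH}$, together with the sign $\mathrm{Im}\,Z(v)>0$ and semistability, forces $\mathcal{H}^{-1}(E)=0$, whence $E$ is a sheaf; any destabilizing Gieseker subsheaf would lie in $\mathcal{T}\subset\mathcal{A}_{0,mH}$ and would destabilize $E$ in the Bridgeland sense by the same asymptotic expansion, a contradiction.

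The main obstacle is the bookkeeping when the leading Gieseker slopes agree and one must match the subleading term of $\mu_{\sigma_{0,mH}}$ with the comparison of reduced Hilbert polynomials; the scaling of $Z_{0,mH}$ in $m$ behaves differently in the cases $r>0$ and $r=0$, so these must be treated separately. This analysis is carried out in \cite[Proposition~14.2]{Brs3} for $r>0$ and extended in \cite[Proposition~6.4, Lemma~6.5]{Tst3} to the case $r=0$, and I would simply follow those references.
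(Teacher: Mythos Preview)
The paper does not give its own proof of this proposition: it is stated as a recollection from the literature, with the argument deferred entirely to the cited references \cite[Proposition~14.2]{Brs3} and \cite[Proposition~6.4, Lemma~6.5]{Tst3}. Your proposal does the same thing in the end, and the sketch you supply of the asymptotic slope analysis is a faithful outline of what those references contain, so there is nothing to compare.
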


\subsection{Moduli stacks of semistable objects on K3 surfaces}\label{subsection:moduliK3}
For each $\sigma \in \mathrm{Stab}(S)$ and $v \in N(S)$, 
we denote by 
\begin{align*}
    \mathfrak{M}_S^{\sigma}(v)
    \end{align*}
the derived moduli stack of $\sigma$-semistable 
objects $E \in \mathcal{A} \cup \mathcal{A}[1]$ with numerical class $v$.
We denote by 
$\mathbb{F}$ the universal object
\begin{align*}
    \mathbb{F} \in D^b(S \times \mathfrak{M}_S^{\sigma}(v)). 
\end{align*}
We also consider the reduced version 
of the stack $\mathfrak{M}_S^{\sigma}(v)$. 
Let $v=(r, \beta, \chi)$. 
Let $\mathcal{P}ic^{\beta}(S)$ be the derived moduli 
stack of line bundles on $S$ with first Chern class 
$\beta$. Then  $\mathcal{P}ic^{\beta}(S)=\Spec\mathbb{C}[\varepsilon]/\mathbb{C}^{\ast}$, where $\varepsilon$ is of degree $-1$.
We consider the 
 determinant morphism 
\begin{align*}
	\det \colon \mathfrak{M}_S^{\sigma}(v) \to \mathcal{P}ic^{\beta}(S)=\Spec\mathbb{C}[\varepsilon]/\mathbb{C}^{\ast}.
	\end{align*}
Define the reduced stack:
\begin{align}\label{defn:reduced}
	\mathfrak{M}_S^{\sigma}(v)^{\rm{red}}:=
	\mathfrak{M}_S^{\sigma}(v) \times_{\mathcal{P}ic^{\beta}(S)} B\mathbb{C}^{\ast}. 
	\end{align}
 The obstruction 
 space of the 
 reduced stack $\mathfrak{M}_S^{\sigma}(v)^{\rm{red}}$
 at $F$ 
is the kernel of the 
trace map: 
\begin{align*}
    \Ext_S^2(F, F)_0:=\Ker\left(\Ext_S^2(F, F) \stackrel{\mathrm{tr}}{\twoheadrightarrow}
    H^2(\mathcal{O}_S)=\mathbb{C} \right). 
\end{align*}
Note that $\mathfrak{M}^\sigma_S(v)^{\mathrm{red}}$ may still not be a classical stack.
 There are decompositions: 
\begin{align*}
    D^b(\mathfrak{M}_S^{\sigma}(v))=\bigoplus_{w \in \mathbb{Z}}D^b(\mathfrak{M}_S^{\sigma}(v))_w, \ 
     D^b(\mathfrak{M}_S^{\sigma}(v)^{\rm{red}})=\bigoplus_{w \in \mathbb{Z}}D^b(\mathfrak{M}_S^{\sigma}(v)^{\rm{red}})_w,
\end{align*}
where each summand contains complexes $F$ of weight 
$w$ with respect to the 
scaling automorphisms $\mathbb{C}^{\ast} \subset \mathrm{Aut}(F)$, see~\cite[Subsection~3.2.4]{T}.

We denote by $\mathcal{M}_S^{\sigma}(v)$ the
classical truncation of $\mathfrak{M}_S^{\sigma}(v)$. It admits a good moduli space
(cf.~\cite{MR3237451}, \cite[Example 7.26]{AHLH}):
\begin{align}\label{gmoduli}
\pi \colon 
    \mathcal{M}_S^{\sigma}(v) \to M_S^{\sigma}(v),
\end{align}
where $M_S^{\sigma}(v)$ is a proper algebraic space. 
A closed point $y \in M_S^{\sigma}(v)$ corresponds 
to a $\sigma$-polystable object
\begin{align}\label{pstable:S}
    F=\bigoplus_{i=1}^m V^{(i)} \otimes F^{(i)},
\end{align}
where $F^{(1)}, \ldots, F^{(m)}$ 
are mutually non-isomorphic 
$\sigma$-stable objects such that $\arg Z(F^{(i)})=\arg Z(F)$, 
and $V^{(i)}$ is a finite dimensional vector space with dimension $d^{(i)}$ for $1\leq i\leq m$.

Let 
$G_y:=\mathrm{Aut}(F)=\prod_{i=1}^m GL(V^{(i)})$
and let
$\widehat{\Ext}_S^1(F, F)$ be the
formal fiber at the origin of the morphism 
\begin{align*}
    \Ext_S^1(F, F) \to \Ext_S^1(F, F)\ssslash G_y. 
\end{align*}
By the formality of the dg-algebra 
$\mathrm{RHom}(F, F)$, see~\cite[Corollary 4.9]{DavPurity}, 
there are equivalences 
\begin{align}\label{formal:equiv}
    \widehat{\mathfrak{M}}_S^{\sigma}(v)_y
    \simeq \widehat{\kappa}^{-1}(0)/G_y, \ 
     \widehat{\mathfrak{M}}_S^{\sigma}(v)_y^{\rm{red}}
    \simeq \widehat{\kappa}_0^{-1}(0)/G_y,
\end{align}
where 
$\kappa$, $\kappa_0$ are the maps
\begin{align*}
    \kappa\colon \Ext_S^1(F, F) \to \Ext_S^2(F, F), \
    \kappa_0 \colon \Ext_S^1(F, F) \to \Ext_S^2(F, F)_0
\end{align*}
given by $x \mapsto [x, x]$, and $\widehat{\kappa}$, 
$\widehat{\kappa}_0$ are their restrictions to 
$\widehat{\Ext}_S^1(F, F)$. 
\begin{remark}\label{rmk:Equiver}
The stack $\kappa^{-1}(0)/G_y$ 
is described in terms of the Ext-quiver of $F$ as follows. 
    Let $Q^{\circ, d}_y$ be the quiver with vertices $\{1, \ldots, m\}$
    and the number of edges from $i$ to $j$ is 
    $\dim \Ext_{S}^1(F^{(i)}, F^{(j)})$ for any $1\leq i, j\leq m$.  
    By Serre duality, $Q^{\circ, d}_y$ is symmetric. Moreover
    the number of loops at each vertex is even, 
    so $Q^{\circ, d}_y$ is the doubled quiver of some quiver $Q_y^{\circ}$. 
    The derived stack $\kappa^{-1}(0)/G_y$ is identified with the 
    derived moduli stack of 
    representations of the preprojective algebra of $Q^\circ_y$ (alternatively, of
    $Q^{\circ, d}_y$-representations with quadratic
    relation $\mathscr{I}_y$)
    as in Subsection~\ref{subsec:double},
    and dimension vector $(d^{(i)})_{i=1}^m$ where $d^{(i)}=\dim V^{(i)}$. 
\end{remark}

There is a wall-chamber structure on $\mathrm{Stab}(S)$ 
 such that $\mathcal{M}_S^{\sigma}(v)$ is constant 
 if $\sigma$ lies in a chamber, but may change when 
 $\sigma$ crosses a wall. 
 Locally, a wall is defined by the equation 
 \begin{align*}
     \frac{Z(v_1)}{Z(v_2)} \in \mathbb{R}_{>0}, \ 
v=v_1+v_2
\end{align*}
such that $v_1$ and $v_2$ are not 
 proportional, see~\cite[Proposition~9.3]{Brs2}. 
 
 A stability condition $\sigma\in \mathrm{Stab}(S)$ is \textit{generic} 
 if $\sigma$ is not on a wall. 
If $\sigma$ is generic, then 
for a polystable object (\ref{pstable:S})
each numerical class of $F^{(i)}$ is proportional to 
$v$. 
Let $v=dv_0$ for a primitive $v_0$. Then 
we have 
\begin{align*}[F^{(i)}]=r^{(i)}v_0, \ d^{(1)}r^{(1)}+\cdots+d^{(m)}r^{(m)}=d.
\end{align*}
The good moduli space $M_S^{\sigma}(v)$ has a 
stratification indexed by data $(d^{(i)}, r^{(i)})_{i=1}^m$, 
and the deepest stratum
corresponds to $m=1$, $\dim V^{(1)}=d$ and 
$d^{(1)}=1$. 

Let $v=dv_0$ for a primitive $v_0$ with 
$\langle v_0, v_0\rangle=2g-2$, and 
$Q^{\circ, d}=Q_{2g}$ be the quiver with one vertex 
and $2g$-loops with relation $\mathscr{I}$ as in Subsection~\ref{subsec:double}. 
Recall the stacks:
\begin{align*}
    \mathscr{P}(d)=\mu^{-1}(0)/GL(d), \ 
    \mathscr{P}(d)^{\rm{red}}=\mu_0^{-1}(0)/GL(d)
\end{align*}
where $\mu \colon \mathfrak{gl}(d)^{\oplus 2g} \to 
\mathfrak{gl}(d)$ and $\mu_0 \colon \mathfrak{gl}(d)^{\oplus 2g} \to \mathfrak{gl}(d)_0$ are moment maps. 
\begin{lemma}\label{lem:py}
    For any closed point $y \in M_S^{\sigma}(v)$, 
    there is a point $p \in P(d)$
    which is sufficiently close to $0\in P(d)$ such that 
    we have equivalences
    \begin{align}\label{formal:equiv3}
    \widehat{\mathfrak{M}}_S^{\sigma}(v)_y
    \simeq \widehat{\mathscr{P}}(d)_p, \ \widehat{\mathfrak{M}}_S^{\sigma}(v)_y^{\rm{red}}
    \simeq 
    \widehat{\mathscr{P}}(d)_p^{\rm{red}}.
\end{align}
If $y$ lies in the deepest stratum, we can take 
$p=0$. 
\end{lemma}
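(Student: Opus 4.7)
The plan is to identify both formal neighborhoods with the same Koszul-type derived stack, using formality of the $\mathrm{RHom}$ dg-algebra for both a polystable K3 object and a polystable representation of the preprojective algebra $\pi_{Q_g}$ of the $g$-loop quiver.

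First I would unpack the K3 side: by the formality of $\mathrm{RHom}(F,F)$ for the $\sigma$-polystable $F=\bigoplus V^{(i)}\otimes F^{(i)}$ (Davison's theorem for CY$_2$ categories, which is what underlies the equivalences \eqref{formal:equiv} already recorded in the text), we have $\widehat{\mathfrak{M}}_S^\sigma(v)_y\simeq \widehat{\kappa}^{-1}(0)/G_y$ and the reduced analogue with $\kappa_0$. Using Serre duality on $D^b(S)$ (CY$_2$ pairing) and Mukai's formula, compute the Ext-quiver dimensions: for $i\neq j$, $\dim\mathrm{Ext}^1_S(F^{(i)},F^{(j)})=\langle r^{(i)}v_0,r^{(j)}v_0\rangle=r^{(i)}r^{(j)}(2g-2)$, and for $i=j$ (with $\hom(F^{(i)},F^{(i)})=1$) $\dim\mathrm{Ext}^1_S(F^{(i)},F^{(i)})=(r^{(i)})^2(2g-2)+2$. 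So the Ext-quiver $Q^{\circ,d}_y$ has $r^{(i)}r^{(j)}(2g-2)$ edges between distinct vertices and $(r^{(i)})^2(2g-2)+2$ loops at vertex $i$, together with the standard symplectic/doubled structure from Serre duality.

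Next I would build the matching preprojective point: choose pairwise non-isomorphic simples $R^{(1)},\dots,R^{(m)}$ of $\pi_{Q_g}$ of dimensions $r^{(1)},\dots,r^{(m)}$ (such simples exist for all $g\geq 1$; e.g.\ for $g=1$ via the commuting variety, and for $g\geq 2$ by Crawley--Boevey-type dimension counts, and in any case we only need their existence near the chosen stratum of $P(d)$), and set $R_p:=\bigoplus W^{(i)}\otimes R^{(i)}$ with $\dim W^{(i)}=d^{(i)}$. Then $p\in P(d)$ lies close to $0\in P(d)$ (indeed it degenerates to $0$ under scaling of the simples). Applying the preprojective/quiver version of formality (Remark \ref{rmk:ext-quiver}), one has $\widehat{\mathscr{P}}(d)_p\simeq \widehat{\kappa}_{R_p}^{-1}(0)/G_p$ with $G_p=\prod\mathrm{GL}(W^{(i)})\cong G_y$. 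Using that $\pi_{Q_g}$ is CY$_2$ with Euler form $\chi_{\pi_{Q_g}}(\alpha,\beta)=(2-2g)\alpha\beta$, the Ext-quiver dimensions at $R_p$ are $r^{(i)}r^{(j)}(2g-2)+2\delta_{ij}$, which match Step~1 exactly.

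Finally, I would match the brackets: both $\kappa$ and $\kappa_{R_p}$ are the canonical curvature-type maps $x\mapsto\tfrac{1}{2}[x,x]$ on the Ext$^1$ of a CY$_2$ formal dg-algebra, so any $G_y$-equivariant isomorphism of $\mathrm{Ext}^1_S(F,F)\cong\mathrm{Ext}^1_{\pi_{Q_g}}(R_p,R_p)$ compatible with the symplectic forms (which exists since the symplectic $G_y$-representations are determined by the Ext-quiver data and are automatically symmetric/doubled on both sides) intertwines $\kappa$ with $\kappa_{R_p}$. This yields $\widehat{\mathfrak{M}}_S^\sigma(v)_y\simeq\widehat{\mathscr{P}}(d)_p$; passing to traceless obstructions (i.e.\ composing with the trace on $\mathfrak{gl}(d)$ or equivalently $\mathrm{Ext}^2_S(F,F)\to\mathbb{C}$) gives the reduced version. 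For the deepest stratum, $m=1$, $r^{(1)}=1$, $d^{(1)}=d$: then $\mathrm{Ext}^1_S(F^{(1)},F^{(1)})$ has dimension $2g$ matching the $2g$ loops of $Q_{2g}$, and the canonical polystable rep $d\cdot S_1$ (with $S_1$ the $1$-dimensional simple of $\pi_{Q_g}$) sits at $0\in P(d)$, so $p=0$ works.

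The hard part will be pinning down the canonical identification of the symplectic $G_y$-representations $\mathrm{Ext}^1_S(F,F)$ and $\mathrm{Ext}^1_{\pi_{Q_g}}(R_p,R_p)$ together with their bracket maps, as opposed to just matching their numerical invariants. The essential input is that Davison's CY$_2$ formality produces the same kind of symmetric Koszul presentation on both sides, so the formal neighborhood is determined up to equivalence by the Ext-quiver together with its Serre-duality symplectic pairing; once this principle is invoked uniformly on the K3 side and the preprojective side, the equivalence is forced.
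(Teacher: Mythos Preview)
Your proposal is correct and follows the same route as the paper: pick simple $\pi_{Q_g}$-representations $R^{(i)}$ of dimensions $r^{(i)}$, form $R_p=\bigoplus W^{(i)}\otimes R^{(i)}$, and match the formal neighborhoods via CY$_2$ formality on both sides, with the $\mathbb{C}^{\ast}$-scaling argument giving ``sufficiently close to $0$''. The paper absorbs your ``hard part'' into a single line---once both $\mathrm{RHom}$ dg-algebras are formal with matching Ext-quivers, the CY$_2$ pairing forces $\Ext^{\ast}(F,F)\cong\Ext^{\ast}(R_p,R_p)$ as graded algebras and hence $\mathrm{RHom}(F,F)\simeq\mathrm{RHom}(R_p,R_p)$ as dg-algebras, so no separate matching of symplectic forms or bracket maps is needed.
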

\begin{proof}
    Let $y$ corresponds to a direct sum (\ref{pstable:S})
    such that $[F^{(i)}]=r^{(i)} v_0$, and 
    let $R^{(i)}$ be a simple $(Q^{\circ, d}, \mathscr{I})$-representation 
    with dimension $r^{(i)}$. 
    Such $R^{(i)}$ exists by a straightforward dimension count argument, for example see
    the proof of~\cite[Lemma~5.7 (i)]{PTquiver}. 
    Let 
    \begin{align*}
R=\bigoplus_{i=1}^m V^{(i)} \otimes R^{(i)}
\end{align*}
be a semisimple $(Q^{\circ, d}, \mathscr{I})$-representation
    and let $p\in P(d)$ be the corresponding point. 
    Note that 
    \begin{align*}
    \chi(R^{(i)}, R^{(j)})=\chi(F^{(i)}, F^{(j)})=r^{(i)}r^{(j)}(2-2g).
    \end{align*}
    By the CY2 property of $(Q^{\circ, d}, \mathscr{I})$-representations (cf. see \cite{KellerVandenBergh}, \cite[Proposition 7.1]{DavPurity}), 
    and the fact that $\hom(R^{(i)}, R^{(j)})=\hom(F^{(i)}, F^{(j)})=\delta_{ij}$, 
    we have an isomorphism 
    \begin{align*}
    \Ext^{\ast}(R^{(i)}, R^{(j)})\cong \Ext^{\ast}(F^{(i)}, F^{(j)}).
    \end{align*}
    Then by the formality of 
    polystable objects 
    CY2 categories~\cite[Corollary~4.9]{DavPurity}, 
    there is an isomorphism of dg-algebras 
    $\mathrm{RHom}(R, R) \cong \mathrm{RHom}(F, F)$. 
    Therefore we have equivalences (\ref{formal:equiv3}), see Remark~\ref{rmk:ext-quiver}. 
    
    There is an action of $\mathbb{C}^{\ast}$ on the moduli of
    $Q^{\circ, d}$-representations which scales the linear maps corresponding to each edge of $Q^{\circ, d}$, which induces an 
    action on $P(d)$. 
    The above $\mathbb{C}^{\ast}$-action preserves
    the type of the semi-simplification, 
    and any point $p \in P(d)$ 
    satisfies $\lim_{t\to 0} (t \cdot p)=0$.
    Therefore we can take $p$ to be sufficiently close 
    to $0$. By the above construction, we can take $p=0$
    if $y$ lies in the deepest stratum. 
\end{proof}
Combining Lemma \ref{lem:py} with~\cite{KaLeSo}, we have the following: 
\begin{lemma}\label{lem:classical}
Suppose that $g\geq 2$. Then for a generic $\sigma$, 
the derived stack $\mathfrak{M}_S^{\sigma}(v)^{\rm{red}}$ is classical, i.e. 
the natural morphism 
$\mathcal{M}_S^{\sigma}(v) \to \mathfrak{M}_S^{\sigma}(v)^{\rm{red}}$ is an equivalence.     
\end{lemma}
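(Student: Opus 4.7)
The plan is to reduce the statement to a purely local question in quiver representations via the formal local structure theorem, and then invoke the classicality result for preprojective algebras due to Kaledin--Lehn--Sorger.

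First I would observe that classicality of a derived Artin stack is a property that can be checked on formal completions at closed points of its good moduli space: indeed, the natural closed immersion $\mathcal{M}_S^{\sigma}(v) \hookrightarrow \mathfrak{M}_S^{\sigma}(v)^{\rm{red}}$ is an equivalence if and only if the cofiber $\mathrm{Cof} := \mathrm{cone}\bigl(\mathcal{O}_{\mathfrak{M}_S^{\sigma}(v)^{\rm{red}}} \to \iota_{\ast}\mathcal{O}_{\mathcal{M}_S^{\sigma}(v)}\bigr)$ vanishes. Since $\mathfrak{M}_S^{\sigma}(v)^{\rm{red}}$ is quasi-smooth, $\mathrm{Cof}$ has coherent cohomology sheaves supported on $\mathcal{M}_S^{\sigma}(v)$, whose vanishing can be tested after pull-back to $\widehat{\mathcal{M}}_S^{\sigma}(v)_y$ for every closed point $y \in M_S^{\sigma}(v)$, via the faithfully flat family of morphisms in the diagram \eqref{dia:fnbd2}.

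Next I apply Lemma~\ref{lem:py} to obtain, for each such $y$, an equivalence
\begin{align*}
\widehat{\mathfrak{M}}_S^{\sigma}(v)_y^{\rm{red}}
    \simeq \widehat{\mathscr{P}}(d)_p^{\rm{red}}
\end{align*}
for some point $p \in P(d)$ sufficiently close to the origin. The question is therefore reduced to checking that the derived stack $\widehat{\mathscr{P}}(d)_p^{\rm{red}}$ is classical. Since $\widehat{\mathscr{P}}(d)_p^{\rm{red}}$ is obtained from $\mathscr{P}(d)^{\rm{red}} = \mu_0^{-1}(0)/GL(d)$ by flat base change along the completion $\Spec \widehat{\mathcal{O}}_{P(d),p} \to P(d)$, it is enough to prove that $\mathscr{P}(d)^{\rm{red}}$ itself is classical, i.e.\ that the trace-free moment map $\mu_0 \colon \mathfrak{gl}(d)^{\oplus 2g} \to \mathfrak{gl}(d)_0$ of \eqref{mu0:trace} is flat.

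This last statement is the main input, and it is precisely \cite[Proposition~3.6]{KaLeSo}, which asserts that for $g \geq 2$ the scheme $\mu_0^{-1}(0)$ is a complete intersection of the expected dimension $(2g-1)d^2+1$, so $\mu_0$ is flat. This is the only place where the hypothesis $g \geq 2$ is used in an essential way; for $g = 1$ the moment map fails to be flat (reflecting the fact that the commuting variety has the wrong dimension), which is why the reduced derived structure survives in lower genus. Combining this input with the two reduction steps above yields the classicality of $\mathfrak{M}_S^{\sigma}(v)^{\rm{red}}$. I expect the main subtlety to be the passage from formal-local classicality to global classicality, which requires care because formal completions of Artin stacks need not commute with all derived operations; however, in our setting it suffices because closed points of $M_S^{\sigma}(v)$ jointly detect vanishing of coherent sheaves on the good moduli space.
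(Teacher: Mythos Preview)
Your proposal is correct and follows essentially the same approach as the paper: reduce to the formal-local model via Lemma~\ref{lem:py}, then invoke \cite[Proposition~3.6]{KaLeSo} to conclude that $\mathscr{P}(d)^{\rm{red}}$ is classical for $g\geq 2$. The paper's proof is a two-line version of yours, leaving the formal-local-to-global reduction implicit; your added justification of that reduction (via vanishing of the cofiber on formal fibers) is correct and a reasonable elaboration.
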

\begin{proof}
For $g\geq 2$, the derived stack $\mathscr{P}(d)^{\rm{red}}$ is classical
by~\cite[Proposition~3.6]{KaLeSo}.
Therefore the conclusion holds by Lemma~\ref{lem:py}. 
\end{proof}

\subsection{Quasi-BPS categories for K3 surfaces}
Let $v\in N(S)$ and $w\in\mathbb{Z}$.
Take $a \in K(S)_{\mathbb{R}}$ such that 
$\chi(a\otimes v)=w \in \mathbb{Z}$.
We define the $\mathbb{R}$-line bundle $\delta$ 
on 
$\mathfrak{M}_S^{\sigma}(v)$ to be 
\begin{align}\label{def:delta}
   \delta=\det p_{\mathfrak{M}_{\ast}}(a \boxtimes 
   \mathbb{F}), 
\end{align}
where $p_{\mathfrak{M}} \colon S \times 
\mathfrak{M}_S^{\sigma}(v) \to \mathfrak{M}_S^{\sigma}(v)$ is the projection. 
Note that the object $p_{\mathfrak{M}\ast}(A \boxtimes \mathbb{F})$ is a 
perfect complex on $\mathfrak{M}_S^{\sigma}(v)$ for any $A \in D^b(S)$, so 
the $\mathbb{R}$-line bundle (\ref{def:delta}) is well-defined. 
The pull-back of $\delta$ to $\mathfrak{M}_S^{\sigma}(v)^{\rm{red}}$ is 
also denoted by $\delta$. 
We 
define the (non-reduced or reduced) 
quasi-BPS categories to be the following intrinsic window categories from Definition \ref{def:intwind}:
\begin{align}\label{qbps:T}	&\mathbb{T}_S^{\sigma}(v)_ \delta:=\mathbb{W}(\mathfrak{M}_S^{\sigma}(v))^{\rm{int}}_{\delta}\subset 	D^b(\mathfrak{M}_S^{\sigma}(v))_w, \\ \notag
	&\mathbb{T}_S^{\sigma}(v)_ \delta^{\rm{red}}:=\mathbb{W}(\mathfrak{M}_S^{\sigma}(v)^{\rm{red}})^{\rm{int}}_{\delta}\subset 
 D^b(\mathfrak{M}_S^{\sigma}(v)^{\rm{red}})_w. 
	\end{align}

\begin{remark}\label{rmk:qbps}
For each $y\in M_S^{\sigma}(v)$, 
 let $\widehat{\mathfrak{M}}_S^{\sigma}(v)_y$ be the formal 
 fiber at $y$ and $\delta_y$ the pull-back of 
 $\delta$ to it. 
 The quasi-BPS category for the formal 
 fiber is defined in a similar way: 
 \begin{align*}\mathbb{T}^{\sigma}_{S, y}(v)_{\delta_y}
 :=\mathbb{W}(\widehat{\mathfrak{M}}_S^{\sigma}(v)_y)_{\delta_y}^{\rm{int}}
 \subset D^b(\widehat{\mathfrak{M}}_S^{\sigma}(v)_y). 
 \end{align*}
 By the definition of $\mathbb{T}_S^{\sigma}(v)_{\delta}$, an object 
 $\mathcal{E} \in D^b(\mathfrak{M}_S^{\sigma}(v))$ is an 
 object in $\mathbb{T}_S^{\sigma}(v)_{\delta}$ 
 if and only if 
 its restriction to any formal fiber is 
 an object in $\mathbb{T}_{S, y}^{\sigma}(v)_{\delta_y}$. 
 There is an analogous statement for the reduced version. 
 \end{remark}
\begin{lemma}\label{lem:sigmagen}
If $\sigma\in \mathrm{Stab}(S)$ is generic, then $\mathbb{T}_S^{\sigma}(v)_{\delta}$ 
and $\mathbb{T}_S^{\sigma}(v)_{\delta}^{\rm{red}}$ are independent of $a\in K(S)_{\mathbb{R}}$ satisfying 
$\chi(a \otimes v)=w$. 
\end{lemma}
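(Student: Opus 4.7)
The plan is to unpack the intrinsic window definition and check that its defining conditions depend on $a$ only through the integer $w=\chi(a\otimes v)$. By Definition~\ref{def:intwind} together with Remark~\ref{rmk:qbps}, membership in $\mathbb{T}_S^{\sigma}(v)_\delta$ is tested by weight inequalities, ranging over all maps $\nu \colon B\mathbb{C}^\ast \to \mathfrak{M}_U$ coming from local Koszul presentations $\mathfrak{M}_U=s^{-1}(0)/G$; the only term in which $\delta$ enters is $\mathrm{wt}(\nu^\ast\delta)$. Hence it suffices to prove that $\mathrm{wt}(\nu^\ast\delta)$ is independent of the choice of $a\in K(S)_{\mathbb{R}}$ with $\chi(a\otimes v)=w$, for every such $\nu$. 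The same reduction works for $\mathbb{T}_S^{\sigma}(v)_\delta^{\rm{red}}$, since the reduced intrinsic window condition is formulated using the pull-back of $\delta$ along $\mathfrak{M}_S^{\sigma}(v)^{\rm{red}}\to \mathfrak{M}_S^{\sigma}(v)$.

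I would then compute $\mathrm{wt}(\nu^\ast\delta)$ explicitly. A map $\nu\colon B\mathbb{C}^\ast\to\mathfrak{M}_S^{\sigma}(v)$ is the same as a $\mathbb{C}^\ast$-equivariant $\sigma$-semistable object $F\in D^b(S)$ of class $v$, and the $\mathbb{C}^\ast$-action decomposes $F=\bigoplus_j F_j$ into weight spaces. Restricting the universal family along $\nu$, we have $\mathbb{F}|_\nu=\bigoplus_j F_j$ with $\mathbb{C}^\ast$ acting by weight $j$ on the $j$-th summand, so
\[\mathrm{wt}(\nu^\ast\delta)=\mathrm{wt}\det\mathrm{R}\Gamma(S,a\otimes F)=\sum_j j\,\chi(a\otimes F_j).\]

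The genericity of $\sigma$ is used as follows. The $\mathbb{C}^\ast$-action preserves the heart, so each $F_j$ lies in the same shifted heart as $F$, and a standard phase-convexity argument for the direct summands of a Bridgeland semistable object forces each $F_j$ to be $\sigma$-semistable of the same phase as $F$. For $\sigma$ generic, any $\sigma$-semistable object of that phase has class an integer multiple of the primitive Mukai vector $v_0$; writing $v=dv_0$, we get $[F_j]=r_j v_0$ with $r_j\in \mathbb{Z}_{\geq 0}$ and $\sum_j r_j=d$. Therefore
\[\chi(a\otimes F_j)=r_j\,\chi(a\otimes v_0)=\frac{r_j}{d}\,\chi(a\otimes v)=\frac{r_j w}{d},\]
and substituting yields
\[\mathrm{wt}(\nu^\ast\delta)=\frac{w}{d}\sum_j j\,r_j,\]
which depends only on $w$ and on the numerical data $(r_j)$ attached to $\nu$, never on the choice of $a$. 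The main potential obstacle is the claim that the $\mathbb{C}^\ast$-weight summands $F_j$ inherit the phase of $F$; this requires invoking that the $\mathbb{C}^\ast$-action is compatible with the heart together with convexity of phases under direct sums in a Bridgeland stability condition, but once this is in place the rest of the argument is a direct calculation.
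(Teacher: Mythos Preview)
Your argument is correct and follows essentially the same route as the paper. Both proofs reduce to showing that the pairing $\langle \nu,\delta\rangle$ depends only on $w$, using that for generic $\sigma$ every semistable object of phase $\arg Z(v)$ has numerical class proportional to $v_0$. The paper makes this check at closed points $y\in M_S^{\sigma}(v)$ (polystable objects), computing $\delta_y=\bigotimes_i(\det V^{(i)})^{\chi(a\otimes F^{(i)})}$ and invoking Remark~\ref{rmk:qbps} to pass from formal fibers back to the global category; you instead test every $\nu\colon B\mathbb{C}^\ast\to\mathfrak{M}$ directly, which requires the short extra step that the weight summands $F_j$ of a $\mathbb{C}^\ast$-equivariant semistable $F$ are themselves semistable of the same phase. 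That step is standard (each $F_j$ is both a subobject and quotient of $F$), so no gap.
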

\begin{proof}
Let $b \in K(S)_{\mathbb{R}}$ such that 
$\chi(b \otimes v)=0$ and set 
$a'=a+b$. Let $\delta'$ be the $\mathbb{R}$-line 
bundle defined as in (\ref{def:delta}) for $a'$. 
    By Remark~\ref{rmk:qbps}, 
    it is enough to show that $\delta_y=\delta'_y$ for 
    any closed point $y \in M_S^{\sigma}(v)$. 
    Let $y$ be a point which corresponds to the polystable object $F$ as in (\ref{pstable:S}). 
    By the decomposition (\ref{pstable:S}), 
    we have $\mathrm{Aut}(F)=\prod_{i=1}^m GL(V^{(i)})$
    and $\delta_y$ is the character of $\mathrm{Aut}(F)$ 
    given by 
    \begin{align}\label{deltay}
        \delta_y=
        \det\left(\sum_{i=1}^m V^{(i)} \otimes \chi(a \otimes F^{(i)})\right)
        =\bigotimes_{i=1}^m (\det V^{(i)})^{\chi(a\otimes F^{(i)})}. 
    \end{align}
    As $\sigma$ is generic, 
    the numerical class of $F^{(i)}$ is proportional to 
    $v$. Therefore $\chi(b \otimes v)=0$ implies 
    $\chi(b \otimes F^{(i)})=0$ for $1\leq i\leq m$, hence 
    $\delta_y=\delta'_y$. 
\end{proof}

By the above lemma, the following definition 
makes sense. 
\begin{defn}\label{def:qbps0}
For $v \in N(S)$, 
let $\sigma \in \mathrm{Stab}(S)$ be generic. 
For $w \in \mathbb{Z}$, define 
\begin{align*}
    \mathbb{T}_S^{\sigma}(v)_{w} :=
    \mathbb{T}_S^{\sigma}(v)_\delta, \ 
   \mathbb{T}_S^{\sigma}(v)_{w}^{\rm{red}} :=
    \mathbb{T}_S^{\sigma}(v)_\delta^{\rm{red}}. 
\end{align*}
Here $\delta$ is defined as in \eqref{def:delta} for any $a \in K(S)_{\mathbb{R}}$
such that $\chi(a \otimes v)=w$. 
\end{defn}

The first main result of this section is the following wall-crossing equivalence of 
quasi-BPS categories. 

\begin{thm}\label{thm:walleq}
Let $\sigma_1, \sigma_2 \in \mathrm{Stab}(S)$
be generic stability conditions. 
Then there exist equivalences
\begin{align}\label{equiv:T}
\mathbb{T}_S^{\sigma_1}(v)_{w} \stackrel{\sim}{\to} 
\mathbb{T}_S^{\sigma_2}(v)_{w}, \ 
\mathbb{T}_S^{\sigma_1}(v)_{w}^{\rm{red}}\stackrel{\sim}{\to} 
\mathbb{T}_S^{\sigma_2}(v)_{w}^{\rm{red}}. 
\end{align}
    \end{thm}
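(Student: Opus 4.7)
The strategy adapts Halpern-Leistner's window-theoretic approach to K3 wall-crossing to quasi-BPS subcategories, matching the ambient window subcategory of $D^b(\mathfrak{M}_S^{\sigma_0}(v))$ at a wall with the intrinsic window categories $\mathbb{T}^\sigma_S(v)_w$ via the local wall-crossing equivalence for preprojective algebras, Theorem \ref{prop:eqS}. It suffices to handle a single wall crossing: connect $\sigma_1$ and $\sigma_2$ by a path in $\mathrm{Stab}(S)$ meeting finitely many walls transversally, and induct on the number of crossings, so we may assume $\sigma_1$ and $\sigma_2$ lie in adjacent chambers. Pick $\sigma_0$ on the separating wall; both $\mathfrak{M}_S^{\sigma_i}(v) \subset \mathfrak{M}_S^{\sigma_0}(v)$ are open substacks sharing the good moduli space $M_S^{\sigma_0}(v)$.

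\noindent\textbf{Step 1 (window theorem at the wall).} Displacing $\sigma_0$ infinitesimally into the chamber of $\sigma_i$ produces determinantal line bundles $\ell_i$ on $\mathcal{M}_S^{\sigma_0}(v)$ satisfying $\mathfrak{M}_S^{\sigma_0}(v)^{\ell_i\text{-ss}} = \mathfrak{M}_S^{\sigma_i}(v)$. Applying Theorem \ref{thm:window:M} together with Remark \ref{rmk:0shift} (after splitting off the trace factor $B\mathbb{C}^\ast$ so that the reduced stack is $0$-shifted symplectic) yields window subcategories
\[
\mathbb{W}(\mathfrak{M}_S^{\sigma_0}(v))^{\ell_i}_{m_\bullet^{(i)}} \subset D^b\bigl(\mathfrak{M}_S^{\sigma_0}(v)\bigr)
\]
whose restriction to $\mathfrak{M}_S^{\sigma_i}(v)$ is an equivalence onto $D^b(\mathfrak{M}_S^{\sigma_i}(v))$.

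\noindent\textbf{Step 2 (matching with intrinsic windows formally locally).} The heart of the proof is the claim that, for a suitable choice of widths $m_\bullet^{(i)}$, the restriction functor $D^b(\mathfrak{M}_S^{\sigma_0}(v)) \to D^b(\mathfrak{M}_S^{\sigma_i}(v))$ induces an equivalence $\mathbb{T}_S^{\sigma_0}(v)_w \stackrel{\sim}{\to} \mathbb{T}_S^{\sigma_i}(v)_w$. By Remark \ref{rmk:qbps}, membership in the intrinsic window is a formal-local condition on $M_S^{\sigma_0}(v)$, so it suffices to verify the claim at each closed point $y \in M_S^{\sigma_0}(v)$. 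Lemma \ref{lem:py} and Remark \ref{rmk:Equiver} identify $\widehat{\mathfrak{M}}_S^{\sigma_0}(v)_y \simeq \widehat{\mathscr{P}}(\bm{d}_y)_p$ via the Ext-quiver of the polystable representative of $y$; under this identification the pulled-back $\ell_i$ becomes a generic Weyl-invariant character $\ell_{i,y} \in M(\bm{d}_y)^W_{0,\mathbb{R}}$ (genericity is exactly the statement that $\sigma_0$ lies on a wall whose numerical destabilizing classes are not proportional to $v$), while the class $\delta$ from \eqref{def:delta} pulls back to $w\tau_{\bm d_y}$ up to an integral Weyl-invariant shift computed by the explicit formula \eqref{deltay}, which is absorbed into the window width. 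Theorem \ref{prop:eqS}, applied to $(\ell_{1,y},\ell_{2,y})$ with $\delta''= w\tau_{\bm d_y}$, then gives the desired formal-fiber equivalences $\mathbb{T}^{\sigma_0}_{S,y}(v)_{\delta_y}\simeq \mathbb{T}^{\sigma_i}_{S,y}(v)_{\delta_y}$, and these glue to the global equivalence in \eqref{equiv:T}.

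\noindent\textbf{Step 3 (reduced version and main obstacle).} The reduced case proceeds identically with $\mathfrak{M}_S^{\sigma_0}(v)^{\mathrm{red}}$ and $\mathscr{P}(\bm{d}_y)^{\mathrm{red}}$; for $g\geq 2$ both stacks are classical by Lemma \ref{lem:classical}, and the wall-crossing equivalence for the reduced preprojective categories follows by the same argument as Theorem \ref{prop:eqS}. The main obstacle is Step 2: matching the window widths $m_\bullet^{(i)}$ arising from the Kempf-Ness stratification of $\mathfrak{M}_S^{\sigma_0}(v)$ with the polytope condition \eqref{chi:rho} that defines the quasi-BPS category on the quiver side. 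When $p \in P(\bm{d}_y)$ is not the origin, one reduces this verification to the deepest-stratum case $p=0$ using the formal-fiber semiorthogonal decomposition of Proposition \ref{prop:sod2}, together with Corollary \ref{cor:gen} to ensure compatibility of the restriction functors with the categorical Hall products in diagram \eqref{com:hall2}.
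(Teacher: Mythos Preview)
Your overall strategy---reduction to a single wall crossing, passage to the Ext-quiver at each closed point of the wall moduli space, and invocation of the preprojective wall-crossing equivalence Theorem~\ref{prop:eqS}---matches the paper's approach. There is, however, a genuine gap in Step~2 concerning the perturbation of $\delta$.

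Theorem~\ref{prop:eqS} does not apply with $\delta'' = w\tau_{\bm d_y}$ as you write: its hypothesis requires $\delta'' = \delta + \varepsilon_+\ell_+ + \varepsilon_-\ell_-$ for \emph{general} small $\varepsilon_\pm$, so that $\delta''$ is displaced off the relevant hyperplanes by a generic combination of the two characters. The paper handles this by constructing a global perturbation $\delta'' = \delta + \delta'$ with $\delta'$ built from $b = \varepsilon_+ b_+ + \varepsilon_- b_-$, where $b_\pm \in K(S)_{\mathbb R}$ are the real parts of the central charges $Z_\pm$, and then observing that for the \emph{generic} $\sigma_\pm$ one has $\mathbb{T}_S^{\sigma_\pm}(v)_{\delta''} = \mathbb{T}_S^{\sigma_\pm}(v)_w$ by Lemma~\ref{lem:sigmagen} (since $\chi(b\otimes v)=0$). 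Without this perturbation the intrinsic window at the wall stability $\sigma_0$ need not restrict to an equivalence, and your invocation of Theorem~\ref{prop:eqS} is not justified.

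A related confusion: you assert that $\delta$ pulls back to $w\tau_{\bm d_y}$ at each closed point of $M_S^{\sigma_0}(v)$, citing formula~\eqref{deltay}. But \eqref{deltay} gives $\delta_y = \bigotimes_i (\det V^{(i)})^{\chi(a\otimes F^{(i)})}$, and since $\sigma_0$ lies on a wall the classes $[F^{(i)}]$ need not be proportional to $v$; hence $\delta_y$ is in general an arbitrary Weyl-invariant character, not $w\tau_{\bm d_y}$. This is why the paper tracks the full $\delta''$ rather than $w$ throughout the argument at $\sigma_0$. Relatedly, Lemma~\ref{lem:py} (identification with the $g$-loop quiver at a point $p\in P(d)$) is stated for \emph{generic} $\sigma$; at the wall one must work directly with the multi-vertex Ext-quiver of Remark~\ref{rmk:Equiver}, as the paper does.

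Finally, your Step~3 detour through Proposition~\ref{prop:sod2} and Corollary~\ref{cor:gen} is unnecessary and misplaced: those results feed into the semiorthogonal decomposition of Theorem~\ref{thm:sodK3}, not into wall-crossing. In the paper the formal-fiber equivalence \eqref{equiv:local} is obtained by a direct application of (the formal-fiber version of) Theorem~\ref{prop:eqS} to the multi-vertex Ext-quiver at each $y$, with no reduction to the deepest stratum required.
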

    \begin{proof}
    We only prove the first equivalence, the second one follows by the same argument.
We reduce the proof of the equivalence to a local statement as in Theorem~\ref{prop:eqS}.
    
Consider a stability $\sigma=(Z, \mathcal{A}) \in \mathrm{Stab}(S)$ lying on a wall 
and consider stability conditions $\sigma_{\pm}=(Z_{\pm}, \mathcal{A}_{\pm}) \in \mathrm{Stab}(S)$
lying in adjacent chambers. 
Let $b \in K(S)_{\mathbb{R}}$ be an  
element satisfying 
$\chi(b \otimes v)=0$ 
and let $\delta' \in \mathrm{Pic}(\mathfrak{M}_S^{\sigma}(v))_{\mathbb{R}}$
be defined as in (\ref{def:delta}) using $b$. Let 
$\delta \in \mathrm{Pic}(\mathfrak{M}_S^{\sigma}(v))_{\mathbb{R}}$
be 
as in Definition \ref{def:qbps0}, 
and set 
$\delta''=\delta+\delta'$.  
It is enough to show that, 
there exists $b$ as above such that 
the restriction 
functors for 
the open immersions $
\mathfrak{M}^{\sigma_{\pm}}_S(v) \subset 
\mathfrak{M}^{\sigma}_S(v)$
restrict to the equivalence 
\begin{align}\label{induce:T}
    \mathbb{T}_S^{\sigma}(v)_{\delta''} \stackrel{\sim}{\to} 
    \mathbb{T}_S^{\sigma_{\pm}}(v)_{\delta''}.
    \end{align}

The open substacks $\mathfrak{M}_S^{\sigma_{\pm}}(v) \subset \mathfrak{M}_S^{\sigma}(v)$ 
are semistable loci with respect to line bundle $\ell_{\pm}$ on 
$\mathfrak{M}_S^{\sigma}(v)$ and they are parts of  
$\Theta$-stratifications, see~\cite[Proposition~4.4.5]{halpK32}. 
The line bundles 
$\ell_{\pm}$ are constructed as follows. 
We may assume that $Z(v)=Z_{\pm}(v)=\sqrt{-1}$, 
and write 
$Z_{\pm}(-)=\chi(\omega_{\pm}\otimes -)$ for 
$\omega_{\pm} \in K(S)_{\mathbb{C}}$. 
Then set $b_{\pm} \in K(S)_{\mathbb{R}}$ 
to be the real parts of $\omega_{\pm}$, 
which satisfy $\chi(b_{\pm} \otimes v)=0$. 
The line bundles $\ell_{\pm}$ are 
defined by 
$\ell_{\pm}=\det p_{\mathfrak{M}\ast}(b_{\pm} \boxtimes \mathbb{F})$, 
see~\cite[Theorem~6.4.11]{Halpinstab}. 
Then we set $b=\varepsilon_{+} b_{+} +\varepsilon_{-} b_{-}$
for general elements
$0< \varepsilon_{\pm} \ll 1$. 

Since $\mathfrak{M}_S^{\sigma}(v)$ is 0-shifted 
symplectic, from Theorem~\ref{thm:window:M} and Remark~\ref{rmk:0shift} (see also~\cite[Theorem~3.3.1]{halpK32}), 
there exist subcategories
$\mathbb{W}(\mathfrak{M}_S^{\sigma}(v))^{\ell_{\pm}}_{m_{\bullet\pm}} 
    \subset D^b(\mathfrak{M}_S^{\sigma}(v))$
    which induce equivalences: 
\begin{align}\label{window:k3}
    \mathbb{W}(\mathfrak{M}_S^{\sigma}(v))^{\ell_{\pm}}_{m_{\bullet\pm}} 
    \stackrel{\sim}{\to}
    D^b(\mathfrak{M}_S^{\sigma_{\pm}}(v)).
\end{align}
Moreover, there exist choices of $m_{\bullet\pm}$ such that 
$\mathbb{T}_S^{\sigma}(v)_{\delta''} \subset 
\mathbb{W}(\mathfrak{M}_S^{\sigma}(v))^{\ell_{\pm}}_{m_{\bullet\pm}}$, 
see~\cite[Lemma~4.3.10]{halpK32} or~\cite[Proposition~6.15]{Totheta}
for a choice of $m_{\bullet}$. 
Therefore, by Remark~\ref{rmk:qbps}, it is enough to show 
that, for each closed point $y\in M_S^{\sigma}(v)$,
we have the equivalences 
\begin{align}\label{equiv:local}
    \mathbb{T}_{S, y}^{\sigma}(v)_{\delta_y''} \stackrel{\sim}{\to} 
    \mathbb{T}_{S, y}^{\sigma_{\pm}}(v)_{\delta_y''}.
\end{align}
Here, on the right hand side we consider the intrinsic window 
subcategories for the formal fibers 
of 
the morphisms 
$\mathcal{M}_S^{\sigma_{\pm}}(v) \subset \mathcal{M}_S^{\sigma}(v) \to M_S^{\sigma}(v)$ at $y$. 

Let $y$ corresponds to the polystable object (\ref{pstable:S}) 
and set $\bm{d}=(\dim V^{(i)})_{i=1}^m$. 
Let $(Q^{\circ, d}_y, \mathscr{I}_y)$ be the Ext-quiver at $y$ with relation $\mathscr{I}_y$, 
see~Remark~\ref{rmk:Equiver}. 
The quiver with relation $(Q^{\circ, d}_y, \mathscr{I}_y)$
is the
double of some quiver $Q_y^{\circ}$. 
Let $\mathscr{P}_y(\bm{d})$ 
be the derived stack of $(Q^{\circ, d}_y, \mathscr{I}_y)$-representations with dimension vector $\bm{d}$, 
see (\ref{P:dzero}), 
and $P_y(\bm{d})$ the good 
moduli space of its classical truncation. 
By the equivalence (\ref{formal:equiv}), 
there is an equivalence 
\begin{align}\label{equiv:formal}
\widehat{\mathfrak{M}}_S^{\sigma}(v)_y \simeq 
\widehat{\mathscr{P}}_y(\bm{d}).
\end{align}
Here the right hand side is the formal fiber of 
$\mathscr{P}(\bm{d})$ at $0 \in P(\bm{d})$. 
The line bundles 
$\ell_{\pm}$
restricted to $\widehat{\mathfrak{M}}_S^{\sigma}(v)$
correspond to 
generic elements $\ell_{\pm} \in M(\bm{d})_{\mathbb{R}}^W$, 
where $M(\bm{d})_{\mathbb{R}}$ is the 
character lattice of the maximal torus 
of $G_y:=\mathrm{Aut}(y)=\prod_{i=1}^m GL(V^{(i)})$. 
Moreover the 
$\sigma_{\pm}$-semistable loci 
in the left hand side of (\ref{equiv:formal})
correspond to $\ell_{\pm}$-semistable $(Q^{\circ, d}_y, \mathscr{I}_y)$-representations. 
Therefore the equivalences (\ref{equiv:local}) follow from 
the formal fiber version of 
the equivalences
\[\mathbb{T}(\bm{d})_{\delta_y''} \stackrel{\sim}{\to} 
\mathbb{T}^{l_{\pm}}(\bm{d})_{\delta_y''}\]
in Theorem~\ref{prop:eqS}, 
whose proof is identical to loc. cit. 
\end{proof}

By Lemma~\ref{lem:sigmagen} and Theorem~\ref{thm:walleq}, 
the following definition makes sense: 
\begin{defn}\label{defn:BPS}
For $v \in N(S)$ and $w \in \mathbb{Z}$, define 
\begin{align*}
    \mathbb{T}_S(v)_{w}:=
    \mathbb{T}_S^{\sigma}(v)_{w}, \ 
     \mathbb{T}_S(v)_{w}^{\rm{red}}:=
    \mathbb{T}_S^{\sigma}(v)_{w}^{\rm{red}},
\end{align*}
where $\sigma \in \mathrm{Stab}(S)$ is a generic stability condition. 
\end{defn}
\begin{remark}
    The category $\mathbb{T}_S(v)_w$ is defined 
    as an abstract pre-triangulated dg-category. 
    If we take a generic $\sigma \in \mathrm{Stab}(S)$, 
    it is realized as a subcategory 
    of $D^b(\mathfrak{M}_S^{\sigma}(v))$ 
    by the identification $\mathbb{T}_S(v)_w=\mathbb{T}_S^{\sigma}(v)_w
    \subset D^b(\mathfrak{M}_S^{\sigma}(v))$.
    \end{remark}
\begin{remark}\label{rmk:primitive}
Suppose that $g\geq 2$ and take a generic $\sigma \in \mathrm{Stab}(S)$. 
Then we have $\mathfrak{M}_S^{\sigma}(v)^{\rm{red}}=\mathcal{M}_S^{\sigma}(v)$. 
Let $\mathcal{M}_S^{\sigma\text{-st}}(v) \subset \mathcal{M}_S^{\sigma}(v)$
be the open substack of $\sigma$-stable objects. Then the good moduli 
space morphism
$\mathcal{M}_S^{\sigma\text{-st}}(v) \to M_S^{\sigma\text{-st}}(v)$
is a $\mathbb{C}^{\ast}$-gerbe classified by $\alpha \in \mathrm{Br}(M_S^{\sigma\text{-st}}(v))$
which gives the 
obstruction of the existence of a universal object in $S \times M_S^{\sigma\text{-st}}(v)$. 
We then have that
\begin{align}\label{decom:prim0}
        \mathbb{T}_S(v)_w^{\rm{red}}|_{M_S^{\sigma\text{-st}}(v)}=D^b(M_S^{\sigma}(v), \alpha^w),
\end{align}
where the right hand is the derived category of $\alpha^w$-twisted 
coherent sheaves on $M_S^{\sigma\text{-st}}(v)$, see~\cite{MR2700538, MR2309155}, and the left hand side is the subcategory of $D^b(\mathcal{M}^{\sigma\text{-st}}(v))$ classically generated by the restriction of objects in $\mathbb{T}_S(v)_w^{\rm{red}}$. 

If $v$ is primitive, then 
$M_S^{\sigma\text{-st}}(v)=M_S^{\sigma}(v)$ and 
it is a non-singular holomorphic symplectic variety 
deformation equivalent to the Hilbert scheme of points $S^{[n]}$, where $n=\langle v, v\rangle/2+1$. 
By (\ref{decom:prim0}), we have 
\begin{align}\label{decom:prim}
        \mathbb{T}_S(v)_w^{\rm{red}}=D^b(M_S^{\sigma}(v), \alpha^w).
\end{align}
\end{remark}

\begin{remark}\label{rmk:abelian}
We can also define quasi-BPS categories for other Calabi-Yau surfaces, 
i.e. abelian surfaces, similarly to Definition~\ref{defn:BPS}. 
When $S$ is an abelian surface, 
the derived Picard stack is $\mathcal{P}ic^{\beta}(S)=\widehat{S} \times \Spec \mathbb{C}[\varepsilon]/\mathbb{C}^{\ast}$, 
where $\widehat{S}$ is the dual abelian surface, and 
we define the reduced stack (\ref{defn:reduced}) 
by $\mathfrak{M}_S^{\sigma}(v) \times_{\mathcal{P}ic^{\beta}(S)}\mathcal{P}ic^{\beta}(S)^{\rm{cl}}$. 
The results in this paper also hold for abelian surfaces.     
\end{remark}

Let $v=dv_0$ for a primitive $v_0$. 
We expect that, if $\gcd(d, w)=1$, the category 
$\mathbb{T}_S(v)_w^{\rm{red}}$
is a ``non-commutative hyperkähler manifold'', 
so that it shares several properties 
with $D^b(M)$ 
for a smooth projective hyperkähler variety of $K3^{[n]}$-type
for $n=\langle v, v \rangle/2+1$. 
More precisely, we may expect the following, 
which we view as a categorical $\chi$-independence 
phenomenon. 
\begin{conj}\label{conj:HK}
Let $v=dv_0$ for $d\geq 1$ and $v_0$ a primitive vector with $\langle v_0, v_0\rangle=2g-2$.
	Suppose that $g\geq 0$. 
 For $\gcd(d, w)=1$,
	the category $\mathbb{T}_S(v)_{w}^{\rm{red}}$ 
 is deformation equivalent to $D^b(S^{[n]})$ for $n=\langle v, v\rangle/2+1$. 
	\end{conj}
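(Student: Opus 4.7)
The plan is to handle the conjecture in stages, reducing the general case to a local statement for preprojective algebras plus a hyperk\"ahler deformation. For $d=1$ the statement follows from Remark~\ref{rmk:primitive}, together with the (nontrivial, but standard) fact that twisted and untwisted derived categories of hyperk\"ahler manifolds of $K3^{[n]}$-type are deformation equivalent through families of twisted hyperk\"ahler varieties connecting $(M_S^\sigma(v),\alpha^w)$ to $(S^{[n]},0)$. The case $g=0$ is Proposition~\ref{prop:g=0}. The case $g=1$ gives $n=1$ and $S^{[n]}=S$, and reduces via Koszul duality (Theorem~\ref{thm:Kduality}) and the Ext-quiver structure at the deepest stratum (a doubled Jordan quiver, whose triple quiver with potential is that of $\mathbb{C}^3$) to the conjectures of~\cite{PTzero,PT1}, recalled here as Conjectures~\ref{conj:K3} and~\ref{conj:C2}; this is the approach taken in~\cite{PaTobps} for $(d,w)=(2,1)$.

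For $d\geq 2$ and $g\geq 2$, I would proceed as follows. By Theorem~\ref{thm:walleq} fix a convenient generic Gieseker stability $\sigma$. By Lemma~\ref{lem:classical} the stack $\mathfrak{M}_S^\sigma(v)^{\rm{red}}$ is classical, and by Remark~\ref{rmk:qbps} together with Proposition~\ref{prop:sod2}, an object of $\mathbb{T}_S(v)_w^{\rm{red}}$ is determined by its formal-local restrictions to fibers over closed points of $M_S^\sigma(v)$. By Lemma~\ref{lem:py} each such formal fiber is equivalent to $\widehat{\mathscr{P}}(\bm{d})_p^{\rm{red}}$ for the preprojective algebra of an Ext-quiver whose doubling is the Ext-quiver of the polystable object. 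This reduces the global conjecture to: (i) a local analog for reduced quasi-BPS categories of preprojective algebras of symmetric quivers, asserting deformation equivalence to $D^b$ of a smooth symplectic resolution of the Marsden--Weinstein reduction at a generic central parameter; and (ii) a gluing statement identifying the globally-glued family of local categories with a categorical deformation of the singular moduli space. The natural local deformation is the Kaledin-style replacement of $\mu_0^{-1}(0)$ by $\mu_0^{-1}(\lambda)$ for generic central $\lambda$, paired on the K3 side with the Bayer--Macr\`i / Perego--Rapagnetta deformation of $M_S^\sigma(v)$ through a family of hyperk\"ahler varieties to a smooth $M'$ of $K3^{[n]}$-type.

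The hardest step will be executing stage (i) above: constructing a dg-family of reduced quasi-BPS categories over the parameter space of the deformed moment map, and showing that the intrinsic window subcategory $\mathbb{W}(-)^{\rm{int}}_{w\tau}$ is well-defined and dg-rigid (i.e.\ constant up to equivalence) on the generic locus. This demands a relative version of the categorical support lemma (Lemma~\ref{cor:support}) and of the finite-generation argument of Proposition~\ref{lem:bound} over the base, which is not available in the current setup. A secondary difficulty is matching the Brauer twist $\alpha^w$ with the deformed geometric side: since the Serre functor of $\mathbb{T}_S(v)_w^{\rm{red}}$ is only \'etale-locally trivial by Theorem~\ref{intro:thm4}(ii), one expects a nontrivial twist on $M'$, which must itself be shown deformation-equivalent to the trivial class on $S^{[n]}$. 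The topological K-theory identification of Theorem~\ref{intro:thm:K} provides a useful consistency check (equality of dimensions) but is not sufficient to establish the full deformation equivalence.
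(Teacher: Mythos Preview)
This statement is a \emph{conjecture} in the paper, not a theorem: the paper does not give a proof, and explicitly leaves it open except in the cases $g=0$ (Proposition~\ref{prop:g=0}) and $g=1$, $(d,w)=(2,1)$ (via Proposition~\ref{prop:conj} and the forthcoming~\cite{PaTobps}). Your proposal is therefore not to be compared against an existing proof but against the partial evidence the paper provides, and you correctly identify those pieces.

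Your own write-up is honest about this: you call the key step for $g\ge 2$ ``the hardest step'' and note that the relative categorical support lemma and the dg-rigidity of the intrinsic window subcategory over a deformation base ``is not available in the current setup.'' That is the genuine gap, and it is exactly why the statement remains a conjecture. The Kaledin-style deformation $\mu_0^{-1}(0)\rightsquigarrow\mu_0^{-1}(\lambda)$ you invoke does not obviously carry the quasi-BPS subcategory along: the intrinsic window condition of Definition~\ref{def:intwind} is defined pointwise via weight polytopes and there is no mechanism in the paper for transporting it over a base. Likewise, your step (ii), gluing local categorical deformations to a global one over the Bayer--Macr\`i/Perego--Rapagnetta family, would require a sheaf-of-categories formalism and a comparison of Brauer obstructions that the paper does not develop. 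Even your $d=1$ case is not fully handled in the paper: Remark~\ref{rmk:primitive} gives $\mathbb{T}_S(v)_w^{\rm red}\simeq D^b(M_S^\sigma(v),\alpha^w)$, but the paper does not establish that this twisted derived category is deformation equivalent to $D^b(S^{[n]})$; you flag this as ``nontrivial, but standard,'' which overstates what is actually written down in the literature.

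In short: your outline is a reasonable research plan, and it agrees with the paper on what is known, but it is not a proof, and neither is anything in the paper.
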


\subsection{Quasi-BPS categories for Gieseker semistable sheaves}
In Definition~\ref{defn:BPS}, we defined quasi-BPS categories 
for Bridgeland semistable objects. 
By applying the categorical wall-crossing 
equivalence in Theorem~\ref{thm:walleq}, we 
can relate the categories in Definition~\ref{defn:BPS} 
with those under Hodge isometries, and 
with those for moduli stacks of Gieseker 
semistable sheaves. 

Let $G$ be the group of Hodge isometries of 
the Mukai lattice $H^{\ast}(S, \mathbb{Z})$, 
preserving the orientation of the positive 
definite four dimensional plane of $H^{\ast}(S, \mathbb{R})$. 
Note that it acts on the algebraic part 
$\mathbb{Z} \oplus \mathrm{NS}(S) \oplus \mathbb{Z}$. 
The following is a categorical analogue of 
derived invariance property of counting invariants
for K3 surfaces~\cite{Tst3, TodK3}. 
\begin{cor}\label{cor:equiv}
For any $\gamma \in G$, there is an equivalence 
\begin{align*}
    \mathbb{T}_S(v)_w \simeq \mathbb{T}_S(\gamma v)_w. 
\end{align*}
\end{cor}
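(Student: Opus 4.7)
The plan is to realize $\gamma$ as the cohomological shadow of a derived autoequivalence and to transport quasi-BPS categories along it. By the Derived Torelli Theorem for K3 surfaces, every Hodge isometry $\gamma \in G$ of the Mukai lattice of $S$ (preserving the orientation of the positive four-plane) is induced by a Fourier--Mukai equivalence $\Phi \colon D^b(S) \xrightarrow{\sim} D^b(S)$ with $\Phi_{\ast}^{\mathrm{coh}}=\gamma$ on $N(S)$.

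Fix a generic $\sigma=(Z,\mathcal{A})\in\mathrm{Stab}(S)$. The autoequivalence $\Phi$ acts on $\mathrm{Stab}(S)$ by $\Phi\cdot\sigma:=(Z\circ\gamma^{-1},\Phi(\mathcal{A}))$ (up to shift) and induces an isomorphism of derived moduli stacks
\[
\Phi \colon \mathfrak{M}_S^{\sigma}(v) \xrightarrow{\sim} \mathfrak{M}_S^{\Phi\cdot\sigma}(\gamma v)
\]
which is compatible with scaling $\mathbb{C}^{\ast}$-automorphisms (hence with the weight decomposition), and with the determinant morphism (hence with the reduced structure, since the trace of the obstruction is preserved). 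The stability condition $\Phi\cdot\sigma$ is again generic for $\gamma v$, because any would-be wall-defining equation $Z\circ\gamma^{-1}(\gamma v_1)/Z\circ\gamma^{-1}(\gamma v_2)\in\mathbb{R}_{>0}$ with $\gamma v=\gamma v_1+\gamma v_2$ is equivalent to $Z(v_1)/Z(v_2)\in\mathbb{R}_{>0}$ for $v=v_1+v_2$.

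The remaining point is that $\Phi$ matches the intrinsic window subcategories defining the two sides. Choose $a\in K(S)_{\mathbb{R}}$ with $\chi(a\otimes v)=w$; since $\gamma$ preserves the Mukai pairing, $\chi(\gamma(a)\otimes\gamma v)=w$, and the line bundle $\delta$ of \eqref{def:delta} built from $(v,a)$ corresponds, under $\Phi$, to the one built from $(\gamma v,\gamma(a))$, because $\Phi$ intertwines the universal families through its kernel. At each closed point $y\in M_S^{\sigma}(v)$ with polystable representative $F$, the equivalence $\Phi$ gives an isomorphism of $A_{\infty}$-algebras $\mathrm{RHom}_S(F,F)\cong\mathrm{RHom}_S(\Phi(F),\Phi(F))$ identifying the Ext-quiver with relation $(Q^{\circ,d}_y,\mathscr{I}_y)$ at $y$ with the one at $\Phi(y)$; via the formal fiber description \eqref{formal:equiv}, \eqref{formal:equiv3}, this identifies the polytopes $\mathbf{W}(\bm{d})$ and the Weyl-invariant weights $w\tau_{\bm{d}}$ governing the intrinsic window on both sides. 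Combining these observations with Remark~\ref{rmk:qbps}, we obtain equivalences
\[
\mathbb{T}_S(v)_w=\mathbb{T}_S^{\sigma}(v)_w \simeq \mathbb{T}_S^{\Phi\cdot\sigma}(\gamma v)_w = \mathbb{T}_S(\gamma v)_w,
\]
using Theorem~\ref{thm:walleq} and Lemma~\ref{lem:sigmagen} to pass to and from generic stability conditions if needed.

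The main obstacle is the formal-local matching in the final step: checking that the isomorphism of Ext-quivers induced by $\Phi$ identifies the weight $w\tau_{\bm{d}}$ together with the polytope $\mathbf{W}(\bm{d})$. This reduces to the fact that a Fourier--Mukai kernel acts trivially on the diagonal $\mathbb{C}^{\ast}$-weights of the stabilizer $G_y=\prod_i GL(V^{(i)})$ and identifies its tangent obstruction data, which is a direct consequence of $\Phi$ being an equivalence of CY2 categories. No additional input beyond the wall-crossing equivalence of Theorem~\ref{thm:walleq} is required.
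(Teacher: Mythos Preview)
Your approach is essentially the same as the paper's: realize $\gamma$ by an autoequivalence $\Phi$, transport the moduli stack and the line bundle $\delta$, and then invoke the wall-crossing equivalence of Theorem~\ref{thm:walleq}. Your formal-local check that $\Phi$ matches the intrinsic window data is more explicit than the paper's, which simply observes that $\phi\colon\mathfrak{M}_S^{\sigma}(v)\simeq\mathfrak{M}_S^{\Phi_\ast\sigma}(\Phi_\ast v)$ carries $\delta$ (built from $a$) to $\delta'$ (built from $\Phi_\ast a$); since the intrinsic window subcategory is, by construction, intrinsic to the derived stack together with the $\mathbb{R}$-line bundle, this suffices.

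There is one point you gloss over that the paper handles with care. You write that ``$\Phi$ acts on $\mathrm{Stab}(S)$'', but recall that in this paper $\mathrm{Stab}(S)$ denotes the \emph{distinguished connected component}, and Theorem~\ref{thm:walleq} only compares quasi-BPS categories for stability conditions lying in that component. The Derived Torelli Theorem alone produces an autoequivalence $\Phi$ with $\Phi_\ast=\gamma$, but does not immediately guarantee that $\Phi_\ast\sigma$ stays in the distinguished component. The paper avoids this by working with the subgroup $\mathrm{Aut}_\circ(D^b(S))\subset\mathrm{Aut}(D^b(S))$ of autoequivalences preserving $\mathrm{Stab}(S)$, and citing that the induced map $\mathrm{Aut}_\circ(D^b(S))\to G$ is already surjective (\cite[Proposition~7.9]{HH}, \cite[Corollary~4.10]{HMS2}). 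You should either cite this surjectivity, or cite a result ensuring that every autoequivalence preserves the distinguished component; without one of these, the final identification $\mathbb{T}_S^{\Phi\cdot\sigma}(\gamma v)_w=\mathbb{T}_S(\gamma v)_w$ is not justified.
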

\begin{proof}
Let $\mathrm{Aut}_{\circ}(D^b(S))$ be the 
group of autoequivalences $\Phi$ of $D^b(S)$
whose action $\Phi_{\ast}$ on the space
of stability conditions preserves the component 
$\mathrm{Stab}(S)$. 
It also acts on $H^{\ast}(S, \mathbb{Z})$, and denote the action by $\Phi_{\ast} \colon H^{\ast}(S, \mathbb{Z}) \to H^{\ast}(S, \mathbb{Z})$. 
Then we have the surjective group homomorphism, see~\cite[Proposition~7.9]{HH}, 
\cite[Corollary~4.10]{HMS2}:
\begin{align}\label{aut:surj}
    \mathrm{Aut}_{\circ}(D^b(S)) \to G, \ 
    \Phi \mapsto \Phi_{\ast}
\end{align}

For $\Phi \in \mathrm{Aut}_{\circ}(D^b(S))$, 
there is an equivalence 
    of derived stacks
$\phi \colon \mathfrak{M}_S^{\sigma}(v) \simeq 
\mathfrak{M}_S^{\Phi_{\ast}\sigma}(\Phi_{\ast}\sigma)$
given by $F \mapsto \Phi(F)$. 
The above equivalence 
induces an equivalence 
\begin{align*}
\phi_{\ast} \colon 
\mathbb{T}_S^{\sigma}(v)_{\delta} \simeq 
\mathbb{T}_S^{\Phi_{\ast}\sigma}(\Phi_{\ast}v)_{\delta'}
\end{align*}
where $\delta'$ is determined by $a'=\Phi_{\ast}^{-1}a \in K(S)_{\mathbb{R}}$
which satisfies $\chi(a'\otimes v')=w$
for $v'=\Phi_{\ast}v$. By Theorem~\ref{thm:walleq}
and the surjectivity of (\ref{aut:surj}), 
we obtain the corollary. 
\end{proof}

Let $H$ be an ample divisor on $S$. 
We denote by $\mathfrak{M}_S^H(v)$ the derived moduli 
stack of $H$-Gieseker semistable sheaves on $S$ with 
Mukai vector $v$, 
and by $\mathfrak{M}_S^H(v)^{\rm{red}}$ its reduced stack. 
For $a \in K(S)_{\mathbb{R}}$, we define the
$\mathbb{R}$-line
bundle $\delta$ on $\mathfrak{M}_S^H(v)$, $\mathfrak{M}_S^H(v)^{\rm{red}}$ similarly to 
(\ref{def:delta}). 
Then we define 
\begin{align*}
&\mathbb{T}_S^H(v)_{\delta}:=\mathbb{W}(\mathfrak{M}_S^H(v))_{\delta}^{\rm{int}}
\subset D^b(\mathfrak{M}_S^H(v)), \\
&\mathbb{T}_S^H(v)_{\delta}^{\rm{red}}:=\mathbb{W}(\mathfrak{M}_S^H(v)^{\rm{red}})_{\delta}^{\rm{int}}
\subset D^b(\mathfrak{M}_S^H(v)^{\rm{red}}). 
\end{align*}

Below we consider $H$ generic with respect to $v$, 
so that all Jordan-H\"{o}lder factors of 
objects in $\mathfrak{M}_S^H(v)$ 
have numerical class proportional to $v$. 
The following is a corollary of the wall-crossing 
equivalence in Theorem~\ref{thm:walleq}. 
\begin{cor}\label{cor:lv}
For $v \in N(S)_{\mathbb{R}}$ and generic 
$\sigma \in \mathrm{Stab}(S)$, 
there is $\varepsilon \in \{0, 1\}$ 
and $m\gg 0$ such that, by setting 
$v'=(-1)^{\varepsilon}v(mH)$ we have 
equivalences
\begin{align*}
    \mathbb{T}_S(v)_{\delta} \simeq 
    \mathbb{T}_S^H(v')_{\delta'}, \ 
  \mathbb{T}_S(v)_{\delta}^{\rm{red}} \simeq 
    \mathbb{T}_S^H(v')_{\delta'}^{\rm{red}}.   
\end{align*}
Here, $\delta'$ is a line bundle 
on $\mathfrak{M}_S^H(v)$ determined by 
$a'=(-1)^{\varepsilon}a(-mH) \in K(S)_{\mathbb{R}}$ with $\chi(a \otimes v)=\chi(a' \otimes v')=w$. Then:
\begin{align*}
    \mathbb{T}_S(v)_{w} \simeq 
    \mathbb{T}_S^H(v')_{w}, \ 
  \mathbb{T}_S(v)_{w}^{\rm{red}} \simeq 
    \mathbb{T}_S^H(v')_{w}^{\rm{red}}.   
\end{align*}
\end{cor}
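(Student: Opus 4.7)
The plan combines three ingredients established above: Corollary~\ref{cor:equiv} (invariance of quasi-BPS categories under the action of $G$ on Mukai vectors), Theorem~\ref{thm:walleq} (wall-crossing equivalence between generic stability conditions), and Proposition~\ref{prop:LV} (which identifies $\sigma_{0,mH}$-semistable objects at the large volume limit with $H$-Gieseker semistable sheaves, provided a positivity condition on the Mukai vector).

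\textbf{Step 1: choose $\varepsilon$ so that $v'$ is positive at the large volume limit.} Writing $v=(r,\beta,\chi)$, for $m\gg 0$ we have
\[
v(mH)=\left(r,\ \beta+rmH,\ \chi+m\,\beta\cdot H+\tfrac{1}{2}rm^2 H^2\right).
\]
If $r>0$ take $\varepsilon=0$, and if $r<0$ take $\varepsilon=1$, so that $v'=(-1)^\varepsilon v(mH)$ has positive first component. If $r=0$, the second component of $v(mH)$ equals $\beta$, and we use the sign of $\beta\cdot H$ in the same role; if moreover $\beta\cdot H=0$, we use the sign of $\chi$ (which is nonzero since $v\ne 0$). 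In each case, $v'$ satisfies the hypothesis of Proposition~\ref{prop:LV}.

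\textbf{Step 2: transform the Mukai vector via Corollary~\ref{cor:equiv}.} The map $\gamma\colon v\mapsto(-1)^\varepsilon v(mH)$ is a Hodge isometry of the Mukai lattice. The factor $(-1)^\varepsilon$ lies in $G$: indeed, $-\mathrm{id}$ preserves the Mukai pairing and, acting as multiplication by $-1$ on the positive-definite four-dimensional real subspace of $H^*(S,\mathbb{R})$, preserves its orientation (since $\det(-I_4)=1$). The factor $-\otimes e^{mH}$ is realized by the autoequivalence $-\otimes\mathcal{O}_S(mH)$, so $\gamma\in G$. By Corollary~\ref{cor:equiv} (whose proof applies verbatim to the reduced version, as the autoequivalence induces a morphism of derived stacks compatible with the determinant map $\det\colon\mathfrak{M}_S^\sigma(v)\to\mathcal{P}ic^\beta(S)$), we obtain equivalences
\[
\mathbb{T}_S(v)_w\simeq\mathbb{T}_S(v')_w,\qquad \mathbb{T}_S(v)_w^{\rm{red}}\simeq\mathbb{T}_S(v')_w^{\rm{red}},
\]
and tracking the line bundle, $a\in K(S)_\mathbb{R}$ with $\chi(a\otimes v)=w$ is sent to $a'=(-1)^\varepsilon a(-mH)$ with $\chi(a'\otimes v')=w$.

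\textbf{Step 3: wall-cross to the large volume limit and identify with Gieseker moduli.} Choose $m$ also large enough that $\sigma_{0,mH}$ is generic for $v'$. By Theorem~\ref{thm:walleq}, $\mathbb{T}_S(v')_w\simeq\mathbb{T}_S^{\sigma_{0,mH}}(v')_w$, and similarly for the reduced version. By Proposition~\ref{prop:LV} and the positivity of $v'$, every $\sigma_{0,mH}$-semistable object of Mukai vector $v'$ is a shift $[2a]$ of an $H$-Gieseker semistable sheaf, and the integer $a$ is uniquely determined by the requirement that the Mukai vector of the sheaf equals $v'$ (since $[2]$ is the Serre functor and acts trivially on $N(S)$, only one shift lies in $\mathrm{Coh}(S)$). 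This gives an isomorphism of derived stacks $\mathfrak{M}_S^{\sigma_{0,mH}}(v')\simeq\mathfrak{M}_S^H(v')$ that respects the universal families, hence the line bundles $\delta'$, and the determinant maps to $\mathcal{P}ic^\beta(S)$, hence the reduced structures. The intrinsic window subcategories then match on the nose, yielding the desired equivalences.

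The main obstacle is the careful verification in Step~3 that the $\mathbb{C}$-point identification of Proposition~\ref{prop:LV} upgrades to an equivalence of derived moduli stacks that is compatible with universal families, scalar automorphisms, and the reduced (traceless) structures—these compatibilities are what ensure that the line bundles $\delta'$ and the intrinsic window subcategories transport correctly across every step of the chain.
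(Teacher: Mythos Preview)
Your proposal is correct and follows essentially the same route as the paper: both arguments combine Corollary~\ref{cor:equiv}, Theorem~\ref{thm:walleq}, and Proposition~\ref{prop:LV}, choosing the autoequivalence $\otimes\,\mathcal{O}(mH)$ or $\otimes\,\mathcal{O}(mH)[1]$ (your $\varepsilon\in\{0,1\}$) to bring $v'$ into the positivity range required by Proposition~\ref{prop:LV}. Your write-up is more detailed than the paper's—in particular, you spell out the case analysis on $(r,\beta\cdot H,\chi)$, justify $-\mathrm{id}\in G$ directly, and flag the compatibility of the derived-stack identification with universal families and determinant maps needed for the reduced version—but the underlying strategy is identical.
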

\begin{proof}
We take the autoequivalence $\Phi$ of $D^b(S)$ 
to be either $\Phi=\otimes \mathcal{O}(mH)$
or $\otimes \mathcal{O}(mH)[1]$ for $m\gg 0$, such that  
the vector $\Phi_{\ast}v=(r, \beta, \chi)$ either has $r \geq 0$ and $H \cdot \beta>0$, 
or $r=H \cdot \beta=0$, $\chi>0$. 
Then applying Corollary~\ref{cor:equiv}, 
Theorem~\ref{thm:walleq}, and Proposition~\ref{prop:LV} 
we obtain the conclusion. 
\end{proof}

We next mention the natural periodicity and symmetry of quasi-BPS categories:
\begin{lemma}\label{lem:period}
Let $m:=\gcd\{\chi(a \otimes v) : a \in K(S)\}$. 
We have equivalences 
\begin{align*}
    \mathbb{T}_S(v)_w \simeq \mathbb{T}_S(v)_{w+m}, \
    \mathbb{T}_S(v)_w \simeq \mathbb{T}_S(v)_{-w}^{\rm{op}}.    
\end{align*}
The similar equivalences also hold for $\mathbb{T}_S(v)_w^{\rm{red}}$. 
    \end{lemma}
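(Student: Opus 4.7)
The plan is to produce both equivalences as auto- or anti-equivalences of $D^b(\mathfrak{M}_S^\sigma(v))$ (and its reduced counterpart) compatible with the intrinsic window condition of Definition~\ref{def:intwind}. By Remark~\ref{rmk:qbps}, it suffices to verify compatibility on each formal fiber $\widehat{\mathfrak{M}}_S^\sigma(v)_y$ at a closed point $y\in M_S^\sigma(v)$, where Remark~\ref{rmk:Equiver} and Lemma~\ref{lem:py} give a description in terms of the (tripled) Ext-quiver at $y$, and Theorem~\ref{thm:Kduality} allows passage to graded matrix factorizations when convenient.

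For the first equivalence, pick $a_0\in K(S)$ with $\chi(a_0\otimes v)=m$---which exists since $\{\chi(a\otimes v):a\in K(S)\}=m\mathbb{Z}$---and set $L_0:=\det p_{\mathfrak{M}\ast}(a_0\boxtimes\mathbb{F})\in\mathrm{Pic}(\mathfrak{M}_S^\sigma(v))$. By~\eqref{deltay}, the scalar torus $\mathbb{C}^{\ast}\subset\mathrm{Aut}(F)$ acts on $L_0|_y$ with weight $\chi(a_0\otimes v)=m$ at every closed point $y$, so $(-)\otimes L_0$ shifts the $\mathbb{C}^{\ast}$-weight decomposition by $m$. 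Formally locally at $y$, $L_0$ corresponds to a character of $G_y$ on the Koszul presentation $\mathfrak{M}_U=s^{-1}(0)/G\hookrightarrow\mathscr{Y}_U$, so it extends to $\widetilde{L_0}$ on $\mathscr{Y}_U$ with $j_\ast(\mathcal{E}\otimes L_0)\cong j_\ast\mathcal{E}\otimes\widetilde{L_0}$. Every $\lambda$-weight appearing in Definition~\ref{def:intwind} shifts uniformly by $\langle\lambda,L_0\rangle$, so $(-)\otimes L_0$ gives the desired equivalence $\mathbb{T}_S^\sigma(v)_\delta\xrightarrow{\sim}\mathbb{T}_S^\sigma(v)_{\delta+L_0}=\mathbb{T}_S^\sigma(v)_{w+m}$; the argument on $\mathfrak{M}_S^\sigma(v)^{\mathrm{red}}$ is identical.

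For the second equivalence, the plan is to use derived dualization through Koszul duality. On matrix factorizations, the contravariant functor $(P,d_P)\mapsto(P^\vee,d_P^\vee)$ is an autoequivalence which, by Lemma~\ref{lem:genJ}, sends the generators $\mathcal{O}\otimes\Gamma_{G(\bm{d})}(\chi)$ to $\mathcal{O}\otimes\Gamma_{G(\bm{d})}(-w_0\chi)$, where $w_0$ is the longest Weyl element. The key symmetry input is that the polytope $\mathbf{W}(\bm{d})$ of the tripled Ext-quiver is both $W$-invariant and centrally symmetric: $R_{Q_y^{\circ,d}}(\bm{d})\cong T^\ast R_{Q_y^\circ}(\bm{d})$ is self-dual because $Q_y^{\circ,d}$ is a doubled quiver by Remark~\ref{rmk:Equiver}, and $\mathfrak{g}(\bm{d})$ is self-dual under the trace pairing. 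Combined with $w_0\rho=-\rho$ and the Weyl-invariance of $\delta_y$ from~\eqref{deltay}, the condition $\chi+\rho-\delta_y\in\mathbf{W}(\bm{d})$ is equivalent to $(-w_0\chi)+\rho-(-\delta_y)\in\mathbf{W}(\bm{d})$, which is precisely the window condition at $-\delta_y$ for the dualized generator. By Lemma~\ref{lem:compareT} and Remark~\ref{rmk:qbps}, dualization then glues to an equivalence $\mathbb{T}_S(v)_w\simeq\mathbb{T}_S(v)_{-w}^{\mathrm{op}}$. The reduced case is identical since replacing the moment map $\mu$ by its traceless version $\mu_0$ preserves the self-duality of the ambient $T(\bm{d})$-representation.

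The hardest part is rigorously producing and gluing the contravariant functor in the second equivalence: as $\mathfrak{M}_S^\sigma(v)$ is only quasi-smooth, a naive global $\mathcal{RH}om(-,\mathcal{O})$ is awkward to control, so one instead uses the Koszul-dual matrix factorization model, where dualization is concrete at the level of $G\times\mathbb{C}^{\ast}$-equivariant factors. Gluing the formally-local anti-equivalences into a global one then relies on the presentation-independent character of $\mathbb{W}^{\mathrm{int}}$ in Definition~\ref{def:intwind}.
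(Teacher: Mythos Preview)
Your argument for the first equivalence is exactly the paper's: tensoring by the determinant line bundle $\delta=\det p_{\mathfrak{M}\ast}(a\boxtimes\mathbb{F})$ for $a\in K(S)$ shifts the weight by $\chi(a\otimes v)$ and preserves the intrinsic window condition.

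For the second equivalence your approach is correct but more elaborate than necessary. The paper simply takes the global functor $\mathcal{H}om(-,\mathcal{O}_{\mathfrak{M}_S^\sigma(v)})$ and asserts that it restricts to $\mathbb{T}_S^\sigma(v)_w\to\mathbb{T}_S^\sigma(v)_{-w}^{\mathrm{op}}$. Your concern that ``a naive global $\mathcal{RH}om(-,\mathcal{O})$ is awkward to control'' because $\mathfrak{M}_S^\sigma(v)$ is only quasi-smooth is misplaced: on a quasi-smooth stack the structure sheaf has finite Tor-amplitude, the dualizing complex is an invertible object, and $\mathcal{H}om(-,\mathcal{O}_{\mathfrak{M}})$ differs from Grothendieck duality $\mathbb{D}_{\mathfrak{M}}$ only by a line bundle twist and shift, so it is a perfectly good contravariant autoequivalence of $D^b(\mathfrak{M}_S^\sigma(v))$. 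Indeed, the paper uses exactly this functor again in the proof of Theorem~\ref{thm:sodK3} (denoted $\mathbb{D}_{\mathfrak{M}}$) and records that it sends $\mathbb{T}_S^\sigma(v)_\delta$ to $\mathbb{T}_S^\sigma(v)_{-\delta}^{\mathrm{op}}$. So there is no gluing problem: the global functor already exists, and one only needs to check formally locally that it preserves the window condition. Your polytope-symmetry calculation (self-duality of $R_{Q_y^{\circ,d}}(\bm{d})\oplus\mathfrak{g}(\bm{d})$, Weyl-invariance and central symmetry of $\mathbf{W}(\bm{d})$, $w_0\rho=-\rho$) is exactly the verification needed; it just should be read as confirming that the \emph{global} $\mathcal{H}om(-,\mathcal{O})$ respects the window, not as constructing a functor to be glued from local pieces.
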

\begin{proof}
    For $a \in K(S)$, let $\delta$ be the line bundle (\ref{def:delta}).
    Then tensoring by $\delta$ induces equivalences:
    \begin{align*}
        \mathbb{T}_S^{\sigma}(v)_w \simeq \mathbb{T}^{\sigma}_S(v)_{w+\chi(a\otimes v)}, \  
    \mathbb{T}^{\sigma}_S(v)_w^{\rm{red}} \simeq \mathbb{T}^{\sigma}_S(v)_{w+\chi(a\otimes v)}^{\rm{red}}.    
    \end{align*}
    Therefore we obtain the first equivalence. 
    The second equivalence is given by the restriction of 
    $\mathcal{H}om(-, \mathcal{O}_{\mathfrak{M}_S^{\sigma}(v)})$ to 
    $\mathbb{T}_S^{\sigma}(v)$. 
\end{proof}
 \subsection{Conjecture~\ref{conj:HK} for \texorpdfstring{$g=0,1$}{g=0,1}}
Write the Mukai vector as $v=dv_0$, where $d\in\mathbb{Z}_{\geq 1}$ and for $v_0$ a primitive Mukai vector
with $\langle v_0, v_0 \rangle=2g-2$
and $g\geq 0$. 

The following
proposition proves Conjecture~\ref{conj:HK} when $g=0$.
\begin{prop}\label{prop:g=0}
Suppose that $g=0$ and $\gcd(d, w)=1$. Then we have 
\begin{align*}
  \mathbb{T}_S(dv_0)_w^{\rm{red}}=\begin{cases}
      D^b(\Spec \mathbb{C}), & d=1, \\
      0, & d>1.
  \end{cases}  
\end{align*}
\end{prop}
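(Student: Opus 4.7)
The plan is to reduce the global statement to the formal-local model at the unique closed point of $M_S^\sigma(dv_0)$, then to rule out nonzero objects in the local quasi-BPS category via an elementary polytope computation. Since $\langle v_0, v_0\rangle = 2g-2 = -2$, any $\sigma$-stable object $E_0$ with Mukai vector $v_0$ satisfies $\mathrm{ext}^1(E_0, E_0)=0$ and is therefore rigid; for a generic $\sigma$, $M^\sigma_S(v_0)$ is either empty (in which case the claim is trivial) or a single point $\{E_0\}$. In the latter case $M^\sigma_S(dv_0)$ is a single point corresponding to the polystable object $V\otimes E_0$ with $\dim V=d$, and Lemma~\ref{lem:py} identifies the formal local model with $\mathscr{P}(d)^{\mathrm{red}}=\mathrm{Spec}(\mathrm{Sym}(\mathfrak{sl}(d)^\vee[1]))/GL(d)$, the reduced preprojective stack of the quiver with one vertex and no loops. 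Since the good moduli space is a point, this formal fiber agrees with the whole stack, and Remark~\ref{rmk:qbps} gives $\mathbb{T}_S(dv_0)_w^{\mathrm{red}} \simeq \mathbb{T}(d)_w^{\mathrm{red}}$.

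For $d=1$, $\mathfrak{sl}(1)=0$, so $\mathscr{P}(1)^{\mathrm{red}} = B\mathbb{C}^*$; the intrinsic window condition is automatic on each scalar-weight eigenspace, giving $\mathbb{T}_S(v_0)_w^{\mathrm{red}} = D^b(B\mathbb{C}^*)_w \simeq D^b(\mathrm{Spec}\,\mathbb{C})$. For $d>1$ with $\gcd(d,w)=1$, the reduced analog of Lemma~\ref{lem:compareT} (which carries over verbatim because, for $g=0$, the traceless obstruction $\mathfrak{sl}(d)$ differs from $\mathfrak{gl}(d)$ only in weight-zero summands that do not alter the defining polytope) characterizes $\mathbb{T}(d)_w^{\mathrm{red}}$ as the subcategory of $\mathcal{E}\in D^b(\mathscr{P}(d)^{\mathrm{red}})$ whose pushforward $j_*\mathcal{E}$ to $BGL(d)$ is classically generated by irreducibles $\Gamma_{GL(d)}(\chi)$ indexed by dominant weights $\chi=(\chi_1\geq\cdots\geq\chi_d)$ with $\sum_i\chi_i=w$ and
\[
\chi + \rho - w\tau_d \in \mathbf{W}(d) = \tfrac{1}{2}\sum_{i\neq j}[0,\beta_i-\beta_j].
\]

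A direct inspection shows that the first coordinate of any element of $\mathbf{W}(d)$ lies in $[-(d-1)/2,(d-1)/2]$; combined with $\rho_1=(d-1)/2$, the polytope inclusion forces $\chi_1 \leq w/d$, while dominance of $\chi$ together with $\sum_i\chi_i=w$ forces $\chi_1 \geq w/d$. Hence $\chi_1 = w/d$, contradicting $\chi_1\in\mathbb{Z}$ since $\gcd(d,w)=1$ and $d>1$. Therefore no admissible $\chi$ exists, so by Lemma~\ref{lem:genJ} applied to the Koszul equivalence we have $\mathcal{E}=0$ for every $\mathcal{E}\in\mathbb{T}(d)_w^{\mathrm{red}}$, giving $\mathbb{T}_S(dv_0)_w^{\mathrm{red}}=0$. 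The main technical obstacle is verifying the reduced analog of Lemma~\ref{lem:compareT}; once this is granted, the remainder of the proof is an elementary zonotope computation.
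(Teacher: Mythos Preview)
Your proof is correct and follows essentially the same approach as the paper's. Both arguments identify the reduced moduli stack with $\mathrm{Spec}\,\mathbb{C}[\mathfrak{gl}(d)_0^\vee[1]]/GL(d)$ (the paper via Corollary~\ref{cor:lv} and direct inspection of the Gieseker moduli, you via Lemma~\ref{lem:py} and Remark~\ref{rmk:qbps}, which amount to the same thing since the good moduli space is a point), then observe that the quasi-BPS category is controlled by dominant weights $\chi$ with $\chi+\rho-w\tau_d\in\mathbf{W}(d)$; the paper cites \cite[Lemma~3.2]{Toquot2} for the nonexistence of such $\chi$ when $d\nmid w$, whereas you verify it directly by the coordinate estimate $\chi_1\le w/d\le\chi_1$. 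Your explicit computation and your remark that the reduced analog of Lemma~\ref{lem:compareT} holds (the paper states this just after \eqref{def:Tdwred}) are both sound.
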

\begin{proof}
 By Corollary~\ref{cor:lv}, we can assume that 
 $\mathbb{T}_S(v)_w=\mathbb{T}_S^H(v)_{\delta}$
 in $D^b(\mathfrak{M}_S^H(v))$ for $H$ an ample divisor on $S$. 
 It is well-known that 
 $\mathfrak{M}_S^H(v)$ consists of a single 
 point $F^{\oplus d}$
 for a spherical stable sheaf $F$, so 
 we have 
 \begin{align*}
     \mathfrak{M}_S^H(v)^{\rm{red}}=
     \Spec \mathbb{C}[\mathfrak{gl}(d)_0^{\vee}[1]]/GL(d). 
 \end{align*}
 By the definition of $\mathbb{T}_S(v)_w^{\rm{red}}$, 
 it consists of objects such that 
 for the inclusion \[j \colon \mathfrak{M}_S^H(v)^{\rm{red}} \hookrightarrow BGL(d),\]
 the object $j_{\ast}\mathcal{E}$ is generated 
 by $\Gamma_{GL(d)}(\chi)$ for a dominant weight $\chi$ 
 such that 
 \begin{align*}
     \chi+\rho \in \frac{1}{2} \mathrm{sum}[0, \beta_i-\beta_j]+\frac{w}{d}\sum_{i=1}^d \beta_i, 
 \end{align*} where the Minkowski sum is after all $1\leq i, j\leq d$.
 By~\cite[Lemma~3.2]{Toquot2}, such a weight exists if and only if $d|w$, and thus only if $d=1$ because $\gcd(d,w)=1$.
 Therefore, together with (\ref{decom:prim}) in the primitive case, the proposition follows. 
\end{proof}

We next discuss the case of $g=1$. 
Then $v=dv_0$, where $v_0$ is primitive with $\langle v_0, v_0 \rangle=0$. 
For a generic $\sigma$, set  
\begin{align*}S':=M_S^{\sigma}(v_0)
\end{align*}
which is well-known to be a K3 surface~\cite{Mu2, BaMa2}. 
We have the good moduli space morphism 
$\mathcal{M}_S^{\sigma}(v_0) \to S'$ which is a $\mathbb{C}^{\ast}$-gerbe
classified by some $\alpha \in \mathrm{Br}(S')$. 
There is an equivalence 
\begin{align}\label{equiv:twist}
D^b(S', \alpha) \stackrel{\sim}{\to} D^b(S)
\end{align}
given by the Fourier-Mukai transform with 
kernel the universal $(1\boxtimes \alpha)$-twisted 
sheaf on $S \times S'$, see~\cite{MR1902629}. 
There is also an isomorphism 
given by the direct sum map 
\begin{equation}\label{directsum}
\mathrm{Sym}^d(S') \stackrel{\cong}{\to} M_S^{\sigma}(dv_0).
\end{equation}

Let $\mathcal{M}_S^{\sigma}(v_0, \ldots, v_0)$
be the classical moduli stack of filtrations 
of semistable objects on $S$:
\begin{align*}
    0=F_0 \subset F_1 \subset \cdots \subset F_d
\end{align*}
such that $F_i/F_{i-1}$ is a $\sigma$-semistable object with numerical class $v_0$.  
We define $\mathscr{Z}_S$ and $\widetilde{\mathcal{M}}_S^{\sigma}(v_0)$ by the following diagram, where the two squares are Cartesian in the classical sense:  
\begin{align}\label{dia:ZS}
    \xymatrix{
    \mathscr{Z}_S  \ar@/^18pt/[rr]^-{p_S}
    \ar@<-0.3ex>@{^{(}->}[r]\ar[d]_-{q_S} \ar@{}[rd]|\square & \mathcal{M}_S^{\sigma}(v_0, \ldots, v_0) \ar[r] \ar[d] & 
    \mathfrak{M}_S^{\sigma}(dv_0)^{\rm{red}} \\
    \widetilde{\mathcal{M}}_S^{\sigma}(v_0) \ar@{}[rd]|\square\ar[d] \ar@<-0.3ex>@{^{(}->}[r] & \mathcal{M}_S^{\sigma}(v_0)^{\times d} \ar[d] &  \\
    S' \ar@<-0.3ex>@{^{(}->}[r]_-{\Delta} & (S')^{\times d}.
    & 
    }
\end{align}
Let $T(d)=(\mathbb{C}^{\ast})^{\times d}$. 
The map $\widetilde{\mathcal{M}}_S^{\sigma}(v_0) \to 
S'$ is a $T(d)$-gerbe, 
so we have the decomposition into $T(d)$-weights
\begin{align}\label{decom:Td}
    D^b(\widetilde{\mathcal{M}}_S^{\sigma}(v_0))
    =\bigoplus_{(w_1, \ldots, w_d)\in \mathbb{Z}^d}
    D^b(\widetilde{\mathcal{M}}_S^{\sigma}(v_0))_{(w_1, \ldots, w_d)}
\end{align}
where the summand corresponding to $(w_1, \ldots, w_d)$
is equivalent to $D^b(S', \alpha^{w_1+\cdots+w_d})$. 
For $1\leq i\leq d$, define $m_i$ by the formula
\begin{align}\label{def:mi}
	m_i :=\left\lceil \frac{wi}{d} \right\rceil -\left\lceil \frac{w(i-1)}{d} \right\rceil
	+\delta_{id} -\delta_{i1} \in \mathbb{Z}.
	\end{align}
Define the functor 
\begin{align*}
    \Phi_{d,w}\colon D^b(S', \alpha^w)\to D^b(\mathfrak{M}_{S}^{\sigma}(v)^{\rm{red}})_{w},\
    \mathcal{F}\mapsto p_{S*}\left(q_S^{\ast}\circ i_{(m_1, \ldots, m_d)}\mathcal{F}\right),
\end{align*}
where $i_{(m_1, \ldots, m_d)}$ is the inclusion of
$D^b(S', \alpha^w)$ into the weight $(m_1, \ldots, m_d)$-part 
of (\ref{decom:Td}). 
 When $v_0=[\mathcal{O}_x]$ for a point $x \in S$, 
 it is proved in~\cite[Proposition~4.7]{PT2} that the image of the functor $\Phi_{d, w}$ 
 lies in $\mathbb{T}_S^{\sigma}(v)_w$. We now state a stronger form of Conjecture \ref{conj:HK} for $g=1$.
\begin{conj}\label{conj:K3}
Let $v=d v_0$ such that $d \in \mathbb{Z}_{\geq 1}$ and 
$v_0$ is primitive with $\langle v_0, v_0 \rangle=0$. 
If $\gcd(d, w)=1$, then the functor $\Phi_{d, w}$ restricts to the equivalence 
\begin{align}\label{def:Phi}
    \Phi_{d, w} \colon D^b(S', \alpha^w) \stackrel{\sim}{\to} \mathbb{T}_S^{\sigma}(dv_0)_{w}^{\rm{red}}. 
\end{align}
In particular for $w=1$, 
the category 
$\mathbb{T}_S^{\sigma}(dv_0)_1$
   is equivalent to $D^b(S)$. 
\end{conj}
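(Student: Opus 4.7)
The plan is to reduce Conjecture~\ref{conj:K3} to its local, formal-fiber analogue for representations of the $1$-loop preprojective algebra, and then recover the $w=1$ statement by composition with the Brauer-twisted Fourier--Mukai equivalence \eqref{equiv:twist}. By Remark~\ref{rmk:qbps}, the image condition for $\Phi_{d,w}$, full faithfulness, and essential surjectivity can all be tested after passing to each formal fiber of $\mathcal{M}_S^\sigma(dv_0)\to M_S^\sigma(dv_0)\cong \mathrm{Sym}^d(S')$. A closed point $y\in \mathrm{Sym}^d(S')$ corresponds to a polystable object $\bigoplus_i V^{(i)}\otimes F^{(i)}$ where the $F^{(i)}$ are pairwise non-isomorphic $\sigma$-stable objects of Mukai vector $v_0$. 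Because $g=1$, stability and Serre duality on $S$ give $\Ext^{\ast}_S(F^{(i)},F^{(j)})=0$ for $i\neq j$ and $\dim \Ext^1_S(F^{(i)},F^{(i)})=2$, so the Ext-quiver at $y$ is a disjoint union of $2$-loop quivers, i.e.\ of doubles of $1$-loop quivers. By Lemma~\ref{lem:py} this yields
\[
\widehat{\mathfrak{M}}_S^\sigma(dv_0)^{\mathrm{red}}_y \simeq \prod_i \widehat{\mathscr{P}}(d^{(i)})^{\mathrm{red}}_0,
\]
where $\mathscr{P}(d)$ denotes the preprojective algebra stack of the $1$-loop quiver.

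The key local input is an equivalence (in its formal-fiber version at the origin) between $\mathbb{T}(d)_w^{\mathrm{red}}$ for the $1$-loop preprojective algebra and $D^b$ of $\mathrm{Hilb}^d(\mathbb{C}^2)$ for $\gcd(d,w)=1$; after Koszul duality \eqref{Koszul:theta}, this is essentially the content of Conjecture~\ref{conj:C2}. Granting this, Proposition~\ref{prop:sod2} together with the base-change square \eqref{com:hall2} upgrades it to a product equivalence at the non-deepest strata of $\mathrm{Sym}^d(S')$. One expects the local equivalence to be realised by a Hall-product functor composed with a specific $T(d)$-weight twist, matching \eqref{def:Phi} on the nose.

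To globalise, I would match the local equivalences with $\Phi_{d,w}$ by base-changing the diagram \eqref{dia:ZS} to each formal neighbourhood: the filtration stack $\mathcal{M}_S^\sigma(v_0,\ldots,v_0)$ is formal-locally the filtration stack for $Q_1$-representations, and the weights $(m_1,\ldots,m_d)$ of \eqref{def:mi} translate into the intrinsic window shifts appearing in Definition~\ref{def:intwind} (compare the correction in \eqref{PhiEy}). The Brauer class $\alpha^w$ on $S'$ is absorbed by the $T(d)$-gerbe structure of $\mathcal{M}_S^\sigma(v_0)\to S'$ and its tensor power. Full faithfulness of $\Phi_{d,w}$ then follows from its formal-local analogue combined with the coherence of relative Hom-spaces guaranteed by Theorem~\ref{intro:thm4}(i) (the K3 counterpart of Proposition~\ref{lem:bound}); essential surjectivity follows from generation on each formal fiber via a statement along the lines of Corollary~\ref{cor:gen}. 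The case $w=1$ is then immediate by composing $\Phi_{d,1}$ with \eqref{equiv:twist}.

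The main obstacle is the local input: the equivalence $\mathbb{T}(d)_w^{\mathrm{red}}\simeq D^b(\mathrm{Hilb}^d(\mathbb{C}^2))$ for the $1$-loop preprojective algebra when $\gcd(d,w)=1$, i.e.\ Conjecture~\ref{conj:C2}. This requires an explicit construction of a tilting-like generator of $D^b(\mathrm{Hilb}^d(\mathbb{C}^2))$ whose image under a filtration Hall product lies in, and classically generates, the window category $\mathbb{T}(d)_w^{\mathrm{red}}$; it is known only for $d=2$ in \cite{PaTobps}. A subsidiary technical subtlety is the precise matching of the weights $(m_i)$ from \eqref{def:mi} with the intrinsic window polytope \eqref{def:polytope} at stratified points, which requires careful bookkeeping of the $\rho$-shift and the $\Ext^2$-correction appearing in \eqref{PhiEy}.
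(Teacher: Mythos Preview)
Your overall architecture matches the paper's: this is a conjecture, not a theorem, and the paper only proves the reduction to Conjecture~\ref{conj:C2} (Proposition~\ref{prop:conj}). You correctly isolate Conjecture~\ref{conj:C2} as the missing local input and propose to globalise via formal fibers, which is exactly what the paper does.

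However, there is a genuine error in your identification of the local target. You write that the key local input is an equivalence between $\mathbb{T}(d)_w^{\mathrm{red}}$ and $D^b(\mathrm{Hilb}^d(\mathbb{C}^2))$, and later speak of a tilting generator of $D^b(\mathrm{Hilb}^d(\mathbb{C}^2))$. This is not what Conjecture~\ref{conj:C2} asserts: the local equivalence is with $D^b(\mathbb{C}^2)$, via the functor $\Phi_{d,w}^{\mathbb{C}^2}$ built from the diagram~\eqref{dia:ZS2}. The global source is $D^b(S',\alpha^w)$, and $S'$ is a K3 surface (so two-dimensional); its formal-local model at the deepest point $d[p]\in\mathrm{Sym}^d(S')$ is $D^b(\mathbb{C}^2)$, not the derived category of a Hilbert scheme. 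This confusion would derail the formal-fiber matching you propose.

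Beyond that, your globalisation is more elaborate than necessary. The paper does not invoke Proposition~\ref{prop:sod2}, Theorem~\ref{intro:thm4}(i), or any bookkeeping of the weights $(m_i)$ against the intrinsic window polytope. Instead it writes down an explicit candidate right adjoint
\[
\Phi_{d,w}^R=\mathrm{pr}\circ q_{S\ast}\circ p_S^{!}\colon \mathbb{T}_S^\sigma(dv_0)_w^{\mathrm{red}}\to D^b(S',\alpha^w),
\]
observes that the diagram~\eqref{dia:ZS} base-changed to $\Spec\widehat{\mathcal{O}}_{\mathrm{Sym}^d(S'),d[p]}$ is isomorphic to~\eqref{dia:ZS2} base-changed to $\Spec\widehat{\mathcal{O}}_{\mathrm{Sym}^d(\mathbb{C}^2),d[0]}$, and uses Conjecture~\ref{conj:C2} to conclude that this candidate really lands in $D^b$ rather than $\Ind D^b$. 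Then the adjunction unit and counit are checked to be isomorphisms formally locally on $\mathrm{Sym}^d(S')$. This bypasses the stratified product description you sketch and the subsidiary weight-matching you flag as a subtlety: once the two diagrams are identified, the functors $\Phi_{d,w}$ and $\Phi_{d,w}^{\mathbb{C}^2}$ are literally the same construction and no separate $\rho$-shift bookkeeping is required.
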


In~\cite{PTzero, PT1},
we addressed a similar conjecture for $\mathbb{C}^2$
which we recall here. 
Let $\mathscr{C}(d)^{\rm{red}}$ be the reduced derived 
moduli stack of zero-dimensional sheaves on $\mathbb{C}^2$ with 
length $d$. 
It is the quotient stack 
\begin{align*}
    \mathscr{C}(d)^{\rm{red}}=\mu_0^{-1}(0)/GL(d),
\end{align*}
where $\mu_0 \colon \mathfrak{gl}(d)^{\oplus 2} \to \mathfrak{gl}(d)_0$
is the commuting map, so the map (\ref{mu0:trace}) for $g=1$. 
Let $\mathscr{C}(1, \ldots, 1)$ be the classical moduli stack of filtrations of 
zero-dimensional sheaves on $\mathbb{C}^2$:
\begin{align*}
    0=Q_0 \subset Q_1 \subset \cdots \subset Q_d
\end{align*}
such that $Q_i/Q_{i-1}$ is isomorphic to $\mathcal{O}_{x_i}$ for some $x_i \in \mathbb{C}^2$. 
Similarly to (\ref{dia:ZS}), we have the 
following diagram 
\begin{align}\label{dia:ZS2}
    \xymatrix{
    \mathscr{Z}_{\mathbb{C}^2}  \ar@/^18pt/[rr]^-{p_{\mathbb{C}^2}}
    \ar@<-0.3ex>@{^{(}->}[r]\ar[d]_-{q_{\mathbb{C}^2}} \ar@{}[rd]|\square & \mathscr{C}(1, \ldots, 1) \ar[r] \ar[d] & 
    \mathscr{C}(d)^{\rm{red}} \\
    \mathbb{C}^2/T(d) \ar@<-0.3ex>@{^{(}->}[r]_-{\Delta} & (\mathbb{C}^2)^{\times d}/T(d). & 
    }
\end{align}
The functor 
\begin{align}\label{funct:C2}
    \Phi_{d, w}^{\mathbb{C}^2} \colon 
    D^b(\mathbb{C}^2) \to D^b(\mathscr{C}(d)^{\rm{red}})
\end{align}
is defined similarly to (\ref{def:Phi}). 
\begin{conj}\emph{(\cite{PTzero, PT1, PaTobps})}\label{conj:C2}
   If $\gcd(d, w)=1$, the functor (\ref{funct:C2})
   restricts to the equivalence 
   \begin{align}\label{equiv:conjC2}
   \Phi_{d, w}^{\mathbb{C}^2} \colon 
       D^b(\mathbb{C}^2) \stackrel{\sim}{\to} \mathbb{T}(d)_w^{\rm{red}}. 
   \end{align}
   Here the right hand side is defined in (\ref{def:Tdwred}) for $g=1$. 
\end{conj}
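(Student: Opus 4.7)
The plan is to prove the equivalence \eqref{equiv:conjC2} in three steps: first, verifying that $\Phi^{\mathbb{C}^2}_{d,w}$ takes values in $\mathbb{T}(d)_w^{\mathrm{red}}$; second, proving it is fully faithful; third, showing its essential image classically generates the target. For the window check, I would use the intrinsic description of Lemma~\ref{lem:compareT} in its reduced incarnation: given $\mathcal{F} \in D^b(\mathbb{C}^2)$, the pushforward $j_*\Phi^{\mathbb{C}^2}_{d,w}(\mathcal{F})$ to the ambient stack $\mathscr{Y}(d) = \mathfrak{gl}(d)^{\oplus 2}/GL(d)$ is computed, via the projection formula applied to diagram \eqref{dia:ZS2}, as an induced representation from the Borel subgroup with weight vector $(m_1,\ldots,m_d)$. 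The formula \eqref{def:mi} is engineered precisely so that, after applying Bott's theorem, every dominant weight that appears lies in the polytope translate $w\tau_d - \rho + \mathbf{W}(d)$ demanded by \eqref{chi:rho}.

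For fully faithfulness, the strategy is base change: $\Hom^*(\Phi^{\mathbb{C}^2}_{d,w}\mathcal{F}_1, \Phi^{\mathbb{C}^2}_{d,w}\mathcal{F}_2)$ computed on $\mathscr{C}(d)^{\mathrm{red}}$ is identified, using properness of $p_{\mathbb{C}^2}$, with a Hom computation on the fiber product $\mathscr{Z}_{\mathbb{C}^2} \times_{\mathscr{C}(d)^{\mathrm{red}}} \mathscr{Z}_{\mathbb{C}^2}$. This fiber product admits a stratification indexed by the Weyl group $S_d$, and the key technical point is that, after taking $T(d)$-invariants weighted by $(m_1,\ldots,m_d)$, only the diagonal stratum contributes, reducing to $\Hom^*_{D^b(\mathbb{C}^2)}(\mathcal{F}_1,\mathcal{F}_2)$. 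This is the step carried out explicitly for $d=2$ in \cite{PaTobps}, and for general $d$ it amounts to a Bridgeland--King--Reid-style vanishing on the nested Hilbert scheme side.

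The hardest step is essential surjectivity. I would apply the semiorthogonal decomposition of Theorem~\ref{cor:sodT} in the $g=1$ case to the reduced stack $\mathscr{P}(d)^{\mathrm{red}}$, expressing $D^b(\mathscr{P}(d)^{\mathrm{red}})_w$ as an iterated categorical Hall product of building blocks indexed by partitions $d = d_1 + \cdots + d_k$ with strictly increasing slopes $w_1/d_1 < \cdots < w_k/d_k$ summing to $w$. Since $\mathbb{T}(1)_{w_i}^{\mathrm{red}} \simeq D^b(\mathbb{C}^2)$, the finest partition $d = 1+\cdots+1$ contributes, via categorical Hall multiplication, exactly the image of $\Phi^{\mathbb{C}^2}_{d,w}$ for the weight vector $(m_1,\ldots,m_d)$. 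One must then show that the coprime hypothesis $\gcd(d,w)=1$ forces all other partition summands to vanish inside $\mathbb{T}(d)_w^{\mathrm{red}}$: intuitively, no coarser partition of $d$ admits a strictly increasing slope sequence with denominators dividing $d$ and sum equal to $w$, but translating this numerical observation into a genuine categorical vanishing is the main obstacle. Concretely, one must control the discrepancy between the non-reduced pieces $\mathbb{T}(d_i)_{w_i}$ appearing in \eqref{sod:triple} and their reduced versions, and use the categorical support Lemma~\ref{cor:support} to kill the unwanted summands, presumably by induction on $d$. The case $d=2$ treated in \cite{PaTobps} provides a template, but I expect the higher-$d$ argument to require a more uniform combinatorial input on slope-ordered partitions together with a refined understanding of how the reduction from $\mathscr{P}(d)$ to $\mathscr{P}(d)^{\mathrm{red}}$ interacts with categorical Hall multiplication.
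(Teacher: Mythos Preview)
The statement is labeled as a \emph{conjecture} in the paper (Conjecture~\ref{conj:C2}) and is not proved there; the authors only record in Remark~\ref{rmk:21} that the case $(d,w)=(2,1)$ will appear in~\cite{PaTobps}. There is thus no paper proof to compare against, and your proposal is an attack on a genuinely open problem.

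Your Step~3 contains a real conceptual error. In the semiorthogonal decomposition of Theorem~\ref{cor:sodT} (or its reduced analogue), the category $\mathbb{T}(d)_w^{\mathrm{red}}$ is the summand attached to the \emph{trivial} partition $k=1$, whereas the Hall products from the finest partition $d=1+\cdots+1$ (with strictly increasing integer weights) are \emph{other}, semiorthogonal summands. So the SOD says precisely that such Hall products are orthogonal to $\mathbb{T}(d)_w^{\mathrm{red}}$, not that they generate it. The reason the image of $\Phi^{\mathbb{C}^2}_{d,w}$ can land in $\mathbb{T}(d)_w^{\mathrm{red}}$ at all (your Step~1) is that the weight vector $(m_1,\ldots,m_d)$ of~\eqref{def:mi} is \emph{not} strictly increasing in general (e.g.\ $(0,0,1)$ for $d=3$, $w=1$), so $\Phi^{\mathbb{C}^2}_{d,w}$ is not one of the SOD functors; moreover $\Phi^{\mathbb{C}^2}_{d,w}$ is built from the \emph{diagonal} $\mathbb{C}^2 \hookrightarrow (\mathbb{C}^2)^d$ in diagram~\eqref{dia:ZS2}, not from the full product used in the Hall product. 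Your numerical claim is also false: for $d=3$, $w=1$ the partition $(1,2)$ with $(w_1,w_2)=(0,1)$ has slopes $0<\tfrac12$ and contributes a nonzero summand $\mathbb{T}(1)_0 \boxtimes \mathbb{T}(2)_1^{\mathrm{red}}$ to $D^b(\mathscr{P}(3)^{\mathrm{red}})_1$, semiorthogonal to $\mathbb{T}(3)_1^{\mathrm{red}}$. Essential surjectivity therefore needs a genuinely different mechanism, and the paper supplies none for general $d$; your Step~2 is likewise, by your own admission, open beyond $d=2$.
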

We have the following proposition: 
\begin{prop}\label{prop:conj}
    Conjecture~\ref{conj:C2} implies Conjecture~\ref{conj:K3}. 
\end{prop}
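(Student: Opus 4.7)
The plan is to reduce the K3 equivalence to the $\mathbb{C}^2$ equivalence via an \'etale-local comparison over the good moduli space $M_S^\sigma(dv_0)\simeq\mathrm{Sym}^d(S')$ (see \eqref{directsum}), using the \'etale slice theorem (Lemma~\ref{lem:py}) and the formality of polystable objects in the Calabi--Yau $2$-category $D^b(S)$. Both the source $D^b(S',\alpha^w)$ (via pushforward along the small diagonal $\Delta\colon S'\hookrightarrow \mathrm{Sym}^d(S')$) and the target $\mathbb{T}_S^\sigma(dv_0)_w^{\mathrm{red}}$ (via pullback along the good moduli space map) are $D^b(\mathrm{Sym}^d(S'))$-linear, so it suffices to check the equivalence on each formal fiber.

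First I would verify that $\Phi_{d,w}$ lands in $\mathbb{T}_S^\sigma(dv_0)_w^{\mathrm{red}}$, generalizing \cite[Proposition~4.7]{PT2} from the case $v_0=[\mathcal{O}_x]$ to arbitrary primitive $v_0$ with $\langle v_0,v_0\rangle=0$. By Remark~\ref{rmk:qbps} this is formal-local; at any polystable point the Ext-quiver is a disjoint union of doubled $1$-loop quivers (since $\langle v_0,v_0\rangle=0$), so its formal fiber is, up to the single global trace relation, a product of commuting variety stacks, and the weight computation for $\Phi_{d,w}$ on such a fiber is formally identical to the one carried out in \cite{PTzero,PT1} for $\Phi_{d,w}^{\mathbb{C}^2}$.

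For the equivalence at a point $y=d\cdot x\in\Delta(S')$, Lemma~\ref{lem:py} provides $\widehat{\mathfrak{M}}_S^\sigma(dv_0)_y^{\mathrm{red}}\simeq\widehat{\mathscr{C}}(d)_0^{\mathrm{red}}$. Choosing a formal coordinate isomorphism $\widehat{S'}_x\simeq\widehat{\mathbb{C}^2}_0$ (possible since $S'$ is smooth of dimension two) together with a trivialization of $\alpha^w$ on this formal neighborhood, the source $D^b(S',\alpha^w)$ restricted to $\widehat{S'}_x$ is identified with $D^b(\widehat{\mathbb{C}^2}_0)$. Under these identifications the diagram \eqref{dia:ZS} restricted to formal fibers at $y$ matches \eqref{dia:ZS2} at the origin, so $\Phi_{d,w}$ pulls back to $\Phi_{d,w}^{\mathbb{C}^2}$, and Conjecture~\ref{conj:C2} yields the equivalence at $y$.

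For a point $y=\sum_{i=1}^m d_i x_i$ with $m\geq 2$, I would argue that both formal fibers vanish. The source vanishes because $p_S\colon\mathscr{Z}_S\to\mathfrak{M}_S^\sigma(dv_0)^{\mathrm{red}}$ has good-moduli-space image contained in $\Delta(S')$, by inspection of \eqref{dia:ZS}. For the target, Serre duality forces $\mathrm{Ext}^1(F_0^{(x_i)},F_0^{(x_j)})=0$ for $i\neq j$, so the Ext-quiver splits and the formal fiber is a product of commuting variety stacks. By Lemma~\ref{lem:compareT} the intrinsic window condition on factor $i$ requires a dominant integer weight $\chi^{(i)}$ with $\chi^{(i)}+\rho^{(i)}-\delta^{(i)}\in\mathbf{W}(d_i)$ for $\delta^{(i)}=\frac{w}{d}\sum_j\beta_j^{(i)}$; taking the sum of coefficients and noting that $\mathbf{W}(d_i)$ lies in the zero-sum hyperplane (it is a Minkowski sum of intervals in root directions, which have zero coefficient sum) yields $|\chi^{(i)}|=\frac{wd_i}{d}$. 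Under $\gcd(d,w)=1$ this forces $d\mid d_i$ and hence $d_i=d$, contradicting $d_i<d$. The main obstacle is verifying this combinatorial vanishing cleanly and ensuring that the single global trace relation does not alter the factorization of the intrinsic window subcategory along the product decomposition of the Ext-quiver.
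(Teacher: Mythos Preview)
Your approach is correct and shares the paper's formal-local strategy over $\mathrm{Sym}^d(S')$, but the execution differs. The paper does not verify that $\Phi_{d,w}$ lands in the quasi-BPS category or check the equivalence fiber-by-fiber; instead it writes down the explicit candidate right adjoint $\Phi_{d,w}^R=\mathrm{pr}\circ q_{S*}\,p_S^{!}$ on Ind-categories, uses Conjecture~\ref{conj:C2} at points $d[p]$ (where the diagrams~\eqref{dia:ZS} and~\eqref{dia:ZS2} agree after formal completion) to see that $\Phi_{d,w}^R$ carries $\mathbb{T}^\sigma_S(dv_0)_w^{\mathrm{red}}$ into $D^b(S',\alpha^w)$, and then checks that the unit and counit of the adjunction are isomorphisms formally locally. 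Your plan should invoke this adjoint (or an equivalent mechanism) to make the passage ``it suffices to check on each formal fiber'' precise.

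The more substantive difference is your treatment of off-diagonal points $y=\sum_i d_i[x_i]$ with $m\ge 2$. The paper never isolates this case: once Conjecture~\ref{conj:C2} is assumed, Lemma~\ref{lem:py} identifies the formal fiber of $\mathbb{T}^\sigma_S(dv_0)_w^{\mathrm{red}}$ at $y$ with that of $\mathbb{T}(d)_w^{\mathrm{red}}\simeq D^b(\mathbb{C}^2)$ at an off-diagonal point of $\mathrm{Sym}^d(\mathbb{C}^2)$, which vanishes because $\mathbb{C}^2$ embeds via the small diagonal. Your direct weight-polytope argument reaches the same vanishing without appealing to Conjecture~\ref{conj:C2} and is a pleasant independent check. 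Your worry about the single global trace relation is harmless: the trace direction is the trivial $G_y$-representation, so it has weight zero for every cocharacter and contributes nothing to the bounds defining the intrinsic window (compare Remark~\ref{rmk:anotherT} and~\eqref{eta:equal}); thus the polytope factors along the disconnected Ext-quiver exactly as in the non-reduced case, and your coefficient-sum argument goes through.
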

\begin{proof}
      Consider the composition 
    \begin{align}\label{compose}
    D^b(\mathfrak{M}_S^{\sigma}(dv_0)^{\rm{red}}) \stackrel{p_S^{!}}{\to}\Ind D^b(\mathscr{Z}_S)
    \stackrel{q_{S\ast}}{\to}\Ind D^b(\widetilde{\mathcal{M}}_S^{\sigma}(v_0)) \stackrel{\mathrm{pr}}{\to}\Ind D^b(S', \alpha^w),
    \end{align}
    where $\mathrm{pr}$ is the projection onto the weight $(m_1, \ldots, m_d)$-component. 
 We claim that, assuming Conjecture~\ref{conj:C2}, the above functor restricts 
    to the functor 
    \begin{align}\label{Phiright}
        \Phi_{d, w}^{R} \colon \mathbb{T}^\sigma_S(dv_0)_w^{\rm{red}} \to D^b(S', \alpha^w),
    \end{align}
    which is a right adjoint of $\Phi_{d, w}$. 
        Let 
        \begin{align*}\mathcal{M}_S^{\sigma}(dv_0) \to M_S^{\sigma}(dv_0) \stackrel{\cong}{\leftarrow} 
        \mathrm{Sym}^d(S')
        \end{align*}
    be the good moduli space morphism, see \eqref{directsum}. 
    
  For a point $p \in S'$, 
  the diagram (\ref{dia:ZS}) pulled back over the formal completion 
  $\Spec \widehat{\mathcal{O}}_{\mathrm{Sym}^d(S'), d[p]} \to \mathrm{Sym}^d(S')$
  is isomorphic to the diagram (\ref{dia:ZS2})
  pulled back 
via $\Spec \widehat{\mathcal{O}}_{\mathrm{Sym}^d(\mathbb{C}^2), d[0]} \to 
\mathrm{Sym}^d(\mathbb{C}^2)$. 
The ind-completion of the equivalence (\ref{equiv:conjC2}) gives an equivalence 
\[\Ind D^b(\mathbb{C}^2) \stackrel{\sim}{\to} \Ind \mathbb{T}(d)_w^{\rm{red}},\]
whose inverse is 
\begin{align}\label{functorprop418}
    \mathrm{pr} \circ q_{\mathbb{C}^2 \ast} p_{\mathbb{C}^2}^! \colon 
    \Ind\mathbb{T}(d)_w^{\rm{red}} \to \Ind D^b(\mathbb{C}^2). 
\end{align}
In the above, $\mathrm{pr}$ is again the projection functor onto the weight $(m_1,\ldots, m_d)$-component. 
By the equivalence (\ref{equiv:conjC2}), the functor \eqref{functorprop418}
restricts to the functor $\mathbb{T}(d)_w^{\rm{red}}\to D^b(\mathbb{C}^2)$.
Therefore the functor (\ref{compose})
restricts to the functor (\ref{Phiright}), giving 
a right adjoint of $\Phi_{d, w}$. 

  We have the natural transformations 
    $\id \to \Phi_{d, w}^R \circ \Phi_{d, w}$, 
    $\Phi_{d, w}\circ \Phi_{d, w}^R \to \id$ by adjunction, which are isomorphisms 
    formally locally on $\mathrm{Sym}^d(S')$. Hence they are isomorphisms 
    and thus $\Phi_{d, w}$ is an equivalence. 
  \end{proof}
  \begin{remark}\label{rmk:21}
In~\cite{PaTobps}, we prove Conjecture~\ref{conj:C2}
for $(d, w)=(2, 1)$. 
By Proposition \ref{prop:conj}, it implies that 
Conjecture~\ref{conj:K3} is true for $(d, w)=(2, 1)$. 
\end{remark}

\section{Semiorthogonal decompositions into quasi-BPS categories}
In this section, 
we prove a categorical version of the PBW theorem for cohomological Hall algebras of K3 surfaces \cite{DHSM}, see Theorem \ref{thm:sodK3}. 
We first prove
Theorem \ref{thm:sodK3} 
assuming Proposition~\ref{prop:sod2}, which states that there is a semiorthogonal decomposition formally locally on the good moduli space. 
We then prove Proposition~\ref{prop:sod2}. 


\subsection{Semiorthogonal decomposition}
Let $S$ be a K3 surface. 
We take $v \in N(S)$ and write $v=dv_0$ for 
$d \in \mathbb{Z}_{\geq 1}$ and primitive $v_0$. 
For a partition $d=d_1+\cdots+d_k$, let 
$\mathfrak{M}_S^{\sigma}(d_1 v_0, \ldots, d_k v_0)$
be the derived moduli stack of filtrations 
\begin{align*}
    0=F_0 \subset F_1 \subset \cdots \subset F_k
\end{align*}
such that $F_i/F_{i-1}$ is $\sigma$-semistable with 
numerical class $d_i v_0$. 
Consider the natural morphisms
\begin{align}\label{PortaSala}
    \times_{i=1}^k \mathfrak{M}_S^{\sigma}(d_i v_0)
    \stackrel{q}{\leftarrow} \mathfrak{M}_S^{\sigma}(d_1 v_0, \ldots, d_k v_0) \stackrel{p}{\to} \mathfrak{M}_S^{\sigma}(d v_0),
\end{align}
where $q$ is quasi-smooth and $p$ is proper. 
The above morphisms 
induce the categorical Hall product, see~\cite{PoSa}:
\begin{align}\label{hall:k3}
    p_{\ast}q^{\ast} \colon 
    \boxtimes_{i=1}^k D^b(\mathfrak{M}_S^{\sigma}(d_i v_0))
    \to D^b(\mathfrak{M}_S^{\sigma}(d v_0)). 
\end{align}
We next discuss a semiorthogonal decomposition of $D^b(\mathfrak{M}_S^{\sigma}(v))$ using categorical Hall products of quasi-BPS categories, which we view as a categorical version of the PBW theorem for cohomological Hall algebras~\cite[Theorem C]{DM}, \cite[Corollary 1.6]{DHSM}. When $v_0$ is the class of a point, the statement was proved in \cite{P2}.
 
\begin{thm}\label{thm:sodK3}
Assume $v=dv_0$ for $d\in\mathbb{Z}_{\geq 1}$ and for a primitive Mukai vector $v_0$. For a generic stability condition $\sigma$, there 
is a semiorthogonal decomposition 
\begin{align}\label{sod:main}
 D^b(\mathfrak{M}_S^{\sigma}(v))
    =\left\langle 
    \boxtimes_{i=1}^k \mathbb{T}_{S}^{\sigma}(d_i v_0)_{w_i+(g-1)d_i(\sum_{i>j}d_j-\sum_{i<j}d_j)}
    \right\rangle. 
    \end{align}
    The right hand side is after all partitions $(d_i)_{i=1}^k$ of $d$ and all weights $(w_i)_{i=1}^k\in\mathbb{Z}^k$ such that 
    \[\frac{w_1}{d_1}<\cdots<\frac{w_k}{d_k}.\] 
    Each semiorthogonal summand is given by the 
    restriction of the categorical Hall product (\ref{hall:k3}), 
    and the order of the semiorthogonal 
    decomposition is the same as that of (\ref{sod:triple}).
\end{thm}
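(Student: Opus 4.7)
The plan is to reduce the global semiorthogonal decomposition to the formal-local semiorthogonal decomposition for preprojective algebras given in Proposition~\ref{prop:sod2}, exploiting the equivalences of formal fibers from Lemma~\ref{lem:py} and the intrinsic, formal-local characterization of the quasi-BPS categories recorded in Remark~\ref{rmk:qbps}. Concretely, I would first show that the categorical Hall product on K3 surface moduli \eqref{hall:k3} is compatible, under the étale-slice equivalences \eqref{formal:equiv3}, with the categorical Hall product \eqref{chall:P} on stacks of preprojective representations. This compatibility comes from formality of polystable objects in the CY2 category $D^b(S)$ (see Remark~\ref{rmk:ext-quiver} and \cite{DavPurity}) together with the base-change diagram \eqref{com:hall2} extended to the K3 setting: the filtration stack $\mathfrak{M}_S^\sigma(d_1 v_0,\dots,d_k v_0)$ locally on $M_S^\sigma(v)$ decomposes, according to the fiber of the direct sum map $\oplus\colon \prod_i M_S^\sigma(d_i v_0)\to M_S^\sigma(v)$, into a disjoint union of stacks identified with $\mathscr{P}(d_1,\dots,d_k)$ in formal neighborhoods.

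Next, I would verify that the Hall product sends $\boxtimes_{i=1}^k \mathbb{T}_S^\sigma(d_i v_0)_{w_i'}$ into $D^b(\mathfrak{M}_S^\sigma(v))$ with the expected weight shift $w_i'=w_i+(g-1)d_i(\sum_{i>j}d_j-\sum_{i<j}d_j)$. By Remark~\ref{rmk:qbps}, membership in the quasi-BPS category is detected on formal fibers, so this follows from the quiver version by choosing, at each closed point $y\in M_S^\sigma(v)$, a compatible point $p\in P(d)$ as in Lemma~\ref{lem:py} and invoking the corresponding statement on the quiver side. The same philosophy then handles semiorthogonality and fully-faithfulness: both properties are expressed by vanishing of certain $\Hom$ spaces, which can be checked after passing to formal neighborhoods of $M_S^\sigma(v)$ (using that $\pi\colon \mathcal{M}_S^\sigma(v)\to M_S^\sigma(v)$ is a good moduli space morphism, so Hom-complexes are $\pi_*\mathcal{O}$-modules whose vanishing is formally local), and there they follow from Proposition~\ref{prop:sod2}.

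For generation, I would argue that the essential image of the combined Hall functors, together with their semiorthogonal complement, exhausts $D^b(\mathfrak{M}_S^\sigma(v))$ again by a formal-local check: any object $\mathcal{E}\in D^b(\mathfrak{M}_S^\sigma(v))$ orthogonal to all summands restricts, on each formal fiber $\widehat{\mathfrak{M}}_S^\sigma(v)_y\simeq \widehat{\mathscr{P}}(d)_p$, to an object orthogonal to the corresponding summands in Proposition~\ref{prop:sod2}, hence vanishes. A standard descent argument (using QCA-ness of $\mathfrak{M}_S^\sigma(v)$ and compact generation results from \cite{MR3037900}) then shows $\mathcal{E}=0$ globally.

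The main obstacle is the bookkeeping of the order of summands and the weight shifts, and verifying that the global Hall product really matches the formal-local one on the nose: the base-change diagram \eqref{com:hall2} must be extended to a version involving the filtration stacks on the K3 side (this uses \cite[(6.12), Lemma 6.4]{Totheta}) and the equivalences \eqref{formal:equiv3} must intertwine the quasi-smooth map $q$ and the proper map $p$ with their quiver analogues. Once this compatibility is set up, the remainder is a formal transfer from Proposition~\ref{prop:sod2}, whose order of summands (specified by the strict inequality $w_1/d_1<\cdots<w_k/d_k$) is inherited verbatim, as recalled in Remark~\ref{rmk:order}.
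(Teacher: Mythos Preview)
Your overall strategy—reducing to Proposition~\ref{prop:sod2} via the formal-fiber equivalences of Lemma~\ref{lem:py} and the characterization in Remark~\ref{rmk:qbps}—matches the paper's approach. However, there is a genuine gap in how you execute the local-to-global passage.

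You assert that fully-faithfulness and semiorthogonality ``can be checked after passing to formal neighborhoods'' because Hom-complexes are $\pi_*\mathcal{O}$-modules. This is not enough on its own: to reduce a global statement like $\Hom(\Upsilon_A(F),\Upsilon_A(G))\cong\Hom(F,G)$ to a formal-local one, the paper constructs a \emph{right adjoint} $\Upsilon_A^R$ of each Hall functor (using induction on $d$ together with the existence of right adjoints to the categorical Hall product, cf.~\cite[Lemma~6.7]{Totheta}), so that fully-faithfulness becomes the assertion that the unit $\id\to\Upsilon_A^R\circ\Upsilon_A$ is an isomorphism—this is a morphism of sheaves over $M_S^\sigma(v)$ and hence genuinely formal-local. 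Semiorthogonality is similarly reduced to $\Upsilon_A^R\circ\Upsilon_B=0$. You do not mention this adjoint construction or the induction that underlies it.

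More seriously, your generation argument is incomplete. Knowing that the $\Upsilon_A$ are fully faithful with right adjoints and pairwise semiorthogonal does not yet yield a semiorthogonal decomposition: one needs \emph{left} adjoints as well (admissibility). The paper obtains $\Upsilon_A^L$ by conjugating $\Upsilon_{A^\vee}^R$ with the Grothendieck duality functor $\mathbb{D}_{\mathfrak{M}}$, which restricts to an equivalence $\mathbb{T}_S^\sigma(v)_\delta\simeq\mathbb{T}_S^\sigma(v)_{-\delta}^{\mathrm{op}}$. Only then does one get an SOD $\langle\{\mathrm{Im}\,\Upsilon_A\}_{A\in\Gamma},\mathbb{W}\rangle$ with a well-defined complement $\mathbb{W}$; the remaining step is to show $\mathbb{W}=\mathbb{T}_S^\sigma(v)_w$, which is done by checking the inclusion $\mathbb{W}\subset\mathbb{T}_S^\sigma(v)_w$ via the formal-local vanishing $\Upsilon_A^L(\mathcal{E})|_y=0$. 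Your proposed argument (``any $\mathcal{E}$ orthogonal to all summands restricts to an object orthogonal to the local summands, hence vanishes'') implicitly assumes this admissibility and also needs the fact that local summands are generated by restrictions of global ones (Corollary~\ref{cor:gen}); neither point is addressed.
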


\begin{proof}
We first explain that the semiorthogonal decomposition \eqref{sod:main} holds formally locally over the good moduli space $M^\sigma_S(v)$. 
For each $y \in M_S^{\sigma}(v)$, recall 
the equivalence 
\begin{equation}\label{msigmap}
\widehat{\mathfrak{M}}_S^{\sigma}(v)_y \simeq 
\widehat{\mathscr{P}}(d)_p\end{equation}
from Lemma~\ref{lem:py}. 
For a $\mathbb{R}$-line bundle $\delta$ on $\mathfrak{M}_S^{\sigma}(v)$ 
as in (\ref{def:delta}), its restriction 
$\delta_y$ 
to $\widehat{\mathfrak{M}}_S^{\sigma}(v)_y$ 
corresponds to $w\tau_d$ under the above equivalence
by the computation (\ref{deltay}). 
Therefore, the category 
$\mathbb{T}_{S, y}^{\sigma}(v)_{\delta_y}$ from Remark~\ref{rmk:qbps}
is equivalent to the category $\mathbb{T}_p(d)_w$ from (\ref{qbps:that}) under the equivalence \eqref{msigmap}, 
as both of them are intrinsic window subcategories
of equivalent derived stacks. 
Therefore the statement holds formally locally 
at any point $y \in M_S^{\sigma}(v)$
by Proposition~\ref{prop:sod2}. 
We set
\begin{align}\label{part:A}
    A=(d_i, w_i')_{i=1}^k, \ 
    w_i':=w_i+(g-1)d_i\left(\sum_{i>j}d_j-\sum_{i<j}d_j\right).
    \end{align}
Every functor 
\begin{align}\label{upA}
\Upsilon_{A} \colon 
 \boxtimes_{i=1}^k \mathbb{T}_{S}^{\sigma}(d_i v_0)_{w_i'}
\to D^b(\mathfrak{M}_S^{\sigma}(v))
\end{align}
is globally defined via categorical Hall product, 
hence a standard argument
reduces the existence of the desired SOD to the formal 
local statement as in~\cite[Section~4.2]{P2}, 
also see~\cite{PT2, T, Totheta, T3} for the similar  
arguments on reduction to formal fibers. 

We give more details on the proof. 
We prove the semiorthogonal 
decomposition (\ref{sod:main}) 
by induction on $d$. 
The case of $d=1$ is obvious, so we assume that $d\geq 2$. 
We first show
that, for $w_1/d_1<\cdots<w_k/d_k$,
the functor (\ref{upA})  is fully-faithful.
   By the induction hypothesis, the inclusion 
   \begin{align*}
       \boxtimes_{i=1}^k \mathbb{T}_S^{\sigma}(d_i v_0)_{w_i'} 
       \hookrightarrow \boxtimes_{i=1}^k D^b(\mathfrak{M}_S^{\sigma}(d_i v_0))_{w_i'}
   \end{align*}
   admits a right adjoint. 
   The categorical Hall product restricted to the fixed $(\mathbb{C}^{\ast})^k$-weights $(w_i)_{i=1}^k$:
   \begin{align*}
       \boxtimes_{i=1}^k D^b(\mathfrak{M}_S^{\sigma}(d_i v_0))_{w_i'}
       \to D^b(\mathfrak{M}_S^{\sigma}(v))
   \end{align*}
   also admits a right adjoint, see the proof of~\cite[Lemma~6.7]{Totheta} or~\cite[Theorem~1.1]{P2}. 
   Therefore the functor (\ref{upA}) admits a right adjoint $\Upsilon_A^R$. 
   To show that \eqref{upA} is fully-faithful, it is enough to show that the natural transform
   \begin{align}\label{isom:upA}
     \id \to \Upsilon_A^R \circ \Upsilon_A
     \end{align}
     is an isomorphism. 
   This is a local question for $M_S^{\sigma}(v)$, 
   i.e. it is enough to show that (\ref{isom:upA}) is an isomorphism 
   after restricting to $\widehat{\mathfrak{M}}_S^{\sigma}(v)_y$
   for any $y\in M_S^{\sigma}(v)$. Since 
   $\Upsilon_A$ and $\Upsilon_A^R$ are compatible with 
   pull-backs to $\widehat{\mathfrak{M}}_S^{\sigma}(v)_y$, 
   the isomorphism (\ref{isom:upA}) on 
   $\widehat{\mathfrak{M}}_S^{\sigma}(v)_y$
   follows from  
   Lemma~\ref{lem:py} and 
   Proposition~\ref{prop:sod2}. 

We next show that there is a semiorthogonal decomposition of the form 
\begin{align}\label{sod:upw}
    D^b(\mathfrak{M}_S^{\sigma}(v))_w=\langle \{\mathrm{Im}\Upsilon_A\}_{A \in \Gamma}, \mathbb{W} \rangle,
\end{align}
where $\Gamma$ is the set of partitions
$A=(d_i, w'_i)_{i=1}^k$ of $(d, w)$ as in (\ref{part:A})
such that $k\geq 2$ and $w_1/d_1<\cdots<w_k/d_k$. 
    For $A>B$, we have 
    $\Hom(\mathrm{Im}\Upsilon_A, \mathrm{Im}\Upsilon_B)=0$. 
    Indeed it is enough to show that $\Upsilon_A^R \circ \Upsilon_B=0$, 
    which is a property local on $M_S^{\sigma}(v)$. Hence similarly to showing \eqref{isom:upA} is an isomorphism, 
    the desired vanishing follows from 
    Proposition~\ref{prop:sod2}.
We next show that the functor (\ref{upA}) admits a left adjoint $\Upsilon_A^L$. 
Let $\mathbb{D}_{\mathfrak{M}}$ be the dualizing functor 
\begin{align*}
    \mathbb{D}_{\mathfrak{M}} \colon D^b(\mathfrak{M}_S^{\sigma}(v)) \stackrel{\sim}{\to} 
    D^b(\mathfrak{M}_S^{\sigma}(v))^{\rm{op}}. 
\end{align*}
The above functor 
restricts to the equivalence $\mathbb{D}_{\mathbb{T}(d)} \colon \mathbb{T}_S^{\sigma}(v)_{\delta}
\stackrel{\sim}{\to} \mathbb{T}_S^{\sigma}(v)_{-\delta}^{\rm{op}}$. 
For a partition $A$ in (\ref{part:A}), we set 
$A^{\vee}=(d_i, -w_i')_{i=1}^k$. 
Then the functor 
\begin{align*}
    \Upsilon_A^L:= \left(\boxtimes_{i=1}^k \mathbb{D}_{\mathbb{T}(d_i)}\right)
    \circ (\Upsilon_{A^{\vee}}^R)^{\rm{op}} \circ 
    \mathbb{D}_{\mathfrak{M}} \colon 
    D^b(\mathfrak{M}_S^{\sigma}(v)) \to 
    \boxtimes_{i=1}^k \mathbb{T}_S^{\sigma}(d_i v_0)_{w_i'}
    \end{align*}
gives a left adjoint of $\Upsilon_A$. 
Therefore we obtain the semiorthogonal decomposition of the form (\ref{sod:upw}).     

It is enough to show that $\mathbb{W}=\mathbb{T}_S^{\sigma}(v)_w$ in the 
    semiorthogonal decomposition (\ref{sod:upw}).
The inclusion 
$\mathbb{T}_S^{\sigma}(v)_w \subset \mathbb{W}$ follows from a formal local argument as above.
It thus suffices to show that 
$\mathbb{W} \subset \mathbb{T}_S^{\sigma}(v)_w$. 
The subcategory $\mathbb{W}$ consists of $\mathcal{E} \in D^b(\mathfrak{M}_S^{\sigma}(v))_w$
such that $\Upsilon_A^L(\mathcal{E})=0$ for all $A \in \Gamma$. 
This is a local property on $M_S^{\sigma}(v)$. 
The functor $\Upsilon_A^L$ is compatible with pull-back to $\widehat{\mathfrak{M}}_S^{\sigma}(v)_y$. 
Thus, for any $\mathcal{E} \in \mathbb{W}$, 
we have $\mathcal{E}|_{\widehat{\mathfrak{M}}^{\sigma}_S(v)_y} \in \mathbb{T}_{S, y}^{\sigma}(v)_{\delta_y}$
by Lemma~\ref{lem:py} and Proposition~\ref{prop:sod2}. 
Therefore, from Remark~\ref{rmk:qbps}, we conclude that 
$\mathcal{E} \in \mathbb{T}_S^{\sigma}(v)_w$. 
\end{proof}

The reduced version of the semiorthogonal decomposition is as 
follows: 
\begin{thm}\label{thm:sodK32}
   Assume $v=dv_0$ for $d\in\mathbb{Z}_{\geq 1}$ and for a primitive Mukai vector $v_0$. For a generic stability condition $\sigma$, there 
is a semiorthogonal decomposition 
\begin{align*}
 &D^b(\mathfrak{M}_S^{\sigma}(v)^{\rm{red}})
    =\\
    &\left\langle 
    \boxtimes_{i=1}^{k-1} \mathbb{T}_{S}^{\sigma}(d_i v_0)_{w_i+(g-1)d_i(\sum_{i>j}d_j-\sum_{i<j}d_j)}
    \boxtimes \mathbb{T}_S^{\sigma}(d_k v_0)_{w_k+(g-1)d_k(\sum_{k>j}d_j)}^{\rm{red}}
    \right\rangle. 
    \end{align*}
    The right hand side is after 
    $d_1+\cdots+d_k=d$ such that $w_1/d_1<\cdots<w_k/d_k$.  
\end{thm}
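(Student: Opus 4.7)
The plan is to deduce Theorem~\ref{thm:sodK32} from Theorem~\ref{thm:sodK3} by base change along the inclusion $B\mathbb{C}^* \hookrightarrow \mathcal{P}ic^\beta(S)$ arising from the determinant map. The key observation is that the categorical Hall product (\ref{hall:k3}) is linear over $\mathcal{P}ic^\beta(S)$: the filtration stack $\mathfrak{M}_S^\sigma(d_1 v_0,\ldots, d_k v_0)$ admits compatible determinant maps to $\mathfrak{M}_S^\sigma(v)$ and to $\prod_{i=1}^k \mathfrak{M}_S^\sigma(d_i v_0)$, related via the multiplication map $\mathrm{mult}\colon \prod_{i} \mathcal{P}ic^{d_i\beta_0}(S) \to \mathcal{P}ic^\beta(S)$, since $\det F_k = \bigotimes_i \det(F_i/F_{i-1})$ on the filtration stack.

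Next, we identify the base change of the source of the Hall product. Writing $\mathcal{P}ic^{d_i\beta_0}(S) = B\mathbb{C}^* \times \Spec \mathbb{C}[\varepsilon_i]$ with $\varepsilon_i$ in cohomological degree $-1$, the map $\mathrm{mult}$ sends $\varepsilon \mapsto \sum_{i=1}^k \varepsilon_i$, so the derived fiber over $B\mathbb{C}^*$ satisfies
\[
\textstyle\prod_i \mathcal{P}ic^{d_i\beta_0}(S) \times_{\mathcal{P}ic^\beta(S)} B\mathbb{C}^* \cong (B\mathbb{C}^*)^k \times \Spec \mathbb{C}[\varepsilon_1, \ldots, \varepsilon_{k-1}] \cong \textstyle\prod_{i=1}^{k-1} \mathcal{P}ic^{d_i\beta_0}(S) \times \mathcal{P}ic^{d_k\beta_0}(S)^{\rm{red}}
\]
by elimination of $\varepsilon_k$. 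The asymmetric choice to reduce the last factor is a convention; different choices give equivalent semiorthogonal decompositions. It follows that the base change of $\prod_i \mathfrak{M}_S^\sigma(d_i v_0)$ is canonically identified with $\prod_{i=1}^{k-1}\mathfrak{M}_S^\sigma(d_i v_0) \times \mathfrak{M}_S^\sigma(d_k v_0)^{\rm{red}}$, and base-changing the fully-faithful embeddings from Theorem~\ref{thm:sodK3} produces the desired Hall product functors into $D^b(\mathfrak{M}_S^\sigma(v)^{\rm{red}})$.

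The main obstacle is verifying that the base change of $\mathbb{T}_S^\sigma(d_k v_0)_{w_k'}$ along $\mathfrak{M}_S^\sigma(d_k v_0)^{\rm{red}} \hookrightarrow \mathfrak{M}_S^\sigma(d_k v_0)$ coincides with the intrinsic window subcategory $\mathbb{T}_S^\sigma(d_k v_0)^{\rm{red}}_{w_k'}$ from \eqref{qbps:T}. This is a formally local statement on $M_S^\sigma(d_k v_0)$: via Lemma~\ref{lem:py} we identify $\widehat{\mathfrak{M}}_S^\sigma(d_k v_0)_y \simeq \widehat{\mathscr{P}}(d_k)_p$ and $\widehat{\mathfrak{M}}_S^\sigma(d_k v_0)^{\rm{red}}_y \simeq \widehat{\mathscr{P}}(d_k)^{\rm{red}}_p$, and under these identifications both intrinsic window subcategories are cut out by the same Koszul-type weight conditions from Definition~\ref{def:intwind} and Lemma~\ref{lem:compareT}, the only change being $\mu$ replaced by $\mu_0$. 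Fully-faithfulness, orthogonality, and generation of the base-changed SOD are then verified by the same formal-local arguments used in the proof of Theorem~\ref{thm:sodK3}, with the reduced analogue of Proposition~\ref{prop:sod2} (which follows from the block decomposition $\mathfrak{gl}(d)_0 = \bigoplus_{i=1}^{k-1}\mathfrak{gl}(d_i) \oplus \mathfrak{gl}(d_k)_0 \oplus (\text{off-diagonal})$ together with the techniques of~\cite{PTquiver}) replacing Proposition~\ref{prop:sod2}.
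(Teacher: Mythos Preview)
Your proposal is correct and follows essentially the same approach as the paper: set up the commutative diagram of determinant maps to Picard stacks, observe that the Cartesian square $\prod_{i=1}^{k-1}\mathcal{P}ic^{d_i\beta}(S)\times B\mathbb{C}^*\to\prod_i\mathcal{P}ic^{d_i\beta}(S)$ over $B\mathbb{C}^*\to\mathcal{P}ic^{d\beta}(S)$ yields by base change a Hall product landing in $D^b(\mathfrak{M}_S^\sigma(v)^{\rm red})$ with only the last factor reduced, and then rerun the formal-local argument of Theorem~\ref{thm:sodK3}. The paper's proof is terser and does not spell out the identification of the base-changed quasi-BPS category with the reduced intrinsic window subcategory, but your extra detail there is correct and welcome.
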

\begin{proof}
    Let $v_0=(r, \beta, \chi)$. We have the commutative diagram 
    \begin{align*}
    \xymatrix{
\times_{i=1}^k \mathfrak{M}_S^{\sigma}(d_i v_0) \ar[d]_-{\times_{i=1}^k \det} & \ar[l]_-{q}
\mathfrak{M}_S^{\sigma}(d_1 v_0, \ldots, d_k v_0) \ar[r]^-{p} & 
\mathfrak{M}_S^{\sigma}(dv_0) \ar[d]_-{\det} \\
 \times_{i=1}^k \mathcal{P}ic^{d_i \beta}(S)  \ar[rr] \ar@{}[rrd]|\square &  & 
 \mathcal{P}ic^{d\beta}(S) \\
 \times_{i=1}^{k-1}\mathcal{P}ic^{d_i \beta}(S) \times B\mathbb{C}^{\ast} \ar[rr] \ar[u] & & 
 B\mathbb{C}^{\ast}, \ar[u] 
    }
    \end{align*}
    where the middle horizontal arrow is $(L_1, \ldots, L_k) \mapsto L_1 \otimes \cdots \otimes L_k$. 
    By base change, the categorical Hall product induces the functor 
    \begin{align*}
        \boxtimes_{i=1}^{k-1}D^b(\mathfrak{M}_S^{\sigma}(d_i v_0))
        \boxtimes D^b(\mathfrak{M}_S^{\sigma}(d_k v_0)^{\rm{red}}) \to 
        D^b(\mathfrak{M}_S^{\sigma}(dv_0)^{\rm{red}}).
    \end{align*}
    The rest of the argument is the same as in Theorem~\ref{thm:sodK3}. 
\end{proof}

\subsection{Generation from ambient spaces}
The rest of this section is devoted to 
the proof of Proposition~\ref{prop:sod2}.
In this subsection, we prove technical 
preliminary results about 
generation of dg-categories 
from ambient spaces 
and the restriction of semiorthogonal 
decompositions to formal fibers. 

Let $\mathcal{U}$ be a reduced $\mathbb{C}$-scheme of finite 
type 
with an action of a reductive algebraic group $G$. 
Let $\mathcal{U}/G \to T$ be a morphism to an affine 
scheme $T$ of finite type. For a closed point $y \in T$, 
we denote by $\widehat{\mathcal{U}}_y/G$ the 
formal fiber at $y$. 
We denote by $\iota_y$ the induced map 
$\iota_y \colon \widehat{\mathcal{U}}_y/G \to \mathcal{U}/G$. 
Recall the definition of classical generation from Subsection~\ref{notation2}. 

\begin{lemma}\label{lem:gen1}
	The image of the pull-back functor 
	\begin{align}\label{iotay}
		\iota_y^{\ast} \colon D^b(\mathcal{U}/G) \to 
		D^b(\widehat{\mathcal{U}}_y/G)
		\end{align}
	classically generates $D^b(\widehat{\mathcal{U}}_y/G)$. 
	\end{lemma}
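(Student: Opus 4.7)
The plan is to reduce the statement to coherent sheaves and then to produce bounded resolutions by pullbacks of $G$-equivariant vector bundles via Nakayama's lemma, with termination handled by the closed thickenings coming from $\Spec(B/\mathfrak{m}^n)$.

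First I would reduce to coherent sheaves: every $\mathcal{F}\in D^b(\widehat{\mathcal{U}}_y/G)$ is a finite iterated cone of shifts of its cohomology sheaves $H^i(\mathcal{F})\in\Coh(\widehat{\mathcal{U}}_y/G)$, so it suffices to place each coherent sheaf in the thick subcategory generated by the image of $\iota_y^*$.

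Next, observe that for any finite-dimensional $G$-representation $V$, the equivariant bundle $V\otimes_\mathbb{C}\mathcal{O}_{\widehat{\mathcal{U}}_y/G}$ equals $\iota_y^*(V\otimes_\mathbb{C}\mathcal{O}_{\mathcal{U}/G})$, hence lies in the image. Because $\mathfrak{m}\widehat{\mathcal{O}}_{T,y}$ is contained in the Jacobson radical of $\widehat{\mathcal{O}}_{T,y}$, a $G$-equivariant version of Nakayama's lemma (lifting a $G$-stable generating set of the $G$-module $\mathcal{F}/\mathfrak{m}\mathcal{F}$ to global sections) yields a surjection $\phi\colon V\otimes_\mathbb{C}\mathcal{O}_{\widehat{\mathcal{U}}_y/G}\twoheadrightarrow \mathcal{F}$ from such a bundle, with coherent kernel $\mathcal{K}$. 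Then $\mathcal{F}$ is the cone of a map $\mathcal{K}[-1]\to V\otimes_\mathbb{C}\mathcal{O}_{\widehat{\mathcal{U}}_y/G}$, so it remains to show that $\mathcal{K}$ lies in the thick closure.

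The main obstacle will be ensuring that the iterative procedure terminates in finitely many steps to produce a bounded resolution. The key input is the equality $\widehat{\mathcal{O}}_{T,y}/\mathfrak{m}^n = \mathcal{O}_{T,y}/\mathfrak{m}^n = B/\mathfrak{m}^n$, which makes each thickening $\mathcal{U}_n := \mathcal{U}\times_T\Spec(B/\mathfrak{m}^n)$ a genuine closed subscheme of $\mathcal{U}$. Consequently, by flat base change along $\iota_y$, the pushforward of any coherent sheaf on $\mathcal{U}_n/G$ along the closed immersion $\mathcal{U}_n/G\hookrightarrow\widehat{\mathcal{U}}_y/G$ lies in the image of $\iota_y^*$; in particular, every truncation $\mathcal{F}/\mathfrak{m}^n\mathcal{F}$ is automatically in the image. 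Combining this observation with the Nakayama step and a careful bookkeeping on the iterated kernels along the $\mathfrak{m}$-adic filtration (using $\bigcap_n \mathfrak{m}^n\mathcal{F}=0$ by Krull's intersection theorem) should furnish the required bounded resolution of $\mathcal{F}$ by pullbacks, thereby placing $\mathcal{F}$ in the thick closure of $\iota_y^*(D^b(\mathcal{U}/G))$.
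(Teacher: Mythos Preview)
Your reduction to coherent sheaves and the observation that $V\otimes\mathcal{O}_{\widehat{\mathcal{U}}_y/G}$ and, more generally, any sheaf supported on a thickening $\mathcal{U}_n$ lies in the image of $\iota_y^*$ are both correct. The gap is in the termination argument. For singular $\mathcal{U}$ (and the lemma only assumes $\mathcal{U}$ reduced), the formal fiber $\widehat{\mathcal{U}}_y$ is also singular, and a coherent sheaf on a singular affine scheme need not admit any \emph{bounded} resolution by finite free modules. So iterating the Nakayama step produces an unbounded resolution, which does not place $\mathcal{F}$ in the thick subcategory generated by the image.

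Your proposed fix via the $\mathfrak{m}$-adic filtration does not repair this. Krull's intersection theorem $\bigcap_n \mathfrak{m}^n\mathcal{F}=0$ only expresses $\mathcal{F}$ as an \emph{inverse limit} $\varprojlim_n \mathcal{F}/\mathfrak{m}^n\mathcal{F}$ of objects in the image, not as a finite iterated extension of such objects; inverse limits are not available inside $D^b$. There is no bookkeeping on the kernels that converts this infinite tower into a bounded complex unless you already know $\widehat{\mathcal{U}}_y$ has finite global dimension, which fails in the singular case.

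The paper's proof circumvents exactly this issue by passing to Ind-categories and running a d\'evissage: one stratifies $\mathcal{U}$ by $G$-invariant locally closed subsets, uses the local cohomology triangle $R\Gamma_{\mathscr{Z}}(\mathcal{E})\to\mathcal{E}\to j_*j^*\mathcal{E}$ together with base change along $\iota_y$ to reduce to the strata, and thereby reduces to the case $\mathcal{U}$ smooth. In the smooth case your Nakayama argument does terminate (finite global dimension), and indeed this is essentially what the paper cites at the end. So your strategy is the right endgame once $\mathcal{U}$ is smooth; what is missing is the stratification/d\'evissage step that gets you there.
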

\begin{proof}
	It is enough to show that 
	$\Ind D^b(\widehat{\mathcal{U}}_y/G)$ is generated by the image of 
	\begin{align}\label{iota:y}
		\iota_y^{\ast} \colon \Ind D^b(\mathcal{U}/G) \to 
		\Ind D^b(\widehat{\mathcal{U}}_y/G). 
		\end{align}
	Indeed, suppose that $\Ind D^b(\widehat{\mathcal{U}}_y/G)$ is 
 generated by the image of (\ref{iota:y}). Let
	$\mathcal{C}_y \subset D^b(\widehat{\mathcal{U}}_y/G)$
	be the subcategory classically 
	generated by the image of (\ref{iotay}).
	Then we have 
	$\Ind \mathcal{C}_y \stackrel{\sim}{\to} \Ind D^b(\widehat{\mathcal{U}}_y/G)$, 
	hence $\mathcal{C}_y=D^b(\widehat{\mathcal{U}}_y/G)$ as both of them 
	are the subcategories of compact objects in 
	$\Ind \mathcal{C}_y$ and $\Ind D^b(\widehat{\mathcal{U}}_y/G)$, respectively.  
	
	Let $\mathscr{Z} \subset \mathcal{U}$ be a $G$-invariant 
	closed subset, and define 
	$\mathcal{U}^{\circ}=\mathcal{U} \setminus \mathscr{Z}$.
	Let 
	$i \colon \mathscr{Z} \hookrightarrow \mathcal{U}$ be the closed 
	immersion and 
	$j \colon \mathcal{U}^{\circ} \hookrightarrow \mathcal{U}$
	be the open immersion.    
	For any $\mathcal{E} \in \Ind D^b(\mathcal{U}/G)$, 
	we have the distinguished triangle 
	\begin{align*}
		R\Gamma_{\mathscr{Z}}(\mathcal{E}) \to \mathcal{E} 
		\to j_{\ast}j^{\ast}\mathcal{E}\to R\Gamma_{\mathscr{Z}}(\mathcal{E})[1],
		\end{align*}
	where $R\Gamma_{\mathscr{Z}}(\mathcal{E})$ is an object in 
	\begin{align*}
	    \Ind D^b_{\mathscr{Z}}(\mathcal{U}/G)=
     \mathrm{Ker}\left(j^{\ast} \colon \Ind D^b(\mathcal{U}/G) \to \Ind D^b(\mathcal{U}^{\circ}/G) \right)
     \end{align*}
     and 
	$j_{\ast}j^{\ast}\mathcal{E}$ is an object 
	in $j_{\ast}\Ind D^b(\mathcal{U}^{\circ}/G)$. 
	Note that
 by~\cite[Proposition~6.1.3]{MR3701352}, the category
 $\Ind D_{\mathscr{Z}}^b(\mathcal{U}/G)$ is generated by 
	the image of 
	\begin{align*}
		i_{\ast} \colon \Ind D^b(\mathscr{Z}/G) \to \Ind D^b_{\mathscr{Z}}(\mathcal{U}/G). 
		\end{align*}
	We have the Cartesian diagrams 
	\begin{align*}
	\xymatrix{
\widehat{\mathcal{U}}_y^{\circ} \inclusion^-{\widehat{j}} \ar[d]_{\iota_y^{\circ}} & \widehat{\mathcal{U}}_y
\ar[d]_-{\iota_y} & \widehat{\mathscr{Z}}_y \ar[l]_-{\widehat{i}}	\ar[d]_{\overline{\iota}_y} \\
\mathcal{U}^{\circ} \inclusion^-{j} & \mathcal{U} & \mathscr{Z} \ar[l]_-{i}.
}	
		\end{align*}
There are base change isomorphisms, see~\cite[Corollary~3.7.14]{MR3037900}:
\begin{align*}
  \iota_y^{\ast}j_{\ast} \cong \widehat{j}_{\ast}\iota_y^{\circ \ast}&\colon  
\Ind D^b(\mathcal{U}^{\circ}/G) \to \Ind D^b(\widehat{\mathcal{U}}_y/G), \\
\iota_y^{\ast}i_{\ast} \cong \widehat{i}_{\ast}\overline{\iota}_y^{\ast}&\colon 
\Ind D^b(\mathscr{Z}/G) \to \Ind D^b(\widehat{\mathcal{U}}_y/G),
\end{align*}
we can replace $\mathcal{U}$ with 
$\mathcal{U}^{\circ} \sqcup \mathscr{Z}$. 	
Then, by taking a stratification of $\mathcal{U}$
and repeating the above argument, we can assume that 
$\mathcal{U}$ is smooth. 
Then 
\begin{align*}
	\Ind D^b(\mathcal{U}/G)=D_{\rm{qc}}(\mathcal{U}/G)=\Ind \rm{Perf}(\mathcal{U}/G)
	\end{align*}
and it is a standard fact that the image of 
$\rm{Perf}(\mathcal{U}/G) \to \rm{Perf}(\widehat{\mathcal{U}}_y/G)$
classically generates $\rm{Perf}(\widehat{\mathcal{U}}_y/G)$
(see the argument of~\cite[Lemma~5.2]{MR2801403}). 
		\end{proof}

Let $Y$ be a smooth affine variety with an action of a reductive 
algebraic group $G$. Let $V \to Y$ be a $G$-equivariant vector bundle with 
a $G$-invariant section $s$. We set 
$\mathfrak{U}$ to be the derived zero locus of $s$, 
and 
$\mathcal{U} \hookrightarrow \mathfrak{U}$ 
its classical truncation. 
We have the following diagram 
\begin{align*}
	\xymatrix{
	\mathcal{U}/G \inclusion \ar[d] & \mathfrak{U}/G \inclusion \ar[rd]_-{f} & Y/G  \ar[d] \\
	\mathcal{U}\ssslash G \ar@<-0.3ex>@{^{(}->}[rr] & & Y\ssslash G.  
}
	\end{align*}
For $y \in \mathcal{U}\ssslash G$, we denote by $\widehat{Y}_y$ the formal fiber of 
$Y \to Y\ssslash G$ at $y$, and by
$\widehat{\mathfrak{U}}_y \hookrightarrow \widehat{Y}_y$ the 
derived zero locus of 
$s$ restricted to $\widehat{Y}_y$. 
Let $\iota_y \colon \widehat{\mathfrak{U}}_y/G \to \mathfrak{U}/G$
be the induced map. 
\begin{lemma}\label{lem:gen2}
	The image of the pull-back functor 
	\begin{align*}
		\iota_y^{\ast} \colon D^b(\mathfrak{U}/G) \to 
		 D^b(\widehat{\mathfrak{U}}_y/G)
		\end{align*}
	classically generates $D^b(\widehat{\mathfrak{U}}_y/G)$. 
	\end{lemma}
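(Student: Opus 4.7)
The plan is to use the Koszul structure of the closed immersion $j \colon \mathfrak{U}/G \hookrightarrow Y/G$ to reduce to Lemma~\ref{lem:gen1} applied to the smooth ambient stack $Y/G$. The key intermediate step is to show that $D^b(\widehat{\mathfrak{U}}_y/G)$ is classically generated by $\widehat{j}^{\,*}D^b(\widehat{Y}_y/G)$, where $\widehat{j} \colon \widehat{\mathfrak{U}}_y/G \hookrightarrow \widehat{Y}_y/G$ is the formal analogue of $j$.

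Since $Y$ is smooth affine and $G$ reductive, the category $D^b(Y/G)$ is classically generated by $\{\mathcal{O}_Y \otimes W\}$ for $W$ ranging over finite-dimensional $G$-representations. By Koszul duality (Theorem~\ref{thm:Kduality}) combined with Lemma~\ref{lem:genJ}, the derived pullbacks $j^{*}(\mathcal{O}_Y \otimes W) = \mathcal{O}_{\mathfrak{U}} \otimes W$ classically generate $D^b(\mathfrak{U}/G)$: under the equivalence $\Theta$ with $\mathrm{MF}^{\mathrm{gr}}(\mathscr{V}^\vee, f)$, these correspond to matrix factorizations whose factors are direct sums of $\mathcal{O}_{\mathscr{V}^\vee} \otimes W$, which classically generate the category of graded matrix factorizations. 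The same argument applied to the formal fiber shows that $\widehat{j}^{\,*}D^b(\widehat{Y}_y/G)$ classically generates $D^b(\widehat{\mathfrak{U}}_y/G)$.

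Next, apply Lemma~\ref{lem:gen1} to $\mathcal{U}=Y$ (which is smooth, hence reduced) with $T=Y\ssslash G$, concluding that $\iota_y^{\,*}D^b(Y/G)$ classically generates $D^b(\widehat{Y}_y/G)$. Finally, combine: the Cartesian diagram gives the base change identity $\iota_y^{\,*}\circ j^{*}=\widehat{j}^{\,*}\circ\iota_y^{\,*}$, so the image of $\iota_y^{\,*}\colon D^b(\mathfrak{U}/G)\to D^b(\widehat{\mathfrak{U}}_y/G)$ contains $\widehat{j}^{\,*}\iota_y^{\,*}D^b(Y/G)$. Since $\widehat{j}^{\,*}$ is an exact functor preserving direct sums, shifts, cones, and summands, applying it to a classically generating set yields a classical generator of $\widehat{j}^{\,*}D^b(\widehat{Y}_y/G)$, which in turn classically generates $D^b(\widehat{\mathfrak{U}}_y/G)$ by the first step.

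The main obstacle is the first step: checking via Koszul duality that pullbacks from the smooth ambient stack classically generate $D^b$ of the Koszul derived stack. This requires the correct use of Lemma~\ref{lem:genJ}, which translates the question about $\mathcal{E}\in D^b(\mathfrak{U}/G)$ into a statement about $j_{*}\mathcal{E}\in D^b(Y/G)$ being generated by $\mathcal{O}_{Y}\otimes W$; since $Y$ is smooth affine this generation is automatic, so $\Theta(\mathcal{E})$ lies in the subcategory of matrix factorizations with vector-bundle factors, which corresponds under $\Theta^{-1}$ to the thick closure of $\{j^{*}(\mathcal{O}_Y\otimes W)\}$.
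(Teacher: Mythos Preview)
There is a genuine gap in your argument. Your key claim --- that the pullbacks $j^{*}(\mathcal{O}_Y\otimes W)=\mathcal{O}_{\mathfrak{U}}\otimes W$ classically generate $D^b(\mathfrak{U}/G)$ --- is false whenever the derived zero locus $\mathfrak{U}$ is not classical. These objects are perfect complexes, and for a quasi-smooth derived stack that is not smooth one has $\mathrm{Perf}(\mathfrak{U}/G)\subsetneq D^b(\mathfrak{U}/G)$. A minimal counterexample: take $G$ trivial, $Y=\Spec\mathbb{C}$, $V=\mathbb{C}$, $s=0$, so $\mathfrak{U}=\Spec\mathbb{C}[\epsilon]$ with $\deg\epsilon=-1$. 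The module $\mathbb{C}=\mathbb{C}[\epsilon]/(\epsilon)$ lies in $D^b(\mathfrak{U})$ but admits only an infinite free resolution over $\mathbb{C}[\epsilon]$, hence is not perfect and not in the thick closure of $\mathcal{O}_{\mathfrak{U}}$.

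The misuse of Lemma~\ref{lem:genJ} is exactly here: that lemma characterizes $\Theta^{-1}(\mathbb{S})$ as those $\mathcal{E}$ with $j_{\ast}\mathcal{E}$ generated by $\{\mathcal{O}_Y\otimes V_a\}$, which, when $\{V_a\}$ ranges over all $G$-representations, is tautologically all of $D^b(\mathfrak{U}/G)$. It does \emph{not} identify $\Theta^{-1}(\mathbb{S})$ with the thick closure of $\{j^{*}(\mathcal{O}_Y\otimes V_a)\}$; the two descriptions involve $j_{\ast}$ versus $j^{*}$ and differ precisely by the perfect/coherent distinction. The paper's proof bypasses this by going through the classical truncation instead of the smooth ambient: the pushforward $D^b(\widehat{\mathcal{U}}_y/G)\to D^b(\widehat{\mathfrak{U}}_y/G)$ classically generates (every object of $D^b(\widehat{\mathfrak{U}}_y/G)$ is built from its cohomology sheaves, which are pushed forward from the classical truncation), and then Lemma~\ref{lem:gen1} applies directly to $\mathcal{U}/G$.
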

\begin{proof}
Since $D^b(\widehat{\mathfrak{U}}_y/G)$ is 
classically generated by the image of 
\begin{align*}
	D^b(\widehat{\mathcal{U}}_y/G) \to
	 D^b(\widehat{\mathfrak{U}}_y/G)
	\end{align*}
the claim follows from Lemma~\ref{lem:gen1}. 
	\end{proof}
\begin{lemma}\label{lem:basechange}
    Let $D^b(\mathfrak{U}/G)=\langle \mathbb{T}_i \mid i \in I \rangle$ 
    be a $(Y\sslash G)$-linear semiorthogonal decomposition. 
    Let 
    $\widehat{\mathbb{T}}_{i, y} \subset 
    D^b(\widehat{\mathfrak{U}}_y/G)$ be the subcategory classically 
    generated by the image of 
    $\iota_y^{\ast} \colon 
    \mathbb{T}_i \to D^b(\widehat{\mathfrak{U}}_y/G)$. 
    Then there is a semiorthogonal decomposition 
    \[D^b(\widehat{\mathfrak{U}}_y/G)=\langle 
    \widehat{\mathbb{T}}_{i, y} \mid i \in I\rangle.\] 
\end{lemma}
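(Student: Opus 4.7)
The plan is to verify separately the generation property and the semiorthogonality of the claimed decomposition, using as main inputs Lemma~\ref{lem:gen2} together with flat base change along $\Spec \widehat{\mathcal{O}}_{Y\ssslash G, y} \to Y\ssslash G$.

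\textbf{Generation.} Any object $\mathcal{E} \in D^b(\mathfrak{U}/G)$ admits a (finite) tower of distinguished triangles, associated with the semiorthogonal decomposition, whose successive cones lie in the components $\mathbb{T}_i$. Applying $\iota_y^{\ast}$ produces a tower in $D^b(\widehat{\mathfrak{U}}_y/G)$ whose cones lie in $\iota_y^{\ast}\mathbb{T}_i \subset \widehat{\mathbb{T}}_{i,y}$. Since $\iota_y^{\ast}D^b(\mathfrak{U}/G)$ classically generates $D^b(\widehat{\mathfrak{U}}_y/G)$ by Lemma~\ref{lem:gen2}, the subcategories $\widehat{\mathbb{T}}_{i,y}$ together classically generate $D^b(\widehat{\mathfrak{U}}_y/G)$.

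\textbf{Semiorthogonality.} Let $\pi \colon \mathfrak{U}/G \to Y\ssslash G$ denote the composition of the closed immersion $\mathfrak{U}/G \hookrightarrow Y/G$ with the (affine) good moduli space morphism. For $(i,j) \in O$ and objects $\mathcal{A}_i \in \mathbb{T}_i$, $\mathcal{A}_j \in \mathbb{T}_j$, the $(Y\ssslash G)$-linearity of the SOD gives, for every $\mathcal{F} \in \mathrm{Perf}(Y\ssslash G)$,
\[
\mathrm{RHom}_{\mathfrak{U}/G}(\mathcal{A}_i, \mathcal{A}_j \otimes \pi^{\ast}\mathcal{F}) = 0,
\]
because $\mathcal{A}_j \otimes \pi^{\ast}\mathcal{F} \in \mathbb{T}_j$. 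Since $Y\ssslash G$ is affine and $\mathrm{Perf}(Y\ssslash G)$ generates $D_{\rm{qc}}(Y\ssslash G)$, this forces the relative Hom complex $R\pi_{\ast}R\mathcal{H}om(\mathcal{A}_i, \mathcal{A}_j)$ to vanish in $D_{\rm{qc}}(Y\ssslash G)$. Flat base change along $\Spec\widehat{\mathcal{O}}_{Y\ssslash G, y} \to Y\ssslash G$ then yields
\[
R\widehat{\pi}_{y\ast} R\mathcal{H}om_{\widehat{\mathfrak{U}}_y/G}(\iota_y^{\ast}\mathcal{A}_i, \iota_y^{\ast}\mathcal{A}_j) \cong R\pi_{\ast}R\mathcal{H}om(\mathcal{A}_i, \mathcal{A}_j) \otimes_{\mathcal{O}_{Y\ssslash G}} \widehat{\mathcal{O}}_{Y\ssslash G, y} = 0,
\]
and taking global sections gives $\mathrm{RHom}_{\widehat{\mathfrak{U}}_y/G}(\iota_y^{\ast}\mathcal{A}_i, \iota_y^{\ast}\mathcal{A}_j) = 0$. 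Since the objects $\iota_y^{\ast}\mathcal{A}_i$ with $\mathcal{A}_i \in \mathbb{T}_i$ classically generate $\widehat{\mathbb{T}}_{i,y}$, a standard d\'evissage (closure of the vanishing under shifts, cones, and direct summands, applied successively in each variable) extends this to arbitrary pairs of objects in $\widehat{\mathbb{T}}_{i,y}$ and $\widehat{\mathbb{T}}_{j,y}$.

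The main obstacle I anticipate is making flat base change and the interchange of $R\pi_{\ast}$ with $R\mathcal{H}om$ rigorous for coherent but not necessarily perfect complexes on the (quasi-smooth) stack $\mathfrak{U}/G$. Since $\mathfrak{U}/G$ is QCA, $Y\ssslash G$ is affine, and the formal completion morphism is flat, the required base change is available (cf.~\cite[Corollary~3.7.14]{MR3037900}), but one has to track compactness of $\mathcal{A}_i$ in $\Ind D^b(\mathfrak{U}/G)$ and carefully distinguish ind-coherent from coherent categories when invoking these formalities.
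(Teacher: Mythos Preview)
Your proof is correct and follows essentially the same strategy as the paper: generation via Lemma~\ref{lem:gen2}, and semiorthogonality from $(Y\ssslash G)$-linearity together with passage to the completion. The only difference is in how the semiorthogonality step is packaged. The paper avoids relative $R\mathcal{H}om$ and base change entirely: it writes
\[
\Hom(\iota_y^{\ast}A,\iota_y^{\ast}B)=\Hom(A,\,B\otimes \iota_{y\ast}\mathcal{O}_{\widehat{\mathfrak{U}}_y/G})=\Hom(A,\,B\otimes f^{\ast}\widehat{\mathcal{O}}_{Y\ssslash G,y})
\]
by adjunction and the projection formula, observes that $\widehat{\mathcal{O}}_{Y\ssslash G,y}\in\Ind\mathrm{Perf}(Y\ssslash G)$ so that $B\otimes f^{\ast}\widehat{\mathcal{O}}_{Y\ssslash G,y}\in\Ind\mathbb{T}_j$ by linearity, and then concludes using compactness of $A$. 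This sidesteps precisely the concern you flag about base change for internal $R\mathcal{H}om$ of non-perfect complexes on a quasi-smooth stack, since no internal Hom on the formal fiber is ever formed; otherwise the two arguments have identical content.
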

\begin{proof}
 The subcategories $\widehat{\mathbb{T}}_{i, y}$ classically
 generate $D^b(\widehat{\mathfrak{U}}_y/G)$
 by Lemma~\ref{lem:gen2}. 
 As for the semiorthogonality, take $i, j \in I$
such that $\Hom(\mathbb{T}_i, \mathbb{T}_j)=0$. 
 Then for $A \in \mathbb{T}_i$ and $B \in \mathbb{T}_j$, 
 we have 
 \begin{align}\label{HomAB}
     \Hom(\iota_y^{\ast}A, \iota_y^{\ast}B)
     =\Hom(A, B \otimes \iota_{y\ast}\mathcal{O}_{\widehat{\mathfrak{U}}_y/G}) 
     =\Hom(A, B \otimes f^{\ast}\widehat{\mathcal{O}}_{Y\ssslash G, y}).
 \end{align}
 The sheaf $\widehat{\mathcal{O}}_{Y\ssslash G, y}$
 is an object of $D_{\rm{qc}}(Y\ssslash G)=\Ind\mathrm{Perf}(Y\ssslash G)$, 
 hence $f^{\ast}\widehat{\mathcal{O}}_{Y\ssslash G, y}
 \in D_{\rm{qc}}(\mathfrak{U}/G)$, 
 and $\otimes$ is the 
 action of $D_{\rm{qc}}(\mathfrak{U}/G)$ on 
 $\Ind D^b(\mathfrak{U}/G)$, which recall is continuous (i.e. it preserves small coproducts). 
 Then $B \otimes f^{\ast}\widehat{\mathcal{O}}_{Y\ssslash G, y}$
 is an object of $\Ind \mathbb{T}_{j}$, 
 and by writing it as $\mathrm{colim}_{k \in K} B_k$
 for $B_k \in \mathbb{T}_j$, 
 we have 
 \begin{align*}
     \Hom(A, \mathrm{colim}_{k\in K}B_k)=
     \mathrm{colim}_{k\in K}\Hom(A, B_k)=0,
 \end{align*}
 where the first identity follows 
 as $A$ is compact, and the second vanishing 
 holds from $\Hom(\mathbb{T}_{i}, \mathbb{T}_j)=0$.
 \end{proof}

\subsection{Descriptions of quasi-BPS categories for doubled quivers}
In this subsection, we give an alternative description of quasi-BPS categories for doubled 
quivers, which will be used in the proof of 
Proposition~\ref{prop:sod2}. 
Below we keep the notation in Subsection~\ref{subsub:formal}. 

Let $Q^{\circ}$ be a $g$-loop quiver. 
For $d \in \mathbb{N}$, let $\X(d)$ be moduli stack of $d$-dimensional representations of the 
    tripled quiver of $Q^{\circ}$:
    \[\X(d)=\mathfrak{gl}(d)^{\oplus 2g+1}/GL(d).\]
Consider the regular function induced by the tripled potential: 
\begin{align}\label{TrW:X}
    \Tr W(x_1, \ldots, x_g, y_1, \ldots, y_g, z) =
    \Tr \sum_{i=1}^g z[x_i, y_i] \colon \X(d) \to \mathbb{C}.
    \end{align}
Let $\mathbb{C}^{\ast}$ act on $z$ with weight two. 
We define the subcategory 
\begin{align}\label{sgr:w}
\mathbb{S}^{\rm{gr}}(d)_w \subset \mathrm{MF}^{\rm{gr}}(\X(d), \Tr W)  
\end{align}
to be classically generated by matrix factorizations whose factors 
are direct sums of $\mathcal{O}_{\X(d)} \otimes \Gamma_{GL(d)}(\chi)$
such that 
\begin{align*}
    \chi+\rho \in \mathbf{W}(d)_w=\frac{1}{2} \mathrm{sum}[0, \beta]+w\tau_d=\left(\frac{2g+1}{2}\mathrm{sum}_{1\leq i,j\leq d}[0, \beta_i-\beta_j]\right)+w\tau_d
    \end{align*}
where the Minkowski sums above are after all the $T(d)$-weights of $R_{Q}(d)=\mathfrak{gl}(d)^{\oplus 2g+1}$. 
Alternatively, by~\cite[Lemma~2.9]{hls}, the subcategory (\ref{sgr:w}) consists of matrix factorizations 
with factors $\mathcal{O}_{\X(d)} \otimes \Gamma$
for a $GL(d)$-representation $\Gamma$ whose $T(d)$-weights are contained in 
\begin{align*}
    \nabla(d)_w =\left\{\chi \in M(d)_{\mathbb{R}} : 
    -\frac{1}{2}n_{\lambda} \leq \langle \lambda, \chi \rangle 
    \leq \frac{1}{2}n_{\lambda} \mbox{ for all } \lambda \colon 
    \mathbb{C}^{\ast} \to T(d)\right\}+w\tau_d. 
\end{align*}
Here, the width $n_{\lambda}$ is defined by 
\begin{align*}
    n_{\lambda}:=\left\langle \lambda, \det \left((R_Q(d)^{\vee})^{\lambda>0}\right)-\det \left((\mathfrak{gl}(d)^{\vee})^{\lambda>0}\right) \right\rangle=2g \left\langle \lambda, \det \left(\mathfrak{gl}(d)^{\lambda>0}\right)\right\rangle. 
\end{align*}
The equivalence (\ref{Koszul:theta}) restricts to the equivalence, see Lemma~\ref{lem:genJ}:
\begin{align}\label{theta:rest}
    \Theta \colon \mathbb{T}(d)_w \stackrel{\sim}{\to} \mathbb{S}^{\rm{gr}}(d)_w. 
\end{align}

We next give another descriptions of 
the subcategory (\ref{qbps:that}) based on Lemma~\ref{lem:compareT}. 
As in Subsections~\ref{subsec:onevert}, \ref{subsub:formal}, 
let $\mathscr{P}(d)$ be the 
derived moduli stack of $d$-dimensional 
representations of the quiver $Q^{\circ,d}$ with relation $\mathscr{I}$. 
There is a good moduli space map \[\pi_{P,d}\colon \mathscr{P}(d)^{\rm{cl}} \to P(d).\]
Let $p \in P(d)$ be a closed point corresponding to the 
semisimple $(Q^{\circ,d}, \mathscr{I})$-representation $R_p$ as in (\ref{Rp}):
\begin{equation}\label{Rp2}
R_p=\bigoplus_{i=1}^m W^{(i)} \otimes R^{(i)}, \end{equation} 
where $R^{(i)}$ is a simple representation 
of dimension $r^{(i)}$ and 
$W^{(i)}$ is a finite dimensional $\mathbb{C}$-vector space. 
Recall that $G_p=\prod_{i=1}^m GL(W^{(i)})$ and 
let $T_p \subset G_p$ be a maximal torus. 
Note that we have an isomorphism of $G_p$-representations:
\begin{align}\label{ext1}
\Ext_{Q^{\circ, d}}^1(R_p, R_p)
\oplus \mathfrak{gl}(d)^{\vee}
=\bigoplus_{i, j}\Hom(W^{(i)}, W^{(j)})^{\oplus (\delta_{ij}+2g r^{(i)} r^{(j)})}.
    \end{align}
 Let $M_p$ be the character lattice of $T_p$ and let
$\tau_{d, p} \in (M_p)_{\mathbb{R}}$ be the restriction of $\tau_d$
to $G_p \subset GL(d)$. 
For 
$w \in \mathbb{Z}$, we set 
\begin{align}\label{Minksum}
\mathbf{W}_p(d)_{w}=\frac{1}{2}\mathrm{sum}[0, \beta]
    +w \tau_{d, p} \subset (M_p)_{\mathbb{R}},  
\end{align}
where the Minkowski sum is after all $T_p$-weights 
$\beta$ in the representation (\ref{ext1}). 
Let $\beta_i^{(j)}$ for $1\leq i \leq \dim W^{(j)}$
be the weights of 
the standard representation of $GL(W^{(j)})$. 
Then a weight $\chi$ 
in $\textbf{W}_p(d)_w$ is written as
\begin{align}\label{wt:wpd}
\chi=\sum_{i, j, a, b}c_{ij}^{(ab)}(\beta_i^{(a)}-\beta_j^{(b)})
+\frac{w}{d}\sum_{i, a}r^{(a)}\beta_i^{(a)},
\end{align}
where the sum above is after all $1\leq a, b\leq m$, $1\leq i\leq \dim W^{(a)}$, $1\leq j\leq \dim W^{(b)}$, and
where $\lvert c_{ij}^{(ab)} \rvert \leq \delta_{ab}/2+gr^{(a)}r^{(b)}$ for all such $a,b,i,j$. 

\begin{lemma}\label{lem:anotherT}
Recall the map $j_p \colon \widehat{\mathscr{P}}(d)_p
    \hookrightarrow \widehat{\mathscr{Y}}(d)_p$ from \eqref{mapjp}.
The subcategory introduced in (\ref{qbps:that}):
\begin{align}\label{def:Tformal}
    \mathbb{T}_p(d)_{w} \subset 
    D^b(\widehat{\mathscr{P}}(d)_p)
\end{align}
coincides with the subcategory of objects $\mathcal{E}$
such that 
$j_{p\ast}\mathcal{E}$ is generated by the vector bundles
$\Gamma_{G_p}(\chi) \otimes \mathcal{O}_{\widehat{\mathscr{Y}}(d)_p}$, where 
$\chi$ is a dominant $T_p$-weight 
satisfying 
\begin{align*}\chi+\rho_p \in \mathbf{W}_p(d)_{w},
\end{align*}
where $\rho_p$ is half the sum
of positive roots of $G_p$. 
\end{lemma}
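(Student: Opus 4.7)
The plan is to deduce the statement from Lemma~\ref{lem:compareT} by writing $\widehat{\mathscr{P}}(d)_p$ as a local Koszul stack with ambient smooth stack $\widehat{\mathscr{Y}}(d)_p$, and then translating the intrinsic window condition through Koszul duality exactly as in \cite[Lemma 3.14, Corollary 3.20]{PTquiver}.

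First, I would pin down the Koszul presentation. The \'etale slice theorem together with the formality of $\mathrm{RHom}(R_p,R_p)$ from \cite[Corollary 4.9]{DavPurity} gives a $G_p$-equivariant identification
\[
\widehat{\mathscr{P}}(d)_p \simeq \widehat{\mu}_p^{-1}(0)/G_p
\hookrightarrow
\widehat{\mathscr{Y}}(d)_p
= \widehat{\mathrm{Ext}}^1_{Q^{\circ,d}}(R_p,R_p)/G_p,
\]
where $\widehat{\mu}_p\colon \widehat{\mathrm{Ext}}^1_{Q^{\circ,d}}(R_p,R_p)\to \mathfrak{gl}(d)$ is the restriction of the moment map $\mu$ to the slice. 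This is a Koszul presentation as in (\ref{koszuldef}), with trivial vector bundle of fiber $\mathfrak{gl}(d)^\vee$ viewed as a $G_p$-representation via $G_p\hookrightarrow GL(d)$. The Koszul-dual smooth ambient space is therefore
\[
\widehat{\mathscr{V}}_p^\vee = \bigl(\widehat{\mathrm{Ext}}^1_{Q^{\circ,d}}(R_p,R_p)\oplus\mathfrak{gl}(d)\bigr)/G_p.
\]

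Next, I would run the argument of \cite[Lemma 3.14]{PTquiver} in this formal-fiber setting. Two key compatibilities are needed: (i) the $T_p$-weights of $\widehat{\mathrm{Ext}}^1_{Q^{\circ,d}}(R_p,R_p)\oplus\mathfrak{gl}(d)^\vee$ are exactly the weights whose Minkowski sum, after $\rho_p$-shift, gives $\mathbf{W}_p(d)_w - w\tau_{d,p}$, as one sees from the decomposition (\ref{ext1}) and the self-duality of $\mathfrak{gl}(d)$ as a $G_p$-representation; and (ii) the weight $w\tau_d\in M(d)^W$ restricts along $G_p\hookrightarrow GL(d)$ to $w\tau_{d,p}\in (M_p)^{W_p}$, which is immediate from the explicit formulas for $\tau_d$ and $\tau_{d,p}$ and the description of the $G_p$-decomposition $\mathbb{C}^d=\bigoplus_i W^{(i)}\otimes R^{(i)}$. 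With these matches in place, the intrinsic window subcategory $\mathbb{T}_p(d)_w$ corresponds, under the Koszul equivalence of Theorem~\ref{thm:Kduality}, to the subcategory of graded matrix factorizations on $\widehat{\mathscr{V}}_p^\vee$ generated by factors $\mathcal{O}\otimes\Gamma_{G_p}(\chi)$ with $\chi+\rho_p\in\mathbf{W}_p(d)_w$. Lemma~\ref{lem:genJ} then translates this into the stated characterization that $j_{p\ast}\mathcal{E}$ be classically generated by the vector bundles $\mathcal{O}_{\widehat{\mathscr{Y}}(d)_p}\otimes\Gamma_{G_p}(\chi)$ for the same weights $\chi$.

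The main point to handle with care is the compatibility of Definition~\ref{def:intwind} with the formal-fiber Koszul presentation above, namely that $\mathbb{W}(\widehat{\mathscr{P}}(d)_p)^{\rm{int}}_{w\tau_d}$ is correctly computed from this slice model. This is built into Definition~\ref{def:intwind}, which is \'etale-local on the good moduli space, so after base change along $\Spec\widehat{\mathcal{O}}_{P(d),p}\to P(d)$ the intrinsic window subcategory is a direct consequence of the presentation exhibited in the first paragraph. The remaining verifications are the two weight/polytope matchings just noted and the appeal to Lemma~\ref{lem:genJ}, both of which are formal translations of facts already proved in \cite{PTquiver}; no new input is needed.
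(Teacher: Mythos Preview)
Your proposal is correct and takes essentially the same approach as the paper's proof, which simply says the lemma follows as in Lemma~\ref{lem:compareT} via the Koszul equivalence and \cite[Corollary~3.14]{PTquiver}. You have unpacked exactly those ingredients: the formal-fiber Koszul presentation from the \'etale slice and formality, the weight/polytope match coming from (\ref{ext1}) and the restriction $w\tau_d\mapsto w\tau_{d,p}$, and the translation through Lemma~\ref{lem:genJ}; the only cosmetic slip is writing $\mathfrak{gl}(d)$ for the Koszul-dual fiber where the paper uses $\mathfrak{gl}(d)^\vee$, which is immaterial by self-duality.
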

\begin{proof}
    The lemma follows similarly to Lemma~\ref{lem:compareT}, using the Koszul equivalence 
and~\cite[Corollary~3.14]{PTquiver}. 
\end{proof}
\begin{remark}\label{rmk:anotherT}
Alternatively, by Lemma~\ref{lem:anotherT} and~\cite[Lemma~2.9]{hls}, 
the subcategory (\ref{def:Tformal}) consists of objects $\mathcal{E}$ such that 
$j_{p\ast}\mathcal{E}$ is generated by vector bundles
$W \otimes \mathcal{O}_{\widehat{\mathscr{Y}}(d)_p}$ for $W$ a $G_p$-representation 
whose $T_p$-weights are contained in the set
\begin{align}\label{nablap}
   \left\{\chi \in (M_{p})_{\mathbb{R}} : 
   -\frac{1}{2}n_{\lambda, p} \leq 
   \langle \lambda, \chi \rangle \leq \frac{1}{2}
   n_{\lambda, p}\text{ for all }\lambda \colon \mathbb{C}^{\ast} \to T_p
    \right\}+w\tau_{d, p}.
\end{align}
Here, the width $n_{\lambda, p}$ is defined by 
\begin{align*}
    n_{\lambda, p}=\Big\langle \lambda, 
    \det\left(\Ext_{Q^{\circ, d}}^1(R_p, R_p)^{\vee}\oplus 
    \mathfrak{gl}(d)\right)^{\lambda>0}\Big\rangle-\Big\langle \lambda, 
    \det\left((\mathfrak{g}_p^{\vee})^{\lambda>0}\right) \Big\rangle,
\end{align*}
where $\mathfrak{g}_p$ is the Lie algebra of $G_p$. 
From (\ref{ext1}), one can easily check that 
\begin{align}
    \label{eta:equal}
n_{\lambda, p}=2g \Big\langle \lambda, \det\left(\mathfrak{gl}(d)^{\lambda>0}\right)
\Big\rangle =n_{\lambda}
\end{align}
for any cocharacter $\lambda \colon 
\mathbb{C}^{\ast} \to T_p \subset T(d)$. 
\end{remark}

\subsection{Proof of Proposition~\ref{prop:sod2}}\label{subsec:prop}
In this subsection, we prove Proposition~\ref{prop:sod2} and Corollary~\ref{cor:gen}, and thus finish the proof of Theorem \ref{thm:sodK3}. 
\begin{proof}[Proof of Proposition~\ref{prop:sod2} and Corollary~\ref{cor:gen}]
Let $\iota_p \colon \widehat{\mathscr{P}}(d)_p \to \mathscr{P}(d)$
be the natural induced map and define 
$\widehat{\mathbb{T}}_p(d)_w$ to be the 
subcategory of 
$D^b(\widehat{\mathscr{P}}(d)_p)$
classically generated by the image of 
\begin{align*}
    \iota_p^{\ast} \colon \mathbb{T}(d)_w \to 
  D^b(\widehat{\mathscr{P}}(d)_p).   
\end{align*}
    By Theorem~\ref{cor:sodT} and 
    Lemma~\ref{lem:basechange}, we have the semiorthogonal 
    decomposition 
 \begin{align}\label{sod:hat}
    D^b(\widehat{\mathscr{P}}(d)_p)
    =\left\langle \bigoplus_{p_1+\cdots+p_k=p}
    \boxtimes_{i=1}^k \widehat{\mathbb{T}}_{p_i}(d_i)_{w_i+(g-1)d_i(\sum_{i>j}d_j-\sum_{i<j}d_j)}
    \right\rangle. 
    \end{align}
    Therefore it is enough to show that \begin{equation}\label{tpdw}
\widehat{\mathbb{T}}_p(d)_w=\mathbb{T}_p(d)_w, \end{equation}
which is the claim of Corollary \ref{cor:gen}. 

Let $\widehat{\mathscr{X}}(d)_p$ be the formal fiber at $p$ of 
the composition 
\begin{align*}\mathscr{X}(d) \to \mathscr{Y}(d)
\to Y(d)=\mathfrak{gl}(d)^{\oplus 2g}\ssslash GL(d),
\end{align*}
 where the first morphism is the natural
projection. 
It is given by 
\begin{align*}
    \widehat{\mathscr{X}}(d)_p=(\widehat{\Ext}^1_{Q^{\circ, d}}(R_p, R_p)\times \mathfrak{gl}(d)^{\vee})/G_p. 
\end{align*}
We have the Koszul duality equivalence, see Theorem~\ref{thm:Kduality}
\begin{align}\label{Kdualp}
\Theta_p \colon 
    D^b(\widehat{\mathscr{P}}(d)_p)
    \stackrel{\sim}{\to} 
    \mathrm{MF}^{\rm{gr}}(\widehat{\mathscr{X}}(d)_p, \Tr W). 
\end{align}
We next define categories Koszul equivalent to the two categories in \eqref{tpdw}:
\begin{align*}
    \widehat{\mathbb{S}}^{\rm{gr}}_p(d)_w \subset 
   \mathrm{MF}^{\rm{gr}}(\widehat{\mathscr{X}}(d)_p, \Tr W), \ 
    \mathbb{S}^{\rm{gr}}_p(d)_w \subset 
   \mathrm{MF}^{\rm{gr}}(\widehat{\mathscr{X}}(d)_p, \Tr W).
\end{align*}
We define the subcategory  
$\widehat{\mathbb{S}}^{\rm{gr}}_p(d)_w$
to be classically 
generated by the image of 
\begin{align}\notag
\mathbb{S}^{\rm{gr}}(d)_w \subset 
    \mathrm{MF}^{\rm{gr}}(\mathscr{X}(d), \Tr W)
    \to \mathrm{MF}^{\rm{gr}}(\widehat{\mathscr{X}}(d)_p, \Tr W). 
\end{align}
We define the subcategory $\mathbb{S}^{\rm{gr}}_p(d)_w$  to be consisting of 
matrix factorizations 
whose factors are of the form $W \otimes \mathcal{O}_{\widehat{\mathscr{X}}(d)_p}$,
where $W$ is a $G_p$-representation whose $T_p$-weights 
are contained in (\ref{nablap}). 
By the equivalence (\ref{theta:rest}) and using Lemma~\ref{lem:genJ} and
Remark~\ref{rmk:anotherT}, 
the equivalence $\Theta_p$ restricts to equivalences 
\begin{align*}
    \Theta_p \colon 
    \widehat{\mathbb{T}}_p(d)_w \stackrel{\sim}{\to} 
    \widehat{\mathbb{S}}^{\rm{gr}}_p(d)_w, \ 
    \mathbb{T}_p(d)_w \stackrel{\sim}{\to} 
    \mathbb{S}^{\rm{gr}}_p(d)_w. 
\end{align*}
It is enough to show that 
$\widehat{\mathbb{S}}^{\rm{gr}}_p(d)_w=\mathbb{S}^{\rm{gr}}_p(d)_w$. 
By Remark~\ref{rmk:anotherT},  
it is obvious that 
$\widehat{\mathbb{T}}_p(d)_w \subset \mathbb{T}_p(d)_w$, hence 
$\widehat{\mathbb{S}}^{\rm{gr}}_p(d)_w \subset 
\mathbb{S}^{\rm{gr}}_p(d)_w$. 

By the semiorthogonal decomposition (\ref{sod:hat})
together with the equivalence (\ref{Kdualp}), 
we have the semiorthogonal decomposition 
\begin{align}\label{sod:hat2}
 \mathrm{MF}^{\rm{gr}}(\widehat{\mathscr{X}}(d)_p, \Tr W)
    =\left\langle \bigoplus_{p_1+\cdots+p_k=p}
    \boxtimes_{i=1}^k \widehat{\mathbb{S}}^{\rm{gr}}_{p_i}(d_i)_{w_i+g d_i(\sum_{i>j}d_j-\sum_{i<j}d_j)}
    \right\rangle
    \end{align}
    for $w_1/d_1<\cdots<w_k/d_k$, 
    and each summand is given by
    the categorical Hall product, see~\cite[Proposition~3.1]{P2}
or \cite[Lemma~2.4.4, 2.4.7]{T} for the compatibility of the categorical Hall products
    under Koszul duality. 
In Lemma~\ref{lem:orthoS} below, we show that the semiorthogonal summands in (\ref{sod:hat2})
except $\widehat{\mathbb{S}}^{\rm{gr}}_p(d)_w$
are right orthogonal to $\mathbb{S}^{\rm{gr}}_p(d)_w$. 
Then
by (\ref{sod:hat2}) we
have $\mathbb{S}^{\rm{gr}}_p(d)_w \subset \widehat{\mathbb{S}}^{\rm{gr}}_p(d)_w$, 
hence that $\widehat{\mathbb{S}}^{\rm{gr}}_p(d)_w=\mathbb{S}^{\rm{gr}}_p(d)_w$. 
 \end{proof}
\begin{lemma}\label{lem:orthoS}
The semiorthogonal summands in (\ref{sod:hat2}) 
with $k\geq 2$
are right orthogonal to $\mathbb{S}^{\rm{gr}}_p(d)_w$. 
\end{lemma}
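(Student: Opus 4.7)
The plan is to establish the orthogonality via a weight-theoretic computation on an attracting stack, adapting the sliding argument used in the proof of Theorem \ref{cor:sodT} (which comes from \cite{PTquiver}) to the formal local setting. Fix a partition $d=d_1+\cdots+d_k$ with $k\geq 2$ satisfying $w_1/d_1<\cdots<w_k/d_k$ and a splitting $p=p_1+\cdots+p_k$ contributing to the sum in \eqref{sod:hat2}; write $w_i'':=w_i+g d_i\left(\sum_{i>j}d_j-\sum_{i<j}d_j\right)$ for the shifted weight. Let $\lambda_{\bullet}\colon \mathbb{C}^{\ast}\to T_p\subset G_p$ be the antidominant cocharacter corresponding to this decomposition. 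Then on $\widehat{\mathscr{X}}(d)_p$, the Hall product map $q$ factors through the fixed stack $\widehat{\mathscr{X}}(d)_p^{\lambda_{\bullet}}$ while $p$ is a closed immersion of the attracting substack.

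Next, take generators: let $\mathcal{E}_i\in\widehat{\mathbb{S}}^{\rm{gr}}_{p_i}(d_i)_{w_i''}$ with factors $\mathcal{O}\otimes W_i$, where $W_i$ is a $G_{p_i}$-representation whose $T_{p_i}$-weights lie in the polytope from \eqref{nablap} for $(d_i,w_i'')$, and let $\mathcal{F}\in\mathbb{S}^{\rm{gr}}_p(d)_w$ with factor $\mathcal{O}\otimes W$, where $W$ is a $G_p$-representation whose $T_p$-weights lie in \eqref{nablap} for $(d,w)$. By adjunction for the categorical Hall product (see \cite[Proposition~3.1]{P2} and \cite[Lemmas~2.4.4, 2.4.7]{T}), the Hom space $\Hom(p_{\ast}q^{\ast}(\boxtimes_i\mathcal{E}_i),\mathcal{F})$ is controlled by the $T_p$-invariant weights of $(\boxtimes_i W_i)^{\vee}\otimes W\otimes \det(N^{\vee})$, where $N$ is the normal bundle of $\widehat{\mathscr{X}}(d)_p^{\lambda_{\bullet}}\hookrightarrow\widehat{\mathscr{X}}(d)_p$ restricted to the fixed stack, together with the matrix factorization differential coming from $\Tr W$.

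The crucial step is the following weight inequality: for any $T_p$-weights $\chi'\in\boxtimes_i W_i$ and $\chi\in W$, the $\lambda_{\bullet}$-weight of $\chi-\chi'+\det(N^{\vee})|_{\mathrm{fixed}}$ has absolute value strictly greater than $\tfrac{1}{2}n_{\lambda_{\bullet},p}$, with strict excess proportional to the positive quantity $\sum_{i<j}d_id_j(w_j/d_j-w_i/d_i)$ produced by the strict slope inequality. Using the identity $n_{\lambda_{\bullet},p}=n_{\lambda_{\bullet}}$ from \eqref{eta:equal} together with the explicit shifts $w_i''$, one checks that no $T_p$-invariant summand can contribute; this forces $\Hom(p_{\ast}q^{\ast}(\boxtimes_i\mathcal{E}_i),\mathcal{F})=0$.

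The main obstacle will be the precise weight bookkeeping in the last step: matching the shifts $gd_i(\sum_{i>j}d_j-\sum_{i<j}d_j)$ with the character $\det(N^{\vee})|_{\mathrm{fixed}}$ of the attracting normal bundle, and carrying the whole calculation out for $G_p\subsetneq G(d)$ rather than the full group. This parallels the argument in \cite[Theorem~4.20]{PTquiver}, but must be performed $G_p$-equivariantly on the formal fiber. The key technical input is precisely the identity \eqref{eta:equal}, which guarantees that the width bounds appearing in the magic window condition for $T_p$ are the same as those appearing in the attracting normal bundle computation inherited from the ambient $T(d)$-action.
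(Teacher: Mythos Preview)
Your approach has the direction of the Hom space reversed. The lemma states that the $k\geq 2$ summands lie in the \emph{right} orthogonal of $\mathbb{S}^{\rm gr}_p(d)_w$, i.e.\ $\Hom(\mathbb{S}^{\rm gr}_p(d)_w,\text{summand})=0$, and this is what the paper proves: it takes $A$ with factor $\Gamma_{GL(d)}(\chi)\otimes\mathcal{O}$ in $\mathbb{S}^{\rm gr}_p(d)_w$ and $B$ in the product piece, uses the adjunction $\Hom(A,p_{\ast}q^{\ast}B)=\Hom(p^{\ast}A,q^{\ast}B)$, and establishes the one-sided strict inequality $\langle\lambda,\chi''\rangle>\langle\lambda,\chi'\rangle$ for every weight $\chi''$ of $A$ and $\chi'$ of $B$; the strictness comes precisely from the slope gap you identify, and vanishing then follows from \cite[Proposition~4.2]{P}. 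You instead compute $\Hom(p_{\ast}q^{\ast}(\boxtimes_i\mathcal{E}_i),\mathcal{F})$, which would only place the summand in the \emph{left} orthogonal.

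This is not a cosmetic swap. For the reversed Hom one must use the right adjoint of $p_{\ast}q^{\ast}$, which brings in $p^{!}$ and hence the twist by $\omega_p$ (your $\det(N^{\vee})$); the resulting weight estimate goes in the opposite direction, and the symmetric absolute-value bound you assert does not hold in general. Indeed, in the paper's calculation $\langle\lambda,\chi'+\rho_p+\mu-w\tau_d\rangle$ is strictly below $-\tfrac{2g+1}{2}\langle\lambda,\mathfrak{gl}(d)^{\lambda>0}\rangle$, while $\langle\lambda,\chi''+\rho_p+\mu-w\tau_d\rangle$ ranges over the full interval $[-\tfrac{2g+1}{2},\tfrac{2g-1}{2}]\cdot\langle\lambda,\mathfrak{gl}(d)^{\lambda>0}\rangle$; so $\langle\lambda,\chi''-\chi'\rangle$ is always positive but need not be large, and the reverse inequality fails. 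More importantly, left orthogonality does not combine with the semiorthogonal decomposition \eqref{sod:hat2} to give the inclusion $\mathbb{S}^{\rm gr}_p(d)_w\subset\widehat{\mathbb{S}}^{\rm gr}_p(d)_w$ needed in the proof of Proposition~\ref{prop:sod2}. Once you flip the direction and drop the $\det(N^{\vee})$ twist, your outline agrees with the paper's: the decisive inputs are the identity \eqref{eta:equal} and the strict slope inequality, carried out $G_p$-equivariantly.
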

\begin{proof}
The proof is analogous to that of ~\cite[Lemma~3.6]{PT1}. 
The inclusion $T_p \subset T(d)$ induces a surjection 
$M(d) \twoheadrightarrow M_p$. We will regard $T(d)$-weights 
as $T_p$-weights by the above surjection.
Let $\widehat{\textbf{W}}_p(d)_w$ be the image of 
$\textbf{W}(d)_w \subset M(d)_{\mathbb{R}} \twoheadrightarrow 
(M_{p})_{\mathbb{R}}$. 
Recall the decomposition \eqref{Rp2} and the weights $\beta^{(a)}_i$ for $1\leq a\leq m$ and $1\leq i\leq\dim W^{(a)}$.
Then a weight $\chi$ in $\widehat{\textbf{W}}_p(d)_w$ is written 
as 
\begin{align}\label{write:chi}
    \chi=\sum_{i, j, a, b}\alpha_{ij}^{(ab)}(\beta_i^{(a)}-\beta_j^{(b)})
    +\frac{w}{d}\sum_{i, a}r^{(a)}\beta_i^{(a)},
\end{align}
where the sum above is after all $1\leq a, b\leq m$, $1\leq i\leq \dim W^{(a)}$, $1\leq j\leq \dim W^{(b)}$, 
and we have that $\lvert \alpha_{ij}^{(ab)} \rvert \leq
r^{(a)}r^{(b)}(g+1/2)$.
We also note that a choice of $(p_1, \ldots, p_k)$
corresponds to decompositions for all $1\leq j\leq m$:
\begin{align*}
    W^{(j)}=W_1^{(j)} \oplus \cdots \oplus W_k^{(j)}
\end{align*}
such that $d_i^{(j)}=\dim W_i^{(j)}$ satisfies 
$d_i=d_i^{(1)}+\cdots+d_i^{(m)}$. 

Let $\lambda$ be the antidominant cocharacter
of $T_p$ which acts on the space $W_i^{(j)}$ by weight $(k+1-i)$ for $1\leq j\leq m$ and $1\leq i\leq k$, 
and write it as $\lambda=(\lambda^{(j)})_{1\leq j\leq m}$, 
where $\lambda^{(j)}$ is a cocharacter of the maximal torus of $GL(W^{(j)})$. 
We set $\mathfrak{g}^{(j)}=\mathrm{End}(W^{(j)})$. 
Consider the diagram of attracting loci 
\begin{align*}
    \widehat{\mathscr{X}}(d)_p^{\lambda}=\times_{i=1}^k
    \widehat{\mathscr{X}}(d_i)_{p_i} \stackrel{q}{\leftarrow}
    \widehat{\mathscr{X}}(d)_p^{\lambda \geq 0}
    \stackrel{p}{\to} 
    \widehat{\mathscr{X}}(d)_p. 
\end{align*}
Let $A=\Gamma_{GL(d)}(\chi) \otimes \mathcal{O}_{\widehat{\mathscr{X}}(d)_p}$
and $B=\Gamma_{GL(d)^{\lambda}}(\chi') \otimes
\mathcal{O}_{\mathscr{X}(d)_p^{\lambda}}$
such that 
\begin{align}\label{condition:chiW}
    \chi+\rho_p \in \textbf{W}_p(d)_w, \ 
    \chi'+\sum_{i=1}^k \rho_{p_i} \in 
    \bigoplus_{i=1}^k \widehat{\textbf{W}}_{p_i}(d_i)_{w_i'}\subset \bigoplus_{i=1}^k M(d_i)_\mathbb{R}=M(d)_\mathbb{R},
\end{align}
where $w=w_1+\cdots+w_k$, 
$w_1/d_1<\cdots<w_k/d_k$
and $w_i'=w_i+gd_i(\sum_{i>j}d_j-\sum_{j>i}d_j)$. 
We write 
\begin{equation}\label{psidecomp}
    \chi'=\sum_{i=1}^k (\psi_i+w'_i\tau_{d_i}),\, \psi_i\in \widehat{\textbf{W}}_{p_i}(d_i)_0.
\end{equation}
By the adjunction, we have 
\begin{align}\label{hom:AB}
    \Hom(A, p_{\ast}q^{\ast}B)=\Hom(p^{\ast}A, q^{\ast}B). 
\end{align}
Let $\chi''$ be a weight of $\Gamma_{GL(d)}(\chi)$. 
Below we show that 
\begin{equation}\label{inequality}
    \langle \lambda, \chi'' \rangle > \langle \lambda, \chi'\rangle.
\end{equation} 
Then (\ref{hom:AB}) vanishes by~\cite[Proposition~4.2]{P} and thus the lemma holds. 
Let $\mu$ be the weight:
    \begin{align}\label{wt:mu}
    \mu=-\frac{1}{2}
    \mathfrak{gl}(d)^{\lambda>0}+\frac{1}{2}\sum_{a=1}^m(\mathfrak{g}^{(a)})^{\lambda^{(a)}>0}=
    \sum_{i, j, a<b}\gamma_{ij}^{(ab)}(\beta_i^{(a)}-\beta_j^{(b)})
\end{align}
where $1\leq a<b\leq m$, $1\leq i\leq \dim W^{(a)}$, $1\leq j\leq \dim W^{(b)}$, and such that $\lvert \gamma_{ij}^{(ab)}\rvert=r^{(a)}r^{(b)}/2$. 
To show \eqref{inequality}, it is enough to show that 
\begin{align}\label{ineq:chirho}
\langle \lambda, \chi''+\rho_p+\mu-w\tau_d\rangle 
>\langle \lambda,  \chi'+\rho_p+\mu-w\tau_d \rangle.     
\end{align}
 By \eqref{psidecomp}, we write 
 \begin{align*}
    \chi'+\rho_p+\mu-w\tau_d
    =\sum_{i=1}^k \psi_i+\sum_{i=1}^k w_i \tau_{d_i}
    -\frac{2g+1}{2} \mathfrak{gl}(d)^{\lambda>0}-w\tau_d,
 \end{align*}
 where 
 $\psi_i \in \widehat{\textbf{W}}_{p_i}(d_i)_{0}$ for $1\leq i\leq k$. 
 In what follows, we write $\mathfrak{gl}(d)^{\lambda>0}$ instead of $\det\left(\mathfrak{gl}(d)^{\lambda>0}\right)$ to simplify notation.
We compute 
\begin{align*}
\left\langle \lambda, 	\chi'+\rho_p+\mu-w\tau_d \right\rangle	&=\left\langle \lambda, 	\sum_{i=1}^k \psi_i+\sum_{i=1}^k w_i \tau_{d_i}-\frac{2g+1}{2}
 \mathfrak{gl}(d)^{\lambda>0}
	-w \tau_d \right\rangle \\
	&=\sum_{i=1}^k (k+1-i)d_i\left(\frac{w_i}{d_i}-\frac{w}{d}\right)
	-\left\langle \lambda, \frac{2g+1}{2}\mathfrak{gl}(d)^{\lambda>0} \right\rangle.
	\end{align*}
	For $1\leq i\leq k$, define \[\tilde{w}_i:=d_i\left(\frac{w_i}{d_i}-\frac{w}{d}\right).\] 
	Then $\tilde{w}_1+\cdots+\tilde{w}_k=0$
	and $\tilde{w}_1+\cdots+\tilde{w}_l<0$ for $1\leq l<k$.
	Therefore 
	\begin{align*}
	   \sum_{i=1}^k (k+1-i)d_i\left(\frac{w_i}{d_i}-\frac{w}{d}\right)
	    =\sum_{l=1}^k \left(\sum_{i=1}^l \tilde{w}_i \right)<0. 	\end{align*}
	It follows that 
	\begin{align}\label{lambda1}
	    \left\langle \lambda, 	\chi'+\rho_p+\mu-w\tau_d \right\rangle <-\left\langle \lambda, \frac{2g+1}{2}\mathfrak{gl}(d)^{\lambda>0} \right\rangle.
	\end{align}
On the other hand,
by (\ref{condition:chiW}) and~\cite[Lemma~2.9]{hls},
we have 
\begin{align*}
    \langle \lambda, \chi'' -w\tau_d \rangle \geq -\frac{1}{2}n_{\lambda, p}=-g \langle \lambda, \mathfrak{gl}(d)^{\lambda>0} \rangle. 
\end{align*}
Then 
\begin{align*}
    \langle \lambda, \chi''+\rho_p+\mu-w\tau_d \rangle \geq 
    -g \langle \lambda, \mathfrak{gl}(d)^{\lambda>0}\rangle+\langle \lambda, 
    \rho_p+\mu \rangle=-\left\langle \lambda, \frac{2g+1}{2}\mathfrak{gl}(d)^{\lambda>0} \right\rangle.
\end{align*}
Therefore we have the inequality (\ref{ineq:chirho}). 
\end{proof}

\section{Smooth and properness of reduced quasi-BPS categories}
In this section, 
we show that the reduced version of quasi-BPS category 
is smooth and proper, which gives evidence 
towards Conjecture~\ref{conj:HK}. 
We first prove the strong generation 
of quasi-BPS categories. 
It relies on the strong generations of 
singular support quotients, which itself is 
of independent interest and is proved in Subsection~\ref{subsec:ssuport:gen}. 

\subsection{Strong generation of quasi-BPS categories}
In this subsection, we prove the strong generation of the quasi-BPS 
category $\mathbb{T}_S(v)_{w}$, 
see Subsection~\ref{notation} for the terminology of strong generation. 
The strategy is to show that $\mathbb{T}_S(v)_{w}$ is admissible in a singular support quotient category constructed from Joyce-Song pairs on the local Calabi-Tau threefold $X:=S\times \mathbb{C}$, which has a strong generator by Theorem \ref{thm:regular2}.

Let $S$ be a smooth projective K3 surface, let $H$ be an ample divisor on $S$, and set 
 $\mathcal{O}(n)=\mathcal{O}_S(nH)$. 
For $v \in N(S)$, let 
$\mathfrak{M}=\mathfrak{M}_S^H(v)$ be the derived 
moduli stack of $H$-Gieseker semistable sheaves $F$ 
on $S$ with numerical class $v$. We take $H$ generic 
with respect to $v$.
Let $n \gg 0$ be such that 
$H^i(F(n))=0$ for all $i>0$ and all $H$-Gieseker semistable sheaves $F$
with numerical class $v$.
Let $\mathbb{F} \in D^b(S \times \mathfrak{M})$
be the universal sheaf, and consider the 
following derived stack 
\begin{align*}
    \mathfrak{M}^{\dag}:=
    \Spec_{\mathfrak{M}} \mathrm{Sym}
    (p_{\mathfrak{M}\ast}(\mathbb{F}\boxtimes \mathcal{O}(n))^{\vee}), 
\end{align*}
where $p_{\mathfrak{M}} \colon S \times \mathfrak{M} \to \mathfrak{M}$ is the projection. 
The stack $\mathfrak{M}^{\dag}$ 
is the derived moduli stack of pairs $(F, s)$, 
where $F$ is an $H$-Gieseker semistable sheaf on $S$ with 
numerical class $v$ and $s \in H^0(F(n))$. 

We consider its $(-1)$-shifted cotangent space
\begin{align*}
\Omega_{\mathfrak{M}^{\dag}}[-1]=\Spec_{\mathfrak{M}^{\dag}}
\mathrm{Sym}(\mathbb{T}_{\mathfrak{M}^{\dag}}[1]). 
\end{align*}
Since the projection 
$\mathfrak{M}^{\dag} \to \mathfrak{M}$
is smooth, we have the isomorphism, 
see~\cite[Lemma~3.1.2]{T}: 
\begin{align*}
(\Omega_{\mathfrak{M}}[-1]\times_{\mathfrak{M}} \mathfrak{M}^{\dag})^{\rm{cl}}
\stackrel{\cong}{\to} \Omega_{\mathfrak{M}^{\dag}}[-1]^{\rm{cl}}. 
\end{align*}
Therefore, $\Omega_{\mathfrak{M}^{\dag}}[-1]$ is 
the derived moduli 
stack of pairs $(E, s)$, where 
$E$ is a compactly supported coherent 
sheaf on 
the local K3 surface 
\begin{align*}
X:=\mathrm{Tot}(\omega_S)=S \times \mathbb{C}
\stackrel{r}{\to} S
\end{align*}
such that $r_{\ast}E$ has numerical class $v$, 
and $s \in H^0(E(n))$. 
Here the pull-back of $\mathcal{O}(n)$ on $S$ to $X$ 
is also denoted by $\mathcal{O}(n)$. 
We recall the definition of Joyce-Song (JS) stable 
pairs on $X$: 
\begin{defn}(\cite[Definition~5.20]{JS})
A pair $(E, s)$ on $X=S \times \mathbb{C}$ is 
JS-stable if $E$ is a compactly supported $H$-Gieseker
semistable sheaf on $X$ and $s \in H^0(E(n))$ is a section
such that there is no non-trivial 
exact sequence of framed sheaves 
\begin{align}\label{ex:framed}
0 \to (\mathcal{O}_X \to E'(n)) \to 
(\mathcal{O}_X \stackrel{s}{\to} E(n))
\to (0 \to E''(n)) \to 0,
\end{align}
where $E'$, $E''$ are $H$-Gieseker semistable 
sheaves with the same reduced Hilbert polynomials. 
\end{defn}
We denote 
by \begin{align*}
\Omega_{\mathfrak{M}^{\dag}}^{\rm{JS}}[-1]
\subset \Omega_{\mathfrak{M}^{\dag}}[-1]
    \end{align*}
    the open substack 
    consisting of JS-stable pairs, 
    and we denote by $\mathscr{Z}^{\rm{JS}}$ its complement.
    It is well-known that $\Omega_{\mathfrak{M}^{\dag}}^{\rm{JS}}[-1]^{\rm{cl}}$ is a quasi-projective scheme, 
    which easily follows from~\cite[Theorem~5.22]{JS}
    by taking a compactification of $X$. 
We set
\begin{align*}
\ell :=\det p_{\mathfrak{M}\ast}(\mathbb{F}\boxtimes \mathcal{O}(n)) \in \mathrm{Pic}(\mathfrak{M}).
\end{align*}
Its pull-back to $\Omega_{\mathfrak{M}^{\dag}}[-1]$ 
is also denoted by $\ell$. 
We denote by $\Omega^{\ell\text{-ss}}_{\mathfrak{M}^{\dag}}[-1]$ the stack of $\ell$-semistable points in $\Omega_{\mathfrak{M}^{\dag}}[-1]^{\mathrm{cl}}$.
\begin{lemma}\label{lem:JS}
We have $\Omega^{\rm{JS}}_{\mathfrak{M}^{\dag}}[-1]=
\Omega^{\ell\mathrm{-ss}}_{\mathfrak{M}^{\dag}}[-1]$. 
    \end{lemma}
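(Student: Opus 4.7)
The plan is to verify the lemma via the Hilbert--Mumford criterion for $\ell$-semistability on $\Omega_{\mathfrak{M}^{\dag}}[-1]^{\rm{cl}}$. The key point is that both $\Omega^{\rm{JS}}_{\mathfrak{M}^{\dag}}[-1]$ and $\Omega^{\ell\text{-ss}}_{\mathfrak{M}^{\dag}}[-1]$ are characterized by the non-existence of a certain type of filtration of the pair $(E,s)$, and I would show these filtrations match.

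First, I would interpret Θ-points of $\Omega_{\mathfrak{M}^{\dag}}[-1]^{\rm{cl}}$ in sheaf-theoretic terms. A point of $\Omega_{\mathfrak{M}^{\dag}}[-1]^{\rm{cl}}$ is a pair $(E,s)$ with $E \in \mathrm{Coh}_c(X)$ such that $r_*E$ is an $H$-Gieseker semistable sheaf with numerical class $v$, and $s \in H^0(E(n))$. A morphism $f \colon \Theta=[\mathbb{A}^1/\mathbb{G}_m] \to \Omega_{\mathfrak{M}^{\dag}}[-1]$ with $f(1)=(E,s)$ corresponds, via the Rees construction, to an increasing $\mathbb{Z}$-filtration $E^\bullet$ of $E$ by subsheaves of $X$ together with the compatibility that the section $s$ lies in $H^0(E^0(n))$ (the appropriate weight piece). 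Two-step filtrations $0 \subsetneq E^0 = E' \subsetneq E^1 = E$ with $s \in H^0(E'(n))$ are precisely the filtrations appearing in the JS-destabilizing exact sequence \eqref{ex:framed}.

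Next, I would compute the $\lambda$-weight of $f^*\ell$ at the fiber over $0 \in \Theta$. Since $\ell=\det p_{\mathfrak{M}*}(\mathbb{F} \boxtimes \mathcal{O}(n))$ has fiber $\det H^0(r_*E(n))$ (using $H^i(F(n))=0$ for $i>0$) and the limit sheaf at $t=0$ is the associated graded $\mathrm{gr}^\bullet E^\bullet$, the weight comes out to $\sum_i i \cdot h^0(\mathrm{gr}^i E^\bullet(n))$. For a two-step filtration this is a linear combination of $h^0(r_*E'(n))$ and $h^0(r_*E''(n))$ that, after subtracting the appropriate multiple of $\chi(r_*E(n))$ coming from the central $\mathbb{G}_m$-scalars (which act trivially on the quotient by the center), is controlled precisely by the difference of reduced Hilbert polynomials of the subsheaf and quotient. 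Because $r_*E$ is assumed Gieseker semistable, the only way to obtain strictly negative weight with a valid limit of the section is for $r_*E'$ and $r_*E''$ to both be Gieseker-semistable with the same reduced Hilbert polynomial as $r_*E$, with $s \in H^0(r_*E'(n))$, recovering exactly the JS-destabilizing data. Conversely, any such JS-destabilization gives, via Rees, a Θ-filtration of $(E,s)$ realizing the obstruction to $\ell$-semistability.

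I expect the main obstacle to be keeping track of the sign/weight conventions in the Rees construction for the framed pair, in particular verifying that the constraint $s \in H^0(E^0(n))$ is the one coming from the Θ-data and that the central scalar $\mathbb{G}_m \subset \mathrm{Aut}(E)$ (which acts with weight $1$ on $s$) does not produce spurious negative-weight filtrations. Once the conventions are fixed, the remainder of the argument reduces to the standard Gieseker-type Hilbert-polynomial comparison for pairs.
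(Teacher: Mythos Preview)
Your approach is genuinely different from the paper's. The paper does not run a global Hilbert--Mumford argument on $\Omega_{\mathfrak{M}^{\dag}}[-1]^{\rm{cl}}$; instead it checks the equality fibre-by-fibre over the good moduli space $M$. For a closed point $y\in M$ corresponding to a polystable sheaf $\bigoplus_i V^{(i)}\otimes F^{(i)}$, the fibre $\gamma^{-1}(y)$ is identified with nilpotent representations of a \emph{framed} Ext-quiver $Q_y^{\dag}$ with dimension vector $(1,\bm d)$, the restriction $\ell_y$ becomes the determinant character $\prod_i(\det g_i)^{c^{(i)}}$ with $c^{(i)}=h^0(F^{(i)}(n))>0$, and the equality $\mathrm{JS}=\ell\text{-ss}$ reduces to the standard framed-quiver fact (cited from \cite{T}) that $\ell_y$-semistable representations are exactly those generated by the framing arrows. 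This local route avoids any direct analysis of $\Theta$-points of the global $(-1)$-shifted cotangent.

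Your global route should also work, but the weight analysis as written is not quite right. Two points: (i) once you are inside the moduli of semistable sheaves with fixed class $v$, every $\Theta$-filtration \emph{automatically} has associated graded pieces that are Gieseker semistable with the same reduced Hilbert polynomial---this is what it means for the limit to lie in $\mathcal{M}$---so ``controlled by the difference of reduced Hilbert polynomials'' is not the discriminant here; (ii) there is no quotient by the central $\mathbb{G}_m$ to normalize against, since on $\mathfrak{M}^{\dag}$ the scalars scale $s$ and do not act trivially. The correct computation is simpler than you suggest: for a two-step filtration $0\subset E'\subset E$ with $s\in H^0(E'(n))$, assigning weights $(0,-1)$ to $(E',E'')$ gives a $\Theta$-point (the framing lands in weight $\geq 0$) with $\ell$-weight $-h^0(r_*E''(n))<0$; conversely, any destabilizing $\Theta$-point produces a proper sub-pair containing $s$. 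Once you straighten this out, your argument is fine. The paper's local-quiver argument has the advantage of directly plugging into the window-category machinery used immediately afterwards; your global argument is more self-contained but would need the same care with $\Theta$-points of the $(-1)$-shifted cotangent that you flag as an obstacle.
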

\begin{proof}
Let $\mathfrak{M}^{\rm{cl}} \to M$ be a 
good moduli space. 
    It is enough to prove the identity on 
    each fiber at a closed point $y \in M$
    for the composition of the projections
    \begin{align}\label{compose2}
    \gamma \colon 
    \Omega_{\mathfrak{M}^{\dag}}[-1]^{\rm{cl}} \to 
    \mathfrak{M}^{\dag, \rm{cl}} \to \mathfrak{M}^{\rm{cl}} \to M.         
    \end{align}
    A point $y$ corresponds to a polystable sheaf 
    $\bigoplus_{i=1}^m V^{(i)} \otimes F^{(i)}$. Let 
    $(Q^{\circ, d}_y, \mathscr{I}_y)$ be the 
    Ext-quiver of $(F^{(1)}, \ldots, F^{(m)})$
    with relation $\mathscr{I}_y$. The quiver $Q^{\circ, d}_y$ is the double of some quiver $Q_y^{\circ}$, see Remark~\ref{rmk:Equiver}. 
   Let $(Q_y, W)$
    be the tripled quiver with potential of $Q_y^{\circ}$, 
    see Subsection~\ref{subsec:triple}. 
    Let $c^{(i)}:=h^0(F^{(i)}(n))>0$
    and let $Q_y^{\dag}$ be the 
    quiver obtained by adding a vertex $\{0\}$ to $Q_y$ 
    and
    $c^{(i)}$-arrows from $0$ to $i$ for $1\leq i\leq m$. 
    Then a fiber of (\ref{compose2})
    at $y$ corresponds to nilpotent $Q_y^{\dag}$-representations with dimension vector $(1, \bm{d})$
    where $\bm{d}=(\dim V^{(i)})_{i=1}^m$ and $1$ is 
    the dimension at the vertex $\{0\}$:
    \begin{align}\label{isom:gamma}
\gamma^{-1}(y) \cong R^{\rm{nil}}_{Q_y^{\dag}}(1, \bm{d})/G(\bm{d}). 
    \end{align}
    Also the line bundle $\ell$
    restricted to $\gamma^{-1}(y)$ corresponds to the character
    \begin{align*}    
     \ell_y \colon G(\bm{d})=\prod_{i=1}^m GL(V^{(i)}) \to \mathbb{C}^{\ast}, \ (g_i)_{i=1}^m \mapsto 
    \prod_{i=1}^m (\det g_i)^{c^{(i)}}.
    \end{align*}
    By~\cite[Lemma~5.1.9, 5.1.19]{T}, the $\ell_y$-semistable 
    $Q_y^{\dag}$-representations 
    are those 
    generated by the images from the arrows 
    $0 \to i$ with $1\leq i \leq m$. 
   The above $\ell_y$-semistable locus in the right hand side of (\ref{isom:gamma})
   corresponds to 
    pairs $(E, s)$ on $X$ in $\gamma^{-1}(y)$ such that 
$r_{\ast}E$ is S-equivalent to 
$\bigoplus_{i=1}^m V^{(i)} \otimes F^{(i)}$
and there is no exact sequence of
the form (\ref{ex:framed}), i.e. it is a JS pair. Therefore we obtain the desired identity on $\gamma^{-1}(y)$. 
\end{proof}

 We set 
 \begin{align*}
 b:=\ch_2(p_{\mathfrak{M}\ast}(\mathbb{F} \boxtimes \mathcal{O}(n))) \in H^4(\mathfrak{M}, \mathbb{Q}).
 \end{align*}
Its pull-back to $\Omega_{\mathfrak{M}^{\dag}}[-1]$ is 
also denoted by $b$. 
Consider the $\Theta$-stratification with 
respect to $(\ell, b)$, see~\cite[Theorem~4.1.3]{halpK32}:
\begin{align*}
    \Omega_{\mathfrak{M}^{\dag}}[-1]=
    \mathscr{S}_1 \sqcup \cdots \sqcup \mathscr{S}_N
    \sqcup  \Omega^{\ell\text{-ss}}_{\mathfrak{M}^{\dag}}[-1].
\end{align*}
By Theorem~\ref{thm:window:M}, 
for each choice of $m_\bullet=(m_i)_{i=1}^N \in \mathbb{R}^N$, 
there is a subcategory $\mathbb{W}(\mathfrak{M}^{\dag})_{m_{\bullet}}^\ell
\subset D^b(\mathfrak{M}^{\dag})$ such that the composition 
\begin{align}\label{equiv:WDT}
\Phi \colon 
    \mathbb{W}(\mathfrak{M}^{\dag})_{m_{\bullet}}^\ell 
    \subset  D^b(\mathfrak{M}^{\dag}) \twoheadrightarrow D^b(\mathfrak{M}^{\dag})/\mathcal{C}_{\mathscr{Z}^{\rm{JS}}}
\end{align}
is an equivalence. 
Let $\eta \colon \mathfrak{M}^{\dag} \to \mathfrak{M}$
be the projection. 
We have the following lemma: 
\begin{lemma}\label{lem:eta}
Let $\delta\in\mathrm{Pic}(\mathfrak{M}^H_S(v))_\mathbb{R}$.
There exists a choice $m_\bullet$ such that the 
functor $\eta^{\ast} \colon D^b(\mathfrak{M}) \to 
D^b(\mathfrak{M}^{\dag})$ restricts to a
functor 
$\eta^{\ast} \colon \mathbb{T}_S^H(v)_\delta
    \to \mathbb{W}(\mathfrak{M}^{\dag})_{m_{\bullet}}^\ell$. 
\end{lemma}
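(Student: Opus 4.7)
The plan is to verify the containment fiberwise along the good moduli space map $\mathcal{M}_S^H(v)\to M:=M_S^H(v)$, combining Remark~\ref{rmk:qbps} (local characterization of $\mathbb{T}_S^H(v)_\delta$) with the local description of $\mathbb{W}(\mathfrak{M}^\dag)^\ell_{m_\bullet}$ given by (\ref{PhiEy}). Since $\eta$ is smooth and the composition $\mathfrak{M}^\dag\to\mathfrak{M}\to M$ has formal fibers $\widehat{\mathfrak{M}}_y^\dag$ fitting in a Cartesian square with $\widehat{\mathfrak{M}}_y$, it suffices to exhibit a uniform choice of $m_\bullet$ such that, for every closed point $y\in M$, the pullback $\eta^*\mathcal{E}|_{\widehat{\mathfrak{M}}_y^\dag}$ lies in the corresponding formal window whenever $\mathcal{E}|_{\widehat{\mathfrak{M}}_y}$ lies in $\mathbb{T}^H_{S,y}(v)_{\delta_y}$.

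First I would set up the local picture at a point $y$ corresponding to a polystable $F=\bigoplus_{i=1}^m V^{(i)}\otimes F^{(i)}$ with $G_y=\prod_i GL(V^{(i)})$. By the proof of Lemma~\ref{lem:JS}, the formal fiber $\widehat{\mathfrak{M}}_y^\dag$ is (up to a smooth factor coming from sections that do not destabilize) modeled on the stack of representations of the framed quiver $Q_y^\dag$ of dimension vector $(1,\bm{d})$, with $\ell$-semistability equal to JS-stability. The destabilizing cocharacters $\lambda_i$ for the Kempf--Ness stratification of this unstable locus can be chosen to lie in the maximal torus $T(\bm{d})\subset G_y$ (acting trivially on the framing vertex $\{0\}$), so that restriction of $\lambda_i$-weights commutes with the projection $\eta$. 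Consequently, the centers $\widehat{\mathscr{Z}}_{i,y}^\dag$ on the $\mathfrak{M}^\dag$-side pull back from (products of) centers on the $\mathfrak{M}$-side, and the $\lambda_i$-weight spectrum of $\eta^*\mathcal{E}$ on $\widehat{\mathscr{Z}}_{i,y}^\dag$ equals the $\lambda_i$-weight spectrum of $\mathcal{E}$ on the corresponding center in $\widehat{\mathfrak{M}}_y$.

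Second, I would compare widths. By Lemma~\ref{lem:anotherT} and Remark~\ref{rmk:anotherT}, the intrinsic window condition on $\mathcal{E}\in\mathbb{T}^H_{S,y}(v)_{\delta_y}$ forces the $T(\bm{d})$-weights of its generators to lie in the polytope $\mathbf{W}_p(\bm{d})_w+\delta_y$, whose $\lambda_i$-projection has length $n_{\lambda_i,p}$, coinciding with the Ext-quiver width. On the other hand, the normal bundle $N_{\mathscr{S}_i/\Omega_{\mathfrak{M}^\dag}[-1]}|_{\mathscr{Z}_i}$ has $\lambda_i$-positive part coming from both the Ext-quiver summand \textit{and} the framing arrows $0\to k$, whose weights pair nonnegatively with $\lambda_i$ (this is precisely what makes $\lambda_i$ destabilize in the first place). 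Hence the width $\eta_i$ of the KN window for $\mathfrak{M}^\dag$ dominates the projection of $\mathbf{W}_p(\bm{d})_w$. Setting
\[m_i := \min_{\chi\in\mathbf{W}_p(\bm{d})_w+\delta_y}\langle\lambda_i,\chi\rangle - \langle\lambda_i,\det\mathcal{H}^1(\mathbb{T}_{\mathfrak{M}^\dag}|_y)^{\lambda_i>0}\rangle,\]
the condition (\ref{PhiEy}) is satisfied for $\eta^*\mathcal{E}$ at $y$. Uniformity of the choice $m_\bullet$ across $y\in M$ follows because the types of formal models $(G_y,Q_y^\dag,\bm{d})$ range over only finitely many isomorphism classes on the (quasi-compact) image of each stratum.

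The main obstacle is this last comparison: keeping track of the shift between intrinsic windows (defined via the polytope $\mathbf{W}_p$ that controls $j_*\mathcal{E}$ on $\mathscr{Y}(\bm{d})$) and KN windows (defined via weights of $\mathcal{E}|_{\widehat{\mathscr{Z}}_i^\dag}$), after accounting for the discrepancy term in (\ref{PhiEy}). Here we use crucially that the framing vertex carries $\lambda_i$-weight zero, so the additional normal directions contributed by the pair stack only \emph{enlarge} the KN window relative to the intrinsic one, never restricting it, which is what allows a common $m_\bullet$ to work.
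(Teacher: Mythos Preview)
Your approach is essentially the paper's: reduce to formal fibers, compare the intrinsic-window weight bound $[-n_i/2,n_i/2]+\langle\lambda_i,\delta_y\rangle$ for $\mathbb{T}_S^H(v)_\delta$ with the Kempf--Ness window width $n_i^\dag$ for $\mathfrak{M}^\dag$, and observe that the extra framing directions make $n_i^\dag>n_i$ strictly. The core geometric point is right.

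There is, however, a genuine gap in your uniformity step. Your formula for $m_i$ is written in terms of $y$-dependent data, and you then argue that finitely many isomorphism types of $(G_y,Q_y^\dag,\bm d)$ force a common choice. This does not close the argument: $m_\bullet$ is a single tuple attached to the global $\Theta$-strata $\mathscr{S}_1,\dots,\mathscr{S}_N$, while the local condition~(\ref{PhiEy}) involves the shifted value $m_i'=m_i-\langle\lambda_i,\det\mathcal H^1(\mathbb T_{\mathfrak{M}^\dag}|_y)^{\lambda_i>0}\rangle$, and this shift varies with $y$ along the same stratum. Finiteness of local types does not by itself guarantee that a single $m_i$ produces, at every $y$, an interval containing $[-n_i/2,n_i/2]+\langle\lambda_i,\delta_y\rangle$. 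The paper avoids this by invoking \cite[Proposition~6.1]{Totheta}, which furnishes a specific choice of $m_\bullet$ such that the membership in $\mathbb{W}(\mathfrak{M}^\dag)^\ell_{m_\bullet}$ is characterized, at every $y$, by the \emph{symmetric} interval $[-\tfrac12 n_i^\dag,\tfrac12 n_i^\dag)+\langle\lambda_i,\delta_y\rangle$. With that in hand the containment is immediate from $n_i<n_i^\dag$. You should either cite this result or reproduce its argument; the finiteness heuristic is not a substitute.

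One minor sign issue: the arrows $0\to k$ live in $V^{(k)}$, and (as in \cite[Lemma~5.1.9]{T}) the destabilizing cocharacters $\lambda_i$ act with \emph{non-positive} weights on each $V^{(j)}$, not nonnegative. It is the dual $(V^{(j)})^\vee$ that contributes strictly positive $\lambda_i$-weights to the normal bundle, which is what yields $n_i^\dag=n_i+\sum_j c^{(j)}\langle\lambda_i,\det((V^{(j)\vee})^{\lambda_i>0})\rangle>n_i$.
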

\begin{proof}
    We use the notation in the proof of Lemma~\ref{lem:JS}. 
    For $y\in M$, 
    let 
    $\mathscr{X}_y(\bm{d})$ be the 
    moduli stack of $Q_y$-representations 
    with dimension vector $\bm{d}$
and let $\mathscr{X}_y^{\dag}(\bm{d})$ be the moduli 
stack of $Q_y^{\dag}$-representations 
with dimension vector $(1, \bm{d})$. 
Let $\widehat{\mathscr{X}}_y^{\dag}(\bm{d})$ be 
the formal fiber of the composition 
\begin{align*}
    \mathscr{X}^{\dag}_y(\bm{d}) \to \mathscr{X}_y(\bm{d})
    \to X_y(\bm{d})
\end{align*}
at the origin, where the last map is the good moduli 
space morphism. 
     Let
    \begin{align*}
        \widehat{\mathscr{X}}^{\dag}_y(\bm{d})=
        \widehat{\mathscr{S}}_1 \sqcup \cdots 
        \sqcup \widehat{\mathscr{S}}_N \sqcup 
      \widehat{\mathscr{X}}^{\dag}_y(\bm{d})^{\ell_y\text{-ss}}
    \end{align*}
    be the Kempf-Ness stratification with respect
    to $(\ell_y, b_y)$. For $1\leq i\leq N$, consider the center $\widehat{\mathscr{Z}}_i$ of $\widehat{\mathscr{S}}_i$ and its corresponding
    one parameter subgroup $\lambda_i$ for 
    the maximal torus of $G(\bm{d})$.
    Let $\widehat{\mathfrak{M}}_y^{\dag}$, 
    $\widehat{\mathfrak{M}}_y$ be the formal 
    fibers along $\mathfrak{M}^{\dag} \to M$, 
    $\mathfrak{M}\to M$ at $y$ respectively. 
    We have the commutative diagram 
    \begin{align}\label{com:koszul}
       \xymatrix{
D^b(\widehat{\mathfrak{M}}_y) \ar[r]^-{\Theta_y}_-{\sim} \ar[d]_-{\eta^{\ast}} & 
\mathrm{MF}^{\rm{gr}}(\widehat{\mathscr{X}}_y(\bm{d}), \Tr W) \ar[d]^-{\eta'^{\ast}} \\
D^b(\widehat{\mathfrak{M}}_y^{\dag}) \ar[r]^-{\Theta_y^{\dag}}_-{\sim} & 
\mathrm{MF}^{\rm{gr}}(\widehat{\mathscr{X}}_y^{\dag}(\bm{d}), \Tr W). 
    }        
    \end{align}
    Here the horizontal arrows are Koszul duality 
    equivalences in Theorem~\ref{thm:Kduality}, and the vertical arrows are 
    pull-backs along the natural projections. 
    
    By~\cite[Proposition~6.1]{Totheta}, 
    there exists a choice $m_\bullet=(m_i)_{i=1}^N \in \mathbb{R}^N$ such that 
    an object $\mathcal{E} \in D^b(\mathfrak{M}^{\dag})$
    lies in $\mathbb{W}(\mathfrak{M}^{\dag})_{m_{\bullet}}^\ell$ if and 
    only if, for any $y$ as above, 
    we have 
    \begin{align*}
        \mathrm{wt}_{\lambda_i}
        \Theta_y^{\dag}(\mathcal{E}|_{\widehat{\mathfrak{M}}_y^{\dag}})|_{\widehat{\mathscr{Z}}_i} \subset 
        \left[ -\frac{1}{2}n_i^{\dag}, \frac{1}{2}n_i^{\dag}\right)+\langle \lambda_i, \delta_y\rangle. 
    \end{align*}    
    Here, the width $n_i^{\dag}$ is defined by 
    \begin{align*}
     n_i^{\dag}:=\left\langle \lambda_i, \det\Big(\mathbb{L}^{\lambda_i>0}_{\mathscr{X}_y^{\dag}(\bm{d})}\Big|_{0}\big)\right\rangle 
     =n_i+\sum_{j=1}^m
     c^{(j)} \left\langle \lambda_i, \det\big((V_j^{\vee})^{\lambda_i>0}\big)
     \right\rangle
    \end{align*}
    and
$n_i:=\big\langle \lambda_i, \det\big(\mathbb{L}^{\lambda_i>0}_{\mathscr{X}_y(\bm{d})}|_{0}\big)\big\rangle$. 
On the other hand, by the definition of
$\mathbb{T}_S^H(v)_\delta$, for an object 
$A \in \mathbb{T}_S^H(v)_\delta$,
the $\lambda_i$-weights of 
$\Theta_y(A|_{\widehat{\mathfrak{M}}_y})|_{\widehat{\mathscr{X}}_y(\bm{d})^{\lambda_i}}$
lie in $[-n_i/2, n_i/2]+\langle \lambda_i, \delta_y\rangle$
for all $1\leq i\leq N$. 
As in~\cite[Lemma~5.1.9]{T}, 
each $\lambda_i$ has only non-positive
weights in each $V^{(j)}$ for $1\leq j\leq m$, hence 
we have $n_i^{\dag}>n_i$. 
From the diagram (\ref{com:koszul}), 
we have 
\[\Theta_y^{\dag}((\eta^{\ast}A)|_{\widehat{\mathfrak{M}}_y^{\dag}})
\cong \eta'^{\ast}\Theta_y(A|_{\mathfrak{M}_y}),\]
hence its restriction to $\widehat{\mathscr{Z}}_i$
has $\lambda_i$-weights
in $[-n_i^{\dag}/2, n_i^{\dag}/2)+\langle \lambda_i, \delta_y\rangle$. 
Therefore we have 
$\eta^{\ast}A \in \mathbb{W}(\mathfrak{M}^{\dag})_{m_{\bullet}}^\ell$. 
\end{proof}

We prove the following theorem, using the strong generation
of singular support quotients in Theorem~\ref{thm:regular2} 
which will be proved in Subsection~\ref{subsec:ssuport:gen}:
\begin{thm}\label{thm:regular}
The quasi-BPS category 
$\mathbb{T}_S(v)_w$
is regular. 
\end{thm}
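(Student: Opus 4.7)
The plan is to realize $\mathbb{T}_S(v)_w$ as a categorical retract of a dg-category whose regularity is known a priori, namely the singular support quotient appearing in \eqref{equiv:WDT}. First, by Corollary~\ref{cor:lv}, one may replace $\mathbb{T}_S(v)_w$ by $\mathbb{T}_S^H(v')_w$ for some $v'$ and a generic Gieseker polarization $H$, making the Joyce-Song pair formalism developed above directly applicable.

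Consider the smooth affine bundle $\eta\colon \mathfrak{M}^{\dag}\to\mathfrak{M}$ and its zero section $s\colon \mathfrak{M}\to \mathfrak{M}^{\dag}$, which satisfy $\eta\circ s=\id_{\mathfrak{M}}$, and consequently $s^*\circ \eta^*\simeq \id$ on $D^b(\mathfrak{M})$. By Lemma~\ref{lem:eta}, one may choose $m_{\bullet}$ so that $\eta^*$ carries $\mathbb{T}_S^H(v')_w$ into $\mathbb{W}(\mathfrak{M}^{\dag})_{m_{\bullet}}^{\ell}$; post-composing with the window equivalence $\Phi$ of \eqref{equiv:WDT} produces an exact functor
\[
\Phi\circ\eta^*|_{\mathbb{T}_S^H(v')_w}\colon \mathbb{T}_S^H(v')_w\longrightarrow D^b(\mathfrak{M}^{\dag})/\mathcal{C}_{\mathscr{Z}^{\rm{JS}}}.
\]
By Lemma~\ref{lem:JS}, the complementary open locus $\Omega^{\rm{JS}}_{\mathfrak{M}^{\dag}}[-1]^{\rm{cl}}$ is a quasi-projective scheme of finite type, which is precisely the setting required to invoke Theorem~\ref{thm:regular2}; the latter provides a strong generator $\overline{G}$ of the singular support quotient $D^b(\mathfrak{M}^{\dag})/\mathcal{C}_{\mathscr{Z}^{\rm{JS}}}$.

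To transfer regularity back, I would define a retraction $\Psi := \pi\circ s^* \circ \Phi^{-1}$, where $\Phi^{-1}$ is the quasi-inverse of the window equivalence and $\pi\colon D^b(\mathfrak{M})\twoheadrightarrow \mathbb{T}_S^H(v')_w$ is the semiorthogonal projection onto the admissible summand $\mathbb{T}_S^H(v')_w$, whose existence follows from the Gieseker analogue of Theorem~\ref{thm:sodK3} (obtained by transporting structure through Corollary~\ref{cor:lv}). On $\mathbb{T}_S^H(v')_w$ one then has
\[
\Psi\circ \Phi\circ\eta^* \simeq \pi\circ s^*\circ \eta^* \simeq \pi|_{\mathbb{T}_S^H(v')_w}\simeq \id.
\]
Since $\pi$, $s^*$, and $\Phi^{-1}$ are all exact, $\Psi$ is exact, and applying it to any presentation $D^b(\mathfrak{M}^{\dag})/\mathcal{C}_{\mathscr{Z}^{\rm{JS}}}=\langle \overline{G}\rangle^{\star N}$ yields $\mathbb{T}_S^H(v')_w=\langle \Psi(\overline{G})\rangle^{\star N}$, which is the desired strong generation.

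The main obstacle is proving Theorem~\ref{thm:regular2} itself, which is the technical heart of the argument: one must show that whenever the complement of a conical closed substack in $\Omega_{\mathfrak{M}^{\dag}}[-1]^{\rm{cl}}$ is a finite-type scheme, the associated singular support quotient admits a strong generator. A secondary concern is checking the admissibility of $\mathbb{T}_S^H(v')_w$ inside $D^b(\mathfrak{M}_S^H(v'))$ (i.e.\ the existence of the projection $\pi$), which should follow from a Gieseker version of Theorem~\ref{thm:sodK3} obtained either by direct semiorthogonal decomposition or by transport through the wall-crossing equivalences of Theorem~\ref{thm:walleq} and Corollary~\ref{cor:lv}.
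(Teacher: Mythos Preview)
Your proposal is correct and follows essentially the same strategy as the paper: reduce to Gieseker stability via Corollary~\ref{cor:lv}, embed $\mathbb{T}_S^H(v')_w$ into the singular support quotient $D^b(\mathfrak{M}^{\dag})/\mathcal{C}_{\mathscr{Z}^{\rm{JS}}}$ via $\eta^*$ and the window equivalence, invoke Theorem~\ref{thm:regular2}, and transfer the strong generator back via a retraction built from $\Phi^{-1}$ and the projection $\pi$ coming from Theorem~\ref{thm:sodK3}. The only minor difference is that the paper uses the weight-$w$ component of $\eta_*$ (exploiting the semiorthogonal decomposition~\eqref{sod:wt}) as the left inverse to $\eta^*$, whereas you use the zero-section pullback $s^*$; both are valid left inverses and the argument goes through identically.
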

\begin{proof}
 By Corollary~\ref{cor:lv}, it is enough 
 to show that $\mathbb{T}_S^H(v)_{w} \subset 
 D^b(\mathfrak{M}_S^H(v))$ is regular. 
We consider the following composition 
\begin{align*}
F \colon \mathbb{T}_S^H(v)_w \stackrel{i}{\hookrightarrow} 
D^b(\mathfrak{M})_w \stackrel{\eta^{\ast}}{\to} 
D^b(\mathfrak{M}^{\dag}) \stackrel{p}{\twoheadrightarrow} 
D^b(\mathfrak{M}^{\dag})
/\mathcal{C}_{\mathscr{Z}^{\rm{JS}}}.
\end{align*}
Let $\Phi$ be window equivalence \eqref{equiv:WDT} as in Lemma \ref{lem:eta}, and let $\Phi^{-1}$ be its inverse. Let $\Psi \colon D^b(\mathfrak{M})_w \twoheadrightarrow \mathbb{T}_S^H(v)_w$ be the projection 
with respect to the semiorthogonal decomposition 
in Theorem~\ref{thm:sodK3}.
We also define the following functor 
\begin{align*}
    G \colon     D^b(\mathfrak{M}^{\dag})/\mathcal{C}_{\mathscr{Z}^{\rm{JS}}} \stackrel{\Phi^{-1}}{\to}  \mathbb{W}(\mathfrak{M}^{\dag})_{m_{\bullet}}^\ell
    \stackrel{j}{\hookrightarrow} D^b(\mathfrak{M}^{\dag})
    \stackrel{(\eta_{\ast})_w}{\twoheadrightarrow} 
    D^b(\mathfrak{M})_w \stackrel{\Psi}{\twoheadrightarrow} 
    \mathbb{T}_S^H(v)_{w}. 
\end{align*}
Here,
$(\eta_{\ast})_w(-)$ is the weight $w$-part of 
$\eta_{\ast}(-)$, which is the projection 
onto $D^b(\mathfrak{M})_w$
with respect to the semiorthogonal decomposition 
\begin{align}\label{sod:wt}
    D^b(\mathfrak{M}^{\dag})
    =\langle \ldots, D^b(\mathfrak{M})_{-1}, D^b(\mathfrak{M})_0, D^b(\mathfrak{M})_1, \ldots \rangle. 
\end{align}
Every fully-faithful functor in (\ref{sod:wt})
is given by 
the restriction of $\eta^{\ast}$ to $D^b(\mathfrak{M})_w$. 
The above semiorthogonal decomposition exists 
since $\eta \colon \mathfrak{M}^{\dag} \to \mathfrak{M}$
is an affine space bundle 
such that the cone of $\mathcal{O}_{\mathfrak{M}} \to 
\eta_{\ast}\mathcal{O}_{\mathfrak{M}^{\dag}}$
has strictly negative $\mathbb{C}^{\ast}$-weights, 
see~\cite[Amplification~3.18]{halp}. 

Then $G \circ F \cong \id$. 
Indeed, we have 
\begin{align*}
\Psi \circ (\eta_{\ast})_w \circ \Phi^{-1} \circ p \circ\eta^{\ast} \circ i \cong 
\Psi \circ (\eta_{\ast})_w \circ \eta^{\ast} \circ i
\cong \Psi \circ i \cong \id. 
\end{align*}
For the first isomorphism, the image of $\eta^{\ast} \circ i$
lies in $\mathbb{W}(\mathfrak{M}^{\dag})_{m_{\bullet}}^\ell$ by 
Lemma~\ref{lem:eta}
and then $\Phi^{-1}\circ p$ is identity on 
$\mathbb{W}(\mathfrak{M}^{\dag})_{m_{\bullet}}^\ell$ by the definition of $\Phi$. 
The second isomorphism follows
since $(\eta_{\ast})_w \circ \eta^{\ast}\cong \id$. 
The last isomorphism also holds by the definition 
of $\Psi$. By Theorem~\ref{thm:regular2}
together with the fact that $\Omega^{\rm{JS}}_{\mathfrak{M}^{\dag}}[-1]$
is a quasi-projective scheme, 
the category $D^b(\mathfrak{M}^{\dag})/\mathcal{C}_{\mathscr{Z}^{\rm{JS}}}$ is regular, so 
it is $\langle \mathcal{E} \rangle^{\star n}$ for some 
$\mathcal{E} \in D^b(\mathfrak{M}^{\dag})/\mathcal{C}_{\mathscr{Z}^{\rm{JS}}}$ and $n\geq 1$. Then
as $\mathrm{Im}(F) \subset \langle \mathcal{E} \rangle^{\star n}$ and 
$G \circ F \cong \id$, 
we conclude that $\mathbb{T}_S^H(v)_{w}=\langle 
G(\mathcal{E}) \rangle^{\star n}$, hence 
$\mathbb{T}_S^H(v)_{w}$ is regular. 
\end{proof}

By an analogous argument using window categories of the reduced stack  $\mathfrak{M}^{\dag, \mathrm{red}}$, we obtain:
\begin{thm}\label{thm:regularred}
    The reduced quasi-BPS category $\mathbb{T}_S(v)_w^{\mathrm{red}}$ is regular.
\end{thm}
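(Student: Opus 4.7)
The plan is to mirror the proof of Theorem~\ref{thm:regular}, replacing every stack along the way by its reduced counterpart and checking that the key ingredients (Lemma~\ref{lem:JS}, Lemma~\ref{lem:eta}, and the regularity of the ambient singular support quotient from Theorem~\ref{thm:regular2}) survive the reduction. By Corollary~\ref{cor:lv} it suffices to treat the Gieseker case, so write $\mathfrak{M}=\mathfrak{M}_S^H(v)$ and $\mathfrak{M}^{\mathrm{red}}=\mathfrak{M}_S^H(v)^{\mathrm{red}}$, and set
\begin{align*}
\mathfrak{M}^{\dag,\mathrm{red}} := \mathfrak{M}^{\mathrm{red}} \times_{\mathfrak{M}} \mathfrak{M}^{\dag},
\end{align*}
viewed as the reduced derived moduli stack of pairs $(F,s)$. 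Since $\mathfrak{M}^\dag\to\mathfrak{M}$ is smooth, $\mathfrak{M}^{\dag,\mathrm{red}}\to\mathfrak{M}^{\mathrm{red}}$ is smooth and an affine space bundle, hence induces a weight semiorthogonal decomposition analogous to \eqref{sod:wt} for $D^b(\mathfrak{M}^{\dag,\mathrm{red}})$, and the classical truncation of $\Omega_{\mathfrak{M}^{\dag,\mathrm{red}}}[-1]$ is again the moduli stack of pairs $(E,s)$ on $X=S\times\mathbb{C}$ but now with the traceless part of the obstruction theory removed, i.e.\ precisely the appropriate reduced classical stack.

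Next I would prove the reduced analogue of Lemma~\ref{lem:JS}: pulling back $\ell$ to $\Omega_{\mathfrak{M}^{\dag,\mathrm{red}}}[-1]$, the $\ell$-semistable locus coincides with the open substack $\Omega^{\mathrm{JS}}_{\mathfrak{M}^{\dag,\mathrm{red}}}[-1]$ of JS-stable pairs. The argument is identical to that of Lemma~\ref{lem:JS}: both sides are described fiberwise over the good moduli space $M$ in terms of the Ext-quiver $(Q_y^{\circ,d},\mathscr{I}_y)$ enlarged by the framing vertex, and removing the trace only amounts to passing from $GL(\bm d)$-actions to their traceless cousins, which preserves the characterization of $\ell_y$-semistable framed representations. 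Write $\mathscr{Z}^{\mathrm{JS,red}}$ for the complement. Theorem~\ref{thm:window:M} then produces, for each real window datum $m_{\bullet}$, a subcategory $\mathbb{W}(\mathfrak{M}^{\dag,\mathrm{red}})^\ell_{m_\bullet}\subset D^b(\mathfrak{M}^{\dag,\mathrm{red}})$ whose inclusion followed by projection gives an equivalence onto $D^b(\mathfrak{M}^{\dag,\mathrm{red}})/\mathcal{C}_{\mathscr{Z}^{\mathrm{JS,red}}}$.

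I would then prove the reduced analogue of Lemma~\ref{lem:eta}, namely that for a suitable $m_\bullet$ the pullback along $\eta^{\mathrm{red}}\colon \mathfrak{M}^{\dag,\mathrm{red}}\to\mathfrak{M}^{\mathrm{red}}$ sends $\mathbb{T}_S^H(v)^{\mathrm{red}}_\delta$ into $\mathbb{W}(\mathfrak{M}^{\dag,\mathrm{red}})^\ell_{m_\bullet}$. The argument proceeds by the same Koszul-duality diagram as \eqref{com:koszul}, with the Ext-quiver stack $\mathscr{X}_y(\bm d)$ replaced by its traceless analogue coming from the reduced obstruction; the width bound $n_i^{\dag,\mathrm{red}}>n_i^{\mathrm{red}}$ is checked exactly as in the proof of Lemma~\ref{lem:eta} because the framing arrows $0\to i$ contribute the same non-negative correction on the $\lambda_i$-weight spectrum, and the shifts $\langle\lambda_i,\delta_y\rangle$ are unaffected by removing the trace.

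With these reduced ingredients in place, I would assemble the same diagram of functors as in the proof of Theorem~\ref{thm:regular}:
\begin{align*}
F\colon \mathbb{T}_S^H(v)^{\mathrm{red}}_w \xhookrightarrow{i} D^b(\mathfrak{M}^{\mathrm{red}})_w \xrightarrow{\eta^{\mathrm{red},*}} D^b(\mathfrak{M}^{\dag,\mathrm{red}}) \twoheadrightarrow D^b(\mathfrak{M}^{\dag,\mathrm{red}})/\mathcal{C}_{\mathscr{Z}^{\mathrm{JS,red}}},
\end{align*}
and the retraction $G$ built from the inverse window equivalence, the weight-$w$ projection $(\eta^{\mathrm{red}}_*)_w$, and the projection $\Psi^{\mathrm{red}}\colon D^b(\mathfrak{M}^{\mathrm{red}})_w\twoheadrightarrow \mathbb{T}_S^H(v)^{\mathrm{red}}_w$ from the reduced semiorthogonal decomposition of Theorem~\ref{thm:sodK32}. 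The identity $G\circ F\simeq \id$ holds for the same three formal reasons as in the non-reduced case. Finally, Theorem~\ref{thm:regular2} applies to the reduced ambient quasi-smooth stack $\mathfrak{M}^{\dag,\mathrm{red}}$ once one notes that $\Omega^{\mathrm{JS}}_{\mathfrak{M}^{\dag,\mathrm{red}}}[-1]$ is still a quasi-projective scheme (it is a closed subscheme of its non-reduced analogue), which forces the quotient $D^b(\mathfrak{M}^{\dag,\mathrm{red}})/\mathcal{C}_{\mathscr{Z}^{\mathrm{JS,red}}}$ to be regular; combined with $G\circ F\simeq \id$, this transfers regularity to $\mathbb{T}_S(v)_w^{\mathrm{red}}$. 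The main technical obstacle I anticipate is verifying that the reduction procedure is compatible with the Koszul-duality diagram \eqref{com:koszul} at the formal-fiber level, i.e.\ that the traceless moment map $\mu_0$ and the framing variables interact cleanly so that the width estimate $n_i^{\dag,\mathrm{red}}>n_i^{\mathrm{red}}$ still holds over all relevant Kempf--Ness strata; once this bookkeeping is settled, the rest of the argument is a direct transcription of the proof of Theorem~\ref{thm:regular}.
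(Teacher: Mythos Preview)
Your proposal is correct and follows exactly the approach the paper indicates: the paper's proof is the single sentence ``By an analogous argument using window categories of the reduced stack $\mathfrak{M}^{\dag,\mathrm{red}}$, we obtain'', and you have accurately spelled out the reduced versions of Lemma~\ref{lem:JS}, Lemma~\ref{lem:eta}, the $F$/$G$ retraction argument (now invoking Theorem~\ref{thm:sodK32} for the projection $\Psi^{\mathrm{red}}$), and the appeal to Theorem~\ref{thm:regular2}. One minor imprecision: in your reduced Lemma~\ref{lem:JS} the passage to the reduced stack does not change the group $G(\bm d)$ acting but rather replaces the target $\mathfrak{gl}(d)$ of the moment map by $\mathfrak{gl}(d)_0$, though this does not affect the $\ell_y$-semistability analysis and your conclusion stands.
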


\subsection{Properness of reduced quasi-BPS categories}\label{subsec62}
Recall that we write $v=dv_0$ for $d\in\mathbb{Z}_{\geq 1}$ and for $v_0$ primitive 
with $\langle v_0, v_0 \rangle=2g-2$. 
Let $\mathfrak{M}^{\rm{red}}=\mathfrak{M}_S^{\sigma}(v)^{\rm{red}}$
for a generic $\sigma \in \mathrm{Stab}(S)$. 
We consider its $(-1)$-shifted cotangent space:
\begin{align*}
\Omega_{\mathfrak{M}^{\rm{red}}}[-1] \to \mathfrak{M}^{\rm{red}}.
\end{align*}
Its classical truncation is identified with the moduli 
stack of pairs 
\begin{align}\label{F:pairs}
    (F, \theta), \ F \in \mathcal{M}_S^{\sigma}(v), \ \theta \colon F \to F
\end{align}
such that $\mathrm{tr}(\theta)=0$, see~\cite[Lemma~3.4.1]{T} for the non-reduced 
case and the proof for the reduced case is similar. Let 
\begin{align*}
\mathcal{N}_{\rm{nil}} \subset \Omega_{\mathfrak{M}^{\rm{red}}}[-1]
\end{align*}
be the closed substack consisting of pairs (\ref{F:pairs}) such that 
$\theta$ is nilpotent. The following is the
global version of the categorical support lemma. 
\begin{thm}\label{prop:catsupp}
Let $w\in \mathbb{Z}$ be coprime with $d$ and let 
$\mathcal{E} \in \mathbb{T}_S^{\sigma}(v)_w^{\rm{red}} \subset D^b(\mathfrak{M}_S^{\sigma}(v)^{\rm{red}})$. 
Then $\mathrm{Supp}^{\rm{sg}}(\mathcal{E}) \subset \mathcal{N}_{\rm{nil}}$. 
    \end{thm}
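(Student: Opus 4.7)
The plan is to reduce the global claim to the local categorical support lemma (Lemma~\ref{cor:support}) for preprojective algebras of the $2g$-loop quiver, by working formally locally on the good moduli space $M_S^\sigma(v)$. Since singular support is compatible with smooth pullbacks (see \cite[Section~7]{AG}), and since formal fibers along $\mathcal{M}_S^\sigma(v)\to M_S^\sigma(v)$ together cover $\mathcal{M}_S^\sigma(v)$ in the appropriate sense, it suffices to show that for every closed point $y\in M_S^\sigma(v)$ the pullback $\mathcal{E}|_{\widehat{\mathfrak{M}}_S^\sigma(v)_y^{\mathrm{red}}}$ has singular support contained in the preimage of $\mathcal{N}_{\mathrm{nil}}$ in the formal fiber of the $(-1)$-shifted cotangent stack.

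Fix such a $y$ and apply Lemma~\ref{lem:py} to obtain an equivalence $\widehat{\mathfrak{M}}_S^\sigma(v)_y^{\mathrm{red}} \simeq \widehat{\mathscr{P}}(d)_p^{\mathrm{red}}$ for an appropriate point $p\in P(d)$, where $\mathscr{P}(d)^{\mathrm{red}}=\mu_0^{-1}(0)/GL(d)$ is built from the moment map of the $2g$-loop quiver. The equivalence is induced from the formality isomorphism $\mathrm{RHom}(F,F)\cong \mathrm{RHom}(R,R)$ at the polystable point \cite[Corollary~4.9]{DavPurity}, so it identifies the classical truncations of the $(-1)$-shifted cotangent stacks: under this identification, the fiber direction at the polystable object $F=\bigoplus V^{(i)}\otimes F^{(i)}$ is $\mathrm{Ext}^2(F,F)_0\cong \mathfrak{gl}(d)_0$ (via the CY2 pairing with $\mathrm{Hom}(F,F)_0$), and an endomorphism $\theta$ of $F$ is nilpotent if and only if the matching element of $\mathfrak{gl}(d)_0$ lies in $\mathfrak{gl}(d)_{\mathrm{nil}}$ (using the block decomposition induced by $F=\bigoplus V^{(i)}\otimes F^{(i)}$). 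Consequently the closed substack $\mathcal{N}_{\mathrm{nil}}$ corresponds, on the quiver side, precisely to the nilpotent locus appearing in Lemma~\ref{cor:support}.

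By the reduced analog of Remark~\ref{rmk:qbps}, the restriction $\mathcal{E}|_{\widehat{\mathfrak{M}}_S^\sigma(v)_y^{\mathrm{red}}}$ lies in the formal-local quasi-BPS category $\mathbb{T}_{S,y}^\sigma(v)_{\delta_y}^{\mathrm{red}}$, which under the equivalence of Lemma~\ref{lem:py} becomes the reduced version of $\mathbb{T}_p(d)_w$ from \eqref{qbps:that}. By the reduced analog of Corollary~\ref{cor:gen} (proved identically to the non-reduced case, via Lemmas~\ref{lem:gen2} and~\ref{lem:basechange} applied to $\mu_0^{-1}(0)/GL(d)$), this category is classically generated by the image of pullback from the global quasi-BPS category $\mathbb{T}(d)_w^{\mathrm{red}}$. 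Since singular support is closed under shifts, direct summands, and cones, it follows that the singular support of $\mathcal{E}|_{\widehat{\mathfrak{M}}_S^\sigma(v)_y^{\mathrm{red}}}$ is contained in the pullback of $\bigcup_{\mathcal{F}\in \mathbb{T}(d)_w^{\mathrm{red}}}\mathrm{Supp}^{\mathrm{sg}}(\mathcal{F})$, which by the local categorical support lemma (Lemma~\ref{cor:support}) lies in the nilpotent locus. Combining with the identification of nilpotent loci above, we conclude $\mathrm{Supp}^{\mathrm{sg}}(\mathcal{E})\subset \mathcal{N}_{\mathrm{nil}}$.

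The main obstacle is the second step: making precise the identification of $\mathcal{N}_{\mathrm{nil}}$ with the nilpotent locus on the quiver side under Lemma~\ref{lem:py}. This requires tracking carefully how the CY2 Serre pairing intertwines the commutator bracket $x\mapsto [x,x]$ governing $\kappa_0$ on the K3 side with the moment map $\mu_0$ on the preprojective side, and how the traceless condition defining the reduced structures matches on both sides. Once this correspondence is in place, the rest of the argument is mechanical; note also that the appeal to Lemma~\ref{lem:classical} (for $g\geq 2$) or to the quasi-smooth derived stack formalism (in general) is only needed to make the statement of singular support well-posed and does not affect the strategy.
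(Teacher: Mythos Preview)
Your approach is correct and matches the paper's strategy: localize to formal fibers via Lemma~\ref{lem:py}, identify the nilpotent loci on both sides, and invoke Lemma~\ref{cor:support}. The paper avoids your detour through a reduced analog of Corollary~\ref{cor:gen}: since the support condition in Lemma~\ref{cor:support} is established via local weight constraints on the Koszul-dual matrix factorization (cf.\ Remark~\ref{rmk:anotherT} and the proof in \cite{PTquiver}), it applies directly to objects of the formal-local intrinsic window category, so no generation argument is needed.
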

\begin{proof}
It is enough to prove the inclusion 
$\mathrm{Supp}^{\rm{sg}}(\mathcal{E}) \subset \mathcal{N}_{\rm{nil}}$
over any point $y \in M_S^{\sigma}(v)$. 
For simplicity, we write 
$\widehat{\mathfrak{M}}^{\rm{red}}_y=\widehat{\mathfrak{M}}_S^{\sigma}(v)_y^{\rm{red}}$.
The equivalence in Lemma~\ref{lem:py} induces the 
isomorphism of classical 
truncations of $(-1)$-shifted cotangents, 
\begin{align}\label{isom:hat}
\Omega_{\widehat{\mathfrak{M}}^{\rm{red}}_y}[-1]^{\rm{cl}} \stackrel{\cong}{\to}
\Omega_{\widehat{\mathscr{P}}(d)_p^{\rm{red}}}[-1]^{\rm{cl}}.
\end{align}
The right hand side is the critical locus 
of the function 
\begin{align*}
\Tr W \colon 
    \widehat{\mathfrak{gl}}(d)^{\oplus 2g}_p
    \times \mathfrak{gl}(d)_0 \to \mathbb{C}
\end{align*}
where $\Tr W$ is the function (\ref{TrW:X}) 
associated with the tripled quiver of 
the $g$-loop quiver, see Subsection~\ref{subsec:shifted}. 
Then the isomorphism (\ref{isom:hat}) restricts to the 
isomorphism 
\begin{align*}
\mathcal{N}_{\rm{nil}} \times_{\mathfrak{M}^{\rm{red}}}
\widehat{\mathfrak{M}}^{\rm{red}}_y \stackrel{\cong}{\to} 
\mathrm{Crit}(\Tr W) \cap (\widehat{\mathfrak{gl}}(d)^{\oplus 2g}_p
    \times \mathfrak{gl}(d)_{\rm{nil}}).
\end{align*}
Therefore 
the theorem follows from Lemma~\ref{cor:support}.     
\end{proof}

Recall that a pre-triangulated 
category $\mathcal{D}$ over $\mathbb{C}$ is 
called \textit{proper} if for any 
$\mathcal{E}_1, \mathcal{E}_2 \in \mathcal{D}$, 
the vector space
$\bigoplus_{i\in \mathbb{Z}} \Hom^{\ast}(\mathcal{E}_1, \mathcal{E}_2)$ is finite dimensional. 
We also have the following global analogue of 
Proposition~\ref{lem:bound}: 

\begin{thm}\label{thm:proper}
If $(d, w)\in\mathbb{N}\times\mathbb{Z}$ are coprime and $g\geq 2$, the 
category $\mathbb{T}_S(v)_w^{\rm{red}}$
is proper. 
\end{thm}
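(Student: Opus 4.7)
The strategy is to show that for any $\mathcal{E}_1, \mathcal{E}_2 \in \mathbb{T}:=\mathbb{T}_S(v)_w^{\mathrm{red}}$, the pushforward of the internal Hom along the good moduli space morphism
\[\pi \colon \mathcal{M}_S^{\sigma}(v) \to M_S^{\sigma}(v)\]
is an object of $D^b_{\mathrm{coh}}(M_S^{\sigma}(v))$, and then to use properness of $M_S^{\sigma}(v)$ to conclude finite dimensionality of $\bigoplus_i \Hom^i(\mathcal{E}_1, \mathcal{E}_2)$.

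First, for $g\geq 2$ and a generic stability condition $\sigma$, Lemma~\ref{lem:classical} ensures that $\mathfrak{M}_S^{\sigma}(v)^{\mathrm{red}}$ is classical, hence coincides with $\mathcal{M}_S^{\sigma}(v)$, so $\mathcal{H}om_\mathbb{T}(\mathcal{E}_1, \mathcal{E}_2)\in D_{\mathrm{qc}}(\mathcal{M}_S^{\sigma}(v))$ is well defined and $M_S^{\sigma}(v)$ is a proper algebraic space. Coherence and boundedness of $\pi_{\ast}\mathcal{H}om_\mathbb{T}(\mathcal{E}_1, \mathcal{E}_2)$ are local properties on $M_S^{\sigma}(v)$, so they can be verified after pullback to $\Spec\widehat{\mathcal{O}}_{M_S^{\sigma}(v), y}$ for every closed point $y\in M_S^{\sigma}(v)$.

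Fix such a point $y$. By Lemma~\ref{lem:py}, there is a point $p\in P(d)$ and an equivalence $\widehat{\mathfrak{M}}_S^{\sigma}(v)_y^{\mathrm{red}}\simeq \widehat{\mathscr{P}}(d)_p^{\mathrm{red}}$. Under this equivalence the line bundle $\delta_y$ corresponds to $w\tau_d$ by the computation in (\ref{deltay}), so the restriction of $\mathbb{T}$ to the formal fiber matches the formal-fiber analogue of the reduced quasi-BPS category $\mathbb{T}(d)_w^{\mathrm{red}}$, as intrinsic window subcategories transfer under equivalences of derived stacks. Moreover, Theorem~\ref{prop:catsupp} states that $\mathrm{Supp}^{\mathrm{sg}}(\mathcal{E}_i)\subset \mathcal{N}_{\mathrm{nil}}$, and under the equivalence this translates to precisely the nilpotency condition on endomorphisms entering the proof of Proposition~\ref{lem:bound}. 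Proposition~\ref{lem:bound}, extended in a straightforward way from the closed point $p=0$ to nearby $p$ using the $\mathbb{C}^*$-action on $P(d)$ contracting to the origin as in the proof of Lemma~\ref{lem:py} (the argument is purely local and only uses the support lemma), then implies that the $\widehat{\mathcal{O}}_{M_S^{\sigma}(v), y}$-module
\[\bigoplus_{i\in\mathbb{Z}}\Hom^{i}\bigl(\mathcal{E}_1|_{\widehat{\mathcal{M}}_{S,y}^{\sigma}(v)}, \mathcal{E}_2|_{\widehat{\mathcal{M}}_{S,y}^{\sigma}(v)}\bigr)\]
is finitely generated and concentrated in a bounded range of degrees. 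Equivalently, $\pi_{\ast}\mathcal{H}om_\mathbb{T}(\mathcal{E}_1, \mathcal{E}_2)\in D^b_{\mathrm{coh}}(M_S^{\sigma}(v))$.

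Since $M_S^{\sigma}(v)$ is proper, $R\Gamma$ of any object in $D^b_{\mathrm{coh}}(M_S^{\sigma}(v))$ is finite dimensional, and
\[\mathrm{R}\Hom_{\mathcal{M}_S^{\sigma}(v)}(\mathcal{E}_1, \mathcal{E}_2)\cong \mathrm{R}\Gamma\bigl(M_S^{\sigma}(v), \pi_{\ast}\mathcal{H}om_\mathbb{T}(\mathcal{E}_1, \mathcal{E}_2)\bigr)\]
is therefore finite dimensional, proving properness. The main obstacle is justifying that the local Hom-finiteness of Proposition~\ref{lem:bound}, originally phrased at the deepest stratum $0\in P(d)$, still holds at arbitrary $p\in P(d)$ on formal fibers, and that the nilpotent singular support condition pulls back cleanly under the formal-local equivalence of Lemma~\ref{lem:py}; both reductions are routine given the intrinsic nature of the window subcategory (Definition~\ref{def:intwind}) and the compatibility of singular support with the equivalences of derived stacks in question.
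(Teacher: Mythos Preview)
Your proof is correct and follows essentially the same route as the paper: reduce to showing $\pi_{\ast}\mathcal{H}om_{\mathbb{T}}(\mathcal{E}_1,\mathcal{E}_2)\in D^b(M_S^{\sigma}(v))$, check this formally locally via Lemma~\ref{lem:py} and Proposition~\ref{lem:bound}, and conclude by properness of $M_S^{\sigma}(v)$. One small clarification: your concern about ``extending Proposition~\ref{lem:bound} from $p=0$ to nearby $p$'' is unnecessary, since that proposition is already a global statement over $P(d)$ (the Hom is a finitely generated $\mathcal{O}_{P(d)}$-module), so it applies at every formal fiber by base change; likewise, the explicit appeal to Theorem~\ref{prop:catsupp} is not needed here, as the nilpotent support is already absorbed into the proof of Proposition~\ref{lem:bound}.
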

\begin{proof}
We regard $\mathbb{T}_S(v)_w^{\rm{red}}$ as a subcategory of 
$D^b(\mathfrak{M}_S^{\sigma}(v)^{\rm{red}})$ for a generic $\sigma\in\mathrm{Stab}(S)$
via $\mathbb{T}_S(v)_w^{\rm{red}}=\mathbb{T}_S^{\sigma}(v)_{w}^{\rm{red}}$. 
For $\mathcal{E}_1, \mathcal{E}_2 \in \mathbb{T}_S^{\sigma}(v)_w^{\rm{red}}$, let  
    \begin{align*}\mathcal{H} om(\mathcal{E}_1, \mathcal{E}_2) \in 
    D_{\rm{qc}}(\mathfrak{M}_S^{\sigma}(v)^{\rm{red}})
    \end{align*}
    be the internal homomorphism, see~Subsection~\ref{subsec:qsmooth}. 
    Recall that $\mathfrak{M}_S^{\sigma}(v)^{\rm{red}}=\mathcal{M}_S^{\sigma}(v)$
    by Lemma~\ref{lem:classical}. 
    Let $\pi$ be the good moduli space morphism from (\ref{gmoduli}). 
  Then we have 
    \begin{align}\label{pi:coh}
        \pi_{\ast}\mathcal{H}om(\mathcal{E}_1, \mathcal{E}_2)
        \in D^b(M_S^{\sigma}(v)). 
    \end{align}
    Indeed, the statement \eqref{pi:coh} is local on $M_S^{\sigma}(v)$,
hence it follows from Proposition~\ref{lem:bound} and Lemma~\ref{lem:py}. 
       Then the theorem holds as 
    \begin{align*}
      \mathrm{Hom}^{\ast}(\mathcal{E}_1, \mathcal{E}_2)
    =R^{\ast}\Gamma(\pi_{\ast}\mathcal{H}om(\mathcal{E}_1, \mathcal{E}_2))
    \end{align*}
    and 
    $M_S^{\sigma}(v)$ is a proper algebraic space. 
   \end{proof}

\begin{cor}\label{cor:smooth}
If $(d, w)\in\mathbb{N}\times\mathbb{Z}$ are coprime and $g\geq 2$, 
then $\mathbb{T}_S(v)_w^{\rm{red}}$ is proper and smooth. 
\end{cor}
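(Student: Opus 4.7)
The plan is to combine the two preceding results. Theorem~\ref{thm:proper} directly gives properness of $\mathbb{T}_S(v)_w^{\rm{red}}$ under the coprimality and $g\geq 2$ hypotheses, while Theorem~\ref{thm:regularred} supplies a strong generator $E \in \mathbb{T}_S(v)_w^{\rm{red}}$, i.e.\ $\mathbb{T}_S(v)_w^{\rm{red}} = \langle E \rangle^{\star n}$ for some $n \geq 1$. Thus it remains only to upgrade regularity to smoothness in the presence of properness.

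Concretely, since $E$ is a strong generator and in particular a compact generator of the ind-completion $\Ind \mathbb{T}_S(v)_w^{\rm{red}}$, Bondal--Van den Bergh's theorem yields a quasi-equivalence $\mathbb{T}_S(v)_w^{\rm{red}} \simeq \mathrm{Perf}(A)$ with $A := \mathrm{RHom}(E,E)$. Properness (Theorem~\ref{thm:proper}) forces the dg-algebra $A$ to have finite-dimensional total cohomology. The strong generation condition, transported across this Morita equivalence, translates into a filtration of finite length (bounded by $n$) of the diagonal $A$-bimodule whose successive quotients are summands of external tensor products $P \boxtimes Q$ of perfect one-sided $A$-modules. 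This is precisely perfectness of the diagonal $A \otimes A^{\mathrm{op}}$-module, i.e.\ smoothness of the dg-algebra $A$, which in turn means $\mathbb{T}_S(v)_w^{\rm{red}}$ is smooth as a dg-category.

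The only nontrivial step is this last implication, namely that a proper pre-triangulated dg-category over $\mathbb{C}$ with a strong generator is automatically smooth. This is the converse of the direction~\cite[Lemmas~3.5, 3.6]{VL} cited in the preliminaries, and is a standard fact in the dg-categorical literature (it amounts to inductively splicing the filtration giving $\langle E\rangle^{\star n}$ into a resolution of the diagonal bimodule, using properness to guarantee perfectness of $\mathrm{RHom}$-complexes). I expect the main (minor) obstacle to be choosing a clean reference rather than any substantive argument, since both inputs have already been established in the preceding sections and the last step is purely formal.
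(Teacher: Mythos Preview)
The proposal is correct and takes essentially the same approach as the paper: combine Theorem~\ref{thm:proper} (properness) with Theorem~\ref{thm:regularred} (regularity), and then invoke the general fact that a proper, regular dg-category over $\mathbb{C}$ is smooth. The paper simply cites Orlov~\cite[Theorem~3.18]{Orsmooth} for this last implication rather than sketching the diagonal-bimodule argument you outline.
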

\begin{proof}
    By Theorem~\ref{thm:proper} and Theorem~\ref{thm:regularred}, 
    the category $\mathbb{T}_S(v)_w^{\rm{red}}$ is proper and
    regular if $\gcd(d, w)=1$. 
    Then it is also proper and smooth by~\cite[Theorem~3.18]{Orsmooth}. 
\end{proof}

\subsection{Strong generation of singular support quotients}
\label{subsec:ssuport:gen}
In this subsection, we prove Theorem~\ref{thm:regular2}
on strong generation of singular support quotients, which 
was used in the proof of Theorem~\ref{thm:regular}. 

Let $\mathfrak{M}$ be a quasi-smooth derived stack 
of finite type over $\mathbb{C}$ 
such that 
its classical truncation 
$\mathcal{M}=\mathfrak{M}^{\rm{cl}}$ admits 
a good moduli space $\mathcal{M} \to M$
which is quasi-separated. 
Note that $M$ is quasi-compact by the 
assumption on $\mathfrak{M}$. 

We denote by $\mathrm{Et}/M$ the category 
whose objects $(U, \rho)$,
where $U$ is a $\mathbb{C}$-scheme and $\rho \colon U \to M$
is an \'{e}tale morphism. 
The set of morphisms $(U', \rho') \to (U, \rho)$
consists of \'{e}tale morphisms $U' \to U$ commuting with 
$\rho$ and $\rho'$. 

For 
a closed subscheme $Z \subset U$,
an \'{e}tale morphism $f \colon U' \to U$ 
is called an \textit{\'{e}tale neighborhood} of $Z$ if 
$f^{-1}(Z) \to Z$ is an isomorphism. 

We will use the following result in the proof of Theorem \ref{thm:regular}: 
\begin{thm}
\emph{(\cite[Theorem~D]{Rydetale})}\label{thm:ryd}
 Let $\mathbf{D} \subset \mathrm{Et}/M$
 be the subcategory satisfying the following 
 conditions:
\begin{enumerate}[(i)]
\item If $(U\to M) \in \mathbf{D}$
     and $(U' \to U)$ is a morphism in $\mathrm{Et}/M$, 
     then $(U' \to M) \in \mathbf{D}$. 
\item If 
$(U' \to M) \in \mathbf{D}$ and 
$(U' \to U)$ is a morphism in $\mathrm{Et}/M$
     which is finite and surjective,
     then $(U \to M) \in \mathbf{D}$. 
     \item If $(j \colon U^{\circ} \to U)$
     and $(f \colon W \to U)$ 
     are morphisms in $\mathrm{Et}/M$
     such that $j$ is an open immersion and $f$ is 
     an \'{e}tale neighborhood of $U\setminus U^{\circ}$, and $(U^{\circ} \to M)
     \in \mathbf{D}$ and $(W \to M) \in \mathbf{D}$,
     then $(U \to M) \in \mathbf{D}$. 
     \end{enumerate}
  Then if there 
 is $(g \colon M' \to M) \in \mathbf{D}$ 
 such that $g$ is surjective, then 
$(\id \colon M \to M) \in \mathbf{D}$. 
\end{thm}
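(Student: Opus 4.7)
The plan is to prove this by noetherian induction on closed subspaces of $M$, combined with the étale devissage philosophy that any étale cover can, locally on the target, be refined by Zariski unions of finite étale covers. Since $M$ is quasi-compact and quasi-separated as a good moduli space of a finite-type stack, I can assume $M$ is noetherian from the start, and I would also reduce to the case where $M$ is reduced using property (i).

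I would run noetherian induction as follows. Let $\mathcal{I}$ be the collection of reduced closed subspaces $Z \subset M$ such that $(Z \to M)$ (viewed via an étale presentation near $Z$) fails to land in $\mathbf{D}$; suppose for contradiction that $\mathcal{I}$ is nonempty and let $Z_0$ be minimal. By minimality together with (i), the open complement $V := M \setminus Z_0$ satisfies $(V \hookrightarrow M) \in \mathbf{D}$. It then suffices, by property (iii), to construct an étale neighborhood $W \to M$ of $Z_0$ with $(W \to M) \in \mathbf{D}$, which will yield $(\id_M) \in \mathbf{D}$ and contradict the choice of $Z_0$.

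To construct $W$, I would use the hypothesis that a surjective $(g \colon M' \to M) \in \mathbf{D}$ exists together with the local structure theorem for étale morphisms: Zariski-locally around each point of $Z_0$, the morphism $g$ can be refined by a quasi-finite étale morphism which, after henselization or Zariski-local spreading (à la Artin approximation), contains a finite étale surjection $W' \to W$ with $W \to M$ an étale neighborhood of a Zariski-open piece of $Z_0$. Property (i) gives $(W' \to M) \in \mathbf{D}$ by factoring through $g$, and property (ii) then transfers this to $(W \to M) \in \mathbf{D}$. Since $Z_0$ is quasi-compact, finitely many such $W$'s cover it, and iterating property (iii) reduces to a single étale neighborhood of $Z_0$.

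The hard part will be the local refinement step: producing, near an arbitrary point of $Z_0$, a finite étale cover that dominates a Zariski-open piece of $M'|_{W}$. This is the technical core of Rydh's étale devissage and requires the interplay between henselian local rings and Zariski spreading (effectively a form of Artin approximation), along with the observation that properties (i)–(iii) together encode exactly the axioms for a Nisnevich-like sheaf of subcategories on the small étale site of $M$. In particular, property (iii) is the Mayer–Vietoris-type gluing for a distinguished square consisting of a Zariski open and a finite étale neighborhood of its complement, which is precisely the local-to-global statement needed to close the induction.
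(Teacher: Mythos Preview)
The paper does not prove this theorem; it is stated with a citation to \cite[Theorem~D]{Rydetale} and used as a black box in the proof of Theorem~\ref{thm:regular2}. So there is no ``paper's own proof'' to compare against.

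Your sketch captures the correct overall strategy of Rydh's argument (noetherian induction on the target, combined with the fact that an \'etale surjection can be dominated, over a dense open, by a finite \'etale map, and then gluing via the Nisnevich-type square in (iii)). However, there is an imprecision in your setup: you define $\mathcal{I}$ as the set of closed subspaces $Z \subset M$ for which ``$(Z \to M)$ fails to land in $\mathbf{D}$'', but $\mathbf{D}$ is a subcategory of $\mathrm{Et}/M$ and closed immersions are not objects there. The correct inductive statement is rather: for every closed $Z \subset M$ there exists an \'etale neighborhood of $Z$ lying in $\mathbf{D}$. The minimal counterexample $Z_0$ then has the property that every proper closed subspace of $Z_0$ admits such a neighborhood, and the goal is to produce one for $Z_0$ itself. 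As you correctly identify, the technical core is the refinement step---showing that over a dense open of $Z_0$ the given \'etale surjection $M' \to M$ can be dominated by a finite \'etale map---which is where Zariski's main theorem and spreading out from henselian local rings enter. Your outline is reasonable as a roadmap, but since the paper simply cites the result, no proof is expected here.
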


For each object $(U \to M) \in \mathrm{Et}/M$, 
let $\mathcal{M}_U \to U$ be the pull-back of 
$\mathcal{M} \to M$ by $U \to M$. 
There is a derived stack $\mathfrak{M}_U$, unique 
up to equivalence, 
such that for each morphism 
$\rho \colon U' \to U$ in $\mathrm{Et}/M$
there is an induced diagram, see Subsection~\ref{subsec:qsmooth} 
\begin{align}\label{dia:induced}
\xymatrix{
U' \ar[d]_-{\rho} & \ar[l] 
    \mathcal{M}_{U'} \ar[r] \ar[d] & \mathfrak{M}_{U'} \ar[d]^-{\rho} \\
  U & \ar[l]   \mathcal{M}_U \ar[r] & \mathfrak{M}_U. 
    }
\end{align}
For each $y\in M$, there is $\rho \colon U \to M$
in $\mathrm{Et}/M$
whose image contains $y$ such that 
$\mathfrak{M}_U$ is equivalent to 
a Koszul stack
\begin{align}\label{eq:Kstack}
\mathfrak{M}_U \simeq s^{-1}(0)/G
\end{align}
for some $(Y, V, s, G)$, 
where $Y$ is a smooth scheme with 
an action of a reductive algebraic group $G$, 
$V\to Y$ is a $G$-equivariant vector bundle 
with a $G$-invariant section $s$ and $s^{-1}(0)$ is the 
derived zero locus of $s$, see~Subsection~\ref{subsec:qsmooth}. 

For $\ell \in \mathrm{Pic}(\mathfrak{M})_{\mathbb{R}}$ and $(U\to M)\in\mathrm{Et}/M$, consider 
the $\ell$-semistable locus
\begin{align}\label{l-stable}
    \Omega_{\mathfrak{M}_U}^{\ell\text{-ss}}[-1]^{\rm{cl}} \subset \Omega_{\mathfrak{M}_U}[-1]^{\rm{cl}}.
\end{align}
We denote by $\mathscr{Z}_U$ the complement of the 
open immersion (\ref{l-stable}), which is a conical 
closed substack. 
Let $\mathcal{C}_{\mathscr{Z}_U} \subset D^b(\mathfrak{M}_U)$
be the subcategory of objects with singular supports
contained in $\mathscr{Z}_U$. 
\begin{lemma}\label{lem:compact}
Suppose that the open substack (\ref{l-stable}) is an algebraic space. 
Then for a Koszul stack as in (\ref{eq:Kstack}), 
    the category $D^b(\mathfrak{M}_U)/\mathcal{C}_{\mathscr{Z}_{U}}$ is 
    regular. In particular, there is a 
    compact generator $\mathcal{E}_U \in D^b(\mathfrak{M}_U)/\mathcal{C}_{\mathscr{Z}_U}$. 
\end{lemma}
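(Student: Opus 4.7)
The plan is to pass through Koszul duality to reduce the statement to a regularity claim for matrix factorizations whose critical locus is an algebraic space, and then to invoke scheme-theoretic arguments. By Theorem~\ref{thm:Kduality} there is an equivalence $\Theta \colon D^b(\mathfrak{M}_U) \xrightarrow{\sim} \mathrm{MF}^{\rm{gr}}(\mathscr{V}^{\vee}, f)$ with $\mathscr{V}^{\vee} = \mathrm{Tot}_Y(V^{\vee})/G$ and $f(y, v) = \langle s(y), v \rangle$, under which $\mathrm{Crit}(f)$ is canonically identified with $\Omega_{\mathfrak{M}_U}[-1]^{\rm{cl}}$. As recalled in Subsection~\ref{subsec:shifted}, $\Theta$ sends the singular support of an object to the support of the corresponding matrix factorization, so $\mathcal{C}_{\mathscr{Z}_U}$ corresponds to matrix factorizations supported on $\mathscr{Z}_U$. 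Verdier localization therefore yields
\[
D^b(\mathfrak{M}_U)/\mathcal{C}_{\mathscr{Z}_U} \simeq \mathrm{MF}^{\rm{gr}}(\mathscr{V}^{\vee} \setminus \mathscr{Z}_U, f),
\]
and since matrix factorizations are supported on the critical locus, this further agrees with $\mathrm{MF}^{\rm{gr}}((\mathscr{V}^{\vee})^{\ell\text{-ss}}, f)$, whose critical locus is exactly $\Omega_{\mathfrak{M}_U}^{\ell\text{-ss}}[-1]^{\rm{cl}}$, an algebraic space by hypothesis.

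Next, I would exploit that the critical locus is an algebraic space to produce an étale-local scheme model. Triviality of the $G$-stabilizers at all critical points of $f|_{(\mathscr{V}^{\vee})^{\ell\text{-ss}}}$ gives, via the Luna slice theorem together with the Alper--Hall--Rydh local structure theorem \cite{AHR}, that a neighborhood of the critical locus in $(\mathscr{V}^{\vee})^{\ell\text{-ss}}$ is étale-locally a smooth finite-type algebraic space. Because $\mathrm{MF}^{\rm{gr}}$ only sees a formal neighborhood of $\mathrm{Crit}(f)$, the category of matrix factorizations in question can then be computed on such local scheme models.

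For a smooth finite-type algebraic space $\mathscr{W}$ carrying a $\mathbb{C}^{*}$-action of weight two and a $\mathbb{C}^{*}$-equivariant regular function $g$, the category $\mathrm{MF}^{\rm{gr}}(\mathscr{W}, g)$ is regular by standard arguments: $\mathrm{Perf}(\mathscr{W})$ is strongly generated because $\mathscr{W}$ is smooth, quasi-compact, and of finite type, and every coherent matrix factorization admits a finite resolution by Koszul-type factorizations whose underlying sheaves lie in $\mathrm{Perf}(\mathscr{W})$, so strong generation transfers through the resolution. One then globalizes the étale-local strong generators into one over all of $M$ using Theorem~\ref{thm:ryd}, whose conditions (i)--(iii) are respected by the natural pullback, pushforward, and Mayer--Vietoris operations for matrix factorization categories. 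The resulting strong generator $\mathcal{E}_U$ of $D^b(\mathfrak{M}_U)/\mathcal{C}_{\mathscr{Z}_U}$ is automatically a compact generator of its ind-completion.

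The hard part will be the reduction in the second paragraph: passing from the stacky open substack $(\mathscr{V}^{\vee})^{\ell\text{-ss}}$, whose generic points off $\mathrm{Crit}(f)$ may still carry nontrivial $G$-stabilizers, to a genuine algebraic space on which the $\mathbb{C}^{*}$-equivariant matrix factorization category is defined and amenable to the scheme-theoretic argument, together with the gluing of the étale-local strong generators into a single global one in a way compatible with the $\mathbb{C}^{*}$-grading used in $\mathrm{MF}^{\rm{gr}}$. Both delicacies should be manageable by working in a formal neighborhood of the critical locus and using the étale descent formalism for matrix factorizations.
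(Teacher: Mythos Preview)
Your first paragraph is correct and essentially identical to the paper's opening: Koszul duality, then pass to $\mathrm{MF}^{\rm{gr}}((\mathscr{V}^{\vee})\setminus\mathscr{Z}_U,f)$ and observe that the critical locus is the given algebraic space.

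Where you diverge is in the reduction to a scheme model, and here you are working far harder than necessary. The paper does not use any \'{e}tale-local Luna/AHR slicing or any gluing via Theorem~\ref{thm:ryd}. Instead it observes directly that, since every point of $\mathrm{Crit}(f)\cap(\mathscr{V}^{\vee})^{\ell\text{-ss}}$ has trivial $G$-stabilizer, the critical locus is already contained in the open substack $(V^{\vee})^{\rm{free}}/G$ of $\ell$-semistable points with free closed $G$-orbits. This open substack is a global smooth quasi-projective scheme, being an open subscheme of the GIT quotient $(V^{\vee})^{\ell\text{-ss}}\ssslash G$. Since matrix factorizations depend only on a neighborhood of the critical locus, one may replace $(\mathscr{V}^{\vee})^{\ell\text{-ss}}$ by this quasi-projective $Y:=(V^{\vee})^{\rm{free}}/G$ in one step, and then cite~\cite[Lemma~2.11, Remark~2.12]{FavTy} for smoothness (hence regularity) of $\mathrm{MF}^{\rm{gr}}(Y,f)$. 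No descent or gluing is involved at this stage.

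Your invocation of Theorem~\ref{thm:ryd} ``over all of $M$'' is also a conflation of this lemma with Theorem~\ref{thm:regular2}: the present lemma is precisely the local input (for one chart $U$) that feeds into the Rydh descent argument carried out there. Importing Rydh here both duplicates that later work and misplaces it---the lemma concerns a single $U$, not $M$.
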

\begin{proof}
    By the Koszul duality equivalence in Theorem~\ref{thm:Kduality}, we have 
    the equivalence 
    \begin{align}\label{equiv:KZ}
        D^b(\mathfrak{M}_U) \stackrel{\sim}{\to} 
        \mathrm{MF}^{\rm{gr}}(V^{\vee}/G, f).  
    \end{align}
    The above equivalence descends to an equivalence, see~\cite[Proposition~2.3.9]{T}
    \begin{align*}
        D^b(\mathfrak{M}_U)/\mathcal{C}_{\mathscr{Z}_U} \stackrel{\sim}{\to} 
          \mathrm{MF}^{\rm{gr}}((V^{\vee}/G) \setminus \mathscr{Z}_U, f). 
    \end{align*}
    Note that we have 
    \begin{align}\label{crit:Uw}
       (\Omega_{\mathfrak{M}}[-1]^{\rm{cl}}\setminus \mathscr{Z})\times_{\mathcal{M}}U
       =(\mathrm{Crit}(w)/G) \setminus \mathscr{Z}_U,
    \end{align}
    hence the right hand side is an algebraic space 
    by the assumption.     
Let $(V^{\vee})^{\rm{free}} \subset (V^{\vee})^{\ell\text{-ss}}$
be the $(G\times \mathbb{C}^{\ast})$-invariant open subspace of $\ell$-semistable points 
with free closed $G$-orbits. 
Then (\ref{crit:Uw}) is a closed substack of 
$Y:=(V^{\vee})^{\rm{free}}/G$. 
   Since the category of matrix factorizations depends only on an open 
    neighborhood of the critical locus, 
    there is an equivalence 
    \begin{align*}
         \mathrm{MF}^{\rm{gr}}((V^{\vee}/G) \setminus \mathscr{Z}_U, f)
         \stackrel{\sim}{\to}   \mathrm{MF}^{\rm{gr}}(Y, f). 
    \end{align*}
    Note that $Y$ is quasi-projective
    since it is an open subset of 
the quasi-projective good moduli space $(V^{\vee})^{\ell\text{-ss}}\ssslash G$. 
      The category $\mathrm{MF}^{\rm{gr}}(Y, f)$ is proven 
    to be 
    smooth in~\cite[Lemma~2.11, Remark~2.12]{FavTy}, 
    hence it is regular.    
\end{proof}

The main result of this subsection is the following strong generation of singular 
support quotient: 
\begin{thm}\label{thm:regular2}
Let $\mathfrak{M}$ be a quasi-smooth 
derived stack of finite type over $\mathbb{C}$
with a good moduli space \[\mathfrak{M}^{\rm{cl}} \to M,\] where $M$
is a quasi-separated algebraic space. 
For $\ell \in \mathrm{Pic}(\mathfrak{M})_{\mathbb{R}}$, 
suppose that $\Omega_{\mathfrak{M}}^{\ell\text{-ss}}[-1]^{\rm{cl}}$
is an algebraic space. 
Let $\mathscr{Z} = \Omega_{\mathfrak{M}}[-1]^{\rm{cl}}  \setminus \Omega_{\mathfrak{M}}^{\ell\text{-ss}}[-1]^{\rm{cl}}$. Then 
the quotient category $D^b(\mathfrak{M})/\mathcal{C}_{\mathscr{Z}}$ is
 regular. 
\end{thm}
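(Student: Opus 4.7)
The plan is to apply Rydh's \'etale-local descent principle (Theorem~\ref{thm:ryd}) to the full subcategory $\mathbf{D} \subset \mathrm{Et}/M$ consisting of those $(\rho\colon U \to M)$ for which the singular support quotient $D^b(\mathfrak{M}_U)/\mathcal{C}_{\mathscr{Z}_U}$ is regular. Once $\mathbf{D}$ is shown to satisfy the three closure conditions (i)--(iii) of Theorem~\ref{thm:ryd} and to contain a surjective member $(M' \to M)$, the conclusion $(\mathrm{id}_M) \in \mathbf{D}$ is immediate.

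To build the surjective cover, I would invoke the \'etale local structure theorem along good moduli space morphisms recalled in Subsection~\ref{subsection:etale}: every closed point $y \in M$ admits an \'etale neighborhood $U_y \to M$ such that $\mathfrak{M}_{U_y}$ is equivalent to a Koszul stack of the form $s^{-1}(0)/G$. The hypothesis that $\Omega_{\mathfrak{M}}^{\ell\text{-ss}}[-1]^{\rm{cl}}$ is an algebraic space base-changes to $U_y$, so Lemma~\ref{lem:compact} applies and gives $(U_y \to M) \in \mathbf{D}$. By quasi-compactness of $M$ finitely many such $U_y$ suffice, and the disjoint union $M' := \sqcup_y U_y$ is the desired surjective member of $\mathbf{D}$.

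Condition (i), stability under \'etale pullback, I would verify by pulling back a strong generator: for $(U \to M) \in \mathbf{D}$ with $D^b(\mathfrak{M}_U)/\mathcal{C}_{\mathscr{Z}_U} = \langle \mathcal{E}_U\rangle^{\star n}$ and $g\colon U' \to U$ \'etale, the functor $g^{\ast}$ descends to the singular support quotient by compatibility of singular support with \'etale base change (\cite[Section~7]{AG}); reducing to Koszul charts on both sides via Lemma~\ref{lem:compact}, where the quotient categories become $\mathrm{MF}^{\rm{gr}}$ on quasi-projective schemes (hence smooth by \cite[Lemma~2.11]{FavTy}, in particular well-behaved under \'etale base change), shows that $g^{\ast}\mathcal{E}_U$ remains a strong generator. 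Condition (iii), the Nisnevich-type gluing axiom, I would handle by combining strong generators on the open piece $U^{\circ}$ and on the \'etale neighborhood $W$ of $U\setminus U^{\circ}$ via the recollement semiorthogonal decomposition, bounding the number of convolution steps by the two given lengths plus one.

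The principal obstacle is condition (ii), finite \'etale descent. Given $f\colon U' \to U$ finite surjective \'etale and a strong generator $\mathcal{E}_{U'}\in D^b(\mathfrak{M}_{U'})/\mathcal{C}_{\mathscr{Z}_{U'}}=\langle \mathcal{E}_{U'}\rangle^{\star n}$, I plan to show that $f_{\ast}\mathcal{E}_{U'}$ is a strong generator of $D^b(\mathfrak{M}_U)/\mathcal{C}_{\mathscr{Z}_U}$ in the same number of steps. The key inputs are that for finite \'etale $f$ the pushforward $f_{\ast}=f_!$ is both a left and a right adjoint of $f^{\ast}$ (ambidextrous adjunction), and the trace map makes any $\mathcal{F}$ a direct summand of $f_{\ast}f^{\ast}\mathcal{F}$ in characteristic zero. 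Combining these, $f^{\ast}\mathcal{F} \in \langle \mathcal{E}_{U'}\rangle^{\star n}$ implies $f_{\ast}f^{\ast}\mathcal{F} \in \langle f_{\ast}\mathcal{E}_{U'}\rangle^{\star n}$ since $f_{\ast}$ is exact, and then $\mathcal{F} \in \langle f_{\ast}\mathcal{E}_{U'}\rangle^{\star n}$ by the summand property, yielding regularity. The subtle verification is that both $f^{\ast}$ and $f_{\ast}$ descend to the singular support quotients, which is automatic from $\mathscr{Z}_{U'}=f^{-1}\mathscr{Z}_U$. With (i)--(iii) all verified, Theorem~\ref{thm:ryd} applied to the surjective cover $M' \to M$ gives $(\mathrm{id}_M) \in \mathbf{D}$, proving the theorem.
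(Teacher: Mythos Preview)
Your overall strategy---applying Rydh's descent (Theorem~\ref{thm:ryd}) to the subcategory $\mathbf{D}$ of objects with regular singular support quotient, and seeding it with the Koszul cover from Lemma~\ref{lem:compact}---matches the paper exactly, and your treatment of condition (ii) via the splitting $\mathcal{F}\hookrightarrow f_\ast f^\ast\mathcal{F}$ for finite \'etale $f$ is correct. But conditions (i) and (iii) have genuine gaps, both stemming from the same missed subtlety: the pushforwards along open immersions and non-finite \'etale maps do not preserve $D^b$, so you cannot stay inside the compact category $\langle\,\cdot\,\rangle^{\star n}$.

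For (i), your argument ``reducing to Koszul charts on both sides via Lemma~\ref{lem:compact}'' does not work: a general $(U'\to U)$ in $\mathrm{Et}/M$ need not admit a global Koszul presentation, so Lemma~\ref{lem:compact} is unavailable. Even granting smoothness of $\mathcal{T}_{U'}$ would not show that $g^\ast\mathcal{E}_U$ is a strong generator. The paper instead passes to the ind-completion, reformulates regularity as $\Ind\mathcal{T}_U=\llangle\mathcal{E}_U\rrangle^{\star n}$ (via \cite[Proposition~1.9]{Neeman}), and then for any $A\in\Ind\mathcal{T}_{U'}$ uses the diagonal decomposition $U'\times_U U'=U'\sqcup U''$ to split $A$ off $\rho^\ast\rho_\ast A\in\llangle\rho^\ast\mathcal{E}_U\rrangle^{\star n}$.

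For (iii), there is no recollement here---$j$ is open and $f$ is merely an \'etale neighbourhood---and the paper uses the Mayer-Vietoris triangle $A\to j_\ast j^\ast A\oplus f_\ast f^\ast A\to f_\ast f^\ast j_\ast j^\ast A$ in $\Ind\mathcal{T}_U$. The crucial point you miss is that $j_\ast\mathcal{E}_{U^\circ}$ and $f_\ast\mathcal{E}_W$ are \emph{not} compact, so the bound ``two given lengths plus one'' fails as stated. The paper isolates this in Lemma~\ref{lem:cohbou}: for any \'etale $f\colon U'\to U$ and $P\in D^b(\mathfrak{M}_{U'})$, one has $f_\ast P\in\llangle Q\rrangle^{\star m}$ for some compact $Q$, proved via Nagata compactification and Neeman's approximation \cite[Theorem~6.2]{Neeman}. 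Only after invoking this lemma can one conclude $\Ind\mathcal{T}_U=\llangle\mathcal{E}_U\rrangle^{\star(nm+1)}$.
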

\begin{proof}
    For $(U \to M) \in \mathrm{Et}/M$, we define \begin{align}\label{def:mathcaltu}    \mathcal{T}_U=D^b(\mathfrak{M}_U)/\mathcal{C}_{\mathscr{Z}_U}, \     \Ind\mathcal{T}_U=\Ind D^b(\mathfrak{M}_U)/ \Ind \mathcal{C}_{\mathscr{Z}_U}.
    \end{align}
By the diagram (\ref{dia:induced}),
there is an adjoint pair: 
 \begin{align*}
	\xymatrix{
		\Ind\mathcal{T}_{U} \ar@<0.5ex>[r]^{\rho^{\ast}} & \Ind\mathcal{T}_{U'} \ar@<0.5ex>[l]^{\rho_{\ast}}
	}, \rho^{\ast} \dashv \rho_{\ast}. 
\end{align*}
Then $U \mapsto \Ind\mathcal{T}_{U}$ is a $\mathrm{Et}/M$-pre-triangulated categories with adjoints, 
see~\cite[Section~5]{Hperf}. 
    
    Let $\mathbf{D}^{\rm{st}} \subset \mathrm{Et}/M$
    be the full subcategory of $(U \to M)$ such that 
    $\mathcal{T}_U$ is regular. 
    The condition $(U \to M) \in \mathbf{D}^{\rm{st}}$ is 
    equivalent to $\mathcal{T}_U=\langle \mathcal{E}_U \rangle^{\star n}$ for some
    $\mathcal{E}_U \in \mathcal{T}_U$ and $n\geq 1$. 
On the other hand, it is proved in~\cite[Proposition~3.2.7, Section~7.2]{T}
that $\Ind \mathcal{T}_U=\Ind(\mathcal{T}_U)$ with 
compact objects the idempotent closure of $\mathcal{T}_U$.
    Therefore by~\cite[Proposition~1.9]{Neeman}, 
    the condition $\mathcal{T}_U=\langle \mathcal{E}_U \rangle^{\star n}$
    is equivalent to $\Ind \mathcal{T}_U=
    \llangle \mathcal{E}_U \rrangle^{\star n}$ for 
    some $n\geq 1$. 
    By Lemma~\ref{lem:compact}, 
    there exists $(M' \to M) \in \mathbf{D}^{\rm{st}}$ which 
    is surjective. 
    By Theorem~\ref{thm:ryd}, it is enough to check the conditions 
    (i), (ii) and (iii) for the 
    subcategory $\mathbf{D}^{\rm{st}} \subset \mathrm{Et}/M$. 

    To show condition (i), consider a morphism $(\rho \colon U' \to U)$
    in $\mathrm{Et}/M$. Suppose that 
    $\Ind \mathcal{T}_U=\llangle \mathcal{E}_U \rrangle^{\star n}$. 
    For each $A \in \Ind \mathcal{T}_{U'}$, 
    there is a natural morphism $\rho^{\ast}\rho_{\ast}A \to A$
    and $\rho_{\ast}A \in \llangle \mathcal{E}_U \rrangle^{\star n}$ by the assumption. 
       Since $U' \times_U U' \to U'$ admits a section 
    given by the diagonal, we have a decomposition 
    into open and closed subsets 
    \[U' \times_U U'=U' \sqcup U''.\] 
    Then, by the base change 
    for $U' \to U \leftarrow U'$, 
    the morphism 
    $\rho^{\ast}\rho_{\ast}A \to A$ splits, 
    hence $A \in \llangle \rho^{\ast}\mathcal{E}_U \rrangle^{\star n}$.
    Therefore  
    $\Ind \mathcal{T}_{U'}=\llangle \mathcal{E}_{U'}\rrangle^{\star n}$
    for $\mathcal{E}_{U'}=\rho^{\ast}\mathcal{E}_U$ 
    and
    $(U' \to M) \in \mathbf{D}^{\rm{st}}$ holds. 

To show condition (ii), let $(\rho \colon U' \to U)$
 be a morphism in $\mathrm{Et}/M$ such that 
 $\rho$ is finite surjective. 
  Assume that $(U' \to M) \in \mathbf{D}^{\rm{st}}$, 
  so $\Ind \mathcal{T}_{U'}=\llangle \mathcal{E}_{U'} \rrangle^{\star n}$ for some $\mathcal{E}_{U'} \in \mathcal{T}_{U'}$ and $n\geq 1$. 
  For $A \in \Ind \mathcal{T}_U$, let 
  $A \to \rho_{\ast}\rho^{\ast}A=A \otimes \rho_{\ast}\mathcal{O}_{\mathfrak{M}_U}$
  be the natural morphism. 
 The induced map $\rho \colon \mathfrak{M}_{U'} \to \mathfrak{M}_U$ is also finite and surjective, 
 and $\mathcal{O}_{\mathfrak{M}_U} \to \rho_{\ast}\mathcal{O}_{\mathfrak{M}_{U'}}$ splits. 
 In fact, we have $\rho_{\ast}=\rho_{!}$ as $\rho$ is 
 finite \'{e}tale, and the natural map $\rho_{!}\mathcal{O}_{\mathfrak{M}_{U'}} \to \mathcal{O}_{\mathfrak{M}_U}$ gives a splitting. 
 Therefore $A$ is a direct summand of $\rho_{\ast}\rho^{\ast}A$. 
 As $\rho^{\ast}A \in \llangle \mathcal{E}_{U'} \rrangle^{\star n}$, we have 
 $A \in \llangle \rho_{\ast}\mathcal{E}_{U'} \rrangle^{\star n}$. 
 Since $\rho$ is finite, we have 
 $\rho_{\ast}\mathcal{E}_{U'} \in \mathcal{T}_U$. 
  Then by setting $\mathcal{E}_U=\rho_{\ast}\mathcal{E}_{U'}$, 
  we have $A \in \llangle \mathcal{E}_{U} \rrangle^{\star n}$, 
 hence $\Ind \mathcal{T}_U= \llangle \mathcal{E}_{U} \rrangle^{\star n}$
 and $(U \to M) \in \mathbf{D}^{\rm{st}}$ holds. 

To show condition (iii), let $(j \colon U_{\circ} \to U)$
and $(f \colon W \to U)$ be morphisms in $\mathrm{Et}/M$
such that $j$ is an open immersion and $f$ is an 
\'{e}tale neighborhood of $U \setminus U_{\circ}$. 
Suppose that $\Ind \mathcal{T}_{U_{\circ}}=\llangle \mathcal{E}_{U_{\circ}} \rrangle^{\star n}$
and $\Ind \mathcal{T}_{W}=\llangle \mathcal{E}_W \rrangle^{\star n}$ for some $n \geq 1$
and $\mathcal{E}_W \in \mathcal{T}_W$, $\mathcal{E}_{U_{\circ}} \in \mathcal{T}_{U_{\circ}}$. 
For an object $A \in \Ind \mathcal{T}_U$, there is 
a distinguished triangle
in $\Ind \mathcal{T}_U$, see~\cite[Lemma~5.9]{Hperf}:
\begin{align}\label{tr:A}
    A \to j_{\ast}j^{\ast}A \oplus f_{\ast}f^{\ast}A 
    \to f_{\ast}f^{\ast}j_{\ast}j^{\ast}A \to A[1].
\end{align}
We have $j_{\ast}j^{\ast}A \in \llangle j_{\ast}\mathcal{E}_{U_{\circ}}\rrangle^{\star n}$, 
$f_{\ast}f^{\ast}A \in \llangle f_{\ast}\mathcal{E}_W
\rrangle^{\star n}$
and 
$f_{\ast}f^{\ast}j_{\ast}j^{\ast}A \in \llangle f_{\ast}\mathcal{E}_W\rrangle^{\star n}$. 
By Lemma~\ref{lem:cohbou}, 
there exist $\mathcal{E}_U \in \mathcal{T}_U$
such that $j_{\ast}\mathcal{E}_{U_{\circ}}$
and $f_{\ast}\mathcal{E}_W$ are objects in 
$\llangle \mathcal{E}_U \rrangle^{\star m}$ for some $m\geq 1$.  
Then we have 
$j_{\ast}j^{\ast}A \in \llangle \mathcal{E}_U \rrangle^{\star nm}$, 
$f_{\ast}f^{\ast}A \in \llangle \mathcal{E}_U
\rrangle^{\star nm}$
and 
$f_{\ast}f^{\ast}j_{\ast}j^{\ast}A \in \llangle \mathcal{E}_U\rrangle^{\star nm}$. 
From the triangle (\ref{tr:A}), 
we conclude that $\Ind \mathcal{T}_U=\llangle 
\mathcal{E}_U \rrangle^{\star nm+1}$,
therefore $(U \to M) \in \mathbf{D}^{\rm{st}}$. 
  \end{proof}

We have used the following lemma: 
\begin{lemma}\label{lem:cohbou}
Let $f \colon U' \to U$ be a morphism in 
$\mathrm{Et}/M$. Then 
for any object $P \in D^b(\mathfrak{M}_{U'})$, there is 
$Q \in D^b(\mathfrak{M}_{U})$
and $m \geq 1$ such that
$f_{\ast} P \in \llangle Q \rrangle^{\star m}$
in $\Ind D^b(\mathfrak{M}_U)$. 
\end{lemma}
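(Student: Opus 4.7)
The plan is to reduce to two standard cases using the local structure of \'{e}tale morphisms, then handle each directly. First I would shrink $U$ to be quasi-compact (permissible since the conclusion is local on $U$), so that $U'$ may also be assumed quasi-compact. Any \'{e}tale morphism between qcqs schemes admits a finite affine open cover $U' = \bigcup_{i=1}^r V_i$ such that each composition $V_i \hookrightarrow U' \xrightarrow{f} U$ factors as an open immersion $V_i \hookrightarrow W_i$ followed by a finite morphism $W_i \to U$; this is the standard local description of \'{e}tale morphisms as $\Spec(A[t]/P(t))[g^{-1}] \to \Spec A$. Applying the Mayer--Vietoris resolution for this open cover presents $f_*P$ as a finite iterated cone of the pushforwards $(f|_{V_I})_*(P|_{V_I})$ for $I \subseteq \{1, \ldots, r\}$. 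Induction on $r$, together with the composition law for pushforwards, reduces the lemma to the two cases where $f$ is a finite morphism or an open immersion.

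For $f$ finite, $f_*$ is $t$-exact on quasi-coherent sheaves and preserves $D^b$; we take $Q := f_*P$ and $m := 1$. For $f = j$ an open immersion, I would further shrink $U$ so that it is affine and the complement $Z := U \setminus U'$ is cut out by finitely many functions $s_1, \ldots, s_k \in \Gamma(U, \mathcal{O}_U)$. Via the Koszul presentation $\mathfrak{M}_U \simeq s^{-1}(0)/G \hookrightarrow Y/G$ with $Y$ smooth affine, these lift to $G$-invariant global functions on $\mathfrak{M}_U$. Since restriction along an open immersion of a Noetherian stack is essentially surjective on $D^b$ (by extending coherent cohomology sheaves across the complement and inducting on amplitude), one may choose $\tilde{P} \in D^b(\mathfrak{M}_U)$ with $j^*\tilde{P} \simeq P$. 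The affine cover $U' = \bigcup_i D(s_i)$ then yields a \v{C}ech description
\[
j_*P \simeq \Big[\bigoplus_i \tilde{P}[s_i^{-1}] \to \bigoplus_{i<j}\tilde{P}[(s_is_j)^{-1}] \to \cdots \to \tilde{P}[(s_1 \cdots s_k)^{-1}]\Big]
\]
as a bounded complex of length $k$ in $\Ind D^b(\mathfrak{M}_U)$. Each localization $\tilde{P}[s_I^{-1}]$ is the filtered colimit of copies of $\tilde{P}$ under multiplication by $s_I$, and is thus presented as the cone of a map $\bigoplus_n \tilde{P} \to \bigoplus_n \tilde{P}$ in $\Ind D^b(\mathfrak{M}_U)$, placing it in $\llangle \tilde{P} \rrangle^{\star 2}$. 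Consequently $j_*P \in \llangle \tilde{P} \rrangle^{\star m}$ for some $m$ depending only on $k$, so the choice $Q := \tilde{P}$ works.

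The main obstacle is verifying the \v{C}ech formula for $j_*$ in the quasi-smooth derived stack setting rather than at the level of the underlying scheme $U$. Concretely, one must show that $Rj_*$ applied to a coherent complex on $\mathfrak{M}_{U'}$ is computed by the same \v{C}ech construction as on the underlying scheme. This is handled by lifting $s_1, \ldots, s_k$ to global functions on the smooth ambient stack $Y/G$ of the Koszul presentation---where \v{C}ech pushforward is standard for quasi-coherent sheaves---and then descending to $\mathfrak{M}_U \subset Y/G$ via the Koszul resolution, using that the closed immersion $\mathfrak{M}_U \hookrightarrow Y/G$ is compatible with restriction to the principal opens $D(s_i)$.
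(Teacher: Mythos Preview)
Your argument is correct in outline but follows a genuinely different route from the paper. You reduce via Zariski's Main Theorem and Mayer--Vietoris to the cases $f$ finite and $f$ an open immersion, then treat the open case by extending $P$ across the complement and writing $j_*P$ as a \v{C}ech complex of localizations $\tilde{P}[s_I^{-1}]$, each of which lies in $\llangle\tilde{P}\rrangle^{\star 2}$ by the telescope construction. The paper instead first passes from $\mathfrak{M}$ to its classical truncation $\mathcal{M}$ (using that $D^b(\mathfrak{M}_{U'})$ is generated by pushforwards from $D^b(\mathcal{M}_{U'})$), factors $f$ via Nagata compactification as $U'\stackrel{j}{\hookrightarrow}\overline{U}\stackrel{g}{\to}U$ with $g$ proper, extends $P$ to $\overline{P}$ on $\mathcal{M}_{\overline{U}}$, and invokes the identity $j_*P\cong\overline{P}\otimes_{\mathcal{O}_{\overline{U}}}j_*\mathcal{O}_{U'}$ together with Neeman's strong-generation theorem on $D_{\rm qc}(\overline{U})$ to place $j_*\mathcal{O}_{U'}$ in $\llangle B\rrangle^{\star m}$ for some $B\in\mathrm{Perf}(\overline{U})$; then $Q=g_*(\overline{P}\otimes_{\mathcal{O}_{\overline{U}}} B)$ works. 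The paper's route is cleaner: the reduction to $\mathcal{M}$ dissolves your ``main obstacle'' about \v{C}ech pushforward on derived stacks, and tensoring with $j_*\mathcal{O}_{U'}$ moves the entire strong-generation question down to the base scheme $\overline{U}$, where it is a black-box citation. Your telescope argument is essentially the mechanism inside Neeman's theorem, so the two proofs share the same core idea, packaged differently.

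Two small points in your write-up: the conclusion is \emph{not} local on $U$, since you need a single global $Q\in D^b(\mathfrak{M}_U)$ (though in context $U$ is of finite type, hence already quasi-compact); and your intermediate schemes $W_i$ from the Zariski factorization are finite but not \'etale over $M$, so $\mathfrak{M}_{W_i}$ is not defined within the paper's $\mathrm{Et}/M$ framework and would require extending the base-change construction---the paper sidesteps the analogous issue with $\overline{U}\notin\mathrm{Et}/M$ precisely by working on the classical $\mathcal{M}_{\overline{U}}$ rather than on a derived stack.
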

\begin{proof}
    Since $P$ is a finite extension of objects 
    from the image of the pushforward functor 
    $D^b(\mathcal{M}_{U'}) \to D^b(\mathfrak{M}_{U'})$, 
    we may assume that $P\in D^b(\mathcal{M}_{U'})$. It suffices to find $Q\in D^b(\mathcal{M}_U)$ and $m\geq 1$ such that $f_*P\in \llangle Q \rrangle^{\star m}$ in $\Ind D^b(\mathcal{M}_U)$. 
    By Nagata compactification, there is a factorization 
    \[f \colon U' \stackrel{j}{\hookrightarrow} \overline{U} \stackrel{g}{\to} U,\]
    where $j$ is an open immersion and $g$ is proper. 
    There is an object $\overline{P} \in D^b(\mathcal{M}_{\overline{U}})$
    such that $j^{\ast}\overline{P}\cong P$. 
    Then $j_{\ast}P \cong \overline{P} \otimes_{\mathcal{O}_{\overline{U}}} j_{\ast}\mathcal{O}_{U'}$, 
    where $j_{\ast}\mathcal{O}_{U'} \in D_{\rm{qc}}(\overline{U})$
    and the tensor product is given by the action of 
    $D_{\rm{qc}}(\overline{U})$ on $\Ind D^b(\mathcal{M}_{\overline{U}})$. 
    By~\cite[Theorem~6.2]{Neeman}, 
    there is $B \in \mathrm{Perf}(\overline{U})$ such that 
    $j_{\ast}\mathcal{O}_{U'} \in \llangle B \rrangle^{\star m}$ for some $m\geq 1$
    in $D_{\rm{qc}}(\overline{U})$. 
    Then $j_{\ast}P \in \llangle \overline{P} \otimes_{\mathcal{O}_{\overline{U}}}B\rrangle^{\star m}$, hence $f_{\ast}P \in \llangle Q \rrangle^{\star m}$
    for $Q=g_{\ast}(\overline{P}\otimes_{\mathcal{O}_{\overline{U}}}B)
    \in D^b(\mathcal{M}_U)$. 
\end{proof}

\section{Serre functor for reduced quasi-BPS categories}
In this section, we show that the reduced quasi-BPS 
categories have \'etale locally trivial 
Serre functor, which gives further evidence towards 
Conjecture~\ref{conj:HK}. 
\subsection{Serre functor}\label{subsec71}
Recall that we write $v=dv_0$ for $d\in\mathbb{Z}_{\geq 1}$ and a primitive Mukai vector $v_0$ 
with $\langle v_0, v_0 \rangle=2g-2$. We assume $g\geq 2$.
Consider a generic stability $\sigma \in \mathrm{Stab}(S)$. 
Recall that the derived 
stack $\mathfrak{M}_S^{\sigma}(v)^{\rm{red}}$ is 
equivalent to 
its classical truncation $\mathcal{M}=\mathcal{M}^\sigma_S(v)$ by Lemma~\ref{lem:classical}. 
Let $w \in \mathbb{Z}$ such that $\gcd(d, w)=1$, 
and consider the
quasi-BPS category \[\mathbb{T}=\mathbb{T}_S^{\sigma}(v)_w^{\rm{red}}\subset D^b(\mathcal{M}).\]

We recall some terminology from \cite{MR1996800}. Let $\mathcal{T}$ be a $\mathbb{C}$-linear pre-triangulated category.
A contravariant functor $F\colon \mathcal{T}\to \mathrm{Vect}(\mathbb{C})$ is called \textit{of finite type} if $\oplus_{i\in\mathbb{Z}}F(A[i])$ is finite dimensional for all objects $A$ of $\mathcal{T}$. The category $\mathcal{T}$ is called \textit{saturated} if every contravariant functor $H\colon\mathcal{T}\to \mathrm{Vect}(\mathbb{C})$ of finite type is representable.

By Corollary~\ref{cor:smooth} and~\cite[Theorem~1.3]{MR1996800}, the category $\mathbb{T}$ 
is saturated, and thus it 
admits a Serre functor 
\begin{align*}S_{\mathbb{T}} \colon \mathbb{T} \to \mathbb{T}
\end{align*}
i.e. 
a functor such that there are functorial isomorphisms 
for $\mathcal{E}_1, \mathcal{E}_2 \in \mathbb{T}$:
\begin{align*}
\Hom(\mathcal{E}_1, \mathcal{E}_2) \cong 
\Hom(\mathcal{E}_2, S_{\mathbb{T}}(\mathcal{E}_1))^{\vee}. 
\end{align*}

There is also a version of the Serre functor 
relative to the good moduli space 
$\pi \colon \mathcal{M} \to M$. 
For $\mathcal{E}_1, \mathcal{E}_2 \in \mathbb{T}$, 
let $\mathcal{H}om_{\mathbb{T}}(\mathcal{E}_1, \mathcal{E}_2) \in D_{\rm{qc}}(\mathcal{M})$ 
be its internal homomorphism. 
Then a functor $S_{\mathbb{T}/M}\colon \mathbb{T} \to \mathbb{T}$ is called a 
\textit{relative Serre functor} if there are functorial isomorphisms in $D^b(M)$:
\begin{align}\label{rel:S}
\mathcal{H}om_M(\pi_{\ast}\mathcal{H}om_{\mathbb{T}}(\mathcal{E}_1, \mathcal{E}_2), \mathcal{O}_M)
\cong \pi_{\ast}\mathcal{H}om_{\mathbb{T}}(\mathcal{E}_2, S_{\mathbb{T}/M}(\mathcal{E}_1)).
\end{align}
\begin{remark}\label{rmk:Gorenstein}
We note that $M$ has at worst 
Gorenstein singularities. The result is most probably well-known, but we did not find a reference. The statement follows from 
Lemma~\ref{lem:py} and~\cite[Lemma~5.7]{PTquiver}.
Thus $\mathcal{H}om(-, \mathcal{O}_M)$ is 
an equivalence 
\begin{align*}
    \mathcal{H}om(-, \mathcal{O}_M) \colon 
    D^b(M) \stackrel{\sim}{\to} D^b(M)^{\rm{op}}. 
\end{align*}
Moreover, the dualizing complex 
is $\omega_M=\mathcal{O}_M[\dim M]$, since 
the singular locus of $M$ is at least 
codimension two and there is a holomorphic 
symplectic form on the smooth part. 
\end{remark}

\begin{remark}
The category $\mathbb{T}$ is
proper over $M$, i.e. $\pi_{\ast}\mathcal{H}om_{\mathbb{T}}(\mathcal{E}_1, \mathcal{E}_2) \in D^b(M)$, 
and it is strongly generated. Thus the 
relative Serre functor also exists, and is constructed as follows. Let $\mathcal{E} \in \mathbb{T}$ be
a strong generator and consider the sheaf of dg-algebras on $M$:
\[\mathcal{A}=\pi_{\ast}\mathcal{H}om_{\mathbb{T}}(\mathcal{E}, \mathcal{E}).\]
Then $\mathbb{T}$ is equivalent to the derived 
category of 
coherent right dg-$\mathcal{A}$-modules. 
Under the above equivalence, the relative Serre 
functor is given by the 
$\mathcal{A}^{\rm{op}}\otimes_{\mathcal{O}_M}
\mathcal{A}$-module
$\mathcal{H}om_{M}(\mathcal{A}, \mathcal{O}_M)$. 
\end{remark}

The absolute and the relative Serre functors are 
related as follows: 
\begin{lemma}\label{lem:Serre}
    We have $S_{\mathbb{T}}=S_{\mathbb{T}/M}[\dim M]$. 
\end{lemma}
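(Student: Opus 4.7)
The plan is to deduce the identity directly from Grothendieck-Serre duality on the good moduli space $M$, using the Gorenstein property recalled in Remark~\ref{rmk:Gorenstein} and the properness of $M$ noted around \eqref{gmoduli}. The defining property of $S_{\mathbb{T}}$ is $\Hom(\mathcal{E}_1,\mathcal{E}_2)\cong \Hom(\mathcal{E}_2,S_{\mathbb{T}}(\mathcal{E}_1))^{\vee}$, while $S_{\mathbb{T}/M}$ is characterised by \eqref{rel:S} on the level of internal homs on $M$. By uniqueness of the Serre functor, it therefore suffices to exhibit functorial isomorphisms
\begin{equation*}
\Hom(\mathcal{E}_1,\mathcal{E}_2)\cong \Hom\bigl(\mathcal{E}_2,\,S_{\mathbb{T}/M}(\mathcal{E}_1)[\dim M]\bigr)^{\vee}
\end{equation*}
for all $\mathcal{E}_1,\mathcal{E}_2\in\mathbb{T}$.

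First I would rewrite both global Hom's as pushforwards to $M$. Since $\mathbb{T}\subset D^b(\mathcal{M})$ and $\pi\colon\mathcal{M}\to M$ is the good moduli space morphism, we have
\begin{equation*}
\Hom_{\mathbb{T}}(\mathcal{E}_1,\mathcal{E}_2)\;\cong\; R\Gamma\bigl(M,\,\pi_{\ast}\mathcal{H}om_{\mathbb{T}}(\mathcal{E}_1,\mathcal{E}_2)\bigr),
\end{equation*}
and by Theorem~\ref{thm:proper} (together with the local finiteness argument used in its proof) the sheaf $\pi_{\ast}\mathcal{H}om_{\mathbb{T}}(\mathcal{E}_1,\mathcal{E}_2)$ lies in $D^b(M)$.

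Next I would apply Grothendieck-Serre duality on $M$. By Remark~\ref{rmk:Gorenstein}, the algebraic space $M$ is Gorenstein with dualizing complex $\omega_M=\mathcal{O}_M[\dim M]$, and it is proper as noted in the construction of the good moduli space \eqref{gmoduli}. Hence Serre duality gives, for any $\mathcal{F}\in D^b(M)$,
\begin{equation*}
R\Gamma(M,\mathcal{F})^{\vee}\;\cong\; R\Gamma\bigl(M,\,\mathcal{H}om_M(\mathcal{F},\mathcal{O}_M)[\dim M]\bigr).
\end{equation*}
Applying this with $\mathcal{F}=\pi_{\ast}\mathcal{H}om_{\mathbb{T}}(\mathcal{E}_1,\mathcal{E}_2)$ and then invoking the defining isomorphism \eqref{rel:S} of $S_{\mathbb{T}/M}$, we obtain
\begin{align*}
\Hom_{\mathbb{T}}(\mathcal{E}_1,\mathcal{E}_2)^{\vee}
&\cong R\Gamma\bigl(M,\,\mathcal{H}om_M(\pi_{\ast}\mathcal{H}om_{\mathbb{T}}(\mathcal{E}_1,\mathcal{E}_2),\mathcal{O}_M)[\dim M]\bigr)\\
&\cong R\Gamma\bigl(M,\,\pi_{\ast}\mathcal{H}om_{\mathbb{T}}(\mathcal{E}_2,S_{\mathbb{T}/M}(\mathcal{E}_1))[\dim M]\bigr)\\
&\cong \Hom_{\mathbb{T}}\bigl(\mathcal{E}_2,\,S_{\mathbb{T}/M}(\mathcal{E}_1)[\dim M]\bigr).
\end{align*}
All isomorphisms are functorial in $\mathcal{E}_1$ and $\mathcal{E}_2$, so $S_{\mathbb{T}/M}[\dim M]$ satisfies the defining property of the Serre functor on the saturated category $\mathbb{T}$. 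Uniqueness of Serre functors then yields $S_{\mathbb{T}}\cong S_{\mathbb{T}/M}[\dim M]$.

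The only mildly delicate point is verifying the hypotheses needed to apply Grothendieck-Serre duality directly on the (possibly singular) algebraic space $M$: one needs $M$ to be proper and Gorenstein, and to identify $\omega_M$ with $\mathcal{O}_M[\dim M]$. Properness is part of the construction of $M$, while the Gorenstein property and the trivialisation of $\omega_M$ are exactly the content of Remark~\ref{rmk:Gorenstein}, which itself relies on Lemma~\ref{lem:py} and the local Gorenstein statement for preprojective algebras from \cite[Lemma~5.7]{PTquiver}. Given these inputs, the argument is essentially formal.
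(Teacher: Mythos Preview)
Your proof is correct and follows essentially the same approach as the paper: both arguments combine the defining isomorphism \eqref{rel:S} of the relative Serre functor with Grothendieck--Serre duality on the proper Gorenstein space $M$ (using $\omega_M=\mathcal{O}_M[\dim M]$ from Remark~\ref{rmk:Gorenstein}), and then conclude by uniqueness of the Serre functor. The only difference is the order in which these two ingredients are applied, which is immaterial.
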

\begin{proof}
    By taking the global sections of (\ref{rel:S}),
    we obtain 
    \begin{align*}
        \Hom_M(\pi_{\ast}\mathcal{H}om_{\mathbb{T}}(\mathcal{E}_1, \mathcal{E}_2), \mathcal{O}_M)
        \cong \Hom(\mathcal{E}_2, S_{\mathbb{T}/M}(\mathcal{E}_1)).
    \end{align*}
    By the Serre duality for $M$ and using 
    that $\omega_M=\mathcal{O}_M[\dim M]$
    from Remark~\ref{rmk:Gorenstein}, the left hand side 
    is isomorphic to 
    \begin{align*}
        \Hom_M(\mathcal{O}_M, \pi_{\ast}\mathcal{H}om_{\mathbb{T}}(\mathcal{E}_1, \mathcal{E}_2)[\dim M])^{\vee}
        =\Hom(\mathcal{E}_1, \mathcal{E}_2[\dim M])^{\vee}.
    \end{align*}
    Then the lemma holds by the uniqueness of 
    $S_{\mathbb{T}}$. 
    \end{proof}

We believe that $S_\mathbb{T}$ is isomorphic to the shift functor $[\dim M]$, see the discussion in Subsection \ref{subsec12}, which reinforces the analogy between reduced quasi-BPS categories and hyperkähler varieties, see Conjecture~\ref{conj:HK}. 
The main result in this section is the following weaker form of this expectation, which we prove in Subsection~\ref{subsec:proof2}:

\begin{thm}\label{thm:Serre:etale}
The Serre functor $S_{\mathbb{T}}$ is isomorphic to 
the shift functor 
$[\dim M]$ \'{e}tale locally on $M$, i.e. there is an 
\'{e}tale cover $U \to M$ such that for 
each $\mathcal{E} \in \mathbb{T}$
we have $S_{\mathbb{T}}(\mathcal{E})|_U \cong 
\mathcal{E}|_U[\dim M]$.
    \end{thm}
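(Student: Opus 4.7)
The plan is to reduce the statement to the local model for preprojective algebras via the \'etale slice theorem, and then invoke Theorem~\ref{thm:Serre} (which asserts that the relative Serre functor for the preprojective stack is trivial). Combined with Lemma~\ref{lem:Serre}, which expresses the absolute Serre functor as the relative one shifted by $\dim M$, this will give the \'etale local triviality of $S_{\mathbb{T}}$ up to the shift $[\dim M]$.

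Concretely, I would proceed in four steps. First, for each closed point $y \in M$, apply the \'etale slice theorem of Subsection~\ref{subsection:etale} to produce an \'etale neighborhood $\iota_U \colon U \to M$ of $y$ and a Cartesian diagram presenting $\mathfrak{M}^{\rm{red}}_U$ as a Koszul stack; using Lemma~\ref{lem:py} together with the formality of polystable objects in CY2 categories, upgrade this to an \'etale-local equivalence $\mathfrak{M}^{\rm{red}}_U \simeq \mathscr{P}(d)^{\rm{red}}_V$ for an \'etale neighborhood $V$ of a suitable point $p \in P(d)$ (or a multi-vertex analogue at non-deepest strata). Second, since the intrinsic window subcategory of Definition~\ref{def:intwind} is itself \'etale local, this equivalence restricts to $\mathbb{T}|_U \simeq \mathbb{T}(d)_w^{\rm{red}}|_V$. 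Third, the global trace map $\mathrm{tr}_{\mathcal{E}} \colon \pi_{\ast}\mathcal{H}om_{\mathbb{T}}(\mathcal{E},\mathcal{E}) \to \mathcal{O}_M$ (to be constructed in Subsection~\ref{subsec:trace}) is intrinsic and compatible with \'etale pullback, so its restriction to $U$ is identified with the trace map of Theorem~\ref{thm:Serre}; that theorem and the discussion following it then identify the relative Serre functor $S_{\mathbb{T}|_U/U}$ with the identity functor. Fourth, by Remark~\ref{rmk:Gorenstein}, $M$ is Gorenstein with $\omega_M \cong \mathcal{O}_M[\dim M]$, so the \'etale pullback satisfies $\omega_U \cong \mathcal{O}_U[\dim M]$, and Lemma~\ref{lem:Serre} applied relative to $U$ gives $S_{\mathbb{T}}|_U \cong [\dim M]$, yielding the functorial isomorphism for each $\mathcal{E} \in \mathbb{T}$.

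The main obstacle will lie in the first step: while Subsection~\ref{subsection:etale} gives an \'etale neighborhood on which $\mathfrak{M}^{\rm{red}}$ is presented as a Koszul stack, identifying this Koszul stack with an \'etale neighborhood of $\mathscr{P}(d)^{\rm{red}}$ requires promoting the formal equivalence of Lemma~\ref{lem:py} to an \'etale one. This should be achievable either via Artin approximation or via an enhancement of the AHR slice theorem that matches the Calabi--Yau structures on both sides; however, care is needed to handle multi-vertex Ext-quivers at non-deepest strata, which requires a multi-vertex extension of Theorem~\ref{thm:Serre} (expected to follow from the same arguments as in \cite{PTquiver}, but not explicitly stated for general quivers). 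A secondary technical point is verifying that the trace map of Subsection~\ref{subsec:trace} is indeed compatible with \'etale base change and agrees with the local trace map under the above identification --- this should follow from its construction as a morphism naturally arising from the Calabi--Yau 2 structure on $D^b(S)$, but the verification is what prevents the stronger global statement $S_{\mathbb{T}} \cong [\dim M]$ from being immediate.
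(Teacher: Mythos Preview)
Your high-level strategy matches the paper's: reduce to the local preprojective model via the \'etale slice theorem, use the trace map to produce a morphism $\mathcal{E}|_U \to S_{\mathbb{T}/M}(\mathcal{E})|_U$, verify it is an isomorphism via Theorem~\ref{thm:Serre}, and finish with Lemma~\ref{lem:Serre}. However, you have the emphasis of the two obstacles inverted, and this matters for how the proof actually runs.

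Your ``main obstacle'' (promoting the formal equivalence of Lemma~\ref{lem:py} to an \'etale one) is handled directly by the slice-theorem package recorded in diagrams~\eqref{diaggg1}--\eqref{diaggg2}, citing \cite[Theorem~5.11]{DavPurity} and \cite[Theorem~4.2.3]{halpK32}; no Artin approximation argument is needed. Conversely, what you call a ``secondary technical point'' --- that the trace map is intrinsic and agrees with the one in Theorem~\ref{thm:Serre} --- is the actual content of the proof. The trace map of Subsection~\ref{subsec:trace} is \emph{not} a priori intrinsic: it depends on the chosen good data $(G,A,V,s)$, and there is no global trace map on $M$ (this is exactly why the theorem is only stated \'etale locally). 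The paper devotes Lemmas~\ref{compare:trace0} and~\ref{lem:trace=} to showing that the trace map is unchanged under isomorphism of presentations and under adding a trivial summand $(Y,V)\rightsquigarrow(Y\oplus W,\,V\oplus W\oplus W)$. These two lemmas, applied at each formal fiber $\widehat{Z}_u$ via the decompositions $\mathcal{H}^0(\mathbb{T}_{\mathscr{A}}|_u)\cong\mathcal{H}^0(\mathbb{T}_{\mathcal{Z}}|_u)\oplus W$ and the analogous one for $\mathfrak{gl}(d)_0$, are what identify the locally-constructed trace with the trace determined by $(GL(d),\mathfrak{gl}(d)^{\oplus 2g},\mathfrak{gl}(d)_0,\mu_0)$.

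This comparison technique also dissolves your concern about multi-vertex Ext-quivers: rather than proving a multi-vertex analogue of Theorem~\ref{thm:Serre}, the paper uses Lemma~\ref{lem:trace=} twice (once to strip the extra summand $W$ coming from the \'etale chart, once to add the summand $W'$ coming from the embedding into the one-vertex $g$-loop model) to reduce the trace at \emph{any} point --- deepest stratum or not --- to the single-vertex case. So your proposed multi-vertex extension is unnecessary.
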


\subsection{Construction of the trace map}\label{subsec:trace}
In this subsection, we construct a
trace map for objects with nilpotent 
singular supports in a general setting. 
The construction here is used in the proof 
of Theorem~\ref{thm:Serre:etale}. 

Let $G$ be a reductive 
algebraic group which acts on 
a smooth affine variety $Y$. 
We assume that there is a one-dimensional 
subtorus $\mathbb{C}^{\ast} \subset G$
which acts on $Y$ trivially, so 
the $G$-action on $Y$ factors through the 
action of $\mathbb{P}(G):=G/\mathbb{C}^{\ast}$. 
We say that $Y$ is \textit{unimodular} if $\det \Omega_Y$ is 
trivial as a $G$-equivariant line bundle.
We also say that 
the action of $\mathbb{P}(G)$
on $Y$ is \textit{generic} if the subset $Y^s \subset Y$
of points with closed $\mathbb{P}(G)$-orbit and trivial 
stabilizer is non-empty and 
$\mathrm{codim}(Y\setminus Y^s) \geq 2$. 
\begin{lemma}\emph{(\cite[Korollary~2]{Knop2})}\label{lem:Gorenstein}
If $Y$ is unimodular and generic, then 
$Y\ssslash G$
has only Gorenstein singularities
and its canonical module is trivial. 
\end{lemma}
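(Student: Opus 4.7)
The plan is to reduce to the smooth open locus where the $\mathbb{P}(G)$-action is free, compute the canonical bundle explicitly by descent along a principal bundle, and then extend over the codimension $\geq 2$ complement using the Cohen-Macaulay property of the invariant ring. First, since $Y$ is smooth (hence Cohen-Macaulay) and $G$ is reductive, the Hochster-Roberts theorem gives that $\mathcal{O}_Y^G$ is Cohen-Macaulay, so $Y\ssslash G$ is Cohen-Macaulay. In particular its dualizing sheaf $\omega_{Y\ssslash G}$ is a Cohen-Macaulay module of rank one, hence satisfies Serre's condition $S_2$.

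Next, I would compute $\omega_{Y^s/\mathbb{P}(G)}$. By the genericity hypothesis, $\mathbb{P}(G)$ acts freely on $Y^s$ with closed orbits, so $\pi\colon Y^s\to Y^s/\mathbb{P}(G)=Y^s\ssslash G$ is a principal $\mathbb{P}(G)$-bundle and the quotient is smooth. The relative cotangent sequence
\[
0\to \pi^{\ast}\Omega_{Y^s/\mathbb{P}(G)}\to \Omega_{Y^s}\to \mathfrak{p}^{\vee}\otimes\mathcal{O}_{Y^s}\to 0,
\]
with $\mathfrak{p}:=\operatorname{Lie}(\mathbb{P}(G))$ carrying the (co)adjoint $\mathbb{P}(G)$-action, yields a $\mathbb{P}(G)$-equivariant isomorphism
\[
\pi^{\ast}\det\Omega_{Y^s/\mathbb{P}(G)}\cong \det\Omega_{Y^s}\otimes \det\mathfrak{p}.
\]
Unimodularity of $Y$ makes $\det\Omega_{Y^s}$ trivial as a $G$-equivariant line bundle, and since $\mathbb{C}^{\ast}\subset G$ acts trivially on $Y$ this is the same as $\mathbb{P}(G)$-equivariant triviality. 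Moreover, the determinant of the adjoint representation of the reductive group $\mathbb{P}(G)$ is trivial: the nonzero weights are roots, which appear in $\pm\alpha$ pairs, and the Cartan subalgebra and center contribute zero. Hence $\pi^{\ast}\det\Omega_{Y^s/\mathbb{P}(G)}$ is $\mathbb{P}(G)$-equivariantly trivial, and descent along $\pi$ gives $\omega_{Y^s/\mathbb{P}(G)}\cong \mathcal{O}_{Y^s/\mathbb{P}(G)}$.

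Finally, extend this trivialization to $Y\ssslash G$. A codimension estimate, using the free $\mathbb{P}(G)$-action on $Y^s$ together with Luna's étale slice theorem applied along each stratum of positive-dimensional stabilizer in $Y\setminus Y^s$, shows that the open immersion $j\colon Y^s\ssslash G\hookrightarrow Y\ssslash G$ has complement of codimension $\geq 2$. Both $\omega_{Y\ssslash G}$ and $\mathcal{O}_{Y\ssslash G}$ are $S_2$, so each equals $j_{\ast}$ of its restriction to $Y^s\ssslash G$; by the previous step these restrictions are isomorphic, so $\omega_{Y\ssslash G}\cong \mathcal{O}_{Y\ssslash G}$. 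In particular $\omega_{Y\ssslash G}$ is a line bundle, proving that $Y\ssslash G$ is Gorenstein with trivial canonical module. The main obstacle in this strategy is the codimension estimate for the complement of $Y^s\ssslash G$ in $Y\ssslash G$: the $\mathbb{P}(G)$-orbit dimension drops along $Y\setminus Y^s$, so a naive quotient-dimension bound is insufficient, and one must stratify by stabilizer type and analyze each slice representation separately to rule out codimension-one contributions to the quotient.
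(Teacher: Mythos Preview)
The paper does not prove this lemma; it merely cites \cite[Korollary~2]{Knop2} and moves on. So there is no in-paper argument to compare against.

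Your strategy is sound and is essentially the route Knop takes: Hochster--Roberts gives Cohen--Macaulayness of $Y\ssslash G$; on the free locus $Y^s/\mathbb{P}(G)$ the canonical bundle is computed directly via the relative cotangent sequence and descent, using unimodularity of $Y$ and triviality of $\det\mathfrak{p}$ for reductive $\mathbb{P}(G)$; then one extends by $S_2$-ness across a codimension~$\geq 2$ complement. One small point you should also record: $Y^s$ is $\pi$-saturated (a closed orbit of maximal dimension cannot lie in the closure of any other orbit), so $Y^s/\mathbb{P}(G)$ really does embed as an open subscheme of $Y\ssslash G$.

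You correctly flag the only substantive gap: the hypothesis gives $\operatorname{codim}_Y(Y\setminus Y^s)\geq 2$, but what you need is $\operatorname{codim}_{Y\ssslash G}\bigl((Y\ssslash G)\setminus (Y^s/\mathbb{P}(G))\bigr)\geq 2$, and orbit dimensions drop on $Y\setminus Y^s$, so the naive fiber-dimension bound fails. Your proposed fix via Luna slices and stabilizer-type stratification is in the right direction, but be aware that establishing this codimension bound (or circumventing it by computing $(\omega_Y)^G$ more directly) is precisely the technical heart of Knop's paper. So your sketch does not avoid the cited result; it reconstructs its outline while deferring its key step.
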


Let $Y$ be unimodular and generic. By Lemma~\ref{lem:Gorenstein}, the quotient 
$Y\ssslash G$ is Gorenstein and its dualizing 
complex is 
\begin{align}\label{dualcomp}\omega_{Y\ssslash G}=\mathcal{O}_{Y\ssslash G}[\dim Y\ssslash G].
\end{align}
Let $V \to Y$ 
be a $G$-equivariant vector bundle with a $G$-invariant
regular section $s$ such that $V$ is also unimodular and generic. 
We refer to such choices of $G$, $Y$, $V$, and $s$ as \textit{a good data} $(G, Y, V, s)$.

Let $\mathcal{U}:=s^{-1}(0)$ be 
the zero locus of $s$, which is equivalent to the 
derived zero locus as we assumed that $s$ is regular. 
We have the following diagram 
\begin{align}\label{dia:cartesian}
    \xymatrix{
    \mathcal{U}/G \inclusion^-{j} \ar[d]_-{\pi_{\mathcal{U}}} &
    Y/G \ar[d]_-{\pi_Y} \ar@<-0.5ex>[r]_-{0} & V^{\vee}/G 
 \ar@<-0.5ex>[l]_-{\eta} \ar[d]_-{\pi_{V^{\vee}}} \\
 \mathcal{U}\ssslash G \inclusion^-{\overline{j}} & 
 Y\ssslash G \ar@<-0.5ex>[r]_-{\overline{0}} &\ar@<-0.5ex>[l]_-{\overline{\eta}} V^{\vee}\ssslash G. 
 }
\end{align}
Here $0 \colon Y/G \to V^{\vee}/G$ is the zero section, 
$\eta$ is the projection, and the bottom horizontal 
arrows are induced maps on good moduli spaces.

Recall the Koszul duality equivalence in Theorem~\ref{thm:Kduality}
\begin{align*}
\Theta \colon D^b(\mathcal{U}/G) \stackrel{\sim}{\to} 
\mathrm{MF}^{\rm{gr}}(V^{\vee}/G, f).     
\end{align*}
For $\mathcal{E} \in D^b(\mathcal{U}/G)$, let 
$\mathcal{P}=\Theta(\mathcal{E})$. 
Then we have the following isomorphism in 
$D_{\rm{qc}}(Y/G)$, see~\cite[Lemma~2.7]{PTquiver}:
\begin{align*}
    j_{\ast}\mathcal{H}om_{\mathcal{U}/G}(\mathcal{E}, \mathcal{E}) \stackrel{\cong}{\to} \eta_{\ast} \mathcal{H} om_{V^{\vee}/G}(\mathcal{P}, \mathcal{P}). 
\end{align*}
Here $\mathcal{H}om_{V^{\vee}/G}(\mathcal{P}, \mathcal{P})$
is the internal homomorphism of matrix factorizations, 
which is an object in $\mathrm{MF}^{\rm{gr}}(V^{\vee}/G, 0)$.
As $V^{\vee}/G$ is smooth, 
by taking a resolution of $\mathcal{P}$
by vector bundles, 
we obtain 
the natural trace map 
in $\mathrm{MF}^{\rm{gr}}(V^{\vee}/G, 0)$:
\begin{align*}
    \mathrm{tr} \colon \mathcal{H}om_{V^{\vee}/G}(\mathcal{P}, \mathcal{P})
    \to \mathcal{O}_{V^{\vee}/G}. 
\end{align*}
By taking $\pi_{V^{\vee}\ast}$, we obtain 
the morphism in $D^{\rm{gr}}(V^{\vee}\ssslash G)$:
\begin{align}\label{tr:push}
    \pi_{V^{\vee}\ast}\mathrm{tr} \colon 
    \pi_{\ast}\mathcal{H}om_{V^{\vee}/G}(\mathcal{P}, \mathcal{P})
    \to \mathcal{O}_{V^{\vee}\ssslash G}
\end{align}
Here the grading on $V^{\vee}\ssslash G$
is induced by the fiberwise weight two
$\mathbb{C}^{\ast}$-action on 
$V^{\vee}/G \to Y/G$, see Subsection~\ref{subsec:graded}
for the graded category $D^{\rm{gr}}(V^{\vee}\ssslash G)$. 

We say that $\mathcal{P}$ has \textit{nilpotent support} if:
\begin{align*}\mathrm{Supp}(\mathcal{P}) \subset \pi_{V^{\vee}}^{-1}(\mathrm{Im}(\overline{0})).
\end{align*}
We say $\mathcal{E}$ has \textit{nilpotent singular support} with respect to $(G, Y, V, s)$ if $\mathcal{P}$ has nilpotent support. 

Assume that $\mathcal{P}$ has nilpotent support.
Then 
the object $\pi_{V^{\vee}\ast}\mathcal{H}om_{V^{\vee}/G}(\mathcal{P}, \mathcal{P})$ in $D^{\rm{gr}}(V^{\vee}\ssslash G)$
has proper support over $Y\ssslash G$. 
Moreover, we have 
\begin{align}\label{equalityomegao}
 \omega_{V^{\vee}\ssslash G}=
 \mathcal{O}_{V^{\vee}\ssslash G}[\dim V^{\vee}\ssslash G](-2 \rk V)=
 \mathcal{O}_{V^{\vee}\ssslash G}[\dim Y\ssslash G-\rk V]
\end{align}
in $D^{\rm{gr}}(V^{\vee}\ssslash G)$,
where $(1)$ is the grade shift functor of $D^{\rm{gr}}(V^{\vee}\ssslash G)$
which is isomorphic to the cohomological shift 
functor $[1]$. 
Then by Lemma~\ref{lem:psupport} below, the morphism 
(\ref{tr:push}) induces the morphism 
in $D^b(Y\ssslash G)$:
\begin{align}\label{induce:ap}
    a_{\mathcal{P}} \colon 
    \eta_{\ast}\pi_{V^{\vee}\ast}\mathcal{H}om_{V^{\vee}/G}(\mathcal{P}, \mathcal{P}) \to \mathcal{O}_{Y\ssslash G}
[\rk V]. 
\end{align}
Suppose that $\mathcal{U}\ssslash G$ is Gorenstein 
with trivial canonical module 
and has dimension 
$\dim Y\ssslash G -\rk V$. 
Then $\overline{j}^! \mathcal{O}_{Y\ssslash G}=
\mathcal{O}_{\mathcal{U}\ssslash G}[-\rk V]$. 
Since there are isomorphisms:
\begin{align*}
\overline{\eta}_{\ast}\pi_{V^{\vee}\ast}\mathcal{H}om_{V^{\vee}/G}(\mathcal{P}, \mathcal{P})
&=\pi_{Y\ast}\eta_{\ast}\mathcal{H}om_{V^{\vee}/G}(\mathcal{P}, \mathcal{P}) \\
&\stackrel{\cong}{\to} \pi_{Y\ast}j_{\ast}\mathcal{H}om_{\mathcal{U}/G}(\mathcal{E}, \mathcal{E}) \\
&=\overline{j}_{\ast}\pi_{\mathcal{U}\ast}\mathcal{H}om_{\mathcal{U}/G}(\mathcal{E}, \mathcal{E}),
\end{align*}
the morphism (\ref{induce:ap}) induces the trace
morphism in $D^b(\mathcal{U}\ssslash G)$:
\begin{align}\label{const:tre}
 \mathrm{tr}_{\mathcal{E}} \colon 
 \pi_{\mathcal{U}\ast}\mathcal{H}om_{\mathcal{U}/G}(\mathcal{E}, \mathcal{E})
 \to \overline{j}^{!}\mathcal{O}_{Y\ssslash G}[\rk V]
 =\mathcal{O}_{\mathcal{U}\ssslash G}. 
\end{align}

We have used the following lemma 
in the above construction: 
\begin{lemma}\label{lem:psupport}
Let 
$X, Y$ be Noetherian $\mathbb{C}$-schemes with $\mathbb{C}^{\ast}$-actions, 
and let
$f \colon X \to Y$ be a $\mathbb{C}^{\ast}$-equivariant morphism. 
Let $\omega_X$ be a dualizing complex for $X$. 
If $\mathcal{E} \in D^{\rm{gr}}(X)$ 
has proper support over 
$Y$, then 
there is a natural isomorphism 
\[\phi_f \colon \Hom_X(\mathcal{E}, \omega_X) \stackrel{\cong}{\to} \Hom_Y(f_{\ast}\mathcal{E}, \omega_Y).\] 
Moreover, let $g \colon Y \to Z$ be another 
$\mathbb{C}^{\ast}$-equivariant morphism 
and assume the support of $\mathcal{E}$ is proper over $Z$. 
Let $h=g \circ f \colon X \to Z$.  
Then we have 
\begin{align*}
    \phi_h=\phi_g \circ \phi_f \colon 
    \Hom_X(\mathcal{E}, \omega_X) \stackrel{\phi_f}{\to} 
    \Hom_Y(f_{\ast}\mathcal{E}, \omega_Y) \stackrel{\phi_g}{\to} \Hom_Z(h_{\ast}\mathcal{E}, \omega_Z). 
\end{align*}
\end{lemma}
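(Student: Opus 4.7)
The plan is to derive this directly from Grothendieck--Serre duality. Since $\mathcal{E}$ has support proper over $Y$, the adjunction between $f_\ast$ and the twisted inverse image functor $f^!$ applies to $\mathcal{E}$: this is the ``compactly supported'' version of Grothendieck duality, which holds even without properness of $f$ itself (one may equivalently restrict $f$ to a closed subscheme $Z \subset X$ containing the support of $\mathcal{E}$ on which $f|_Z$ becomes proper, and then invoke classical Grothendieck duality for $f|_Z$). Moreover, all the constructions involved respect the $\mathbb{C}^\ast$-equivariant structure, so the adjunction extends to the graded setting. Specializing the adjunction to $\mathcal{F} = \omega_Y$ yields a natural isomorphism
\[
\Hom_X(\mathcal{E}, f^! \omega_Y) \stackrel{\cong}{\longrightarrow} \Hom_Y(f_\ast \mathcal{E}, \omega_Y).
\]

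Next, I would identify $f^! \omega_Y$ with $\omega_X$. Since $\omega_Y$ is dualizing and $f^!$ preserves dualizing complexes, $f^! \omega_Y$ is itself a dualizing complex on $X$. For the specific dualizing complexes appearing in the applications in Subsection \ref{subsec:trace} (where all schemes are Gorenstein and dualizing complexes are canonically shifts of the structure sheaf, as in \eqref{equalityomegao} and Lemma \ref{lem:Gorenstein}), one has a canonical identification $f^! \omega_Y \cong \omega_X$. Composing this with the adjunction isomorphism defines the desired natural isomorphism $\phi_f$.

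For the functoriality $\phi_h = \phi_g \circ \phi_f$, this follows from the composition law $(g \circ f)^! \cong f^! \circ g^!$ for twisted inverse images, together with the standard fact that iterating the Grothendieck duality adjunctions for $f$ and $g$ agrees with the single adjunction for $h = g \circ f$. The compatibility of these isomorphisms with the identifications $f^! \omega_Y \cong \omega_X$ and $g^! \omega_Z \cong \omega_Y$ (and the resulting $h^! \omega_Z \cong \omega_X$) is a standard verification in Grothendieck duality.

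The main subtle point is the naturality of the identification $f^! \omega_Y \cong \omega_X$: dualizing complexes on a connected Noetherian scheme are unique only up to shift and line bundle twist, so an implicit normalization is required in order for the statement to be meaningful as phrased. In each instance where this lemma is applied in Subsection \ref{subsec:trace}, the relevant schemes $Y \ssslash G$ and $V^\vee \ssslash G$ are Gorenstein with canonically trivial dualizing module, so this normalization is transparent; the trace construction \eqref{const:tre} then unwinds to the standard Grothendieck trace.
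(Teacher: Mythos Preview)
Your approach is essentially the same as the paper's: both use the Grothendieck duality adjunction $f_\ast \dashv f^!$ together with $\omega_X = f^!\omega_Y$, reducing from the general case to the proper case by passing to a closed subscheme $T \subset X$ on which the restricted maps are proper (which you mention parenthetically, and the paper carries out explicitly via d\'evissage to $\mathcal{E} = i_\ast\mathcal{F}$). Your discussion of the normalization of dualizing complexes is more careful than the paper's, which simply writes $\omega_X = f^!\omega_Y$ without comment and implicitly relies on the Gorenstein context of the applications.
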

\begin{proof}
The lemma is obvious if $f$ and $g$ are proper 
since
$\omega_X=f^{!}\omega_Y$, $\omega_Y=g^{!}\omega_Z$
and $f^!$ and $g^!$ are right adjoints to $f_{\ast}$, $g_{\ast}$. 
In general, let $i \colon T \hookrightarrow X$ be 
a closed subscheme such that $f|_{T}$, $g|_{f(T)}$ are
proper. By a standard dévissage argument, it suffices to check the statement for $\mathcal{E}=i_{\ast}\mathcal{F}$
for some $\mathcal{F} \in D^b(T)$. 
Then $\Hom_X(\mathcal{E}, \omega_X)=\Hom_T(\mathcal{F}, \omega_T)$ as $\omega_T=i^{!}\omega_X$. 
Then the lemma holds from the case of $f$, $g$ 
proper. 
\end{proof}

\begin{defn}\label{defn:trace}
Let $(G, Y, V, s)$ be a good data.
Suppose that $\mathcal{U}\ssslash G$ is Gorenstein 
with trivial canonical module and of dimension $\dim Y\ssslash G-\mathrm{rank}\,V$. 
For $\mathcal{E} \in D^b(\mathcal{U}/G)$ with nilpotent singular support with respect to this data, the 
morphism 
\begin{align*}
    \mathrm{tr}_{\mathcal{E}} \colon 
  \pi_{\mathcal{U}\ast}\mathcal{H}om_{\mathcal{U}/G}(\mathcal{E}, \mathcal{E})
 \to \mathcal{O}_{\mathcal{U}\ssslash G}   
\end{align*}
constructed 
in (\ref{const:tre}) is called the \textit{trace map 
determined by} $(G, Y, V, s)$. 
\end{defn}

The following lemma is immediate from 
the construction of the trace map:
\begin{lemma}\label{compare:trace0}
For another good data $(G', Y', V', s')$, 
suppose that there is a commutative diagram 
of stacks
\begin{align*}
\xymatrix{
V/G \ar[r]^{\cong} \ar[d]
& V'/G' \ar[d] \\  
Y/G \ar[r]^{\cong} \ar@/^18pt/[u]_-{s} & Y'/G', \ar@/_18pt/[u]^-{s'}
}    
\end{align*}
where the horizontal arrows are isomorphisms. 
Let $\mathcal{U}'=(s')^{-1}(0)$ and 
consider the induced equivalence 
$\phi \colon \mathcal{U}/G \stackrel{\cong}{\to} \mathcal{U}'/G'$. 
For $\mathcal{E} \in D^b(\mathcal{U}/G)$ with nilpotent singular support for $(G, Y, V, s)$, the object $\phi_*\mathcal{E}$ has nilpotent singular support with respect to $(G', Y', V', s')$.
Further, the 
trace map $\mathrm{tr}_{\mathcal{E}}$ determined by 
$(G, Y, V, s)$ is identified with 
that of $\mathrm{tr}_{\phi_{\ast}\mathcal{E}}$
determined by $(G', Y', V', s')$, i.e. 
the following diagram commutes
\begin{align*}
\xymatrix{
\pi_{\mathcal{U}'\ast}\mathcal{H}om_{\mathcal{U}'/G'}(\phi_{\ast}\mathcal{E}, \phi_{\ast}\mathcal{E}) \ar[r]^-{\mathrm{tr}_{\phi_{\ast}\mathcal{E}}}\ar[d]_-{\cong} & 
\mathcal{O}_{\mathcal{U}'\ssslash G'} \ar[d]_-{\cong} \\
\phi_{\ast}\pi_{\mathcal{U}_{\ast}}\mathcal{H}om_{\mathcal{U}/G}(\mathcal{E}, \mathcal{E}) \ar[r]^-{\phi_{\ast}\mathrm{tr}_{\mathcal{E}}} & \phi_{\ast}\mathcal{O}_{\mathcal{U}\ssslash G}, 
}    
\end{align*}
where the vertical arrows are natural isomorphisms 
induced by $\phi$. 
\end{lemma}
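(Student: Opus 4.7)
The plan is to observe that every ingredient in Definition \ref{defn:trace} is built canonically from the good data $(G,Y,V,s)$, and so is automatically respected by the isomorphism of stacks in the hypothesis. Accordingly, the proof will consist of tracking this naturality through each layer of the construction.

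First I would unpack the assumption: the isomorphism $V/G\simeq V'/G'$ commutes with both the zero sections and the sections $s,s'$, so by dualizing it induces an isomorphism of the entire diagram (\ref{dia:cartesian}) for $(G,Y,V,s)$ with the corresponding diagram for $(G',Y',V',s')$, compatible with good moduli spaces. In particular, $\phi\colon \mathcal{U}/G\stackrel{\sim}{\to}\mathcal{U}'/G'$ is identified with the restriction of these isomorphisms, and the Koszul factorizations $\mathcal{K}$ and $\mathcal{K}'$ correspond to each other under the induced equivalence $\mathrm{MF}^{\rm gr}(V^\vee/G,f)\simeq \mathrm{MF}^{\rm gr}((V')^\vee/G',f')$. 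Thus by Theorem \ref{thm:Kduality} the Koszul duality $\Theta'$ agrees with $\Theta$ up to this identification, so $\mathcal{P}':=\Theta'(\phi_\ast\mathcal{E})$ corresponds to $\mathcal{P}=\Theta(\mathcal{E})$. Since the condition of nilpotent support is intrinsic to the diagram (\ref{dia:cartesian}) (it refers only to the closed substack $\pi_{V^\vee}^{-1}(\mathrm{Im}\,\overline{0})$, which corresponds to $\pi_{(V')^\vee}^{-1}(\mathrm{Im}\,\overline{0}')$ under the isomorphism), the nilpotent support property transports from $\mathcal{P}$ to $\mathcal{P}'$, giving the first assertion.

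Next I would establish the trace compatibility. The morphism $\mathrm{tr}\colon \mathcal{H}om_{V^\vee/G}(\mathcal{P},\mathcal{P})\to\mathcal{O}_{V^\vee/G}$ is the canonical categorical trace on $\mathrm{MF}^{\rm gr}(V^\vee/G,0)$, hence it is compatible with any equivalence of the ambient stacks preserving the grading. Pushing forward along $\pi_{V^\vee}$ preserves this compatibility by functoriality, yielding that the maps (\ref{tr:push}) for $\mathcal{P}$ and $\mathcal{P}'$ are identified. The passage from (\ref{tr:push}) to the morphism $a_\mathcal{P}$ in (\ref{induce:ap}) uses the identifications $\omega_{V^\vee\ssslash G}=\mathcal{O}_{V^\vee\ssslash G}[\dim Y\ssslash G-\operatorname{rk}V]$ from (\ref{equalityomegao}) together with Lemma \ref{lem:psupport}; both of these are canonical from the good data, and Lemma \ref{lem:psupport} is natural under $\mathbb{C}^\ast$-equivariant isomorphisms. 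Finally, the passage from $a_\mathcal{P}$ to $\mathrm{tr}_\mathcal{E}$ in (\ref{const:tre}) uses only the identification $\overline{j}^!\mathcal{O}_{Y\ssslash G}=\mathcal{O}_{\mathcal{U}\ssslash G}[-\operatorname{rk}V]$ coming from the Gorenstein/trivial-canonical hypothesis, which again transports under $\phi$. Stringing these naturalities together gives the commutative square in the statement.

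I do not expect a genuine obstacle: the whole content is the assertion that a formula assembled functorially from $(G,Y,V,s)$ is invariant under isomorphism of the data. The only point requiring a little care is verifying that Lemma \ref{lem:psupport} (and the identification of dualizing complexes) is indeed strictly natural under the isomorphism of diagrams, so that the two versions of (\ref{induce:ap}) can be identified rather than merely being abstractly isomorphic; this can be handled by choosing a common resolution of $\mathcal{P}$ by graded vector bundles and transporting it through $\phi$, so that both trace morphisms are represented by literally the same map of complexes.
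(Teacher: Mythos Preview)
Your proposal is correct and matches the paper's approach: the paper simply declares the lemma ``immediate from the construction of the trace map'' and gives no further argument. Your sketch is exactly the unpacking of what ``immediate'' means here---every step of Definition~\ref{defn:trace} is functorial in the good data, so an isomorphism of good data transports the entire construction---and there is nothing to add.
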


Suppose that $\mathcal{E} \in D^b(\mathcal{U}/G)$ is a 
perfect complex. 
In this case, there is a canonical trace map 
$\mathcal{H}om_{\mathcal{U}/G}(\mathcal{E}, \mathcal{E}) \to \mathcal{O}_{\mathcal{U}/G}$. 
By taking the push-forward to $\mathcal{U}\ssslash G$, 
we obtain the map 
\begin{align}\label{trperf}
    \pi_{\mathcal{U}\ast}\mathcal{H}om_{\mathcal{U}/G}(\mathcal{E}, \mathcal{E}) \to 
    \mathcal{O}_{\mathcal{U}\ssslash G}. 
\end{align}
Note that the above construction is independent of 
a choice of $(G, Y, V, s)$. 
The following lemma is straightforward to 
check, and we omit the details. 
\begin{lemma}\label{lem:trperf}
If $\mathcal{E}$ is a perfect complex, then 
$\mathrm{tr}_{\mathcal{E}}$ is the same as the map (\ref{trperf}). 
\end{lemma}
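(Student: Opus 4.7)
The plan is to match the two constructions step by step, reducing to the trivial case $\mathcal{E} = \mathcal{O}_{\mathcal{U}/G}$. Write $\mathcal{P} = \Theta(\mathcal{E}) = \mathcal{K} \otimes_{\mathcal{O}_{\mathcal{U}/G}} \mathcal{E}$. The first step is to verify that the canonical trace $\mathcal{H}om_{V^\vee/G}(\mathcal{P}, \mathcal{P}) \to \mathcal{O}_{V^\vee/G}$ on matrix factorizations corresponds, under the isomorphism $j_\ast\mathcal{H}om_{\mathcal{U}/G}(\mathcal{E}, \mathcal{E}) \cong \eta_\ast\mathcal{H}om_{V^\vee/G}(\mathcal{P}, \mathcal{P})$ from~\cite[Lemma~2.7]{PTquiver}, to the $j_\ast$-pushforward of the canonical trace $\mathcal{H}om_{\mathcal{U}/G}(\mathcal{E}, \mathcal{E}) \to \mathcal{O}_{\mathcal{U}/G}$. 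This is a compatibility of the Koszul equivalence $\Theta$ with the trace pairings: both traces factor through the evaluation $\mathcal{E} \otimes \mathcal{E}^\vee \to \mathcal{O}$, so it suffices to check that $\Theta$ intertwines duality functors, which follows from writing $\Theta(\mathcal{E}^\vee) = \mathcal{K}^\vee \otimes \mathcal{E}^\vee$ and the fact that the Koszul factorization $\mathcal{K}$ plays the role of the ``identity kernel'' for the equivalence.

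The second step is to verify that the Grothendieck duality adjustment in the construction of $\mathrm{tr}_\mathcal{E}$ in~\eqref{const:tre} --- pushforward along $\pi_{V^\vee}$, the application of Lemma~\ref{lem:psupport}, and the identification $\overline{j}^!\mathcal{O}_{Y\ssslash G} = \mathcal{O}_{\mathcal{U}\ssslash G}[-\rk V]$ via the Gorenstein triviality of the canonical module --- transforms the matrix factorization trace of the previous step into the $\pi_{\mathcal{U}\ast}$-pushforward of the canonical perfect complex trace. The key input is the compatibility-with-composition in the second part of Lemma~\ref{lem:psupport}, applied to the factorization of $\pi_\mathcal{U}$ through $V^\vee\ssslash G$; this guarantees that the chain of duality identifications in \eqref{const:tre} collapses, when $\mathcal{E}$ is perfect, to a single Grothendieck duality identification already available in $D^b(\mathcal{U}\ssslash G)$.

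A more conceptual (and shorter) alternative is a devissage argument: both trace maps are additive on distinguished triangles and $\mathcal{O}_{\mathcal{U}\ssslash G}$-linear in $\mathcal{E}$, so since perfect complexes on $\mathcal{U}/G$ are classically generated by the trivial bundles $\mathcal{O}_{\mathcal{U}/G} \otimes \Gamma_{G}(\chi)$, it is enough to verify the coincidence when $\mathcal{E} = \mathcal{O}_{\mathcal{U}/G}$; in that case both sides reduce to the identity endomorphism of $\pi_{\mathcal{U}\ast}\mathcal{O}_{\mathcal{U}/G} = \mathcal{O}_{\mathcal{U}\ssslash G}$. The main obstacle is carefully executing the first step: although the ``identity kernel'' picture for $\mathcal{K}$ is intuitively clear, a precise check requires tracking the normalization of the Koszul factorization through the graded matrix factorization conventions to rule out scalar or sign discrepancies, and this is precisely the sort of bookkeeping that the authors characterize as ``straightforward'' but choose to omit.
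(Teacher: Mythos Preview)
The paper omits the proof entirely, stating only that it is ``straightforward to check.'' Your two-step outline --- compatibility of the Koszul equivalence with canonical traces, followed by tracking the Grothendieck duality identifications through Lemma~\ref{lem:psupport} --- is the natural reconstruction of what that check entails, and is almost certainly what the authors had in mind.

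One caution on your d\'evissage alternative. The phrase ``both trace maps are additive on distinguished triangles'' is not well-posed: the assignment $\mathcal{E} \mapsto \pi_{\mathcal{U}\ast}\mathcal{H}om(\mathcal{E},\mathcal{E})$ is not an exact functor of $\mathcal{E}$, so there is no obvious mechanism by which a trace map on $\mathcal{E}$ restricts or decomposes along a triangle $\mathcal{E}' \to \mathcal{E} \to \mathcal{E}''$. What one can do is reduce to the generators $\mathcal{O}_{\mathcal{U}/G} \otimes \Gamma_G(\chi)$ and observe that the representation $\Gamma_G(\chi)$ passes through both constructions as multiplication by $\dim \Gamma_G(\chi)$, reducing to $\mathcal{E} = \mathcal{O}_{\mathcal{U}/G}$. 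But even in that base case the claim that ``both sides reduce to the identity'' is not free: on the matrix factorization side one has $\mathcal{P} = \mathcal{K}$, and computing the trace $\mathcal{H}om_{V^\vee/G}(\mathcal{K},\mathcal{K}) \to \mathcal{O}_{V^\vee/G}$ and pushing it through the duality identifications of~\eqref{const:tre} is exactly the bookkeeping of your first step, specialized to $\mathcal{E} = \mathcal{O}$. So the d\'evissage does not genuinely bypass the main obstacle you identify; it only localizes it to a single object.
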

\subsection{Comparison of the trace maps}
In this subsection, we compare 
the trace map constructed in the previous 
subsection under a change of the presentations of
quasi-smooth affine derived schemes. 

Suppose that $(G, Y, V, s)$ is a good data and let 
$W$ be another $G$-representation 
such that $\det W$ is a trivial $G$-character. 
Let $i \colon Y/G \hookrightarrow (Y\oplus W)/G$
be the embedding given by $y\mapsto (y, 0)$. 
We have the section $s'$ of the vector bundle 
$V \oplus W \oplus W \to Y \oplus W$ given by 
$(y, w) \mapsto (s(y), w, w)$, whose zero locus
is $\mathcal{U} \subset Y$. 
Then $(G, Y\oplus W, V\oplus W\oplus W, s')$ is a good data. Let $\mathcal{E}\in D^b(\mathcal{U}/G)$ be a complex with nilpotent singular support with respect to $(G, Y, V, s)$. Then $\mathcal{E}$ also has nilpotent singular support with respect to $(G, Y\oplus W, V\oplus W\oplus W, s')$ and we can
consider the trace determined by the good data $(G, Y\oplus W, V\oplus W\oplus W, s')$: 
\begin{align*}
    \mathrm{tr}_{\mathcal{E}}' \colon 
    \mathcal{H}om_{\mathcal{U}/G}(\mathcal{E}, \mathcal{E})
    \to \mathcal{O}_{\mathcal{U}\ssslash G}.
\end{align*}

\begin{lemma}\label{lem:trace=}
Let $\mathcal{E}\in D^b(\mathcal{U}/G)$ have nilpotent singular support with respect to the good data $(G, Y, V, s)$. Then $\mathcal{E}$ also has nilpotent singular support with respect to the good data $(G, Y\oplus W, V\oplus W\oplus W, s')$. 
    Further, we have that $\mathrm{tr}_{\mathcal{E}}=\mathrm{tr}_{\mathcal{E}}'$.
\end{lemma}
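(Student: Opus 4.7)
The plan is to pass to the Koszul dual matrix factorization side, where the enlarged data differs from the original by adding a hyperbolic quadratic in the extra variables. An equivariant graded Knörrer periodicity equivalence then identifies the two matrix factorization categories, and one has to check that it intertwines the two constructions of the trace map.

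First I would observe that the derived zero locus of $s'$ agrees with $\mathcal{U}$: the map $w \mapsto (w,w)$ is a regular embedding of $W$ in $W \oplus W$, so the extra generators in $s'$ form a Koszul complex for a regular sequence whose classical zero locus is $0 \in W$, and the nilpotent singular support condition transfers to the new data because the map on $(-1)$-shifted cotangents is a linear projection. Next I would translate both constructions through the Koszul duality of Theorem \ref{thm:Kduality}. The potential for the new data, after a linear change of coordinates in $W^\vee \oplus W^\vee$, takes the form $f'(y,w,v^\ast,u_1,u_2) = f(y,v^\ast) + \langle w, u_1\rangle$, with $u_2$ a free coordinate. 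The pair $(w, u_1)$ on $W \oplus W^\vee$ carries a non-degenerate hyperbolic pairing, so graded Knörrer periodicity in the equivariant setting yields an equivalence
\[
\Psi \colon \mathrm{MF}^{\mathrm{gr}}(V^\vee/G, f) \stackrel{\sim}{\to} \mathrm{MF}^{\mathrm{gr}}((V \oplus W \oplus W)^\vee/G, f'),
\]
and from the explicit description of the Koszul factorizations of Theorem \ref{thm:Kduality} one has $\Psi \circ \Theta \cong \Theta'$, so $\Psi(\mathcal{P}) \cong \mathcal{P}'$ where $\mathcal{P} = \Theta(\mathcal{E})$ and $\mathcal{P}' = \Theta'(\mathcal{E})$.

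Next I would verify that the traces match after applying $\Psi$. Under $\Psi$ the internal Hom $\mathcal{H}om(\mathcal{P}',\mathcal{P}')$ is the pullback of $\mathcal{H}om(\mathcal{P},\mathcal{P})$ tensored with $\mathcal{H}om(\mathcal{K}_W, \mathcal{K}_W)$, where $\mathcal{K}_W$ is the Koszul factorization for the hyperbolic pairing $\langle w, u_1\rangle$ on $W \oplus W^\vee$. This factor is self-dual, and its matrix-factorization trace pushes forward to the canonical generator of $\omega_{(W \oplus W^\vee)\ssslash G}$; the triviality of $\det W$ as a $G$-character is exactly what is needed to identify this generator with $1 \in \mathcal{O}_{Y \ssslash G}$ after further push-forward (without a residual character twist), so the composite trace for the enlarged data factors as $\mathrm{tr}_{\mathcal{E}}$ tensored with a canonical identity. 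Lemma \ref{lem:psupport} then lets one assemble the iterated pushforwards through the tower of affine quotients compatibly.

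Finally, a bookkeeping of shifts confirms the equality. The new data has $\mathrm{rk}\,V' = \mathrm{rk}\,V + 2\dim W$ and $\dim Y'\ssslash G = \dim Y\ssslash G + \dim W$, while the extra $W$-push-forward contributes a shift $[-\dim W]$ via Serre duality (with no line-bundle twist since $\det W$ is trivial), so the net shift in the analogue of \eqref{induce:ap} is $[\mathrm{rk}\,V + 2\dim W - \dim W] = [\mathrm{rk}\,V + \dim W]$; combined with the extra $[-\dim W]$ coming from the Gorenstein duality $\overline{j}'^!\mathcal{O}_{Y'\ssslash G} = \mathcal{O}_{\mathcal{U}\ssslash G}[-\mathrm{rk}\,V']$, one lands in $\mathcal{O}_{\mathcal{U}\ssslash G}$ with the same shift as in \eqref{const:tre}. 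Appealing to Lemma \ref{compare:trace0} for the cosmetic identification of presentations, one concludes $\mathrm{tr}_{\mathcal{E}} = \mathrm{tr}'_{\mathcal{E}}$. The main obstacle is the second step: pinning down the naturality of the Knörrer periodicity equivalence $\Psi$ on internal Homs so that the matrix-factorization traces are literally identified (not merely proportional), which requires a careful tracking of the self-duality of $\mathcal{K}_W$ and is precisely where the hypothesis that $\det W$ is a trivial $G$-character enters essentially.
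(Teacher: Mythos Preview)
Your strategy---pass to the matrix-factorization side via Koszul duality, invoke Kn\"orrer periodicity for the extra hyperbolic summand, and track the two trace maps through the resulting identification---is exactly the paper's approach. The paper sets $\Phi(-) = (-) \otimes \mathcal{K}$ with $\mathcal{K}$ the Koszul factorization of the pairing $q$ on $W \oplus W^\vee$, and the decisive computation (which replaces your ``self-duality of $\mathcal{K}_W$'' heuristic) is the identification $\mathcal{H}om(\mathcal{K}, \mathcal{K}) \cong i_{0\ast}\mathcal{O}_{BG}$, the pushforward of $\mathcal{O}$ from the origin of $(W\oplus W^\vee)/G$. This yields $\mathcal{H}om(\mathcal{Q}, \mathcal{Q}) \cong i_{V^\vee\ast}\mathcal{H}om(\mathcal{P}, \mathcal{P})$, and the comparison of traces becomes a commuting square whose bottom edge is the adjunction morphism $i_{V^\vee\ast}\mathcal{O}_{V^\vee/G} \to \mathcal{O}$ coming from $i_{V^\vee}^{!}\mathcal{O} \cong \mathcal{O}_{V^\vee/G}$ (this is where triviality of $\det W$ enters). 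Pushing forward and applying Lemma~\ref{lem:psupport} finishes the argument; this is precisely the step you flag as ``the main obstacle,'' resolved concretely.

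One wrinkle in your write-up: your change of coordinates leaves $u_2$ as a free weight-two direction, and then the asserted Kn\"orrer equivalence $\Psi$ cannot land in $\mathrm{MF}^{\mathrm{gr}}((V\oplus W\oplus W)^\vee/G, f')$---adding a free weight-two variable is not an equivalence of graded MF categories (Koszul duality sends it to tensoring with $D^b(\Spec\mathbb{C}[\varepsilon])$). This is actually a symptom of a typo in the lemma statement: the enlarged good data should be $(G, Y\oplus W, V\oplus W, s')$ with $s'(y,w) = (s(y),w)$, not $V \oplus W \oplus W$ with $s'(y,w) = (s(y),w,w)$. The latter section is not regular (its zero locus has codimension $\mathrm{rk}\,V + \dim W < \mathrm{rk}(V\oplus W\oplus W)$), so Definition~\ref{defn:trace} does not even produce a $\mathrm{tr}'_\mathcal{E}$. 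The paper's own proof, its shift $[\mathrm{rk}\,V + \dim W]$ in the analogue of \eqref{induce:ap}, and its application in the proof of Theorem~\ref{thm:Serre:etale} via the decomposition~\eqref{decom:TW} all visibly use the single-$W$ version. For that corrected statement your $u_2$ disappears, no coordinate change is needed, and your outline coincides with the paper's argument.
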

\begin{proof}
We have the following diagram 
\begin{align}\label{diagXc2}
    \xymatrix{
    &   &    &   (Y\oplus W)/G 
    \ar[d]^(.3){\pi_{Y\oplus W}}|\hole &  (V^{\vee} \oplus W \oplus W^{\vee})/G\ar[d]^-{\pi_{V^{\vee} \oplus W \oplus W^{\vee}}} \ar[l]_-{p}\\
     \mathcal{U}/G \inclusion^-{j} \ar[d]_-{\pi_{\mathcal{U}}} & Y/G \ar[rru]^-{i_Y}  \ar[d]_-{\pi_Y} 
      & V^{\vee}/G \ar[l]^-{\eta}
      \ar[d]_(.3){\pi_{V^{\vee}}} \ar[rru]^(.3){i_{V^{\vee}}} & 
      (Y\oplus W)\ssslash G 
      & (V^{\vee}\oplus W \oplus W^{\vee})\ssslash G \ar[l]_-{\overline{p}}\\
      \mathcal{U}\ssslash G \inclusion^-{\overline{j}} & Y\ssslash G \ar[rru]^(.3){\overline{i}_{Y}}|\hole 
      & V^{\vee}\ssslash G 
       \ar[rru]_-{\overline{i}_{V^{\vee}}} \ar[l]^-{\overline{\eta}} & & &
      }
\end{align}
Let $q \colon W \oplus W^{\vee} \to \mathbb{C}$
be the natural non-degenerate pairing. 
From the construction of the Koszul equivalences, 
there is a commutative diagram: 
\begin{align*}
    \xymatrix{
D^b(\mathcal{U}/G) \ar[r]_-{\sim}^-{\Theta} \ar@{=}[d] & 
\mathrm{MF}^{\rm{gr}}(V^{\vee}/G, f) 
\ar[d]^-{\Phi}_-{\sim} \\
D^b(\mathcal{U}/G) \ar[r]_-{\sim}^-{\Theta'} &
\mathrm{MF}^{\rm{gr}}((V^{\vee}\oplus W \oplus W^{\vee})/G, 
f+q).
    }
\end{align*}
Here, 
the horizontal arrows are the Koszul equivalences from Theorem~\ref{thm:Kduality}, 
and 
$\Phi$ is the Kn\"{o}rrer periodicity
equivalence, given by 
$\Phi(-)=(-)\otimes_{\mathbb{C}} \mathcal{K}$. 
The Koszul factorization $\mathcal{K}$ 
of $q$ has the form
\begin{align*}
 \mathcal{K}=\left(\bigwedge^{\rm{even}}
 W \otimes {\mathcal{O}_{W \oplus W^{\vee}}} \rightleftarrows
 \bigwedge^{\rm{odd}}W \otimes \mathcal{O}_{W \oplus W^{\vee}}
 \right) \in \mathrm{MF}^{\rm{gr}}((W \oplus W^{\vee})/G, q)
\end{align*}
and is isomorphic to $\mathcal{O}_{(W \oplus \{0\})/G}$, see~\cite[Proposition~3.20]{MR3895631}. 
In the above, the grading is given by the 
$\mathbb{C}^{\ast}$-action on $W \oplus W^{\vee}$ of
weight $(0, 2)$. 
By a diagram chasing, we see that 
\[\mathcal{Q}:=\Theta'(\mathcal{E})=\Phi(\mathcal{P})\] has support in $\mathrm{Im}(\overline{0})$, where $\overline{0}\colon (Y\oplus W)\ssslash G\to (Y\oplus W\oplus W^\vee)\ssslash G$. Then $\mathcal{E}$ has nilpotent singular support with respect to $(G, Y\oplus W, V\oplus W\oplus W, s')$.

Let $i_0 \colon BG \hookrightarrow (W \oplus W^{\vee})/G$
be the inclusion of the origin. 
We have the Koszul equivalence
\begin{align*}
    D^b(BG) \stackrel{\sim}{\to}
    \mathrm{MF}^{\rm{gr}}((W\oplus W^{\vee})/G, q)
\end{align*}
which sends $\mathcal{O}_{BG}$ to $\mathcal{K}$. 
Then $\mathcal{H}om(\mathcal{K}, \mathcal{K})=i_{0\ast}\mathcal{O}_{BG}$, hence
we have the isomorphism 
$i_{V^{\vee}\ast}\mathcal{H}om(\mathcal{P}, \mathcal{P})
    \stackrel{\cong}{\to} \mathcal{H}om(\mathcal{Q}, \mathcal{Q})$.
We have the commutative diagram: 
\begin{align}\label{diagXc22}
    \xymatrix{
i_{V^{\vee}\ast}\mathcal{H}om(\mathcal{P}, \mathcal{P})
\ar[r]^-{\cong}\ar[d]_-{i_{V^{\vee}\ast}
\mathrm{tr}_{\mathcal{P}}} & 
    \mathcal{H}om(\mathcal{Q}, \mathcal{Q}) \ar[d]^-{\mathrm{tr}_{\mathcal{Q}}} \\
    i_{V^{\vee}\ast} \mathcal{O}_{V^{\vee}/G} \ar[r] &
    \mathcal{O}_{(V^{\vee} \oplus W \oplus W^{\vee})/G},
    }
\end{align}
    where the bottom arrow is the morphism obtained by adjunction and using the isomorphism in 
    $D^b(V^{\vee}/G)$:
    \begin{align*}
        i^{!}_{V^{\vee}}\mathcal{O}_{(V^{\vee} \oplus W \oplus W^{\vee})/G}
        \cong \det W \otimes \det (W^{\vee}(2))[-2\dim W]
        =\mathcal{O}_{V^{\vee}/G}. 
    \end{align*}
    Applying $\pi_{V^{\vee} \oplus W \oplus W^{\vee}\ast}$ to the sheaves in the diagram (\ref{diagXc22}), 
    we obtain the commutative diagram: 
   \begin{align*}
    \xymatrix{
\overline{i}_{V^{\vee}\ast}\pi_{V^{\vee}\ast}\mathcal{H}om(\mathcal{P}, \mathcal{P})
\ar[r]^-{\cong}\ar[d]_-{\overline{i}_{V^{\vee}\ast}\pi_{V^{\vee}\ast}
\mathrm{tr}_{\mathcal{P}}} & 
    \pi_{V^{\vee} \oplus W \oplus W^{\vee}\ast}\mathcal{H}om(\mathcal{Q}, \mathcal{Q}) \ar[d]^-{\pi_{V^{\vee}\oplus W \oplus W^{\vee}\ast}\mathrm{tr}_{\mathcal{Q}}} \\
    \overline{i}_{V^{\vee}\ast} \mathcal{O}_{V^{\vee}\ssslash G} \ar[r] &
    \mathcal{O}_{(V^{\vee} \oplus W \oplus W^{\vee}) \ssslash G}.
    }
\end{align*}
Then by Lemma~\ref{lem:psupport}
applied for the map $p$
together with the commutative diagram (\ref{diagXc2}), 
we have the commutative 
diagram in $D^b((Y\oplus W)\ssslash G)$
 \begin{align*}
    \xymatrix{
\overline{i}_{Y\ast}\pi_{Y\ast}\eta_{\ast}\mathcal{H}om(\mathcal{P}, \mathcal{P})
\ar[r]^-{\cong}\ar[d]_-{\overline{i}_{Y\ast}a_{\mathcal{P}}} & 
    \pi_{Y\oplus W\ast}p_{\ast}\mathcal{H}om(\mathcal{Q}, \mathcal{Q}) \ar[d]^-{a_{\mathcal{Q}}} \\
    \overline{i}_{Y\ast} \mathcal{O}_{Y\ssslash G}[\rk V] \ar[r] &
    \mathcal{O}_{(Y \oplus W) \ssslash G}[\rk V+\dim W].
    }
\end{align*}
The bottom arrow is the natural morphism 
by $\overline{i}_{Y}^{!}\mathcal{O}_{(Y\oplus W)\ssslash G}[\dim W]=\mathcal{O}_{Y\ssslash G}$, see (\ref{dualcomp}). 
The lemma follows from the above commutative diagram 
together with the constructions of $\mathrm{tr}_{\mathcal{E}}$ and $\mathrm{tr}_{\mathcal{E}}'$. 
\end{proof}

\subsection{Local triviality of the Serre functor}\label{subsec:proof2}

In this section, we prove Theorem~\ref{thm:Serre:etale} using the trace map to reduce to the local case discussed in Theorem \ref{thm:Serre}. 

We first explain that objects of $\mathbb{T}$ have nilpotent singular support in the sense of Subsection \ref{subsec:trace}. This result is a version of Lemma \ref{cor:support} and Theorem \ref{prop:catsupp}.
To show it follows from Lemma \ref{cor:support}, we need to
mention a stronger form of the \'etale local description of $M$ from Subsection~\ref{subsec:qsmooth}. For each $y\in M$, recall from Remark \ref{rmk:Equiver} the polystable sheaf $F$, the corresponding doubled quiver $Q^{\circ, d}_y$, dimension vector $\bm{d}$, and good moduli spaces of the reduced stacks of representations of the doubled quiver and of the preprojective algebra of $Q^\circ_y$, respectively:
\[\pi_Y\colon \mathscr{Y}(\bm{d})\to Y(\bm{d}),\, \pi_P\colon \mathcal{P}(\bm{d})\to P(\bm{d}).\]
Then there exists a smooth affine scheme $A$ with an action of the reductive group $G:=G(\bm{d})$, a section $s$ of the vector bundle $V:=\mathcal{O}_A\otimes\mathfrak{g}(\bm{d})^\vee$ with zero locus \[\mathcal{Z}:=s^{-1}(0)/G\subset \mathscr{A}:=A/G,\]
and \'etale maps $e''\colon A\ssslash G\to Y(\bm{d})$ and $M\xleftarrow{e} Z:=s^{-1}(0)\ssslash G\xrightarrow{e'}P(\bm{d})$ 
such that the following diagram is Cartesian, the horizontal maps are \'etale, and the vertical maps are good moduli space maps:
\begin{equation}\label{diaggg1}
    \begin{tikzcd}
        \mathscr{A}\arrow[r, "e''"]\arrow[d, "\pi"]&\mathscr{Y}(\bm{d})\arrow[d, "\pi_Y"]\\
        A\ssslash G\arrow[r, "e''"]& Y(\bm{d}),
    \end{tikzcd}
\end{equation}
and such that both squares in the following diagram are Cartesian, the horizontal maps are \'etale, and the vertical maps are good moduli space maps:
\begin{equation}\label{diaggg2}
    \begin{tikzcd}
        \mathcal{M}\arrow[d, "\pi_M"]& \mathcal{Z}\arrow[d, "\pi"]\arrow[l, "e"']\arrow[r, "e'"]& \mathcal{P}(\bm{d})\arrow[d, "\pi_P"]\\
        M&Z\arrow[l, "e"']\arrow[r, "e'"]& P(\bm{d}).
    \end{tikzcd}
\end{equation} See \cite[Theorem 5.11]{DavPurity} for a proof of the second diagram.
To also obtain the first diagram, one can prove a stronger statement accounting for the derived structure of $\mathfrak{M}$ and $\mathscr{P}(\bm{d})$ as in \cite[Theorem 4.2.3]{halpK32}, because $A$ ($R$ in loc.cit.) can be chosen \'etale over $ \mathrm{Ext}^1_S(F, F)=R_{Q^{\circ,d}}(\bm{d})$, see the proof of loc.cit. Then \eqref{diaggg1} and the right square of \eqref{diaggg2} commute, and the left square of \eqref{diaggg2} commutes by \cite[Theorem 4.2.3]{halpK32}.
For such $e\colon Z\to M$ and for $\mathcal{E}\in D^b(\mathcal{M})$, we denote by $\mathcal{E}|_Z=e^*(\mathcal{E})\in D^b(\mathcal{Z})$.

The upshot of the discussion above is that $y\in M$ is in the image of $e\colon Z\to M$ for a good data $(G, A, V, s)$.


\begin{prop}\label{prop710}
Let $\mathcal{E} \in \mathbb{T}$.
    Then $\mathcal{E}|_Z\in D^b(\mathcal{Z})$ has nilpotent singular support with respect to $(G, A, V, s)$.
\end{prop}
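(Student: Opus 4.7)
The strategy is to transport the statement to the local (quiver) setting via the étale slice identifications of diagrams \eqref{diaggg1}--\eqref{diaggg2} and then invoke the categorical support lemma of~\cite{PTquiver}.

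First, I will lift the étale maps $e\colon Z\to M$ and $e'\colon Z\to P(\bm{d})$ to the derived level, so that $\mathcal{Z}$ (regarded as the derived zero locus $s^{-1}(0)/G$) becomes étale both over $\mathfrak{M}=\mathfrak{M}_S^{\sigma}(v)^{\mathrm{red}}$ and over $\mathscr{P}(\bm{d})^{\mathrm{red}}$ (or the appropriate non-reduced model matching the chosen rank of $V$); this is the derived refinement of \cite[Theorem~5.11]{DavPurity} alluded to after \eqref{diaggg2}. Since the defining condition of the quasi-BPS category is intrinsic to étale neighborhoods (Definition~\ref{def:intwind} and Remark~\ref{rmk:qbps}), the pullback $\mathcal{E}|_{\mathcal{Z}}$ then corresponds, under the induced étale equivalence, to the pullback of some object $\mathcal{F}$ in the reduced quasi-BPS category $\mathbb{T}(\bm{d})_{\delta_y}^{\mathrm{red}}$ associated to the Ext-quiver $Q^{\circ,d}_y$, where $\delta_y$ is the restriction of $\delta$ under the formal equivalence of Lemma~\ref{lem:py}. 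The coprimality hypothesis $\gcd(d,w)=1$, together with the generic choice of $\sigma$, ensures that $\delta_y$ corresponds to a weight coprime to $\bm{d}$ in the precise sense required by the multi-vertex version of Lemma~\ref{cor:support}.

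Second, I will apply the multi-vertex categorical support lemma from~\cite[Corollary~5.5]{PTquiver} (generalizing Lemma~\ref{cor:support}) to conclude that $\mathcal{F}$ has singular support contained in
\[\bigl(R_{Q^{\circ,d}_y}(\bm{d}) \oplus \mathfrak{g}(\bm{d})_{\mathrm{nil}}\bigr)/G(\bm{d}) \subset \Omega_{\mathscr{P}(\bm{d})}[-1]^{\mathrm{cl}},\]
i.e.\ the component in the cotangent direction is forced to be nilpotent. Then I will use the compatibility of singular support with smooth (in particular étale) pullback from \cite[Section~7]{AG} to transport this condition back along $e'$ to the presentation $(G,A,V,s)$. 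The identification of $\Omega_{\mathcal{Z}}[-1]^{\mathrm{cl}}$ with $\mathrm{Crit}(f)$ via the Koszul duality of Theorem~\ref{thm:Kduality}, together with Subsection~\ref{subsec:trace}, shows that nilpotency of the $\mathfrak{g}(\bm{d})$-factor is precisely the condition $\mathrm{Supp}(\Theta(\mathcal{E}|_{\mathcal{Z}})) \subset \pi_{V^{\vee}}^{-1}(\mathrm{Im}(\overline{0}))$, i.e.\ nilpotent singular support with respect to $(G,A,V,s)$.

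The main difficulty will be the first step: carefully matching the derived structures (the reduced/non-reduced dichotomy between $\mathfrak{M}$ and its quiver model) and checking that the weight $\delta_y$ extracted from the global window condition satisfies the correct coprimality with $\bm{d}$. The second step is then a direct application of the quiver support lemma together with the standard functoriality of singular support.
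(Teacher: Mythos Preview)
Your overall strategy matches the paper's: reduce to the quiver model via the roof \eqref{diaggg2} and invoke the categorical support lemma from~\cite{PTquiver}. However, there is a genuine gap in your first step. You assert that $\mathcal{E}|_{\mathcal{Z}}$ ``corresponds, under the induced \'etale equivalence, to the pullback of some object $\mathcal{F}$'' in the quiver quasi-BPS category. But there is no \'etale equivalence here: the diagram \eqref{diaggg2} is a roof $\mathcal{M} \leftarrow \mathcal{Z} \rightarrow \mathcal{P}(\bm{d})$ with both legs \'etale, and neither is an isomorphism. In particular, $(e')^{\ast}$ is not essentially surjective, so there is no reason for $e^{\ast}\mathcal{E}$ to lie in its image. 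You may be conflating this with the formal-fiber equivalence of Lemma~\ref{lem:py}, which holds only pointwise and does not globalize over $Z$.

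The paper closes this gap differently: it uses a generation statement (from~\cite[Subsections~2.11, 9.2]{PTtop}) to show that $\mathcal{E}|_{\mathcal{Z}}$ lies in the subcategory of $D^b(\mathcal{Z})$ \emph{classically generated} by the image of $(e')^{\ast}$. Since nilpotent singular support is preserved under \'etale pullback (by \cite[Section~7]{AG}) and under cones, shifts, and summands (the nilpotent locus is closed and conical), this suffices to conclude via~\cite[Lemma~5.4, Corollary~5.5]{PTquiver}. Your second step is fine once the first is repaired in this way; the coprimality check you worry about is handled exactly as in the proof of Theorem~\ref{prop:catsupp}.
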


\begin{proof}
    The object $\mathcal{E}|_Z$ is in the subcategory of $D^b(\mathcal{Z})$ classically generated by the image of $e''\colon D^b(\mathcal{P}(\bm{d}))\to D^b(\mathcal{Z})$, see \cite[Subsection 2.11, Subsection 9.2]{PTtop}. Then the claim follows from \cite[Lemma 5.4, Corollary 5.5]{PTquiver}.
\end{proof}

\begin{proof}[Proof of Theorem~\ref{thm:Serre:etale}]

    By Proposition \ref{prop710}, the object
    $\mathcal{E}|_Z \in D^b(\mathcal{Z})$ 
    admits a trace map
    determined by $(G, A, V, s)$,
    see
    the construction of Subsection~\ref{subsec:trace}
    and Definition~\ref{defn:trace}:
    \begin{align}\label{tr:E}
    \mathrm{tr}_Z \colon \pi_{\ast}\mathcal{H}om(\mathcal{E}|_{Z}, \mathcal{E}|_{Z}) \to \mathcal{O}_Z.
    \end{align}
    By the definition of the relative Serre functor, 
    it corresponds to a morphism 
    \begin{align}\label{mor:Serre}
    \phi_Z \colon \mathcal{E}|_{Z} \to S_{\mathbb{T}/M}(\mathcal{E})|_{Z}.
    \end{align}
    By Lemma~\ref{lem:Serre}, 
    it is enough to show that the above 
    morphism is an isomorphism. 

Set $\mathscr{A}=A/G$ and $\mathcal{V}=V/G$. 
    For each point $u \in Z \hookrightarrow A\ssslash G$, 
    let $\widehat{\mathscr{A}}_u$ be the formal 
    fiber of $\mathscr{A} \to A\ssslash G$ 
    at $u$, and (by abuse of notation) denote by 
    $u \in \mathscr{A}$ the unique closed 
    point in the fiber of $\mathscr{A} \to A\ssslash G$
    at $u$. 
    Let $G_u=\mathrm{Aut}(u) \subset G$. 
    By the \'{e}tale slice theorem, 
    there is an isomorphism 
    \begin{align}\label{isom:formal}
 \widehat{\mathscr{A}}_u \cong 
 \widehat{\mathcal{H}}^{0}(\mathbb{T}_{\mathscr{A}}|_{u})/G_u.
    \end{align}
    From the triangle 
    $\mathbb{T}_{\mathcal{Z}} \to \mathbb{T}_{\mathscr{A}}|_{\mathcal{Z}} \to \mathcal{V}|_{\mathcal{Z}}\to \mathbb{T}_{\mathcal{Z}}[1]$, 
    there is 
    an exact sequence of $G_u$-representations
    \begin{align*}
        0 \to \mathcal{H}^0(\mathbb{T}_{\mathcal{Z}|_{u})} \to \mathcal{H}^0(\mathbb{T}_{\mathscr{A}}|_{u}) 
        \stackrel{ds|_{u}}{\to} 
        \mathcal{V}|_{u} \to \mathcal{H}^1(\mathbb{T}_{\mathcal{Z}}|_{u}) \to 0. 
    \end{align*}
    Hence there exist
    isomorphisms of $G_u$-representations 
    \begin{align}\label{decom:TW}
    \mathcal{H}^0(\mathbb{T}_{\mathscr{A}}|_{u}) \cong 
    \mathcal{H}^0(\mathbb{T}_{\mathcal{Z}}|_{u}) \oplus W, 
    \ \mathcal{V}|_u \cong \mathcal{H}^1(\mathbb{T}_{\mathcal{Z}}|_{u}) \oplus W
\end{align}
for some $G_u$-representation $W$
such that $ds|_{u}=(0, \id_W)$. 

First assume that $u$ corresponds to a point in the 
deepest stratum, so that 
\begin{align}\label{isom:Tgl}\mathcal{H}^0(\mathbb{T}_{\mathcal{Z}}|_{u})=\mathfrak{gl}(d)^{\oplus 2g}, \ \mathcal{H}^1(\mathbb{T}_{\mathcal{Z}}|_{u})
=\mathfrak{gl}(d)_0, \text{ and } G_u=GL(d).\end{align}
Let $\mu_0 \colon \mathfrak{gl}(d)^{\oplus 2g} \to \mathfrak{gl}(d)_0$ be the 
moment map (\ref{mu0:trace}). 
Note that the zero locus of $s|_{\widehat{\mathscr{A}}_u}$
is isomorphic to the formal fiber of $\mu_0^{-1}(0)/GL(d) \to \mu_0^{-1}(0)\ssslash GL(d)$
at the origin, see Lemma~\ref{lem:py}. 
As both of $s$ and $\mu_0$ are regular sections, 
by a formal coordinate change we 
may replace the isomorphism (\ref{isom:formal})
and assume that $s|_{\widehat{\mathscr{A}}_u}$
corresponds to the map 
\begin{align*}
(\mu_0, \id_W) \colon \mathfrak{gl}(d)^{\oplus 2g} \oplus W \to 
\mathfrak{gl}(d)_0 \oplus W
\end{align*}
under the decompositions (\ref{decom:TW}) and isomorphisms (\ref{isom:Tgl}). 
By Lemmas~\ref{compare:trace0} and \ref{lem:trace=}, the trace map (\ref{tr:E}) 
pulled back via 
$\widehat{Z}_u:=\Spec \widehat{\mathcal{O}}_{Z, u} \to Z$
coincides with 
the trace map determined by the good data
$(GL(d), \mathfrak{gl}(d)^{\oplus 2g}, \mathfrak{gl}(d)_0, \mu_0)$.
Then from Theorem~\ref{thm:Serre}, 
the map (\ref{mor:Serre}) 
is an isomorphism at $\widehat{Z}_u$. 

In general,
let
$p \in \mathfrak{gl}(d)^{\oplus 2g}\ssslash GL(d)$
be a point 
corresponding to $u$ as in Lemma~\ref{lem:py}, i.e. 
there is an equivalence 
\begin{align}\label{equiv:MPp}
\widehat{\mathcal{Z}}_{u} \simeq 
\widehat{\mathscr{P}}(d)_p
\end{align}
for the $g$-loop quiver $Q^{\circ}$. 
Let $\mathscr{Y}(d)=\mathfrak{gl}(d)^{\oplus 2g}/GL(d)$ be the moduli stack of representations of the doubled quiver of $Q^\circ$. We also denote by 
$p \in \mathscr{Y}(d)$
the unique closed point in the fiber of 
$\mathscr{Y}(d) \to \mathfrak{gl}(d)^{\oplus 2g}\ssslash GL(d)$
at $p$. 
Then we have decompositions 
\begin{align}\label{decom:W2}
\mathcal{H}^0(\mathbb{T}_{\mathscr{Y}(d)}|_{p})=\mathcal{H}^0(\mathbb{T}_{\mathcal{Z}}|_{u})
    \oplus W', \    \mathfrak{gl}(d)_0=\mathcal{H}^1(\mathbb{T}_{\mathcal{Z}}|_{u})
    \oplus W'
\end{align}
for some $G_u$-representation $W'$.

By Lemma~\ref{compare:trace0} and the isomorphism 
(\ref{isom:formal}), the trace map 
(\ref{tr:E}) at $\widehat{Z}_u$
equals the trace map determined by 
$(G_u, \mathcal{H}^0(\mathbb{T}_{\mathscr{A}}|_{u}), \mathscr{V}|_{u}, s|_{\widehat{\mathscr{A}}_u})$.  
Then by the decomposition (\ref{decom:TW}) and Lemma~\ref{lem:trace=}, 
it also 
equals the trace map
determined by the good data
$(G_u, \mathcal{H}^0(\mathbb{T}_{\mathcal{Z}}|_{u}), \mathcal{H}^1(\mathbb{T}_{\mathcal{Z}}|_{u}), \kappa)$. 
Then by (\ref{decom:W2}) and Lemma~\ref{lem:trace=}, 
under the equivalence (\ref{equiv:MPp})
the trace map (\ref{tr:E}) at $\widehat{Z}_u$ also equals the trace map determined by 
$(G_p, \mathcal{H}^0(\mathbb{T}_{\mathscr{Y}(d)}|_{p}), \mathfrak{gl}(d)_0, \mu_0)$, 
which in turn equals the trace map determined by 
$(GL(d), \mathfrak{gl}(d)^{\oplus 2g}, \mathfrak{gl}(d)_0, \mu_0)$
at $p$ by Lemma~\ref{compare:trace0}. 
Again by Theorem~\ref{thm:Serre}, the map (\ref{mor:Serre}) 
is an isomorphism on $\widehat{Z}_u$. 
Therefore (\ref{mor:Serre}) is an 
isomorphism at any 
point $u\in Z$, hence it is an isomorphism. 
\end{proof}

The proof of Theorem~\ref{thm:Serre:etale}
also implies the following: 
\begin{cor}\label{cor:isomcoh}
In the situation of Theorem~\ref{thm:Serre:etale}, 
for each $\mathcal{E} \in \mathbb{T}$ there are isomorphisms:
\[\mathcal{H}^i(S_{\mathbb{T}}(\mathcal{E}))
\cong \mathcal{H}^i(\mathcal{E}[\dim M])\]
for all $i\in \mathbb{Z}$. 
In particular, if there exists $k\in \mathbb{Z}$ such that $\mathcal{E}$ is an object in $\mathbb{T} \cap \Coh(\mathcal{M})[k]$, then
$S_{\mathbb{T}}(\mathcal{E}) \cong \mathcal{E}[\dim M]$. 
\end{cor}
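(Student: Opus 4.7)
The plan is to upgrade the \'etale-local isomorphism of Theorem~\ref{thm:Serre:etale} to a globally defined isomorphism $S_\mathbb{T}(\mathcal{E}) \cong \mathcal{E}[\dim M]$ in $D^b(\mathcal{M})$, from which both claims of Corollary~\ref{cor:isomcoh} follow immediately by taking cohomology sheaves. The key input is a global trace morphism $\mathrm{tr}\colon \pi_*\mathcal{H}om_\mathbb{T}(\mathcal{E}, \mathcal{E}) \to \mathcal{O}_M$, obtained by gluing the local traces $\mathrm{tr}_Z$ used in the proof of Theorem~\ref{thm:Serre:etale}. For any two \'etale charts $Z_1, Z_2 \to M$ each equipped with a good data presenting the corresponding restriction of $\mathcal{M}$ as a Koszul stack, the traces $\mathrm{tr}_{Z_1}$ and $\mathrm{tr}_{Z_2}$ pull back to equal morphisms on $Z_1 \times_M Z_2$. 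This compatibility follows by passing to a common refinement of the two presentations inside a sufficiently large Koszul stack and invoking the independence results in Lemma~\ref{compare:trace0} and Lemma~\ref{lem:trace=}. By \'etale descent for morphisms in $D^b(M)$ whose target is the coherent sheaf $\mathcal{O}_M$, the local traces then glue to the desired global $\mathrm{tr}$.

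Next, taking $H^0$ of the defining adjunction~\eqref{rel:S} of the relative Serre functor with $\mathcal{E}_1 = \mathcal{E}_2 = \mathcal{E}$ turns $\mathrm{tr}$ into a global morphism $\phi \colon \mathcal{E} \to S_{\mathbb{T}/M}(\mathcal{E})$ in $D^b(\mathcal{M})$. By the naturality of this adjunction under \'etale base change, the restriction of $\phi$ along each chart $\mathcal{Z} \to \mathcal{M}$ coincides with the morphism $\phi_Z$ built in the proof of Theorem~\ref{thm:Serre:etale}, which was shown there to be an isomorphism. Hence the cone of $\phi$ has vanishing pullback along an \'etale cover of $\mathcal{M}$, so by faithfully flat descent for coherent cohomology sheaves it is itself zero; therefore $\phi$ is a global isomorphism. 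Combined with Lemma~\ref{lem:Serre}, this yields $S_\mathbb{T}(\mathcal{E}) \cong \mathcal{E}[\dim M]$ in $D^b(\mathcal{M})$. Taking $\mathcal{H}^i$ gives the isomorphisms claimed in Corollary~\ref{cor:isomcoh}, and the ``in particular'' follows because an object of $D^b(\mathcal{M})$ with at most one nonzero cohomology sheaf is determined by that sheaf up to shift.

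The main obstacle is the compatibility $\mathrm{tr}_{Z_1}|_{Z_1 \times_M Z_2} = \mathrm{tr}_{Z_2}|_{Z_1 \times_M Z_2}$ for general pairs of good data. Lemma~\ref{compare:trace0} handles the case of isomorphic presentations and Lemma~\ref{lem:trace=} handles enlarging a presentation by a trivial Koszul factor; the general case should follow by embedding both presentations into a common larger one via repeated use of Lemma~\ref{lem:trace=} and then applying Lemma~\ref{compare:trace0} to the induced isomorphism of Koszul stacks on the fiber product. A secondary technical point is that the \'etale descent of $\Hom_{D^b(M)}(\pi_*\mathcal{H}om_\mathbb{T}(\mathcal{E}, \mathcal{E}), \mathcal{O}_M)$ relies on the target $\mathcal{O}_M$ being concentrated in a single cohomological degree, so that derived descent reduces to classical descent of maps of coherent sheaves.
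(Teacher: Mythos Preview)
Your proposal overshoots the target: you are trying to produce a \emph{global} isomorphism $S_{\mathbb{T}}(\mathcal{E})\cong\mathcal{E}[\dim M]$ in $D^b(\mathcal{M})$, from which the corollary would follow trivially. But this global statement is exactly what the paper says it \emph{cannot} currently prove (see the discussion after Theorem~\ref{intro:thm4}); Corollary~\ref{cor:isomcoh} is meant to be a strictly weaker substitute. The gap is in your gluing step. Lemmas~\ref{compare:trace0} and~\ref{lem:trace=} compare traces only for very specific moves between presentations, and the proof of Theorem~\ref{thm:Serre:etale} applies them \emph{pointwise}, after passing to formal fibers and making non-canonical coordinate changes (the decompositions \eqref{decom:TW}, \eqref{decom:W2}). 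This yields $\mathrm{tr}_{Z_1}|_{Z_1\times_M Z_2}-\mathrm{tr}_{Z_2}|_{Z_1\times_M Z_2}=0$ only \emph{formally locally at each point}, not as a morphism in $D^b(Z_1\times_M Z_2)$. Since the source $\pi_*\mathcal{H}om_{\mathbb{T}}(\mathcal{E},\mathcal{E})$ is a genuine complex, $\Hom_{D^b}(-, \mathcal{O})$ involves higher Ext groups and is not detected by formal stalks; your ``secondary technical point'' is mistaken here, as having the target $\mathcal{O}_M$ concentrated in one degree does not reduce $\Hom_{D^b}(\mathcal{F},\mathcal{O}_M)$ to a classical sheaf Hom when $\mathcal{F}$ has several nonzero cohomology sheaves. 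There is no common global Koszul presentation into which both charts embed, so your ``common larger presentation'' argument does not go through on the overlap.

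The paper's actual proof exploits precisely this distinction. It does not glue the $\phi_Z$; instead it passes to cohomology sheaves, where $\mathcal{H}^i(\phi_Z)\colon \mathcal{H}^i(\mathcal{E}|_Z)\to \mathcal{H}^i(S_{\mathbb{T}/M}(\mathcal{E})|_Z)$ is an honest morphism of coherent sheaves. For morphisms of coherent sheaves, vanishing formally locally at every point \emph{does} imply vanishing, so $\mathcal{H}^i(\phi_Z)$ and $\mathcal{H}^i(\phi_{Z'})$ agree on overlaps and glue to a global isomorphism $\mathcal{H}^i(\mathcal{E})\cong\mathcal{H}^i(S_{\mathbb{T}/M}(\mathcal{E}))$. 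Combined with Lemma~\ref{lem:Serre} this gives the first claim; the ``in particular'' follows because a complex with a single nonzero cohomology sheaf is determined by it.
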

\begin{proof}
    Let $\mathrm{tr}_Z$ be the morphism in (\ref{tr:E}). 
    For another \'etale morphism $Z' \to M$, 
    the proof of Theorem~\ref{thm:Serre} shows that 
    the morphism 
    \begin{align*}
        \mathrm{tr}_{Z}|_{Z \times_M Z'}-
        \mathrm{tr}_{Z'}|_{Z\times_M Z'} \colon 
        \pi_{\ast}\mathcal{H}om(\mathcal{E}|_{Z\times_M Z'}, \mathcal{E}|_{Z\times_M Z'}) \to
        \mathcal{O}_{Z \times_M Z'}
    \end{align*}
    is a zero map formally locally at any point in 
    $Z\times_M Z'$. 
    Thus for the morphism $\phi_Z$ in (\ref{mor:Serre}),
    the morphism 
    \begin{align*}
        \phi_Z|_{Z\times_M Z'}-\phi_{Z'}|_{Z\times_M Z'} 
        \colon \mathcal{E}|_{Z\times_M Z'} \to 
        S_{\mathbb{T}/M}(\mathcal{E})|_{Z\times_M Z'}
    \end{align*}
    is a zero map formally locally at each 
    point in $Z\times_M Z'$. 
    Therefore for each $i \in \mathbb{Z}$, the isomorphism
    \begin{align*}
        \mathcal{H}^i(\phi_Z) \colon 
        \mathcal{H}^i(\mathcal{E}|_{Z}) \stackrel{\cong}{\to} \mathcal{H}^i(S_{\mathbb{T}/M}(\mathcal{E})|_{Z})
    \end{align*}
    glues to give 
    an isomorphism $\mathcal{H}^i(\mathcal{E})\cong \mathcal{H}^i(S_{\mathbb{T}/M}(\mathcal{E}))$. 
    Then the corollary follows from Lemma~\ref{lem:Serre}.
\end{proof}

We also have the following: 
\begin{cor}\label{cor:serreperf}
In the situation of Theorem~\ref{thm:Serre}, 
for $\mathcal{E} \in \mathbb{T} \cap \mathrm{Perf}(\mathcal{M})$, we have 
    $S_{\mathbb{T}}(\mathcal{E}) \cong \mathcal{E}[\dim M]$.
\end{cor}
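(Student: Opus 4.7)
The plan is to promote the \'etale-local trace from the proof of Theorem~\ref{thm:Serre:etale} to a global trace in the perfect case, and then deduce that the induced global morphism $\phi\colon \mathcal{E}\to S_{\mathbb{T}/M}(\mathcal{E})$ is an isomorphism.

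First, because $\mathcal{E}\in\mathrm{Perf}(\mathcal{M})$, we have the canonical evaluation-trace map of perfect complexes
\[
\mathrm{tr}_{\mathcal{E}}^{\mathrm{perf}}\colon \mathcal{H}om_{\mathcal{M}}(\mathcal{E},\mathcal{E})\longrightarrow \mathcal{O}_{\mathcal{M}},
\]
which is defined intrinsically (independently of any presentation of $\mathcal{M}$). Applying $\pi_{\ast}$ we get a global trace map $\pi_\ast \mathrm{tr}_\mathcal{E}^{\mathrm{perf}}\colon \pi_{\ast}\mathcal{H}om_{\mathbb{T}}(\mathcal{E},\mathcal{E})\to \mathcal{O}_{M}$. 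Under the defining isomorphism (\ref{rel:S}) of the relative Serre functor, this corresponds to a morphism
\[
\phi\colon \mathcal{E}\longrightarrow S_{\mathbb{T}/M}(\mathcal{E})
\]
in $\mathbb{T}$. By Lemma~\ref{lem:Serre} it is enough to prove that $\phi$ is an isomorphism.

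Isomorphism can be checked after \'etale base change, so we fix an \'etale morphism $e\colon Z\to M$ arising from a good datum $(G,A,V,s)$ as in diagrams \eqref{diaggg1}--\eqref{diaggg2}, so that $\mathcal{E}|_Z\in D^b(\mathcal{Z})$ has nilpotent singular support by Proposition~\ref{prop710}. The proof of Theorem~\ref{thm:Serre:etale} produces a local trace map $\mathrm{tr}_Z$ from the datum $(G,A,V,s)$ and a corresponding morphism $\phi_Z\colon \mathcal{E}|_Z\to S_{\mathbb{T}/M}(\mathcal{E})|_Z$, and shows that $\phi_Z$ is an isomorphism. The key point now is Lemma~\ref{lem:trperf}: since $\mathcal{E}|_Z$ is perfect, the local trace $\mathrm{tr}_Z$ constructed from the good datum coincides with the canonical perfect-complex trace (\ref{trperf}), which is nothing but the restriction to $Z$ of the global $\pi_{\ast}\mathrm{tr}_{\mathcal{E}}^{\mathrm{perf}}$. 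By compatibility of the relative Serre duality pairing (\ref{rel:S}) with \'etale base change, this identification translates into the equality $\phi_Z=\phi|_Z$. Hence $\phi|_Z$ is an isomorphism for every \'etale cover $Z\to M$ of this form; since such covers jointly surject onto $M$ and pull-back along an \'etale cover is conservative, $\phi$ is itself an isomorphism.

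Combining $\phi\colon \mathcal{E}\stackrel{\sim}{\to} S_{\mathbb{T}/M}(\mathcal{E})$ with Lemma~\ref{lem:Serre} yields $S_{\mathbb{T}}(\mathcal{E})\cong \mathcal{E}[\dim M]$, as desired. The only subtle point is the globalization step, i.e.\ that the locally constructed $\phi_Z$ agree on overlaps; this is precisely what Lemma~\ref{lem:trperf} guarantees in the perfect case, which is why the argument cannot be pushed beyond $\mathrm{Perf}(\mathcal{M})$ as in Corollary~\ref{cor:isomcoh}, where only the cohomology sheaves of $\phi_Z$ glue.
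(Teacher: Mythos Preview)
Your proof is correct and follows essentially the same approach as the paper: construct the global trace from perfectness of $\mathcal{E}$, translate it via the relative Serre duality into a global morphism $\phi\colon \mathcal{E}\to S_{\mathbb{T}/M}(\mathcal{E})$, invoke Lemma~\ref{lem:trperf} to identify $\phi|_Z$ with the local $\phi_Z$ of (\ref{mor:Serre}), and conclude via Lemma~\ref{lem:Serre}. Your write-up is somewhat more detailed (e.g.\ the remark on why the globalization step fails outside $\mathrm{Perf}$), but the logical skeleton is identical to the paper's argument.
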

\begin{proof}
    As $\mathcal{E}$ is perfect, there is a 
    trace map $\mathcal{H}om_{\mathcal{M}}(\mathcal{E}, \mathcal{E}) \to \mathcal{O}_{\mathcal{M}}$, 
    thus its push-forward $\pi_{\ast}$ gives 
    a morphism 
    \begin{align*}
        \pi_{\ast}\mathcal{H}om_{\mathcal{M}}(\mathcal{E}, \mathcal{E}) \to \mathcal{O}_M. 
    \end{align*}
    The above morphism corresponds to 
    $\phi \colon \mathcal{E} \to S_{\mathbb{T}/M}(\mathcal{E})$. 
    By Lemma~\ref{lem:trperf}, the above morphism coincides with (\ref{mor:Serre})
    on each \'etale map $Z \to M$, 
    thus $\phi$ is an isomorphism.    
        Then the corollary follows from Lemma~\ref{lem:Serre}.
    \end{proof}
\section{Topological K-theory of quasi-BPS categories for K3 surfaces}\label{sec:topK}
\subsection{Statement of the main result}\label{subsec81}
In this section, we prove Theorem~\ref{intro:thm:K}
using the computation of topological K-theory of quasi-BPS categories of preprojective algebras from~\cite{PTtop}. 
We actually compute the topological K-theory of quasi-BPS categories for all weights $w\in\mathbb{Z}$, not only in the case of $w$ coprime with $v=dv_0$, see Theorem \ref{thmKtop}. 

 For a stack $\X$, 
we denote by $D_{\rm{con}}(\X)$ the 
bounded derived category of constructible sheaves 
on $\X$
and $\mathrm{Perv}(\X) \subset D_{\rm{con}}(\X)$
the subcategory of perverse sheaves \cite{MR2480756}. 
We denote by $D^+_{\mathrm{con}}(\X)$ the category of locally bounded below complexes of constructible sheaves on $\X$: if $\X$ is connected, then $D^+_{\mathrm{con}}(\X)$ is the limit of the diagram of categories $D_n:=D^b_{\mathrm{con}}(\X)$ for all $n\in\mathbb{N}$ and for the functors ${}^p\tau^{\leq n'}\colon D^n\to D^{n'}$; for general $\X$, we have $D^+_{\mathrm{con}}(\X)=\prod_{\X'\in \pi_0(\X)} D^+_{\mathrm{con}}(\X')$.

In this section, we assume that $d\geq 2$, $g\geq 2$, and that
$\sigma \in \mathrm{Stab}(S)$ corresponds to a Gieseker stability condition 
for an ample divisor $H$, see Proposition \ref{prop:LV} and Corollary \ref{cor:lv}. The reason we restrict to Gieseker stability conditions is that, in this case, $\mathcal{M}$ is a global quotient stack and one can construct a cycle map as in \cite[Section 3]{PTtop}.
We fix $v=dv_0$ and $w\in\mathbb{Z}$.

In Subsection \ref{subsub:bpsk3}, we recall the definition of the BPS sheaf 
\[\mathcal{BPS}_v=\mathcal{BPS}^{\sigma}_S(v)\in \mathrm{Perv}\left(M^{\sigma}_S(v)\right).\] 
For a partition $A=(d_i)_{i=1}^k$ of $d$, define the perverse sheaf $\mathcal{BPS}_A$
on $M_S^{\sigma}(v)$ 
to be 
\begin{align*}
    \mathcal{BPS}_A :=\left(\oplus_{\ast}\boxtimes_{i=1}^k 
    \mathcal{BPS}_{d_i v_0}\right)^{\mathfrak{S}_A},
\end{align*}
where $\mathfrak{S}_A \subset \mathfrak{S}_k$ is 
the subgroup of permutations $\sigma \in \mathfrak{S}_k$
such that $d_i=d_{\sigma(i)}$, 
and $\oplus$ is the addition map 
\begin{align*}
    \oplus \colon M_S^{\sigma}(d_1 v_0) \times \cdots 
    \times M_S^{\sigma}(d_k v_0) \to M_S^{\sigma}(d v_0). 
\end{align*}
For $w\in\mathbb{Z}$, 
let $S^d_w$
be the set of partitions of $d$ 
from~\cite[Subsection~6.1.2]{PTtop}. 
From~\cite[Proposition~8.8]{PTtop}, 
it consists of partitions 
$A=(d_i)_{i=1}^k$
such that, for all $1\leq i\leq k$, $w_i:= d_i w/d$ is an integer, thus $S^d_w$ is in bijection with the set of partitions of $\gcd(d,w)$. 
We set  
\[\mathcal{BPS}_{v,w}:=\bigoplus_{A\in S^d_v}\mathcal{BPS}_A.\]

For a dg-category $\mathscr{D}$, we denote by $K^{\mathrm{top}}(\mathscr{D})$ the topological K-theory spectrum as defined by Blanc \cite{Blanc}. We consider its (rational) homotopy groups:
\[K^{\mathrm{top}}_i(\mathscr{D}):=\left(\pi_i K^{\mathrm{top}}(\mathscr{D})\right)\otimes_\mathbb{Z}\mathbb{Q}.\]
For a review of (and references on) topological K-theory, see \cite[Subsection 2.4]{PTtop}. If $\mathcal{M}$ is a quotient stack, we denote by $G^{\mathrm{top}}(\mathcal{M})$ the (rational) K-homology of $\mathcal{M}$. Then, by \cite{HLP}, we have that $G^{\mathrm{top}}(\mathcal{M})=K^{\mathrm{top}}(D^b(\mathcal{M}))$.

For a $\mathbb{Z}$-graded vector space $H^{\ast}$ and $i\in\mathbb{Z}$, 
let $\widetilde{H}^{i}:=\prod_{j \in \mathbb{Z}}H^{i+2j}$. 
In this section, we prove the following result, 
which implies 
Theorem~\ref{intro:thm:K}
as a special case. Note that the second isomorphism is not canonical, see Theorem~\ref{thmKtopgraded} for a statement involving canonical isomorphisms: 

\begin{thm}\label{thmKtop}
For $i\in \mathbb{Z}$, there exist
isomorphisms of $\mathbb{Q}$-vector spaces
\begin{align}\label{isom:thmKtop}
    K_{i}^{\rm{top}}(\mathbb{T}_S^{\sigma}(v)_w^{\rm{red}})
    \stackrel{\cong}{\to}  K_{i}^{\rm{top}}(\mathbb{T}_S^{\sigma}(v)_w)
    \cong \widetilde{H}^{i}\left(M^{\sigma}_S(v), \mathcal{BPS}_{v,w}\right). 
\end{align}
   \end{thm}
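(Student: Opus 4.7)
The plan is to compare two distinct decompositions of $G^{\mathrm{top}}(\mathcal{M}_S^\sigma(v))$: a categorical one coming from the semiorthogonal decomposition in Theorem~\ref{thm:sodK3} (after applying the additivity of topological K-theory over SODs), and a sheaf-theoretic one coming from Davison's BBDG-type decomposition theorem for good moduli space morphisms of CY$_2$-categories \cite{DavPurity}, together with the PBW-type results \cite{DHSM} expressing cohomological Hall algebras of K3 surfaces in terms of BPS cohomology. The second isomorphism in \eqref{isom:thmKtop} will then follow by matching summands indexed by the same partitions of $(d,w)$, using the local computation of topological K-theory of quasi-BPS categories for preprojective algebras carried out in \cite{PTtop}.

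First I would reduce to the non-reduced statement. The square \eqref{defn:reduced} together with the description $\mathcal{P}ic^\beta(S)=\Spec \mathbb{C}[\varepsilon]/\mathbb{C}^\ast$ (with $\varepsilon$ in degree $-1$) expresses $\mathfrak{M}^{\mathrm{red}}$ as a derived base change of $\mathfrak{M}$ along a map that contributes only a shift in a dualizable $\mathrm{Perf}(\Spec\mathbb{C}[\varepsilon])$-module structure; since $\mathrm{Perf}(\Spec\mathbb{C}[\varepsilon])$ has trivial rational topological K-theory beyond $\mathbb{Q}$ concentrated in even degree, base change induces an equivalence of the intrinsic window subcategories up to periodicity, yielding the first isomorphism. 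This is entirely parallel to the quiver argument in \cite{PTtop} where one removes the extra $\mathfrak{gl}(d)$-factor coming from the potential.

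For the second isomorphism, apply $K^{\mathrm{top}}$ to the SOD of Theorem~\ref{thm:sodK3} and use additivity to get
\begin{align*}
G^{\mathrm{top}}(\mathcal{M}_S^\sigma(v))_w \cong \bigoplus \bigotimes_{i=1}^k K^{\mathrm{top}}\left(\mathbb{T}_S^\sigma(d_i v_0)_{w_i'}\right),
\end{align*}
where the sum runs over the partitions in the statement of Theorem~\ref{thm:sodK3} with total weight $w$. On the other hand, since $\sigma$ is a generic Gieseker stability condition, $\mathcal{M}$ is a global quotient stack and one can use the cycle map of \cite[Section~3]{PTtop} to identify $G^{\mathrm{top}}(\mathcal{M})$ with the (shifted, $2$-periodic) Borel–Moore homology of $\mathcal{M}$. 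Davison's decomposition theorem applied to $\pi\colon \mathcal{M}_S^\sigma(v)\to M_S^\sigma(v)$ then expresses $\pi_* \mathbb{Q}_{\mathcal{M}}[\dim]$ as the direct sum, over all partitions $A$ of $d$ and all weight tuples, of the perverse sheaves $\mathcal{BPS}_A$ pushed forward via the sum map, combined with the BPS Lie algebra action. Taking hypercohomology and extracting the weight $w$ piece produces exactly $\widetilde{H}^i(M_S^\sigma(v), \mathcal{BPS}_{v,w})$ on the partition strata indexed by $S^d_w$.

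The comparison of the two sides is then carried out by induction on $d$: for $d=1$, $\mathbb{T}_S^\sigma(v_0)_w = D^b(\mathfrak{M}_S^\sigma(v_0))_w$ by primitivity (see Remark~\ref{rmk:primitive}), and $\mathcal{BPS}_{v_0}=\mathrm{IC}_{M_S^\sigma(v_0)}$, so both sides reduce to the topological K-theory of the smooth hyperkähler variety $M_S^\sigma(v_0)$. For general $d$, the inductive hypothesis determines the topological K-theory of all proper Hall summands in the SOD, and matches them term-by-term with the non-principal BBDG summands (those coming from proper partitions), leaving the principal summand $\mathbb{T}_S^\sigma(v)_w$ to correspond precisely to $\widetilde{H}^\bullet(M_S^\sigma(v),\mathcal{BPS}_{v,w})$. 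The main technical obstacle will be the globalization step: one needs to verify that the term-by-term identification, which is known formally locally from the étale-local equivalence with $\widehat{\mathscr{P}}(d)_p$ (Lemma~\ref{lem:py}) combined with the quiver results of \cite[Section~8]{PTtop}, assembles coherently over $M_S^\sigma(v)$. This should follow because both the SOD of Theorem~\ref{thm:sodK3} and Davison's decomposition are $M_S^\sigma(v)$-linear and compatible with pullback along the étale neighborhoods provided by the formal neighborhood theorem (diagrams \eqref{dia:fnbd}--\eqref{dia:fnbd2}), so the partition-labeled summands can be glued from their local descriptions in \cite{PTtop}.
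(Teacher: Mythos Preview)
Your proposal has the right architecture in mind but defers exactly the content that constitutes the proof.

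For the first isomorphism, your argument via $\mathrm{Perf}(\Spec\mathbb{C}[\varepsilon])$-module structure is more complicated than needed and not clearly correct as stated. The paper's argument (Lemma~\ref{lem:topisom}) is one line: the classical inclusion $\iota\colon\mathcal{M}\hookrightarrow\mathfrak{M}$ induces an isomorphism on $G^{\mathrm{top}}$ because the underlying topological spaces coincide, and $\iota_*$ sends $\mathbb{T}(M)^{\mathrm{red}}$ into $\mathbb{T}(M)$.

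For the second isomorphism, your inductive ``peeling'' strategy has two genuine gaps. First, you need a comparison map between $G^{\mathrm{top}}_i(\mathcal{M})_w$ and a specific cohomological expression before you can subtract the inductively known pieces; the cycle map $\mathrm{ch}\colon G^{\mathrm{top}}_i(\mathcal{M})\to\widetilde{H}^{\mathrm{BM}}_i(\mathcal{M})$ does this, but you must justify that it is injective (or an isomorphism) on the relevant summands, which is not automatic for stacks and is itself a local-to-global statement. Second, and more seriously, the PBW decomposition \eqref{Hall} on the sheaf side has no natural weight grading: once you push $\pi_*D_d$ down to $M$ you lose the $\mathbb{C}^*$-weight information, so there is no a priori way to ``extract the weight $w$ piece'' and recognize it as $\widetilde{H}^i(M,\mathcal{BPS}_{v,w})$ with the specific partition set $S^d_w$. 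The identification of weight-$w$ K-theory with the summands indexed by $S^d_w$ is precisely the output of the local quiver computation in \cite{PTtop}, and transporting it to $M$ is the substance of the theorem.

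The paper addresses both points simultaneously by running \'etale descent for topological K-theory (Proposition~\ref{etalecomputationtopK}, Corollary~\ref{etalecomputationtopK2}), proving that odd stalks vanish (Proposition~\ref{prop78}), and then filtering via the Chern character so that on the associated graded the comparison reduces to the local identification $\mathrm{gr}_j\mathcal{T}^s_{2q}\cong\mathcal{H}^{-2q-2j}(\mathcal{BPS}_{v,w})$ from \cite[Theorem~9.2]{PTtop} (Proposition~\ref{prop79}, Theorem~\ref{thmKtopgraded}). Your final paragraph acknowledges that the globalization ``should follow'' from $M$-linearity and \'etale compatibility, but that is exactly where the work is: \'etale descent plus spectral sequences plus injectivity of $\mathrm{ch}$ (locally from \cite[Proposition~9.9]{PTtop}). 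Without supplying that machinery, the induction cannot close.
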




\subsection{BPS sheaves for K3 surfaces}

As in the case of symmetric quivers with potential or preprojective algebras, the
BPS cohomology for K3 surfaces is the ``primitive" part of the Hall algebra of $S$ for the chosen stability condition, and is computed as the cohomology of the BPS sheaf. 

In this section, we recall the definition of BPS sheaves for K3 surfaces due to Davison--Hennecart--Schlegel Mejia \cite{DHSM} and we compare these sheaves with BPS sheaves for preprojective algebras.

\subsubsection{BPS sheaves via intersection complexes}\label{subsub:bpsk3}

Let $\mathbb{D}$ be the Verdier duality functor on $D^b_{\mathrm{con}}(\mathcal{M}^{\sigma}_S(v))$ and let $D_d:=\mathbb{D}\mathbb{Q}\in D^b_{\mathrm{con}}(\mathcal{M}^{\sigma}_S(v))$ be the dualizing complex on 
$\mathcal{M}^{\sigma}_S(v)=\mathcal{M}^{\sigma}_S(dv_0)$. 
Recall the good moduli space map 
\[\pi_d:=\pi\colon\mathcal{M}\to M.\]
The BBDG decomposition theorem holds for $\pi_{d*}
D_d\in D^+_{\mathrm{con}}(M^{\sigma}_S(v))$, see \cite[Theorem C]{Dav}. 
The BPS sheaf of $M^{\sigma}_S(v)$ is a certain direct summand of the zeroth perverse truncation (which itself is a perverse sheaf on $M^{\sigma}_S(v)$, see loc. cit.):
\begin{equation}\label{eq:BPS0}
{}^p\tau^{\leq 0}\pi_{d*}D_d\in \mathrm{Perv}(M_S^{\sigma}(v)).
\end{equation}
We now explain the definition of the BPS sheaf. 
The cohomological Hall product $m=p_*q^*$ for the maps $p,q$ in \eqref{PortaSala} induces an algebra structure on the $\mathbb{N}$-graded complex 
\[\mathscr{A}:=\bigoplus_{d\in\mathbb{N}} {}^p\tau^{\leq 0}\pi_{d*}D_d \in 
\bigoplus_{d\in \mathbb{N}} D^+_{\mathrm{con}}(M_S^{\sigma}(dv_0))
.\] 
There is a natural map
\[\mathrm{IC}_{M^{\sigma}_S(v)}\to {}^p\tau^{\leq 0}\pi_{d*}D_d.\]
The main theorem of Davison--Hennecart--Schlegel Mejia \cite[Theorem C]{DHSM} says that the induced map from the free algebra generated by the intersection complexes is isomorphic to $\mathscr{A}$:
\begin{equation*}    \mathrm{Free}\left(\bigoplus_{d\in\mathbb{N}}\mathrm{IC}_{M^{\sigma}_S(dv_0)}\right)\xrightarrow{\sim} \mathscr{A}.
\end{equation*}
The BPS sheaves \[\mathcal{BPS}^{\sigma}_S(v)=\mathcal{BPS}^{\sigma}_S(dv_0)\in \mathrm{Perv}\left(M^{\sigma}_S(v)\right)\] are defined via the free Lie algebra on the intersection complexes  
\begin{equation}\label{defBPS}
\mathrm{Free}_{\mathrm{Lie}}\left(\bigoplus_{d\in\mathbb{N}}\mathrm{IC}_{M^{\sigma}_S(v)}\right)=:\bigoplus_{d\in\mathbb{N}}\mathcal{BPS}^{\sigma}_S(dv_0).
\end{equation}
We obtain that:
\begin{equation}\label{BPSZ}
\mathrm{Sym}\left(\bigoplus_{d\in\mathbb{N}}\mathcal{BPS}^{\sigma}_S(dv_0)\right)\xrightarrow{\sim}\mathscr{A}.
\end{equation}


A precise formulation for the heuristics that the BPS cohomology is the ``primitive" part of the Hall algebra is the following:
the relative Hall algebra of $S$ for the multiples of the Mukai vector $v_0$ and stability condition $\sigma$ has a PBW decomposition in terms of BPS sheaves:
\begin{equation}\label{Hall}
\mathrm{Sym}\left(\bigoplus_{d\in\mathbb{N}}\mathcal{BPS}^{\sigma}_S(dv_0)\otimes H^\cdot(B\mathbb{C}^*)\right)\xrightarrow{\sim}\mathscr{H}:=\bigoplus_{d\in\mathbb{N}}\pi_{d*}D_{d},
\end{equation}
see \cite[Theorem 1.5]{DHSM}, and note that the above isomorphism is of constructible sheaves, not of relative algebras. There is also a version for the absolute Hall algebra \cite[Corollary 1.6]{DHSM}. The above PBW theorem is the analogue for K3 surfaces of the Davison--Meinhardt PBW theorem for cohomological Hall algebras of quivers with potential \cite{DM}.
The results in \cite{DHSM} cited above hold by a computation of all the simple summands of $\pi_{d*}D_d$, which satisfies a version of the BBDG/ Saito decomposition theorem due to Davison \cite{DavPurity}.

\subsubsection{The moduli stack of semistable sheaves on a K3 surface via preprojective algebras}\label{localdescription}
One can describe the map 
$\pi_d \colon \mathcal{M}_S^{\sigma}(v) \to M_S^{\sigma}(v)$
étale, formally, or analytically locally on the target via preprojective algebras \cite{Sacca0}, \cite[Sections 4 and 5]{DavPurity}, \cite[Theorem 4.3.2]{halpK32}, \cite{Todstack}, Subsections \ref{subsection:etale} and \ref{subsec:proof2}. 

We will use the setting of Subsection \ref{subsec:proof2}, see diagrams \eqref{diaggg1} and \eqref{diaggg2}. 
We will continue with the notation from Subsection \ref{subsec:proof2}.

The quiver $Q^\circ_y$ is totally negative in the sense of \cite[Section 1.2.3]{DHSM}, see \cite{KaLeSo}.
Thus the results in \cite{DHSM} about construction of BPS sheaves via intersection complexes apply, so 
the BPS sheaves $\mathcal{BPS}^p(\bm{d})$ of the preprojective algebras of the quiver $Q^\circ_y$ have a similar description via intersection complexes \eqref{defBPS}.
Then the maps $e$ and $e'$ induce isomorphisms:
\begin{equation}\label{localBPS}
e^*\left(\mathcal{BPS}^{\sigma}_S(v)\right)=e'^*\left(\mathcal{BPS}^p(\bm{d})\right).
\end{equation}
\begin{remark}
If we are interested in a local analytic description of $\mathcal{M}^{\sigma}_S(v)$, then it is possible to choose $Y$ an analytic open subset of $P(\bm{d})$ and $M^{\sigma}_S(v)$, that is, we may assume that $e$ and $e'$ are open inclusions of analytic sets. Thus, locally analytically near $p$, the preimage of the map $\pi_d$ is isomorphic to the preimage of the map $\pi_P$. 
\end{remark}
\subsection{Topological K-theory and étale covers}
We use the shorthand notations
$M=M^{\sigma}_S(v)$, $\mathcal{M}:=\mathcal{M}^{\sigma}_S(v)$, 
$\mathfrak{M}=\mathfrak{M}^{\sigma}_S(v)$
and 
\[\mathbb{T}(M)^{\mathrm{red}}:=\mathbb{T}^{\sigma}_S(v)^{\mathrm{red}}_w, \ \mathbb{T}(M):=\mathbb{T}^{\sigma}_S(v)_w.\]
We write the semiorthogonal decomposition for $\mathcal{M}$ as:
\begin{equation}\label{SODredAT}
    D^b(\mathcal{M})=\langle \mathbb{A}(M)^{\mathrm{red}}, \mathbb{T}(M)^{\mathrm{red}}\rangle.
\end{equation}
By the following lemma, 
it suffices to prove Theorem \ref{thmKtop} for $\mathbb{T}(M)^{\mathrm{red}}$. The argument for $\mathbb{T}(M)$ is the same, but we prefer working with the stack $\mathcal{M}$ because the good moduli space map is defined from $\mathcal{M}$.
\begin{lemma}\label{lem:topisom}
The closed immersion 
$\iota \colon \mathcal{M} \hookrightarrow \mathfrak{M}$
induces the isomorphism 
\begin{equation*}\label{isoKtopTTred}
    \iota_{\ast}\colon G^{\mathrm{top}}_\bullet(\mathbb{T}(M)^{\mathrm{red}})\xrightarrow{\sim} G^{\mathrm{top}}_\bullet(\mathbb{T}(M)).
\end{equation*}
\end{lemma}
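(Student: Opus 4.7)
The plan is to combine nil-invariance of topological K-theory along $\iota\colon \mathcal{M}\hookrightarrow \mathfrak{M}$ with the compatibility of $\iota_{\ast}$ with the semiorthogonal decompositions from Theorems~\ref{thm:sodK3} and~\ref{thm:sodK32}.

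Since $g\geq 2$, Lemma~\ref{lem:classical} gives $\mathcal{M}=\mathfrak{M}^{\rm{red}}$, so $\iota$ is the inclusion of the reduced stack. By~\eqref{defn:reduced} it is the base change of $B\mathbb{C}^{\ast}\hookrightarrow\mathcal{P}ic^{\beta}(S)=B\mathbb{C}^{\ast}\times\mathrm{Spec}\,\mathbb{C}[\varepsilon]$ with $\varepsilon$ in cohomological degree $-1$, so $\iota$ is an isomorphism on classical truncations and a derived nil-immersion whose only nontrivial content is the single $(-1)$-shifted tangent direction coming from the trace. I would first invoke nil-invariance of topological K-theory for such immersions to deduce that $\iota_{\ast}\colon G^{\rm{top}}_\bullet(\mathcal{M})\to G^{\rm{top}}_\bullet(\mathfrak{M})$ is an isomorphism. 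This rests on Blanc's construction \cite{Blanc} of $K^{\rm{top}}$ from algebraic K-theory by Bott inversion: since Bott-inverted algebraic K-theory is insensitive to derived nilpotent extensions of quasi-smooth dg-stacks (via the identification $G^{\rm{top}}(\mathcal{M})=K^{\rm{top}}(D^b(\mathcal{M}))$ from \cite{HLP}), so is $G^{\rm{top}}$.

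Next, I would compare this isomorphism with the two semiorthogonal decompositions: both are indexed by the same set of partitions $((d_i),(w_i))$ of $(d,w)$, the only difference being that the last factor of each summand is $\mathbb{T}(d_kv_0)_{w_k'}$ in $D^b(\mathfrak{M})$ and $\mathbb{T}(d_kv_0)_{w_k'}^{\rm{red}}$ in $D^b(\mathcal{M})$. Base change applied to~\eqref{PortaSala} over $\mathcal{P}ic^{\beta}(S)$ shows that $\iota_{\ast}$ sends each reduced summand to the corresponding non-reduced summand by applying the identity to the first $k-1$ factors and $\iota_{\ast}$ to the last one. Combining additivity of $G^{\rm{top}}$ under semiorthogonal decompositions with the K\"unneth formula for the smooth and proper categories $\mathbb{T}(d'v_0)$ (Corollary~\ref{cor:smooth} makes them dualizable), the iso of step one decomposes as a direct sum of summand-level isos. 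Focusing on the summand associated with the trivial partition $d=d$ (or arguing by cancellation using that $G^{\rm{top}}(\mathbb{T}(d'v_0))\neq 0$ for smaller $d'$, which follows from the primitive case via Remark~\ref{rmk:primitive}) extracts the desired isomorphism $\iota_{\ast}\colon G^{\rm{top}}_\bullet(\mathbb{T}(M)^{\rm{red}})\xrightarrow{\sim} G^{\rm{top}}_\bullet(\mathbb{T}(M))$.

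The main obstacle is to establish nil-invariance of $G^{\rm{top}}$ rigorously in the generality of quasi-smooth derived stacks with good moduli spaces: Blanc's statement is formulated for dg-algebras, and descent from the local dg-algebra setting to stacks requires the identification of \cite{HLP}. An alternative and arguably more conceptual route is to reduce to the local-on-$M$ situation via Lemma~\ref{lem:py}, where $\widehat{\mathfrak{M}}_y$ is modeled by the stack of representations of the doubled quiver $\widehat{\mathscr{P}}(d)_p$ and $\widehat{\mathcal{M}}_y$ by its classical truncation, so that the formal-local analog of the lemma is essentially the content of the preprojective-algebra computation in \cite{PTtop}.
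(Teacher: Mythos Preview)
Your overall strategy is correct and matches the paper's, but you are working much harder than necessary in two places.

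For the first step, the paper does not invoke Blanc's Bott-inversion machinery or worry about descent from dg-algebras to stacks. It simply observes that $G^{\rm{top}}$ is (rational) K-homology, hence a purely topological invariant, and that $\mathcal{M}$ and $\mathfrak{M}$ have the same underlying topological space since $\iota$ is an isomorphism on classical truncations. That one sentence replaces your entire discussion of nil-invariance and the ``main obstacle'' you flag; there is nothing to descend.

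For the second step, the paper records only that $\iota_{\ast}$ sends $\mathbb{T}(M)^{\rm{red}}$ to $\mathbb{T}(M)$, leaving the rest implicit. Your unpacking via the two semiorthogonal decompositions and additivity of $G^{\rm{top}}$ is the right way to make this precise, but you do not need K\"unneth, dualizability, or any cancellation argument: once you know $\iota_{\ast}$ carries each semiorthogonal summand of Theorem~\ref{thm:sodK32} into the corresponding summand of Theorem~\ref{thm:sodK3}, additivity writes the total isomorphism as a direct sum of summand-level maps, each of which is therefore an isomorphism. The summand for the trivial partition is the map in the lemma.
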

\begin{proof}
We have the isomorphism 
\begin{align*}\iota_{\ast}\colon G^{\mathrm{top}}_\bullet(\mathcal{M})\xrightarrow{\sim} G^{\mathrm{top}}_\bullet(\mathfrak{M})
\end{align*}
since both spaces have the same underlying topological space. 
     Then the lemma holds since 
     $\iota_{\ast}$ sends $\mathbb{T}(M)^{\mathrm{red}}$ to 
$\mathbb{T}(M)$.
\end{proof}

The semiorthogonal decomposition in Theorem~\ref{thm:sodK32} holds \'etale locally over $M$ by
\cite[Section~9]{PTtop} and the diagram \eqref{diaggg2}.
Indeed, let $R\to M$ be an \'etale map which factors through $R\xrightarrow{h} Z\to M$ as in \eqref{diaggg2}.
Let $\mathcal{R}:=\mathcal{M}^\sigma_S(v)\times_{M^\sigma_S(v)} R$.
By~\cite[Section~9]{PTtop}, there is a semiorthogonal decomposition:
\begin{equation}\label{SODZ}
D^b(\mathcal{R})=\langle \mathbb{A}(R)^{\mathrm{red}}, \mathbb{T}(R)^{\mathrm{red}}\rangle
\end{equation}
such that for an 
 \'etale map $b\colon R'\to R$, 
 the pull-back $b^{\ast}$ induce 
 functors 
 \begin{align*}
     b^{\ast} \colon \mathbb{A}(R)^{\mathrm{red}} \to 
     \mathbb{A}(R')^{\mathrm{red}}, \ 
     b^{\ast} \colon \mathbb{T}(R)^{\mathrm{red}} \to \mathbb{T}(R')^{\mathrm{red}}. 
 \end{align*}
 Consider étale covers 
\[U=(Z\xrightarrow{e} M),\, \mathcal{U}=(\mathcal{Z}\xrightarrow{e}\mathcal{M})
\]
generated by the \'etale covers $Z\to M$ as in \eqref{diaggg2}. 
Consider the presheaves of spectra
$\mathcal{K}$, $\mathcal{A}$ and $\mathcal{T}$ on $U$ defined as follows: 
for $(R\xrightarrow{e}M)\in U$ (and dropping $e$ from the notation), we have:
\[\mathcal{K}(R)=G^{\mathrm{top}}(\mathcal{R}),\, \mathcal{A}(R)=K^{\mathrm{top}}(\mathbb{A}(R)^{\mathrm{red}}),\, \mathcal{T}(R)=K^{\mathrm{top}}(\mathbb{T}(R)^{\mathrm{red}}).\]
By~\cite[Theorem~9.2]{PTtop}, there is a direct sum of presheaves of spectra on $U$:
\begin{equation}\label{sum}
\mathcal{K}=\mathcal{A}\oplus\mathcal{T}.
\end{equation}

Let $\mathcal{F}$ be a presheaf of spectra and consider a cover $(Z_i\xrightarrow{e} M)_{i\in I}$ as in diagram \eqref{diaggg2} for a set $I$.
Consider the cosimplicial sheaf of spectra: 
\[\xymatrix{ \prod_{i\in I}\mathcal{F}(Z_i) \ar@<1ex>[r] \ar@<-1ex>[r] &  \prod_{i,j\in I}\mathcal{F}(Z_i\times_M Z_j) \ar@<0ex>[l] \ar@<2ex>[r] \ar@<0ex>[r] \ar@<-2ex>[r] &  \cdots \ar@<1ex>[l] \ar@<-1ex>[l] },\] which can be used to compute the cohomology of the sheafification of $\mathcal{F}$, and which can be also related to Cech cohomology $\Check{\mathrm{H}}(U, \mathcal{F})$, see \cite[Definition 1.33, Remark 1.38]{Thomason3}. 
There is a natural map
\begin{equation}\label{mapzero}
\eta_\mathcal{F}\colon\mathcal{F}(M)\to \Check{\mathrm{H}}(U, \mathcal{F}).
\end{equation}
For a presheaf of spectra $\mathcal{F}$ and for $i\in\mathbb{Z}$, denote by $\mathcal{F}_i=\pi_i\mathcal{F}$ the corresponding presheaf of abelian groups and by $\mathcal{F}_i^s$ the sheafification of $\mathcal{F}_i$.
\begin{prop}\label{etalecomputationtopK}
    The map \eqref{mapzero} induces a weak equivalence of spectra:
    \[G^{\mathrm{top}}(\mathcal{M})=\mathcal{K}(M)\xrightarrow{\sim} \Check{\mathrm{H}}(U, \mathcal{K}).\]
    Thus there is a spectral sequence 
    \begin{equation}\label{ss1}
    E_{p,q}:=\Check{\mathrm{H}}^p(U, \mathcal{K}^s_q)\implies G^{\mathrm{top}}_{q-p}(\mathcal{M}).
    \end{equation}
\end{prop}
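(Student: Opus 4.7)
The plan is to establish Čech descent for the presheaf $\mathcal{K}$ with respect to the cover $U$, and then to extract the spectral sequence from the standard Bousfield-Kan homotopy spectral sequence of the associated cosimplicial spectrum. First, I would check that $\mathcal{K}$ genuinely takes values in a cosimplicial spectrum on the Čech nerve of $U$: given $Z_i \to M$ in $U$, the fiber products $Z_{i_1} \times_M \cdots \times_M Z_{i_n}$ remain étale over $M$, and by base change of good moduli space morphisms along étale maps (Alper), the base-changed stacks $\mathcal{R}$ are again global quotient stacks with good moduli spaces étale over $M$. Thus the presheaf $\mathcal{K}$ together with the pullback functors of the semiorthogonal decomposition \eqref{SODZ} assembles into a cosimplicial diagram of spectra.

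Second, I would prove that $\eta_{\mathcal{K}}$ is a weak equivalence, i.e. that $\mathcal{K}$ satisfies Čech descent on $U$. Using the identification $G^{\mathrm{top}}(\mathcal{R}) = K^{\mathrm{top}}(D^b(\mathcal{R}))$ of \cite{HLP} and the fact that Blanc's topological K-theory is a localizing invariant satisfying étale hyperdescent for schemes, the question reduces to descent for the dg-category $D^b(\mathcal{R})$ along the étale cover coming from $U$. Since each $\mathcal{R}$ is a global quotient of a quasi-smooth stack by a reductive group with good moduli space mapping étale to $M$, descent on the stacks reduces (by the projection formula and flat base change) to étale descent on the good moduli spaces, which are quasi-separated algebraic spaces. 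The exactness of pullback along étale morphisms ensures that these descent data are compatible with the dg-enhancements, so this is essentially the same descent input already used in \cite[Section 9]{PTtop} to establish the sheaf-theoretic direct sum decomposition \eqref{sum}.

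Once descent is established, the Bousfield-Kan spectral sequence of the cosimplicial spectrum
\[
[n] \longmapsto \prod_{(i_0, \ldots, i_n)} \mathcal{K}(Z_{i_0} \times_M \cdots \times_M Z_{i_n})
\]
has $E_2$-page $\check{\mathrm{H}}^p(U, \mathcal{K}^s_q)$ and converges to $\pi_{q-p}$ of the totalization, which by descent equals $\pi_{q-p}\mathcal{K}(M) = G^{\mathrm{top}}_{q-p}(\mathcal{M})$. This yields the spectral sequence \eqref{ss1}.

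The main obstacle I anticipate is the rigorous verification of Čech descent for Blanc's $K^{\mathrm{top}}$ on the specific class of stacks appearing here: étale descent for topological K-theory is classical for schemes and extends to many quotient stacks, but the reduction from descent on $\mathcal{R}$ to descent on the good moduli space $R$ requires compatibility of the dg-enhancement of $D^b(\mathcal{R})$ with the étale base change, and must interact correctly with the infinity-categorical formalism underlying Blanc's construction. Modulo these technical but expected compatibilities, the descent argument is formal and the spectral sequence follows immediately.
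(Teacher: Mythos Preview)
Your overall architecture (Čech descent plus Bousfield--Kan spectral sequence) is correct, but the mechanism you propose for proving descent has a gap. You argue that descent for $\mathcal{K}$ reduces, via $G^{\mathrm{top}}(\mathcal{R}) = K^{\mathrm{top}}(D^b(\mathcal{R}))$, to étale descent for the dg-categories $D^b(\mathcal{R})$ over the good moduli spaces $R$, and then invoke that Blanc's $K^{\mathrm{top}}$ is a localizing invariant. But being a localizing invariant gives compatibility with cofiber sequences and filtered colimits, not with the limits that appear in a descent totalization; there is no general statement that $K^{\mathrm{top}}$ sends a limit of dg-categories to a limit of spectra. So even if $D^b(\mathcal{M}_\bullet)$ satisfied étale descent as a sheaf of dg-categories on $M$, this would not by itself force $K^{\mathrm{top}}(D^b(\mathcal{M}_\bullet))$ to satisfy descent. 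Your proposed reduction ``descent on the stacks reduces to descent on the good moduli spaces'' is likewise not justified: $G^{\mathrm{top}}(\mathcal{M}_R)$ is not $G^{\mathrm{top}}(R)$, and the good moduli space map is neither flat nor proper, so no projection-formula argument is available.

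The paper's argument is different and avoids this issue entirely. It invokes Thomason's framework for étale cohomological descent: the key input is not descent of categories but the existence of \emph{transfer} (pushforward) maps along étale morphisms. Since an étale map $R' \to R$ in $U$ induces an étale map $\mathcal{M}_{R'} \to \mathcal{M}_R$, topological K-homology has the required pushforwards, so $\mathcal{K}$ satisfies Thomason's weak transfer property and hence étale cohomological descent by his general results. This is exactly the argument Thomason gave for rational algebraic K-theory, transported verbatim to rational topological K-theory. The paper also notes an alternative route: classical étale descent for Borel--Moore homology (Milne), lifted to $G^{\mathrm{top}}$ via the cycle map isomorphism of \cite[Proposition~3.1]{PTtop}. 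Either of these is shorter and more robust than attempting to pass through descent of dg-categories.
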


\begin{proof}
    The above statement is proved for (rational) algebraic K-theory by Thomason in \cite[theorem 2.15, Corollary 2.16, Corollary 2.17]{Thomason3}. The proof in loc. cit. also applies to the easier case of (rational) topological K-theory. Indeed, pushforward maps along étale maps exist on topological K-theory, so topological K-theory satisfies the weak transfer property \cite[Definition 2.12]{Thomason3}, thus topological K-theory has etale cohomological descent \cite[Proposition 2.14]{Thomason3}, and then the statement of \cite[Theorem 2.15]{Thomason3} also holds for topological K-theory. 
    
    Alternatively, the analogous statement holds for singular cohomology \cite[Chapter III, Theorem 2.17]{MR0559531}, then by a standard dévissage argument also for Borel-Moore homology, and then the statement for topological K-theory can be obtained using~\cite[Proposition~3.1]{PTtop}. 
\end{proof}
\begin{remark}
Even more, the presheaf $\mathcal{K}$ is a sheaf of spectra. Indeed, let $\mathcal{K}^s$ be the sheafification of $\mathcal{K}$. For any $(E\to M)\in \mathrm{Et}(M)$, we can compute the sections $\mathcal{K}^s(E)$ using Cech cohomology for a cover $U_E$ of $E$:
\[\mathcal{K}^s(E)\xrightarrow{\sim} \Check{\mathrm{H}}(U_E, \mathcal{K}).\]
By the same argument as in Proposition \ref{etalecomputationtopK}, we also have that $\mathcal{K}(E)\xrightarrow{\sim} \Check{\mathrm{H}}(U_E, \mathcal{K})$, thus $\mathcal{K}$ is indeed a sheaf.
\end{remark}
\begin{cor}\label{etalecomputationtopK2}
    The map \eqref{mapzero} induces a weak equivalence:
    \[K^{\mathrm{top}}(\mathbb{T}(M)^{\mathrm{red}})\xrightarrow{\sim} \Check{\mathrm{H}}(U, \mathcal{T}).\]
    Thus there is a spectral sequence 
    \[\Check{\mathrm{H}}^p(U, \mathcal{T}^s_q)\implies K^{\mathrm{top}}_{q-p}(\mathbb{T}(M)^{\mathrm{red}}).\]
\end{cor}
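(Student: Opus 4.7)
The plan is to bootstrap the corollary from Proposition~\ref{etalecomputationtopK} via the splitting of presheaves $\mathcal{K} = \mathcal{A} \oplus \mathcal{T}$ from \eqref{sum}. First I would observe that the global semiorthogonal decomposition \eqref{SODredAT}, together with the additivity of Blanc's topological K-theory with respect to semiorthogonal decompositions, induces a direct sum decomposition
\[
\mathcal{K}(M) = G^{\mathrm{top}}(\mathcal{M}) \simeq K^{\mathrm{top}}(\mathbb{A}(M)^{\mathrm{red}}) \oplus K^{\mathrm{top}}(\mathbb{T}(M)^{\mathrm{red}}) = \mathcal{A}(M) \oplus \mathcal{T}(M).
\]
This decomposition is functorial in étale pullbacks $b^{\ast}$ restricted to $\mathbb{A}^{\mathrm{red}}$ and $\mathbb{T}^{\mathrm{red}}$, as recalled after \eqref{SODZ}, so the identification $\mathcal{K} = \mathcal{A} \oplus \mathcal{T}$ on $U$ is compatible with the map $\eta_\mathcal{K}$ of \eqref{mapzero}; in particular $\eta_\mathcal{K}$ is the direct sum of $\eta_\mathcal{A}$ and $\eta_\mathcal{T}$.

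Next I would apply Proposition~\ref{etalecomputationtopK}, which asserts that $\eta_\mathcal{K}$ is a weak equivalence. Since weak equivalences are detected summandwise on a direct sum of spectra, it follows that
\[
\eta_\mathcal{T} \colon K^{\mathrm{top}}(\mathbb{T}(M)^{\mathrm{red}}) = \mathcal{T}(M) \longrightarrow \Check{\mathrm{H}}(U, \mathcal{T})
\]
is a weak equivalence, which is the first assertion of the corollary. (One could alternatively deduce this by applying the argument of Proposition~\ref{etalecomputationtopK} — étale cohomological descent via the weak transfer property — directly to $\mathcal{T}$, but reducing to the already-established case for $\mathcal{K}$ is cleaner.)

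The spectral sequence then arises in the standard way as the descent spectral sequence for the cosimplicial spectrum computing $\Check{\mathrm{H}}(U, \mathcal{T})$: filtering by the cosimplicial degree yields an $E_1$-page with entries $\prod_{i_0, \ldots, i_p} \mathcal{T}_q(Z_{i_0} \times_M \cdots \times_M Z_{i_p})$, whose $E_2$-page computes precisely the Čech cohomology $\Check{\mathrm{H}}^{p}(U, \mathcal{T}_q^s)$ of the sheafification. The abutment to $K^{\mathrm{top}}_{q-p}(\mathbb{T}(M)^{\mathrm{red}})$ follows from the weak equivalence just established. The main conceptual point — and the only place any work is needed — is the compatibility of the étale-local splittings of $\mathbb{A}^{\mathrm{red}}$ and $\mathbb{T}^{\mathrm{red}}$ with the global one, but this is guaranteed by the construction of the semiorthogonal decomposition \eqref{SODZ} in \cite{PTtop}, so the argument is essentially a formal consequence of Proposition~\ref{etalecomputationtopK}.
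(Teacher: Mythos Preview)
Your proof is correct and follows essentially the same approach as the paper: the paper's argument is the single sentence ``$\eta_\mathcal{K}=\eta_\mathcal{A}\oplus\eta_\mathcal{T}$ is an isomorphism by Proposition~\ref{etalecomputationtopK}, so $\eta_\mathcal{T}$ is also an isomorphism,'' and you have unpacked exactly this, together with the standard descent spectral sequence that the paper leaves implicit.
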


\begin{proof}
    The map $\eta_\mathcal{K}=\eta_\mathcal{A}\oplus\eta_\mathcal{T}$ is an isomorphism by Proposition \ref{etalecomputationtopK}, so $\eta_\mathcal{T}$ is also an isomorphism.
\end{proof}

Let $\mathcal{H}_q$ be the presheaf of $\mathbb{Q}$-vector spaces such that, for $(Z\xrightarrow{e}M)\in U$, we have \[\mathcal{H}_q(Z)=H^{\mathrm{BM}}_q(\mathcal{Z}).\]
Then $\mathcal{H}_q=\pi_q\mathcal{H}$, where $\mathcal{H}$ is the presheaf of Eilenberg-MacLane spectra. As for $\mathcal{K}$, the presheaf $\mathcal{H}$ is actually a sheaf.
There is an spectral sequence analogous to \eqref{ss1}:
\begin{equation}\label{ss2}
E'_{p,q}:=\Check{\mathrm{H}}^p(U, \mathcal{H}^s_q)\implies H^{\mathrm{BM}}_{q-p}(\mathfrak{M})=H^{\mathrm{BM}}_{q-p}(\mathcal{M}),
\end{equation}
see the proof of Proposition \ref{etalecomputationtopK}.

\begin{prop}\label{prop78}
We have $\mathcal{K}_1=\widetilde{\mathcal{H}}^s_1=0$. Thus
    the terms $E_{p,q}$ from \eqref{ss1} and $E'_{p,q}$ from \eqref{ss2} vanish for $q$ odd.
\end{prop}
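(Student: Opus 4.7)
My plan is to reduce both vanishings to the analogous statements for stacks of representations of preprojective algebras of totally negative quivers, which are established in \cite{PTtop}. The reduction uses the étale local description of $\pi\colon\mathcal{M}\to M$ recalled in Subsection~\ref{localdescription}: by the Cartesian diagrams \eqref{diaggg1}--\eqref{diaggg2}, for each closed point $y\in M$ there is an étale neighborhood $Z\to M$ of $y$ together with a good-moduli-preserving étale morphism $\mathcal{Z}\to\mathcal{P}(\bm{d})$, where $\bm{d}$ depends on the polystable sheaf parameterized by $y$. Since the sheafifications $\mathcal{K}^s_q$ and $\mathcal{H}^s_q$ are determined by their stalks at closed points, and the presheaves $\mathcal{K}$, $\mathcal{H}$ behave functorially under the étale Cartesian squares above, it suffices to prove the desired vanishings stalk-wise at each $y\in M$, which translates to the corresponding statements at $0\in P(\bm{d})$ for the preprojective algebra stack.

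For the Borel--Moore part, the PBW-type decomposition \eqref{Hall} and its analogue for preprojective algebras due to \cite{DHSM} express $\pi_{P,d\ast}D_{\mathcal{P}(\bm{d})}$ as a free symmetric algebra on the BPS sheaves tensored with $H^\ast(B\mathbb{C}^\ast)$. Since $H^\ast(B\mathbb{C}^\ast)$ is concentrated in even degrees, and the BPS sheaves for totally negative quivers have stalks in even degrees (as shifts of intersection complexes of singular symplectic varieties, whose local cohomology is even), the stalk of $\pi_{P,d\ast}D_{\mathcal{P}(\bm{d})}$ at $0$ is concentrated in even degrees. Identifying this with the Borel--Moore homology of the relevant étale neighborhood of $\mathcal{P}(\bm{d})$ yields $\mathcal{H}^s_q|_y=0$ for odd $q$, hence $\widetilde{\mathcal{H}}^s_1=0$. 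For the K-theory part, the 2-periodicity of Blanc's topological K-theory gives $\mathcal{K}_{2n+1}\cong\mathcal{K}_1$, so it suffices to show $\mathcal{K}_1=0$; by the same reduction this is the analogous local statement for $\mathcal{P}(\bm{d})$, which is proven in \cite{PTtop} using a cycle map from Borel--Moore homology to topological K-theory, which rationally transfers even-degree concentration from Borel--Moore homology to topological K-theory in this symplectic setting.

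The main technical obstacle will be verifying that the étale local equivalence between $\mathcal{M}$ and $\mathcal{P}(\bm{d})$ is genuinely compatible with the presheaves $\mathcal{K}$ and $\mathcal{H}$ at the level of stalks, i.e.\ that topological K-theory and Borel--Moore homology behave functorially under the Cartesian squares \eqref{diaggg2}. This should follow from smooth/étale base change, together with the compatibility of the cycle map and the BBDG-type decomposition for $\pi_d$ with their counterparts for $\pi_P$, as essentially already exploited in \cite{PTtop}; however one must track these compatibilities carefully, in particular the comparison of the BPS sheaves via \eqref{localBPS}, to ensure the vanishings established in loc.\ cit.\ transfer faithfully to our setting.
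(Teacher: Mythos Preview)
Your proposal follows broadly the same strategy as the paper---reduce to stalks, then to the preprojective-algebra setting---but the paper's execution is simpler and sidesteps the technical obstacle you flag at the end.

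First, the paper reduces $\mathcal{K}_1=0$ to $\widetilde{\mathcal{H}}^s_1=0$ in a single step via \cite[Proposition~3.1]{PTtop}, which gives the rational isomorphism $G^{\mathrm{top}}_i\cong\widetilde{H}^{\mathrm{BM}}_i$ directly; there is no need to treat the K-theory and Borel--Moore cases separately as you do.

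Second, for the Borel--Moore vanishing the paper does not go through the PBW decomposition \eqref{Hall}. Instead, after passing to analytic stalks (citing Milne for the étale--analytic comparison), it uses the $\mathbb{C}^\ast$-scaling action on representations of the doubled quiver to find a cofinal system of open sets $V\subset P(\bm{d})$ with $\pi_P^{-1}(V)$ homeomorphic to $\mathcal{P}(\bm{d})$ itself. This identifies the stalk with the global $H^{\mathrm{BM}}_{\mathrm{odd}}(\mathcal{P}(\bm{d}))$, whose vanishing is Davison's \cite[Theorem~A]{Dav}. Your PBW route would land on the same purity input anyway---the even-degree concentration of BPS-sheaf stalks you invoke is itself a consequence of Davison's purity theorem---so it is a longer path to the same citation. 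The $\mathbb{C}^\ast$-contraction step, which you omit, is precisely what dissolves the compatibility issue you worry about in your last paragraph: rather than tracking étale base change for the presheaves $\mathcal{K}$ and $\mathcal{H}$, the paper just identifies the stalk with a known global cohomology group.
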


\begin{proof}
By~\cite[Proposition~3.1]{PTtop}, it suffices to check that $\widetilde{\mathcal{H}}^s_{1}=0$. 
It suffices to check that the stalks of $\widetilde{\mathcal{H}}^s_{1}$ over $y\in M$ are zero.  We can define spectra $\mathcal{H}^{\mathrm{an}}$ in the analytic topology, and $\mathcal{H}^{\mathrm{an}}_y\cong \mathcal{H}_y$ for any $y\in M$, which follows as in \cite[Chapter III, Theorem 3.12]{MR0559531}. It thus suffices to check that $H^{\mathrm{BM}}_{\mathrm{odd}}(V)=0$ for a system of open sets $V\subset M$. 
By the local description from Subsection \ref{localdescription}, we may assume that $V\subset P(\bm{d})$ is an open neighborhood of the origin, where $P(\bm{d})$ is the coarse space of dimension $\bm{d}$ representations of the preprojective algebra of the Ext-quiver $Q^\circ_y$.

Consider the action of $\mathbb{C}^*$ on spaces of representations of the double quiver of $Q^\circ_y$, which acts with weight one.
It induces a scaling action on $P(\bm{d})$
which contracts it onto $0$. 
We can choose a system of open sets $0\in V\subset P(\bm{d})$ such that $V$ is homeomorphic to $P(\bm{d})$ and $\pi_{P}^{-1}(V)$ is homeomorphic to $\mathcal{P}(\bm{d})$. 
It then suffices to check that 
$H^{\mathrm{BM}}_{\mathrm{odd}}(\mathcal{P}(\bm{d}))=0$, 
which was proved by Davison in \cite[Theorem A]{Dav}. 
\end{proof}

Let $i\in\mathbb{Z}$. 
Consider the Chern character for the quotient stack $\mathcal{M}$:
\[\mathrm{ch}\colon G^{\mathrm{top}}_i(\mathcal{M})\to \widetilde{H}^{\mathrm{BM}}_i(\mathcal{M}),\] see \cite[Subsection 3.1]{PTtop}. There are analogous such Chern characters for $\mathcal{Z}$ with $(e\colon \mathcal{Z}\to \mathcal{M})\in\mathcal{U}$.
By Proposition \ref{prop78}, there are compatible spectral sequences with terms for bi-degrees:
\begin{equation}\label{diagsect8}
    \begin{tikzcd}\Check{\mathrm{H}}^{2q-i}(U, \mathcal{T}^s_{2q})\arrow[r, Rightarrow]\arrow[d, hook]& K_{i}^{\mathrm{top}}(\mathbb{T}(M))\arrow[d, hook]\\
    \Check{\mathrm{H}}^{2q-i}(U, \mathcal{K}^s_{2q})\arrow[r, Rightarrow]\arrow[d, "\mathrm{ch}"]& G_{i}^{\mathrm{top}}(\mathcal{M})\arrow[d, "\mathrm{ch}"]\\
        \Check{\mathrm{H}}^{2a+i}(U, \widetilde{\mathcal{H}}^s_{2q})\arrow[r, Rightarrow]& \widetilde{H}^{\mathrm{BM}}_{i}(\mathcal{M}).
    \end{tikzcd}
\end{equation}
Let 
$F_{\bullet} \mathcal{K}_{2g}^s\subset \mathcal{K}_{2g}^s$ and $F_{\bullet} \mathcal{T}_{2g}^s\subset \mathcal{T}_{2g}^s$ be the increasing filtrations 
defined by 
\begin{align*}
   F_j \mathcal{K}_{2q}^s =\ch^{-1}(\mathcal{H}_{\leq 2q+2j}^s), \ 
   F_j \mathcal{T}_{2g}^s=\mathcal{T}_{2g}^s \cap  F_j \mathcal{K}_{2q}^s.
\end{align*} 
We denote by $\mathrm{gr}_{\bullet}$ the associated graded 
with respect to the above filtrations. 
We obtain compatible spectral sequences:
\begin{equation}\label{diagss3}
    \begin{tikzcd}\Check{\mathrm{H}}^{2q-i}(U, \mathrm{gr}_{j}\mathcal{T}^s_{2q})\arrow[r, Rightarrow]\arrow[d, hook, "\alpha"]& \mathrm{gr}_{j} K_{i}^{\mathrm{top}}(\mathbb{T}(M))\arrow[d, hook]\\
    \Check{\mathrm{H}}^{2q-i}(U, \mathrm{gr}_{j}\mathcal{K}^s_{2q})\arrow[r, Rightarrow]\arrow[d, "\mathrm{c}"]& \mathrm{gr}_{j}G_{i}^{\mathrm{top}}(\mathcal{M})\arrow[d, "\mathrm{c}"]\\
        \Check{\mathrm{H}}^{2q+i}(U, \mathcal{H}^s_{2q+2j})\arrow[r, Rightarrow, "d"]& H^{\mathrm{BM}}_{i+2j}(\mathcal{M}),
    \end{tikzcd}
\end{equation} where the cycle maps 
$\mathrm{c}$ are isomorphisms by~\cite[Proposition~3.1]{PTtop}. 

\begin{prop}\label{prop79}
    The image of the map $d\mathrm{c}\alpha$ is $H^{-i-2j}(\mathcal{M}, \mathcal{BPS}_{v,w})$. 
\end{prop}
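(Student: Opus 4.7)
\textbf{Proof proposal for Proposition \ref{prop79}.}
The plan is to reduce the identification of the image to the étale-local case of preprojective algebras treated in \cite{PTtop}, and then promote the local identifications to a statement about Čech cohomology. First I would observe that since $\mathrm{c}$ is an isomorphism by \cite[Proposition~3.1]{PTtop} and the diagram \eqref{diagss3} commutes, computing the image of $d\mathrm{c}\alpha$ is equivalent to computing the image, under the spectral sequence edge map into $H^{\mathrm{BM}}_{i+2j}(\mathcal{M})$, of the subsheaf
\[
(\mathrm{c}\circ\alpha)\bigl(\mathrm{gr}_{j}\mathcal{T}^{s}_{2q}\bigr)\subset \mathcal{H}^{s}_{2q+2j}
\]
on the étale site $U$. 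Using the identification $H^{\mathrm{BM}}_{i+2j}(\mathcal{M})=H^{-i-2j}(M,\pi_{d*}D_d)$ together with the DHSM decomposition \eqref{BPSZ} of $\pi_{d*}D_d$, the right-hand side $H^{-i-2j}(M,\mathcal{BPS}_{v,w})$ is a canonical direct summand of the target, so it suffices to show that the image sheaf agrees, inside the sheafification of $\mathcal{H}^{s}_{2q+2j}$, with the summand corresponding to $\mathcal{BPS}_{v,w}$.

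Next I would pass to the étale-local description from Subsection \ref{localdescription}. For each $(R\to M)\in U$ factoring through $Z\to M$ as in \eqref{diaggg2}, the semiorthogonal decomposition \eqref{SODZ} is compatible with étale pullback, so $\mathbb{T}(R)^{\mathrm{red}}$ is identified, via $e'\colon R\to P(\bm{d})$, with the corresponding quasi-BPS category of the preprojective algebra of the Ext-quiver $Q^{\circ}_y$. The analogue of Proposition \ref{prop79} in this local setting is essentially a recapitulation of the main identification in \cite{PTtop} (the preprojective-algebra analogue of the present theorem): under the Chern character into $H^{\mathrm{BM}}_\bullet(\mathcal{P}(\bm{d}))$, the image of the graded piece of the topological K-theory of the local quasi-BPS category is the cohomology of the preprojective BPS sheaf summands indexed by the analog of $S^d_w$, i.e. by partitions with $d_i w/d\in\mathbb{Z}$ for all $i$. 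Transporting this identification along \eqref{localBPS} yields the stalkwise statement that
\[
e'{}^{*}\bigl(\mathcal{BPS}^{p}_{\bm{d},w}\bigr)\;\cong\;e^{*}\bigl(\mathcal{BPS}_{v,w}\bigr)
\]
agrees with the image sheaf on $R$.

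With the local identification in hand, I would globalize using the Čech spectral sequence \eqref{ss1} and its counterpart \eqref{ss2}. The étale maps $e'$ are compatible with the categorical Hall products defining the SOD \eqref{SODZ}, and the partition indexing by $S^d_w$ is stable under étale pullback (the condition $d_i w/d\in\mathbb{Z}$ depends only on $(d,w)$, not on the point). Therefore the image presheaf $(\mathrm{c}\circ\alpha)\mathrm{gr}_{j}\mathcal{T}^{s}_{2q}$ and the presheaf $R\mapsto H^{-i-2j}\!\bigl(R,e^{*}\mathcal{BPS}_{v,w}\bigr)$ sheafify to the same subsheaf of $\mathcal{H}^{s}_{2q+2j}$, and applying $\check{H}^{2q-i}(U,-)$ together with the convergence of the spectral sequence identifies the image of $d\mathrm{c}\alpha$ with $H^{-i-2j}(M,\mathcal{BPS}_{v,w})$.

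The main technical obstacle will be the bookkeeping in the gluing step: one must ensure that the image of $\mathrm{gr}_{j}\mathcal{T}^{s}$ really maps to the \emph{primitive} BPS summands and not to higher pieces of the symmetric algebra in \eqref{BPSZ}. The way to handle this is to trace the contribution of the complementary semiorthogonal summand $\mathbb{A}(M)^{\mathrm{red}}$: by Theorem \ref{thm:sodK32}, $\mathbb{A}(M)^{\mathrm{red}}$ is generated by categorical Hall products of smaller quasi-BPS categories, and under the cycle map these Hall products correspond precisely to the higher symmetric powers in the DHSM decomposition (this is the compatibility of the categorical and cohomological Hall products used already in the preprojective-algebra case in \cite[Section~9]{PTtop}). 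The remaining primitive direct summands are indexed exactly by partitions in $S^d_w$, which gives the claim.
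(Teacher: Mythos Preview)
Your proposal is correct and follows essentially the same route as the paper: identify the image of $\mathrm{c}\alpha$ étale-locally with the cohomology sheaves of $\mathcal{BPS}_{v,w}$ via the preprojective-algebra computation in \cite{PTtop}, then recognize the restricted spectral sequence $d$ as the Čech (hypercohomology) spectral sequence for $\mathcal{BPS}_{v,w}$ on $M$, which converges to $H^{-i-2j}(M,\mathcal{BPS}_{v,w})$. The paper's argument is much terser---it simply cites \cite[Theorem~9.2]{PTtop} to get the image of $\mathrm{c}\alpha$ directly as $\check{H}^{2q+i}(U,\mathcal{H}^{-2q-2j}(\mathcal{BPS}_{v,w}))$ without spelling out the local-to-global passage---so your final paragraph about the ``technical obstacle'' of separating primitive from higher symmetric summands is unnecessary: that separation is already packaged into the cited result, and you do not need to argue it again via the complementary summand $\mathbb{A}(M)^{\mathrm{red}}$.
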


\begin{proof}
    By~\cite[Theorem~9.2]{PTtop}, the image of $\mathrm{c}\alpha$ is the bi-graded complex with terms $E^\circ_{p,q}:=\Check{\mathrm{H}}^{2q+i}(U, \mathcal{H}^{-2q-2j}(\mathcal{BPS}_{v,w}))$. 
    The restriction of $d$ to $E^\circ_{p,q}$ corresponds to the Čech spectral sequence for $\mathcal{BPS}_{v,w}$:
    \[d\colon E^{\circ}_{p,q}\implies H^{-i-2j}(M, \mathcal{BPS}_{v,w}).\] The conclusion then follows.
\end{proof}

We obtain the following: 
\begin{thm}\label{thmKtopgraded}
For any $i\in\mathbb{Z}$,
    there is an isomorphism
    \[\mathrm{c}\colon \mathrm{gr}_jK^{\mathrm{top}}_i(\mathbb{T}^{\sigma}_S(v)^{\mathrm{red}}_w)\xrightarrow{\sim} H^{-i-2j}\left(M^{\sigma}_S(v), \mathcal{BPS}_{v,w}\right).\]
\end{thm}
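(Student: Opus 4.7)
The plan is to extract the desired isomorphism directly from the compatibility diagram \eqref{diagss3} of spectral sequences. First, by Lemma~\ref{lem:topisom}, it suffices to prove the statement after identifying $K^{\mathrm{top}}_i(\mathbb{T}^{\sigma}_S(v)_w^{\mathrm{red}})$ with $K^{\mathrm{top}}_i(\mathbb{T}^{\sigma}_S(v)_w)$, which we may regard as a subspace of $G^{\mathrm{top}}_i(\mathcal{M})$ via the semiorthogonal decomposition \eqref{SODredAT}. Corollary~\ref{etalecomputationtopK2} provides an étale descent spectral sequence converging to $K^{\mathrm{top}}_i(\mathbb{T}(M)^{\mathrm{red}})$, and passing to the associated graded with respect to the filtration coming from the Chern character produces the top row of \eqref{diagss3}.

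Next, the cycle map $\mathrm{c}$ identifies $\mathrm{gr}_j \mathcal{K}^s_{2q}$ with the sheafification of $\mathcal{H}^s_{2q+2j} = H^{\mathrm{BM}}_{2q+2j}(-)$, by~\cite[Proposition~3.1]{PTtop}. The key input then is the étale-local identification of the topological K-theory of quasi-BPS categories for preprojective algebras with BPS cohomology, established in~\cite[Theorem~9.2]{PTtop}: under the local description of $\pi\colon\mathcal{M}\to M$ via preprojective algebras (Subsection~\ref{localdescription}) and the isomorphism \eqref{localBPS}, the subsheaf $\mathrm{c}(\mathrm{gr}_j\mathcal{T}^s_{2q}) \subset \mathcal{H}^s_{2q+2j}$ is identified with $\mathcal{H}^{-2q-2j}(\mathcal{BPS}_{v,w})$. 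This globalization from local preprojective algebras to $M$ is precisely what Proposition~\ref{prop79} encodes.

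Combining these steps, the top spectral sequence of \eqref{diagss3} is identified, via $\mathrm{c}\circ\alpha$, with the Čech spectral sequence
\[
\check{\mathrm{H}}^{2q-i}\bigl(U,\, \mathcal{H}^{-2q-2j}(\mathcal{BPS}_{v,w})\bigr) \implies H^{-i-2j}\bigl(M,\, \mathcal{BPS}_{v,w}\bigr),
\]
whose abutment is exactly the target of the theorem by Proposition~\ref{prop79}. Since $\mathrm{c}$ is an isomorphism on $\mathrm{gr}_j G^{\mathrm{top}}_i(\mathcal{M})$ and $\alpha$ is the inclusion of the topological K-theory of the semiorthogonal factor $\mathbb{T}(M)^{\mathrm{red}}$, the composition $\mathrm{c}\circ\alpha$ is injective on the associated graded, and by the local PTtop computation its image is the full BPS cohomology spectral sequence. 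The induced map on abutments is therefore the desired isomorphism.

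The main obstacle is ensuring the compatibility, at the level of sheaves on the étale site of $M$, of three a priori independent structures: the Čech/cohomological descent spectral sequence of Proposition~\ref{etalecomputationtopK}, the filtration coming from the Chern character, and the PBW-type decomposition \eqref{BPSZ} of the Hall algebra into BPS summands. The nontrivial content is that the étale-local identification of $\mathrm{gr}_\bullet\mathcal{T}^s$ with the cohomology sheaves of the BPS perverse sheaf, proved formally locally in~\cite[Theorem~9.2]{PTtop} via preprojective algebras, glues along étale transition maps between charts of the form \eqref{diaggg2}; this gluing is what makes Proposition~\ref{prop79} applicable and is the heart of the proof.
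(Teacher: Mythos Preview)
Your proposal is correct and follows the same approach as the paper: the paper's proof is the single line ``The conclusion follows from the diagram \eqref{diagss3} and Proposition~\ref{prop79},'' and your argument is an expanded justification of exactly this. One minor remark: your opening invocation of Lemma~\ref{lem:topisom} is unnecessary here, since the theorem is already stated for the reduced quasi-BPS category and the filtration $F_\bullet$ is defined directly on $\mathcal{T}^s$; that lemma is only needed later for Theorem~\ref{thmKtop}.
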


\begin{proof}
    The conclusion follows from the diagram \eqref{diagss3} and Proposition \ref{prop79}.
\end{proof}

\begin{proof}[Proof of Theorem \ref{thmKtop}]
By Theorem \ref{thmKtopgraded} and Lemma \ref{lem:topisom},
    it suffices to check that there is a non-canonical isomorphism $K^{\mathrm{top}}_i(\mathbb{T}^{\sigma}_S(v)^{\mathrm{red}}_w)\cong \bigoplus_{j\in\mathbb{Z}}\mathrm{gr}_j K^{\mathrm{top}}_i(\mathbb{T}^{\sigma}_S(v)^{\mathrm{red}}_w)$.
    It suffices to check that the Chern character
    \[\mathrm{ch}\colon K^{\mathrm{top}}_i(\mathbb{T}^{\sigma}_S(v)^{\mathrm{red}}_w)\hookrightarrow G^{\mathrm{top}}_i(\mathcal{M}^\sigma_S(v))\to \widetilde{H}^{\mathrm{BM}}_i(\mathcal{M}^\sigma_S(v))\] is injective. By the diagram \eqref{diagsect8}, it suffices to check that the following Chern character is injective
    \[\mathrm{ch}\colon K^{\mathrm{top}}_i(\mathbb{T}(R)^{\mathrm{red}})\hookrightarrow G^{\mathrm{top}}_i(\mathcal{R})\to \widetilde{H}^{\mathrm{BM}}_i(\mathcal{R}),\] where $(R\xrightarrow{e} M)\in U$. This was proved in \cite[Proposition 9.9]{PTtop}.
\end{proof}

 \bibliographystyle{amsalpha}
\bibliography{math}

\providecommand{\bysame}{\leavevmode\hbox to3em{\hrulefill}\thinspace}
\providecommand{\MR}{\relax\ifhmode\unskip\space\fi MR }
\providecommand{\MRhref}[2]{%
  \href{http://www.ams.org/mathscinet-getitem?mr=#1}{#2}
}
\providecommand{\href}[2]{#2}
\begin{thebibliography}{BBBBJ15}

\bibitem[AG15]{AG}
D.~Arinkin and D.~Gaitsgory, \emph{Singular support of coherent sheaves and the geometric {L}anglands conjecture}, Selecta Math. (N.S.) \textbf{21} (2015), no.~1, 1--199.

\bibitem[AHD20]{AHR}
J.~Alper, J.~Hall, and D.~David, \emph{A {L}una \'{e}tale slice theorem for algebraic stacks}, Ann. of Math. (2) \textbf{191} (2020), no.~3, 675--738.

\bibitem[AHLH]{AHLH}
J.~Alper, D.~Halpern-Leistner, and J.~Heinloth, \emph{Existence of moduli spaces for algebraic stacks}, arXiv:1812.01128, to appear in Invent. Math.

\bibitem[Alp13]{MR3237451}
J.~Alper, \emph{Good moduli spaces for {A}rtin stacks}, Ann. Inst. Fourier (Grenoble) \textbf{63} (2013), no.~6, 2349--2402.

\bibitem[AS18]{Sacca0}
E.~Arbarello and G.~Sacc\`a, \emph{Singularities of moduli spaces of sheaves on {K}3 surfaces and {N}akajima quiver varieties}, Adv. Math. \textbf{329} (2018), 649--703.

\bibitem[BBBBJ15]{BBBJ}
O.~Ben-Bassat, C.~Brav, V.~Bussi, and D.~Joyce, \emph{A `{D}arboux {T}heorem' for shifted symplectic structures on derived {A}rtin stacks, with applications}, Geom.~Topol.~ \textbf{19} (2015), 1287--1359.

\bibitem[BBD82]{BBD}
A.~Beilinson, J.~Bernstein, and P.~Deligne, \emph{Faisceaux pervers}, Analysis and topology on singular spaces I, Asterisque \textbf{100} (1982), 5--171.

\bibitem[Beh09]{MR2600874}
K.~Behrend, \emph{Donaldson-{T}homas type invariants via microlocal geometry}, Ann. of Math. (2) \textbf{170} (2009), no.~3, 1307--1338.

\bibitem[BFK19]{MR3895631}
M.~Ballard, D.~Favero, and L.~Katzarkov, \emph{Variation of geometric invariant theory quotients and derived categories}, J. Reine Angew. Math. \textbf{746} (2019), 235--303.

\bibitem[Bla16]{Blanc}
A.~Blanc, \emph{Topological {K}-theory of complex noncommutative spaces}, Compos. Math. \textbf{152} (2016), no.~3, 489--555.

\bibitem[BM14]{BaMa2}
A.~Bayer and E.~Macr\`{i}, \emph{Projectivity and birational geometry of {B}ridgeland moduli spaces}, J. Amer. Math. Soc. \textbf{27} (2014), no.~3, 707--752.

\bibitem[Bri07]{Brs1}
T.~Bridgeland, \emph{Stability conditions on triangulated categories}, Ann.~of Math \textbf{166} (2007), 317--345.

\bibitem[Bri08]{Brs2}
\bysame, \emph{Stability conditions on ${K}$3 surfaces}, Duke Math.~J.~ \textbf{141} (2008), 241--291.

\bibitem[Bri09]{Brs3}
\bysame, \emph{Stability conditions and {K}leinian singularities}, Int.~Math.~Res.~Not.~ (2009), 4142--4157.

\bibitem[BVdB03]{MR1996800}
A.~Bondal and M.~{V}an~den Bergh, \emph{Generators and representability of functors in commutative and noncommutative geometry}, Mosc. Math. J. \textbf{3} (2003), no.~1, 1--36, 258.

\bibitem[C{\u{a}}l00]{MR2700538}
A.~C{\u{a}}ld{\u{a}}raru, \emph{Derived categories of twisted sheaves on {C}alabi-{Y}au manifolds}, ProQuest LLC, Ann Arbor, MI, 2000, Thesis (Ph.D.)--Cornell University.

\bibitem[C\u02]{MR1902629}
A.~C\u{a}ld\u{a}raru, \emph{Nonfine moduli spaces of sheaves on {$K3$} surfaces}, Int. Math. Res. Not. (2002), no.~20, 1027--1056.

\bibitem[Dava]{D}
B.~Davison, \emph{{BPS} {L}ie algebras and the less perverse filtration on the preprojective {C}o{HA}}, arXiv:2007.03289.

\bibitem[Davb]{Dav}
\bysame, \emph{The integrality conjecture and the cohomology of preprojective stacks}, arXiv:1602.02110, to appear in J. Reine Angew. Math.

\bibitem[Davc]{DavPurity}
B.~Davison, \emph{Purity and 2-{C}alabi-{Y}au categories}, arXiv:2106.07692.

\bibitem[dCRS21]{MR4338453}
M.A. de~Cataldo, A.~Rapagnetta, and G.~Sacc\`a, \emph{The {H}odge numbers of {O}'{G}rady 10 via {N}g\^{o} strings}, J. Math. Pures Appl. (9) \textbf{156} (2021), 125--178.

\bibitem[DG13]{MR3037900}
V.~Drinfeld and D.~Gaitsgory, \emph{On some finiteness questions for algebraic stacks}, Geom. Funct. Anal. \textbf{23} (2013), no.~1, 149--294.

\bibitem[DHSMa]{DHSM2}
B.~Davison, L.~Hennecart, and S.~{S}chlegel {M}ejia, \emph{{BPS} algebras and generalised {K}ac-{M}oody algebras from 2-{C}alabi-{Y}au categories}, arXiv:2303.12592.

\bibitem[DHSMb]{DHSM}
\bysame, \emph{{BPS} {L}ie algebras for totally negative 2-{C}alabi-{Y}au categories and nonabelian {H}odge theory for stacks}, arXiv:2212.07668.

\bibitem[DM20]{DM}
B.~Davison and S.~Meinhardt, \emph{Cohomological {D}onaldson-{T}homas theory of a quiver with potential and quantum enveloping algebras}, Invent. Math. \textbf{221} (2020), no.~3, 777--871.

\bibitem[EP15]{EfPo}
A.~I. Efimov and L.~Positselski, \emph{Coherent analogues of matrix factorizations and relative singularity categories}, Algebra Number Theory \textbf{9} (2015), no.~5, 1159--1292.

\bibitem[FK18]{FavTy}
D.~Favero and T.~L. Kelly, \emph{Fractional {C}alabi-{Y}au categories from {L}andau-{G}inzburg models}, Algebr. Geom. \textbf{5} (2018), no.~5, 596--649.

\bibitem[Fu03]{Funil}
B.~Fu, \emph{Symplectic resolutions for nilpotent orbits}, Invent. Math. \textbf{151} (2003), 167--186.

\bibitem[Gai13]{MR3136100}
D.~Gaitsgory, \emph{{I}nd-coherent sheaves}, Mosc. Math. J. \textbf{13} (2013), no.~3, 399--528, 553.

\bibitem[GR17]{MR3701352}
D.~Gaitsgory and N.~Rozenblyum, \emph{A study in derived algebraic geometry. {V}ol. {I}. {C}orrespondences and duality}, Mathematical Surveys and Monographs, vol. 221, American Mathematical Society, Providence, RI, 2017.

\bibitem[Har12]{HH}
H.~Hartmann, \emph{Cusps of the {K}$\ddot{\rm{a}}$hler moduli space and stability conditions on {K}3 surfaces}, Math.~Ann.~ \textbf{354} (2012), 1--42.

\bibitem[HHR]{RHH}
B.~Hennion, J.~Holstein, and M.~Robalo, \emph{in progress}, slides available in https://marco-robalo.perso.math.cnrs.fr/gluingMF.pdf.

\bibitem[Hir17]{Hirano}
Y.~Hirano, \emph{Derived {K}n\"orrer periodicity and {O}rlov's theorem for gauged {L}andau-{G}inzburg models}, Compos. Math. \textbf{153} (2017), no.~5, 973--1007.

\bibitem[HLa]{halpK32}
D.~Halpern-Leistner, \emph{Derived {$\Theta$}-stratifications and the {$D$}-equivalence conjecture}, arXiv:2010.01127.

\bibitem[HLb]{Halpinstab}
\bysame, \emph{On the structure of instability in moduli theory}, arXiv:1411.0627, version 4.

\bibitem[HL15]{halp}
\bysame, \emph{The derived category of a {GIT} quotient}, J. Amer. Math. Soc. \textbf{28} (2015), no.~3, 871--912.

\bibitem[HLP20]{HLP}
D.~Halpern-Leistner and D.~Pomerleano, \emph{Equivariant {H}odge theory and noncommutative geometry}, Geom. Topol. \textbf{24} (2020), no.~5, 2361--2433.

\bibitem[HLS20]{hls}
D.~Halpern-Leistner and S.~V. Sam, \emph{Combinatorial constructions of derived equivalences}, J. Amer. Math. Soc. \textbf{33} (2020), no.~3, 735--773.

\bibitem[HMS09]{HMS2}
D.~Huybrechts, E.~Macri, and P.~Stellari, \emph{Derived equivalences of {K}3 surfaces and orientation}, Duke.~Math.~J.~ \textbf{149} (2009), 461--507.

\bibitem[HR17]{Hperf}
J.~Hall and D.~Rydh, \emph{Perfect complexes on algebraic stacks}, Compos. Math. \textbf{153} (2017), no.~11, 2318--2367.

\bibitem[Isi13]{I}
M.~U. Isik, \emph{Equivalence of the derived category of a variety with a singularity category}, Int. Math. Res. Not. IMRN (2013), no.~12, 2787--2808.

\bibitem[JS12]{JS}
D.~Joyce and Y.~Song, \emph{A theory of generalized {D}onaldson-{T}homas invariants}, Mem. Amer. Math. Soc. \textbf{217} (2012), no.~1020, iv+199.

\bibitem[Kel11]{KellerVandenBergh}
B.~Keller, \emph{Deformed {C}alabi-{Y}au completions}, J. Reine Angew. Math. \textbf{654} (2011), 125--180, With an appendix by Michel Van den Bergh.

\bibitem[KK]{KinjoKoseki}
T.~Kinjo and N.~Koseki, \emph{Cohomological $\chi$-independence for {H}iggs bundles and {G}opakumar-{V}afa invariants}, arXiv:2112.10053, to appear in J. Eur. Math. Soc.

\bibitem[KLS06]{KaLeSo}
D.~Kaledin, M.~Lehn, and Ch. Sorger, \emph{Singular symplectic moduli spaces}, Invent.~Math.~ \textbf{164} (2006), 591--614.

\bibitem[Kno89]{Knop2}
F.~Knop, \emph{Der kanonische {M}odul eines {I}nvariantenrings}, J. Algebra \textbf{127} (1989), no.~1, 40--54.

\bibitem[KS]{K-S}
M.~Kontsevich and Y.~Soibelman, \emph{Stability structures, motivic {D}onaldson-{T}homas invariants and cluster transformations}, arXiv:0811.2435.

\bibitem[Kuz11]{MR2801403}
A.~Kuznetsov, \emph{Base change for semiorthogonal decompositions}, Compos. Math. \textbf{147} (2011), no.~3, 852--876.

\bibitem[Kuz14]{KuzICM}
\bysame, \emph{Semiorthogonal decompositions in algebraic geometry}, Proceedings of the {I}nternational {C}ongress of {M}athematicians---{S}eoul 2014. {V}ol. {II}, Kyung Moon Sa, Seoul, 2014, pp.~635--660.

\bibitem[Lie07]{MR2309155}
M.~Lieblich, \emph{Moduli of twisted sheaves}, Duke Math. J. \textbf{138} (2007), no.~1, 23--118.

\bibitem[LO09]{MR2480756}
Y.~Laszlo and M.~Olsson, \emph{Perverse {$t$}-structure on {A}rtin stacks}, Math. Z. \textbf{261} (2009), no.~4, 737--748.

\bibitem[Lun10]{VL}
V.~Lunts, \emph{Categorical resolution of singularities}, J.~Algebra \textbf{323} (2010), 2977--3003.

\bibitem[Mil80]{MR0559531}
J.S. Milne, \emph{\'{E}tale cohomology}, Princeton University Press, Princeton, N.J., 1980.

\bibitem[MS23]{MaulikShen}
D.~Maulik and J.~Shen, \emph{Cohomological {$\chi$}--independence for moduli of one-dimensional sheaves and moduli of {H}iggs bundles}, Geom. Topol. \textbf{27} (2023), no.~4, 1539--1586.

\bibitem[MT18a]{MTK3}
D.~Maulik and R.~P. Thomas, \emph{Sheaf counting on local {K}3 surfaces}, Pure Appl.~Math.~Quart.~ \textbf{2018} (2018), 419--441.

\bibitem[MT18b]{MT}
D.~Maulik and Y.~Toda, \emph{Gopakumar-{V}afa invariants via vanishing cycles}, Invent. Math. \textbf{213} (2018), no.~3, 1017--1097.

\bibitem[Muk87]{Mu2}
S.~Mukai, \emph{On the moduli space of bundles on ${K}$3 surfaces ${I}$}, Vector {B}undles on {A}lgebraic Varieties, M.~F.~Atiyah et al.~, Oxford University Press (1987), 341--413.

\bibitem[Nee21]{Neeman}
A.~Neeman, \emph{Strong generators in ${D}^{\rm{perf}}(x)$ and ${D}^b_{\rm{coh}}(x)$}, Ann. of Math. (3) \textbf{193} (2021), 689--732.

\bibitem[O'G99]{Ogra1}
K.~O'Grady, \emph{Desingularized moduli spaces of sheaves on a {K}3}, J.~Reine Angew.~Math.~ \textbf{512} (1999), 49--117.

\bibitem[Orl16]{Orsmooth}
D.~Orlov, \emph{Smooth and proper noncommutative schemes and gluing of {DG} categories}, Advances in Math \textbf{302} (2016), 59--105.

\bibitem[P{\u{a}}da]{P}
T.~P{\u{a}}durariu, \emph{Generators for {K}-theoretic {H}all algebras of quivers with potential}, arXiv:2108.07919, to appear in Sel. Math.

\bibitem[P{\u{a}}db]{P3}
\bysame, \emph{Noncommutative resolutions and intersection cohomology for quotient singularities}, arXiv:2103.06215.

\bibitem[P{\u{a}}d22]{P0}
\bysame, \emph{Categorical and {K}-theoretic {H}all algebras for quivers with potential}, J.~Inst.~Math.~Jussieu (2022), 1--31.

\bibitem[P{\u{a}}d23]{P2}
\bysame, \emph{Generators for {H}all algebras of surfaces}, Math. Z. \textbf{303} (2023), no.~1, Paper No. 4, 24.

\bibitem[PS23]{PoSa}
M.~Porta and F.~Sala, \emph{Two-dimensional categorified {H}all algebras}, J. Eur. Math. Soc. (JEMS) \textbf{25} (2023), no.~3, 1113--1205.

\bibitem[PTa]{PTzero}
T.~P\u{a}durariu and Y.~Toda, \emph{Categorical and {K}-theoretic {D}onaldson-{T}homas theory of $\mathbb{C}^3$ (part {I})}, arXiv:2207.01899, to appear in Duke Math. J.

\bibitem[PTb]{PT1}
\bysame, \emph{Categorical and {K}-theoretic {D}onaldson-{T}homas theory of $\mathbb{C}^3$ (part {II})}, arXiv:2209.05920, to appear in Forum Math. Sigma.

\bibitem[PTc]{PT2}
\bysame, \emph{The categorical {DT/PT} correspondence and quasi-{BPS} categories for local surfaces}, arXiv:2211.12182.

\bibitem[PTd]{PTquiver}
\bysame, \emph{Quasi-{BPS} categories for symmetric quivers with potential}, in preparation.

\bibitem[PTe]{PaTobps}
\bysame, \emph{Symmetric powers and the {H}ilbert scheme of points on $\mathbb{C}^3$}, in preparation.

\bibitem[PTf]{PTtop}
\bysame, \emph{Topological {K}-theory of quasi-{BPS} categories of symmetric quivers with potential}, in preparation.

\bibitem[PT19]{PiYT}
D.~Piyaratne and Y.~Toda, \emph{Moduli of {B}ridgeland semistable objects on 3-folds and {D}onaldson-{T}homas invariants}, J. Reine Angew. Math. \textbf{747} (2019), 175--219.

\bibitem[PV11]{PoVa3}
A.~Polishchuk and A.~Vaintrob, \emph{Matrix factorizations and singularity categories for stacks}, Annales de l'Institut Fourier \textbf{61} (2011), no.~7, 2609--2642.

\bibitem[Ryd11]{Rydetale}
D.~Rydh, \emph{\'{E}tale d\'{e}vissage, descent and pushouts of stacks}, J. Algebra \textbf{331} (2011), 194--223.

\bibitem[Seg11]{MR2795327}
E.~Segal, \emph{Equivalence between {GIT} quotients of {L}andau-{G}inzburg {B}-models}, Comm. Math. Phys. \textbf{304} (2011), no.~2, 411--432.

\bibitem[{\v S}VdB17]{SVdB}
{\v S}.~{\v S}penko and M.~{V}an~den Bergh, \emph{Non-commutative resolutions of quotient singularities for reductive groups}, Invent. Math. \textbf{210} (2017), no.~1, 3--67.

\bibitem[Tho85]{Thomason3}
R.~W. Thomason, \emph{Algebraic {$K$}-theory and etale cohomology}, Annales scientifiques de l'École Normale Supérieure \textbf{18} (1985), no.~3, 437--552.

\bibitem[Tho00]{Thom}
R.~P. Thomas, \emph{A holomorphic {C}asson invariant for {C}alabi-{Y}au 3-folds and bundles on ${K3}$-fibrations}, J.~Differential.~Geom \textbf{54} (2000), 367--438.

\bibitem[Toda]{T}
Y.~Toda, \emph{Categorical {D}onaldson-{T}homas theory for local surfaces}, arXiv:1907.09076.

\bibitem[Todb]{T3}
\bysame, \emph{Categorical wall-crossing formula for {D}onaldson-{T}homas theory on the resolved conifold}, arXiv:2109.07064, to appear in Geometry and Topology.

\bibitem[Todc]{Totheta}
\bysame, \emph{Semiorthogonal decompositions for categorical {D}onaldson-{T}homas theory via {$\Theta$}-stratifications}, arXiv:2106.05496.

\bibitem[Tod08]{Tst3}
\bysame, \emph{Moduli stacks and invariants of semistable objects on {K}3 surfaces}, Advances in Math \textbf{217} (2008), 2736--2781.

\bibitem[Tod12a]{MR2892766}
\bysame, \emph{Stability conditions and curve counting invariants on {C}alabi-{Y}au 3-folds}, Kyoto J. Math. \textbf{52} (2012), no.~1, 1--50.

\bibitem[Tod12b]{TodK3}
\bysame, \emph{Stable pairs on local {K}3 surfaces}, J.~Differential.~Geom.~ \textbf{92} (2012), 285--370.

\bibitem[Tod13]{Todext}
\bysame, \emph{Stability conditions and extremal contractions}, Math.~Ann.~ \textbf{357} (2013), 631--685.

\bibitem[Tod18]{Todstack}
\bysame, \emph{Moduli stacks of semistable sheaves and representations of {E}xt-quivers}, Geom. Topol. \textbf{22} (2018), no.~5, 3083--3144.

\bibitem[Tod23a]{Toquot2}
\bysame, \emph{Derived categories of {Q}uot schemes of locally free quotients via categorified {H}all products}, Math. Res. Lett. \textbf{30} (2023), no.~1, 239--265.

\bibitem[Tod23b]{TodGV}
\bysame, \emph{Gopakumar-{V}afa invariants and wall-crossing}, Journal of Differential Geometry \textbf{123} (2023), no.~1, 141--193.

\bibitem[VdB]{VdB22}
M.~{V}an~den Bergh, \emph{Non-commutative crepant resolutions, an overview}, arXiv:2207.0970.

\bibitem[VV22]{VaVa}
M.~Varagnolo and E.~Vasserot, \emph{K-theoretic {H}all algebras, quantum groups and super quantum groups}, Selecta Math. (N.S.) \textbf{28} (2022), no.~7, 46 pages.

\end{thebibliography}
\medskip

\textsc{\small Tudor P\u adurariu: Max Planck Institute for Mathematics,
Vivatsgasse 7 
Bonn 53111, Germany.}\\
\textit{\small E-mail address:} \texttt{\small tpadurariu@mpim-bonn.mpg.de}\\

\textsc{\small Yukinobu Toda: Kavli Institute for the Physics and Mathematics of the Universe (WPI), University of Tokyo, 5-1-5 Kashiwanoha, Kashiwa, 277-8583, Japan.}\\
\textit{\small E-mail address:} \texttt{\small yukinobu.toda@ipmu.jp}\\
 \end{document}